\DeclareFontFamily{U}{wncy}{}
\DeclareFontShape{U}{wncy}{m}{n}{<->wncyr10}{}
\DeclareSymbolFont{mcy}{U}{wncy}{m}{n}
\DeclareMathSymbol{\Sha}{\mathord}{mcy}{"58}
\def\C{\ensuremath{\mathbb{C}}}
\def\G{\ensuremath{\mathbb{G}}}
\def\L{\ensuremath{\mathbb{L}}}
\def\P{\ensuremath{\mathbb{P}}}
\def\Q{\ensuremath{\mathbb{Q}}}
\def\R{\ensuremath{\mathbb{R}}}
\def\Z{\ensuremath{\mathbb{Z}}}
\def\sfq{\ensuremath{\mathsf q}}
\def\sfL{\ensuremath{\mathsf L}}
\def\sfR{\ensuremath{\mathsf R}}
\def\cA{\ensuremath{\mathcal A}}
\def\cC{\ensuremath{\mathcal C}}
\def\cD{\ensuremath{\mathcal D}}
\def\cV{\ensuremath{\mathcal V}}
\def\cE{\ensuremath{\mathcal E}}
\def\cF{\ensuremath{\mathcal F}}
\def\cH{\ensuremath{\mathcal H}}
\def\cK{\ensuremath{\mathcal K}}
\def\cL{\ensuremath{\mathcal L}}
\def\cM{\ensuremath{\mathcal M}}
\def\cN{\ensuremath{\mathcal N}}
\def\cP{\ensuremath{\mathcal P}}
\def\cS{\ensuremath{\mathcal S}}
\def\cU{\ensuremath{\mathcal U}}
\def\cV{\ensuremath{\mathcal V}}
\def\cX{\ensuremath{\mathcal X}}
\def\vv{\ensuremath{\mathbf v}}
\def\fg{\mathfrak g}
\def\sC{\ensuremath{\mathscr C}}
\def\sG{\ensuremath{\mathscr G}}
\def\sH{\ensuremath{\mathscr H}}
\def\sL{\ensuremath{\mathscr L}}
\def\sF{\ensuremath{\mathscr F}}
\def\sE{\ensuremath{\mathscr E}}
\def\sM{\ensuremath{\mathscr M}}
\def\sT{\ensuremath{\mathscr T}}
\def\sP{\ensuremath{\mathscr P}}
\def\sS{\ensuremath{\mathscr S}}
\def\sO{\ensuremath{\mathscr O}}
\def\sQ{\ensuremath{\mathscr Q}}
\def\phi{{\varphi}}
\DeclareMathOperator{\Aut}{Aut}
\DeclareMathOperator{\ch}{ch}
\DeclareMathOperator{\Def}{Def}
\DeclareMathOperator{\Db}{D\textsuperscript{\rm b}}
\DeclareMathOperator{\Dperf}{D_{\mathrm perf}}
\DeclareMathOperator{\End}{End}
\DeclareMathOperator{\Ext}{Ext}
\DeclareMathOperator{\Ann}{Ann}
\DeclareMathOperator{\Hom}{Hom}
\DeclareMathOperator{\Ker}{Ker}
\DeclareMathOperator{\NS}{NS}
\DeclareMathOperator{\HT}{HT}
\DeclareMathOperator{\Pic}{Pic}
\DeclareMathOperator{\rk}{rk}
\DeclareMathOperator{\Supp}{Supp}
\DeclareMathOperator{\Amp}{Amp}
\DeclareMathOperator{\Span}{Span}
\DeclareMathOperator{\td}{td}
\DeclareMathOperator{\Br}{Br}
\DeclareMathOperator{\Prym}{Prym}
\def\Sym{\mathrm{Sym}}
\DeclareMathOperator{\bwed}{\bigwedge\nolimits}
\def\At{\mathrm{At}}
\def\Coh{\mathop{\mathrm{Coh}}\nolimits}
\def\ext{\mathop{\mathrm{ext}}\nolimits}
\def\id{\mathop{\mathrm{id}}\nolimits}
\def\Ker{\mathop{\mathrm{Ker}}\nolimits}
\def\NS{\mathop{\mathrm{NS}}\nolimits}
\def\rk{\mathop{\mathrm{rk}}}
\def\Tr{\mathop{\mathrm{Tr}}\nolimits}
\newcommand{\cEnd}{\mathcal{E}\!{\it nd}}
\newcommand{\cExt}{\mathcal{E}\!{\it xt}}
\def\cHom{\mathop{\mathcal Hom}\nolimits}
\newcommand{\srPic}[2]{\mathcal{P}\mathrm{ic}^{=}_{#1/#2}}
\newcommand{\srPico}[2]{\mathcal{P}\mathrm{ic}^{=0}_{#1/#2}}
\newcommand{\wt}{\widetilde}
\newcommand{\RcHom}{\mathrm{R}\mathcal{H}\!{\it om}}
\newcommand{\HK}{hyper-K\"ahler }
\DeclareFontFamily{U}{wncy}{}
\DeclareFontShape{U}{wncy}{m}{n}{<->wncyr10}{}
\DeclareSymbolFont{mcy}{U}{wncy}{m}{n}
\DeclareMathSymbol{\Sha}{\mathord}{mcy}{"58}
\DeclareMathOperator{\oP}{\overline{P}}
\newcommand{\mor}[1][]{\xrightarrow{#1}}
\newcommand{\isomor}{\mor[\sim]}
\newtheorem{Thm}{Theorem}[section]
\newtheorem{Prop}[Thm]{Proposition}
\newtheorem{Lem}[Thm]{Lemma}
\newtheorem{Cor}[Thm]{Corollary}
\newtheorem*{Ass*}{Assumption}
\newtheorem*{Con*}{Conjecture}
\theoremstyle{definition}
\newtheorem{Defi}[Thm]{Definition}
\newtheorem{Rem}[Thm]{Remark}
\setlist[itemize]{noitemsep,nolistsep}
\setlist[enumerate]{noitemsep,nolistsep}
\begin{document}

\definecolor{xdxdff}{rgb}{0.49019607843137253,0.49019607843137253,1}
\definecolor{uuuuuu}{rgb}{0.26666666666666666,0.26666666666666666,0.26666666666666666}
\definecolor{ffqqqq}{rgb}{1,0,0}

\title{O'Grady's tenfolds from stable bundles on hyper-K\"ahler fourfolds}

\author[Alessio Bottini]{Alessio Bottini}

\address{Max Planck Institute for Mathematics, Vivatsgasse 7, 53111 Bonn, Germany}\email{bottini@mpim-bonn.mpg.de}

\makeatletter
\@namedef{subjclassname@2020}{%
  \textup{2020} Mathematics Subject Classification}
\makeatother
\keywords{Hyper-K\"ahler manifolds, stable vector bundles, moduli spaces.}
\subjclass[2020]{14J42, 14F08, 14J60, 53D30.}

\begin{abstract}
We provide a modular construction of the Laza--Saccà--Voisin compactification of the intermediate Jacobian fibration of a cubic fourfold.
Additionally, we construct infinitely many $20$-dimensional families of polarized  hyper-K\"ahler manifolds of type OG10, realized as moduli spaces of stable bundles on hyper-K\"ahler manifolds of type $\mathrm{K3}^{[2]}$. 
\end{abstract}

\maketitle
\setcounter{tocdepth}{1}
\tableofcontents

\section{Introduction}
Almost 30 years ago, Donagi and Markman \cite[Theorem 8.1]{donagi96} showed that the structure sheaf of a smooth Lagrangian submanifold in a \HK manifold $Z \subset X$ has a smooth symplectic deformation space $\Def(\sO_Z)$.
They also asked whether the symplectic structure extends to the closure of the relative Picard scheme inside the moduli space of stable sheaves.
At least at the birational level this question can be addressed, even more generally, via the methods developed in \cite{sacca24}. 
In the particular case where $X$ is the variety of lines on a smooth cubic fourfold $Y$, and $Z$ is the surface of lines on a hyperplane section, these techniques yield the \HK compactification $J_Y \to \P^5$ already constructed in \cite{lsv17}.

If $\lambda$ is any polarization on $X=F(Y)$, the relative Picard is naturally included in the moduli space $M_{\vv_0}(X,\lambda)$ parametrizing $\lambda$-semistable sheaves with Mukai vector $\vv_0 = v(\sO_Z)$. 
For dimensional reasons, its closure is an irreducible component $M^{\circ}_{\vv_0}(X,\lambda)$.
In this paper we prove the following result, which gives a modular description of the LSV compactification $J_Y$. 

\begin{Thm}[\cref{cor:ModularLSV} and \cref{rem:ChangingPolarization}]\label{thm:IntroLagrangians}
    If the cubic $Y$ is general and $\lambda$ is any polarization on $X$, then $M^{\circ}_{\vv_0}(X,\lambda)$ is a connected component of $M_{\vv_0}(X,\lambda)$. 
    Moreover, it is a smooth HK manifold of type OG10 equipped with a Lagrangian fibration 
    \[
            f: M^{\circ}_{\vv_0}(X,\lambda) \to \P^5, \ G \mapsto \Supp(G),
    \]
    and there is an isomorphism $M^{\circ}_{\vv_0}(X,\lambda) \cong J_Y$ compatible with the Lagrangian fibrations.
\end{Thm}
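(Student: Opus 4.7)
The plan has three main stages: construct the support morphism, establish smoothness and the OG10 deformation type via symplectic deformation theory, and finally identify the component with the LSV fibration.

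First I would construct $f$. For $Y$ general, the dense open subset of $M^{\circ}_{\vv_0}(X,\lambda)$ parametrizes (twists of) structure sheaves $\sO_{Z_t}$ of the surfaces of lines on hyperplane sections $Y \cap H_t$, naturally indexed by $t \in |H|^* \cong \P^5$. On this locus the assignment $G \mapsto \Supp(G)$ is manifestly a morphism to $\P^5$. To extend it to the full component I would use that the Mukai vector $\vv_0$ determines the numerical class of the support, combined with properness of the moduli space: under the generality hypothesis, the only surfaces in $X$ with that class come from hyperplane sections of $Y$, so the support of every sheaf in $M^{\circ}_{\vv_0}(X,\lambda)$ lies in $\P^5$.

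Next, for smoothness and the symplectic form, the starting point is \cite[Theorem 8.1]{donagi96}: whenever $Z_t \subset X$ is smooth Lagrangian, $\Def(\sO_{Z_t})$ is smooth symplectic of dimension $10$. For $Y$ general this covers a dense open subset of $M^{\circ}_{\vv_0}(X,\lambda)$. To handle boundary points, where $Z_t$ acquires singularities and the sheaves are limits of line bundles on such $Z_t$, I would apply the compactification techniques of \cite{sacca24}, which in the present setting produce a smooth symplectic model of the closure of the relative Picard. Smoothness of the entire component then forces it to be open in $M_{\vv_0}(X,\lambda)$, and since it is a closure it is also closed, giving the connected component statement. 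The OG10 deformation type would follow either by direct topological computation on the smooth locus or by deformation to the LSV model.

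The last stage is the comparison with $J_Y$. Over the open $U \subset \P^5$ of smooth hyperplane sections, the relative Picard of the family of $Z_t$'s is identified with the intermediate Jacobian fibration of cubic threefold sections of $Y$ via Abel-Jacobi, so both fibrations agree over $U$; combined with the smooth compactification from \cite{sacca24} this gives a birational map $M^{\circ}_{\vv_0}(X,\lambda) \dashrightarrow J_Y$ compatible with projections to $\P^5$. The principal obstacle is upgrading this to a biregular isomorphism, since birational HK manifolds of type OG10 are genuinely not always isomorphic. I would attack this by exploiting that both sides are smooth projective with matching Lagrangian fibrations: the birational map is an isomorphism over $U$, and to extend it across the discriminant one performs a local model analysis showing that, at each singular $t$, the local contribution from the moduli of sheaves and from the LSV construction both realize the compactified Prym of the same degeneration of cubic threefolds. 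This local comparison over the discriminant locus in $\P^5$ is where I expect the bulk of the technical work to lie.
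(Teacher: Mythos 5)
There is a genuine gap at the central step. You write that ``smoothness of the entire component then forces it to be open in $M_{\vv_0}(X,\lambda)$,'' but what your argument produces is smoothness of the component \emph{with its reduced structure}: \cite{donagi96} handles the points where the supporting surface is a smooth Lagrangian, and the techniques of \cite{sacca24} (or, as in the paper, \cite[Corollary B.2]{fantechi1999} together with the transversality arguments of \cite{lsv17}) give a smooth model of the closure of the relative Picard. None of this controls the scheme structure of the moduli space along the boundary of the component: other irreducible components of $M_{\vv_0}(X,\lambda)$, or embedded points, could meet $M^{\circ}_{\vv_0}(X,\lambda)$ precisely where the supporting surfaces degenerate, and then $M^{\circ}$ would be closed and reduced-smooth but not open. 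To rule this out one must show $\ext^1(i_*G,i_*G)=10$ at \emph{every} point of the component, including those where $G$ is a non-locally-free sheaf on a singular surface of lines, and your proposal contains no mechanism for computing or bounding these tangent spaces. This is exactly where the paper invests most of its effort: it transports the component, via a twisted Poincar\'e derived equivalence on a Noether--Lefschetz-special $X$, to a moduli space of slope-stable projectively hyperholomorphic twisted bundles, where $\Ext^1(E,E)$ carries a nondegenerate symplectic pairing even at a priori non-smooth points; an explicit nodal-cubic computation bounds $\ext^1\leq 11$, evenness forces $10$ there, and a Grothendieck--Verdier spectral-sequence argument (using that the remaining bad locus has codimension $\geq 3$) handles the rest before deforming back to a general cubic.

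On the final identification with $J_Y$, your proposed local analysis over the discriminant is plausible but heavier than needed: the paper gets the biregular isomorphism directly by restricting to the relative compactified Prym of a family of very good lines, checking the map is bijective on the locally free locus (whose complement has codimension $2$) and invoking smoothness of the target plus Zariski's main theorem, then descending from $\cF_0$ to $\P^5$. That part of your plan could likely be repaired along these lines, but the connected-component claim cannot be salvaged without an argument controlling the tangent spaces at boundary points.
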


If the cubic fourfold $Y$ is special---in the sense that its variety of lines $X$ lies in a certain Noether--Lefschetz divisor $\cN(d) \subset K^2_6$---we can find a derived equivalence 
\[
    \Db(X) \isomor \Db(X',\theta_d),\text{ with } \theta_d \in \Br(X'),
\]
sending every sheaf in $M^{\circ}_{\vv_0}(X,h)$ into a $\theta_d$-twisted, slope stable, vector bundle on another HK manifold $X'$. 
If $\vv_d$ denotes their twisted Mukai vector (in the sense of \cite{huybrechts2005}), we obtain an isomorphism 
\[
    M^{\circ}_{\vv_0}(X,h) \cong M^{\circ}_{\vv_d}(X',h')
\]
with (a connected component of) a moduli space of $\mu_{h'}$-stable twisted locally free sheaves. 
As images of atomic sheaves via a derived equivalence, these bundles are also atomic, hence projectively hyperholomorphic.
Thus they can be deformed along twistor paths to any HK manifold deformation equivalent to $X'$.

If we perform such deformations until we reach a HK manifold where the Mukai vector $\vv_d$ becomes algebraic, the deformed sheaves will be untwisted. 
The atomicity condition implies, after possibly making the $c_1$ positive, the Mukai vector $\vv_d$ stays algebraic along a $20$-dimensional moduli space $\cK_d$ of polarized HK manifolds (defined precisely in \cref{sec:CompactificationSecond}).

\begin{Thm}[\cref{thm:ModuliOfVB} and \cref{cor:FamiliesOfOG10}]\label{thm:IntroBundles}
    For infinitely many non-negative integers $d$, there is a quasi-projective variety $\cS_d$ with a quasi-finite morphism $\cS_d \to \cK_d$, and a smooth projective morphism $\cM_d \to \cS_d$ whose fibers are HK manifolds of type OG10. 
    If $s \in \cS_d$ is general, and $(W,h_d)$ is its image in $\cK_d$, the fiber $\cM_d|_s$ is a connected component in a moduli space of $h_d$-semistable sheaves on $W$, and it parametrizes only $\mu_{h_d}$-stable bundles of rank $5d^2$. 
\end{Thm}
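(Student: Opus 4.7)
The plan is to combine \cref{thm:IntroLagrangians} with the twisted derived equivalence announced in the introduction (available when $X = F(Y)$ lies in the Noether--Lefschetz divisor $\cN(d) \subset K^2_6$) and to spread the resulting OG10 moduli space across a 20-dimensional family via hyperholomorphic deformation theory. For each admissible $d$, the first step is to pick a general cubic fourfold $Y$ with $X = F(Y) \in \cN(d)$ and any polarization $h$, so that \cref{thm:IntroLagrangians} delivers the OG10 manifold $M^{\circ}_{\vv_0}(X,h)$. Transporting via $\Db(X) \isomor \Db(X',\theta_d)$ yields the isomorphism with $M^{\circ}_{\vv_d}(X',h')$ described in the introduction, a component of the moduli of $\mu_{h'}$-stable $\theta_d$-twisted locally free sheaves; the rank $5d^2$ is then computed by pushing $v(\sO_Z)$ through the equivalence, using the K-theory class of its kernel together with the explicit lattice data of $\cN(d)$.

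The second step is to spread via atomicity. Structure sheaves of smooth Lagrangians are atomic, a property preserved under flat families and under derived equivalences, so every sheaf in $M^{\circ}_{\vv_d}(X',h')$ is atomic and in particular projectively hyperholomorphic. Verbitsky's twistor theory then allows every such bundle to deform along each twistor line of the base. Because atomicity forces the transcendental part of $\vv_d$ to be at most one-dimensional, $\vv_d$ stays algebraic along a codimension-one locus in the moduli of $\mathrm{K3}^{[2]}$-type manifolds, which after arranging positivity of $c_1$ coincides with the 20-dimensional $\cK_d$. Continuing the deformation until $\theta_d$ trivializes produces, on a dense open, untwisted $\mu_{h_d}$-stable locally free sheaves of rank $5d^2$ with algebraic Mukai vector $\vv_d$ on every $(W,h_d) \in \cK_d$.

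The third step is to package this algebraically. Define $\cS_d$ as a quasi-projective variety mapping quasi-finitely to $\cK_d$, carrying the finite rigidifying data needed to single out a consistent connected component (a marking of $\vv_d$ inside the algebraic Mukai lattice together with a choice of component of Gieseker moduli). Over $\cS_d$ take the relative Simpson--Gieseker--Maruyama moduli of $h_d$-semistable sheaves with Mukai vector $\vv_d$, and let $\cM_d$ be the union of components containing the twistor-deformed family of Step~2; projectivity of $\cM_d \to \cS_d$ then follows from the Gieseker--Maruyama construction, while smoothness and the OG10-type of fibers follow from the atomic deformation theory (Hodge and Beauville--Bogomolov data are locally constant along twistor paths from the starting fiber $M^{\circ}_{\vv_d}(X',h')$). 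The generic assertions of $\mu_{h_d}$-stability and local freeness then hold because both are open conditions, satisfied at the twistor starting point.

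The main obstacle is this last step: the twistor path of Step~2 is a priori only an analytic spread, whereas the conclusion demands a smooth projective morphism over a quasi-projective base. One must prove that the locus in $\cK_d$ on which a $\mu_{h_d}$-stable atomic bundle with Mukai vector $\vv_d$ exists really is the entire 20-dimensional algebraic Noether--Lefschetz locus, and that the corresponding component of Gieseker moduli stays projective, smooth, and of OG10 type throughout the transition from twisted to untwisted --- in particular that no sheaf degenerates to a non-locally-free or strictly $h_d$-semistable object at a general point. The specific rank $5d^2$ and the infinite admissible set of $d$ are lattice-theoretic outputs and should emerge from an explicit description of $\cN(d)$ and the equivalence kernel.
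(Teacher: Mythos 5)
Your Steps 1 and 2 match the paper's route: the component $M^{\circ}_{\vv_d}(X',h')$ of twisted atomic $\mu$-stable bundles is produced exactly as you describe, and the spreading is done by deforming along general twistor paths from $(X',h')$ to a very general $(W,h_d)\in\cK_d$. But the step you flag as ``the main obstacle'' is a genuine gap, and your proposed fix does not work as stated. You define $\cS_d$ as carrying ``a marking of $\vv_d$ together with a choice of component of Gieseker moduli'' and then take $\cM_d$ to be ``the union of components containing the twistor-deformed family of Step~2.'' The twistor-deformed family is only an analytic object over a path of twistor lines; there is no algebraic structure that lets you single out, uniformly over a quasi-projective base, ``the component containing it,'' and a priori the component obtained could even depend on the chosen twistor path. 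The paper resolves this by a different mechanism: it forms the relative Gieseker moduli space $\cM_{\vv_d}(\cX/\cK_d,h_d)\to\cK_d$ over the universal family (projective, and surjective because the twistor argument shows the fibers are nonempty over a dense set), takes its \emph{Stein factorization} to get a finite $\cS_d\to\cK_d$ whose points are exactly the connected components of the fibers, picks the irreducible component of $\cS_d$ through one known good point $M^{\circ}_{\vv_d}(W,h_d)$, and then shrinks by generic flatness and generic smoothness of $\rho$. This is precisely how the quasi-finiteness of $\cS_d\to\cK_d$ and the projectivity and smoothness of $\cM_d\to\cS_d$ are obtained without ever having to prove that the good component is canonical or independent of the path.

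Two smaller points. First, the twistor step itself requires more than you supply: to know the deformed fibers are still smooth compact HK manifolds of type OG10 one needs (i) Verbitsky's theorem that the cohomology of hyperholomorphic bundles is constant along twistor deformations, so $\ext^1(E_t,E_t)=10$ throughout; (ii) the Perego--Toma construction of the relative moduli space of twisted sheaves via the associated Brauer--Severi variety, to have a complex space in which the deformed component sits; and (iii) the Kobayashi--Hitchin correspondence to endow each fiber with a K\"ahler metric, so that being a K\"ahler deformation of an OG10 manifold forces it to be one. Second, the twistor argument only reaches \emph{very general} $(W,h_d)$ (one must avoid the $a_d$-walls for suitability of $h_d$), not all of $\cK_d$ as your Step~2 claims; this is harmless for the final statement, which is only about general $s\in\cS_d$, but it is exactly why the Stein-factorization packaging, rather than a pointwise construction over all of $\cK_d$, is needed.
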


The above families are constructed by taking the Stein factorization of the relative moduli space over $\cK_d$.
The reason for this is that we do not know if the moduli spaces of sheaves on high-dimensional HK manifolds are connected. 
A priori, it could happen that given $(W,h_d) \in \cK_d$ there are many different smooth components in $M_{\vv_d}(W,h_d)$, and which one we get depends on the chosen twistor path. 

\subsection{Symplectic forms}
The tangent space to a moduli space of semistable sheaves at a stable point $E$ is identified with $\Ext^1(E,E)$.
On a K3 surface, as first observed in \cite{mukai84}, it can be equipped with the pairing
\[
\Ext^1(E,E) \times \Ext^1(E,E) \to \C, \ (a,b) \mapsto \Tr(a \circ b). 
\]
This is alternating because the trace kills graded brackets, and it is non-degenerate by Serre duality. 

Now let $X$ be a projective HK manifold of dimension $2n$ and let $i: Z \hookrightarrow X$ be a smooth Lagrangian subvariety. 
In \cite[Section 8]{donagi96} the authors construct a symplectic structure on the relative Picard over the Hilbert scheme of deformations of $Z$, depending on the choice of a class in $H^2(X,\C)$.
In certain cases, the local-to-global spectral sequence degenerates, giving an algebra isomorphism 
\[
    \Ext^*(i_*L,i_*L) \cong H^*(Z,\C),
\]
where $L \in \Pic(Z)$.
This happens, for example, if $Z$ is the zero locus of a section of a globally generated vector bundle or if $X$ has dimension 4 by \cite{mladenov19}.
If this happens, the symplectic form associated to a K\"ahler class $\omega \in H^2(X,\R)$ is described as:
\[
    H^1(Z,\C) \times H^1(Z,\C) \to \C,\ (a,b) \mapsto \int_Z{\omega^{n-1}|_Z \cup a \cup b}.
\]
It is non-degenerate by a combination of Poincaré duality and the Hard Lefschetz Theorem for $\omega|_Z$. 

A similar construction works if $E$ is a projectively hyperholomorphic bundle. 
In that case, the class $\overline{\sigma} \in H^2(X,\sO_X)$ plays the role of the K\"ahler class. 
We denote by $L_{\overline{\sigma}} = \overline{\sigma} \otimes \id_E \in \Ext^2(E,E)$, so we can consider the pairing
\[
    \Ext^1(E,E) \times \Ext^1(E,E) \to \C, \ (a,b) \mapsto \Tr(L^{n-1}_{\overline{\sigma}} \circ a \circ b),
\]
where $\circ$ denotes the Yoneda pairing on $\Ext^*(E,E)$. 
It is skew-symmetric by properties of the trace, and non-degenerate by a combination of Serre duality and the holomorphic Hard Lefschetz for hyperholomorphic sheaves proved by Verbitsky, see \cref{thm:HardLefschetzVerbitsky}.
In this case, if we identify $H^2(X,\sO_X) \cong \C$, the symplectic form above is just $\Tr(a \circ b)$, since $\overline{\sigma}$ acts as multiplication by a scalar and can be taken out of the trace.
It is important to observe that we do not need to assume that $\Def(E)$ is smooth.

These two constructions can be unified via the obstruction map, as explained in \cite[Definition 7.3]{bottini22}.
For any $E \in \Db(X)$, Markman \cite{markman21} defines the obstruction map 
\begin{equation*}
    \chi_{E} : \HT^2(X) \to \Ext^2(E,E), \  \eta  \mapsto \eta \lrcorner \exp(\At_E). 
\end{equation*}
by contraction with the exponential of the Atiyah class.
Then for every $\eta \in \HT^2(X)$ we define a pairing
\begin{equation*}
    \Ext^1(E,E) \times \Ext^1(E,E) \to \C, \ (a,b) \mapsto \Tr( \chi_E(\eta)^{n-1} a \circ b). 
\end{equation*}
If one chooses $\eta$ to be respectively a K\"ahler class or $\overline{\sigma}$ one recovers precisely the two cases above. 
At the moment, there are no other examples where it is known if this is non-degenerate.

If $W \in \cK_d$ and $M^{\circ}$ is a smooth component as in \cref{thm:IntroBundles} we can also give an explicit global description of the symplectic form. 

\begin{Thm}[\cref{thm:SmoothnessOurCase}]\label{thm:IntroExtSheaves}
    Let $\pi : W \times M^{\circ} \to  M^{\circ}$ be the second projection, and let $\sE \to W \times  M^{\circ}$ be a twisted universal family. 
    The Yoneda pairing induces an isomorphism 
    \[
        \bwed^2 \cExt^1_{\pi}(\sE,\sE) \cong \cExt_{\pi}^2(\sE,\sE)
    \] 
    Moreover, via this isomorphism the trace map $\Tr: \cExt^2_{\pi}(\sE,\sE) \to \sO_{M^{\circ}}$ gives the symplectic form. 
\end{Thm}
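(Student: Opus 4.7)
The plan is to establish the claim fiberwise at each $t \in M^{\circ}$ and then globalize via base change. Let $E = \sE|_t$ be the corresponding $\mu_{h_d}$-stable atomic bundle on $W$. Graded commutativity of the Yoneda algebra $\Ext^{*}(E,E)$ makes the multiplication $\Ext^1(E,E) \otimes \Ext^1(E,E) \to \Ext^2(E,E)$ skew-symmetric, so it factors as a map $\bwed^2 \Ext^1(E,E) \to \Ext^2(E,E)$. Since $M^{\circ}$ is smooth of dimension $10$ by \cref{thm:IntroBundles}, the source has dimension $\binom{10}{2} = 45$.

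The main mechanism for non-degeneracy is the introduction's formula: composition with the trace $\Tr : \Ext^2(E,E) \to H^2(W, \sO_W) \cong \C$ sends $a \wedge b$ to $\Tr(a \circ b)$, which is exactly the symplectic form on $T_t M^{\circ}$ obtained from the obstruction map $\chi_E$ evaluated at $\overline{\sigma}$. Its non-degeneracy follows from Serre duality combined with Verbitsky's Hard Lefschetz for projectively hyperholomorphic bundles (\cref{thm:HardLefschetzVerbitsky}). By itself this only shows that the image of $\bwed^2 \Ext^1(E,E)$ in $\Ext^2(E,E)$ is not killed by the trace. To upgrade to an actual isomorphism, I would verify the dimension equality $\dim \Ext^2(E,E) = 45$ together with surjectivity of Yoneda in degree $2$. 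Both rely crucially on the atomicity of $E$: using the module structure of $\Ext^{*}(E,E)$ under the LLV-type action associated to $W$, together with Lefschetz $\mathfrak{sl}_2$-triples attached to K\"ahler and holomorphic-symplectic classes, one identifies $\Ext^2(E,E)$ with the image of the Yoneda map from $\bwed^2 \Ext^1(E,E)$.

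To globalize, constancy of fiberwise $\Ext$-dimensions and smoothness of $\pi$ imply, by cohomology and base change, that both $\bwed^2 \cExt^1_{\pi}(\sE,\sE)$ and $\cExt^2_{\pi}(\sE,\sE)$ are locally free of the same rank, and their formation commutes with restriction to fibers. The Yoneda pairing is then a morphism of vector bundles that is a fiberwise isomorphism, hence an isomorphism. For the last assertion, the relative trace $\Tr : \cExt^2_{\pi}(\sE,\sE) \to R^2\pi_{*}\sO_{W \times M^{\circ}} \cong \sO_{M^{\circ}}$ composed with this isomorphism produces a global $2$-form on $M^{\circ}$ whose fiber at $t$ is precisely the symplectic form from the introduction. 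The key obstacle is the surjectivity $\bwed^2 \Ext^1(E,E) \twoheadrightarrow \Ext^2(E,E)$: on a generic HK fourfold one would only detect the rank-$1$ quotient recorded by the trace, and quantitatively exploiting atomicity to account for the remaining $44$ traceless dimensions is the heart of the proof.
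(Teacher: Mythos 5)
There is a genuine gap at exactly the point you identify as ``the heart of the proof'': the fiberwise surjectivity (and injectivity) of $\bwed^2 \Ext^1(E,E) \to \Ext^2(E,E)$ for \emph{every} bundle $E$ in the component. Serre duality plus Verbitsky's Hard Lefschetz only control the one-dimensional quotient of $\Ext^2(E,E)$ detected by the trace; no form of atomicity or ``LLV-type action on $\Ext^*(E,E)$'' in the literature computes the remaining $44$ traceless dimensions, and you do not supply such an argument. Your globalization step is also circular: invoking cohomology and base change requires constancy of $\ext^2(E,E)$ along $M^{\circ}$, which is precisely what is unproven (a priori these dimensions could jump at the non-smooth points of the ambient moduli space, whose exclusion is the content of \cref{thm:SmoothnessOurCase}). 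A smaller issue: the Yoneda algebra $\Ext^*(E,E)$ of a higher-rank bundle is not graded-commutative in general, so the factorization through $\bwed^2$ needs justification beyond ``graded commutativity''; what is always true is only that the trace kills graded brackets.

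The paper's route is essentially the opposite of a pointwise computation. The fiberwise isomorphism is established only on a dense open $U$, where the sheaves are images under the derived equivalence of $i_*L$ for $L$ a line bundle on a \emph{smooth} Lagrangian surface $Z = F(Y_H)$; there $\Ext^*(i_*L,i_*L) \cong H^*(Z,\C)$ as algebras (Mladenov's degeneration) and cup product gives $\bwed^2 H^1(Z) \cong H^2(Z)$. This is assumption (D) of \cref{sec:ParticularResult}. The extension to all of $M^{\circ}$ is then a global sheaf argument (\cref{lem:ComputingCurlyExt2}): the Grothendieck--Verdier spectral sequence \eqref{eq:VerdierSpectralSequence} degenerates enough to give $\cExt^2_{\pi}(\sE,\sE) \cong \cExt^2_{\pi}(\sE,\sE)^*$, hence reflexivity; the Yoneda map $\bwed^2\cExt^1_{\pi}(\sE,\sE) \to \cExt^2_{\pi}(\sE,\sE)$, defined on the smooth locus, extends because the source is locally free and the target reflexive; and the torsion cokernel $\mathscr{Q}$ is killed by a determinant computation ($\det T_{M^{\circ}} \cong \sO_{M^{\circ}}$ from the symplectic form, $\det\cExt^2_{\pi}(\sE,\sE)$ two-torsion from self-duality). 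The fiberwise statement for arbitrary $E$ is only obtained a posteriori, once local freeness and base change are known. If you want to salvage your approach, you would need to either prove the $\Ext^2$ computation for every Cohen--Macaulay degeneration $i_*G$ (including the non-locally-free ones on singular $Z$), which is the hard geometric analysis the paper deliberately avoids, or import the paper's reflexivity-plus-determinant mechanism.
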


This exhibits an interesting phenomenon that does not occur for K3 surfaces.
Namely, since $\cExt^1_{\pi}(\sE,\sE) = T_{M^{\circ}}$, the sheaf $\cExt^2_{\pi}(\sE,\sE)$ controls the symplectic forms on the moduli space.  
This opens the way to the possibility of directly showing that a symplectic moduli space has a unique symplectic form, without resorting to a deformation argument. 
In this paper, we rely on \cite{lsv17} for this and to determine the deformation type. 

\subsection{Atomic, modular, and hyperholomorphic bundles}
Hyperholomorphic bundles were introduced by Verbitsky in \cite{verbitsky96b} as those bundles which remain holomorphic on every complex structure in a twistor line.  
If $\omega$ is a K\"ahler class and $E$ is hyperholomorphic with respect to $\omega$, then it is $\mu_{\omega}$-polystable.
Conversely, by \cite[Theorem 3.19]{verbitsky99} a $\mu_{\omega}$-polystable bundle $E$ is hyperholomorphic if and only if $c_1(E)$ and $c_2(E)$ remain of Hodge type along the twistor line induced by $\omega$.
In practice, it is useful to extend this definition and consider also \textit{projectively hyperholomorphic} bundles.
A (possibly twisted) bundle $E$ is projectively hyperholomorphic if $\cEnd(E)$ is hyperholomorphic (or equivalently, if $\P(E)$ deforms along a twistor line).

More recently, O'Grady \cite[Definition 1.1]{ogrady22} introduced modular sheaves. 
We will not need the precise definition, but recall that if, for a torsion-free sheaf $E$, the discriminant 
\[
\Delta(E) = -2r(E)\ch_2(E) + c_1(E)^2
\]
remains of Hodge type over all K\"ahler deformations of $X$\footnote{Such sheaves are referred to as very modular or strongly modular in the literature.}, then $E$ is modular.
In \cref{sec:TransformsOfLagrangians}, we will use the theory of slope stability for modular sheaves, developed in \cite{ogrady22}. 

Markman \cite{markman21} and Beckmann \cite{beckmann22} introduced independently the notion of atomic objects. 
An object $E \in \Db(X)$ is atomic if 
\[
\Ann(v(E)) = \Ann(\tilde{v}(E)) \subset \fg(X),
\]
for some 
\[
0 \neq \tilde{v}(E) \in \wt{H}(X,\Q) \coloneqq \Q\alpha \oplus H^2(X,\Q) \oplus \Q\beta.
\]
Here $\fg(X) \subset \End(H^*(X,\Q))$ is the Looijenga--Lunts--Verbitsky (LLV) Lie algebra, which acts on $ \wt{H}(X,\Q)$ via the isomorphism $\fg(X) \cong  \mathfrak{so}(\wt{H}(X,\Q))$ of  \cite{verbitsky96a,looijengaLunts97} (see also \cite[Theorem 2.7]{green22}).
The class $\tilde{v}(E)$ is called the extended Mukai vector of $E$ and, thanks to \cite[Theorem A]{taelman21}, is preserved under derived equivalences (in particular atomicity is preserved as well). 

This is the strongest of the three conditions: by \cite[Lemma 5.4]{beckmann22}, an atomic torsion-free sheaf is modular, and if it is a $\mu$-polystable vector bundle it is also projectively hyperholomorphic. 
On the other hand, atomicity is well-defined for torsion sheaves as well. 
Most notably, a topologically trivial line bundle supported on smooth Lagrangian subvariety is atomic if and only if a certain geometric condition is satisfied \cite[Theorem 1.8]{beckmann22}. 
This gives a way of producing projectively hyperholomorphic bundles by starting with such a line bundle and transforming it into a $\mu$-polystable locally free sheaf via a derived equivalence. 

\subsection{Strategy of the proof}
The proofs of \cref{thm:IntroLagrangians} and \cref{thm:IntroBundles} are intertwined. 
Indeed, as it turns out, it is easier to understand the singularities of a moduli space if it parametrizes only projectively hyperholomorphic vector bundles than if it parametrizes sheaves supported on Lagrangian subvarieties. 
Thus, we study the irreducible component $M^{\circ}_{\vv_0}(X,h)$ by explicit geometric considerations, but, to understand the extra structure that the moduli space may have along this component, we use the isomorphism with the moduli space $M^{\circ}_{\vv_d}(X',h')$. 
Being induced by a derived equivalence, it sends smooth points to smooth points.
So, if we prove that $M^{\circ}_{\vv_d}(X',h')$ is a smooth connected component, the same is true for $M^{\circ}_{\vv_0}(X,h)$.   

\subsubsection{The irreducible component}
Let $\cF \subset X \times \P^5$ be the universal surface of lines on hyperplane sections. 
Taken with its reduced structure, the component $M^{\circ}_{\vv_0}(X,h)$ is isomorphic to the closure of line bundles $\overline{\Pic^0}(\cF/\P^5)$ in the relative moduli space of torsion-free sheaves. 
In \cref{sec:compactifiedPic}, we study this space under the assumption that $Y$ is general, so that, for every hyperplane section $Y_H$, the corresponding surface of lines is integral. 
We prove the following two facts:
\begin{enumerate}
    \item[(i)] There is an isomorphism $\overline{\Pic^0}(\cF/\P^5) \isomor J_Y$ extending the natural birational map. 
    \item[(ii)] Every sheaf in $\overline{\Pic^0}(\cF/\P^5)$ is Cohen--Macaulay. 
\end{enumerate}
In particular, we deduce from (i) that $M^{\circ}_{\vv_0}(X,h)$ is smooth with the reduced structure. 
If we only wanted to prove this, we would not need the descent result of \cite{lsv17} but only the smoothness of the relative compactified Prym variety $\oP_{\cF_0}$. 
In turn, this relies on the smoothness criterion \cite[Corollary B.2]{fantechi1999} for the relative compactified Jacobian of a family of planar curves, together with the transversality arguments of \cite{lsv17}.

\subsubsection{The derived equivalence}
In \cref{sec:TwistedPoincaré} we prove the following result, which generalizes \cite[Proposition 2.1]{addington16} to every projective Tate--Shafarevich twist. 

\begin{Thm}[{{\cref{thm:TwistedPoincaré}}}]
    Let $(S,H)$ be a polarized K3 such that every curve in $|H|$ is integral. 
    Let $X' \to \P^n$ be the relative compactified Jacobian of the linear system $|H|$. 
    Then, for every projective Tate--Shafarevich twist $X \to \P^n$ given by an element of $\Sha^0$, there exists a Brauer class $\theta \in \Br(X')$ and a twisted Cohen--Macaulay sheaf 
    \[
    \sP \in \Coh\left(X \times X',p_2^*\theta\right),
    \]
    inducing a derived equivalence $ \Phi_{ \sP }: \Db(X) \isomor \Db(X',\theta)$.
\end{Thm}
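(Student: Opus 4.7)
The plan is to construct $\sP$ by twisted étale descent from Arinkin's relative Poincaré kernel. Since $X'$ is the relative compactified Jacobian of the universal family $\cC \to \P^n$ of integral curves in $|H|$, Arinkin's extension of Mukai's construction provides a maximal Cohen--Macaulay sheaf $\sP_0$ on $X' \times_{\P^n} X'$, flat over both factors, whose associated Fourier--Mukai functor is an autoequivalence of $\Db(X')$. The essential feature I will exploit is the \emph{see-saw} property: translation by a relative section $s: \P^n \to X'$ on the first factor transforms $\sP_0$ into $\sP_0 \otimes p_2^* L_s$ for an explicit line bundle $L_s$ on $X'$ determined by $s$ via the universal family of rank $1$ torsion-free sheaves on $\cC$.

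Pick an étale cover $U \to \P^n$ on which $\alpha \in \Sha^0$ is trivialized, and fix an isomorphism $\tau: X \times_{\P^n} U \isomor X' \times_{\P^n} U$ of $U$-schemes. Pulling $\sP_0$ back along $\tau \times \id$ produces a coherent sheaf on $X \times_{\P^n} X' \times_{\P^n} U$. On the double fiber product $U \times_{\P^n} U$, the two pullbacks of $\tau$ differ by translation by the $1$-cocycle $\alpha_{12} \in X'(U \times_{\P^n} U)$ representing $\alpha$, and by the see-saw formula the two descent pullbacks of $\sP_0$ differ by multiplication by $p_2^* L_{\alpha_{12}}$, a line bundle on $X' \times_{\P^n} U \times_{\P^n} U$. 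The induced $2$-cocycle with values in $\cO_{X'}^{\times}$ defines a Brauer class $\theta \in \Br(X')$, and the $\Sha^0$ hypothesis—which forces $\alpha$ to be compatible with the polarization datum on $\P^n$—is exactly what guarantees that this cocycle yields a genuine $\G_m$-gerbe on $X'$. Consequently, $\sP_0$ descends to a $p_2^*\theta$-twisted coherent sheaf $\sP$ on $X \times_{\P^n} X'$, which I view on $X \times X'$ via the closed embedding.

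For the remaining properties, Cohen--Macaulay-ness is étale-local and is inherited from $\sP_0$. To verify that $\Phi_{\sP}: \Db(X) \to \Db(X',\theta)$ is an equivalence, I would argue after base change along $U \to \P^n$: the pulled-back kernel is $\sP_0$ tensored with a line bundle pulled back from $X'$, and so the associated functor is the composition of Arinkin's Fourier--Mukai equivalence with the autoequivalence of $\Db(X')$ given by tensoring with this line bundle. Faithfully flat descent then promotes this to the desired twisted equivalence on $\P^n$. I expect the main obstacle to be the precise cocycle-level identification of $\theta$ and the verification that the $\Sha^0$ hypothesis really produces a $\G_m$-gerbe on $X'$ rather than a higher obstruction; once this bookkeeping is handled, the Cohen--Macaulay and equivalence properties descend formally from Arinkin's theorem.
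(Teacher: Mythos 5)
Your proposal follows the same basic strategy as the paper: restrict Arinkin's Poincar\'e kernel to the pieces of a cover of $\P^n$ trivializing the twist, use the theorem of the square (your ``see-saw'') to identify the discrepancy between the two regluings on overlaps as tensoring by a line bundle pulled back from the second factor, package those line bundles into a class $\theta \in H^2(X',\sO^*_{X'})$ in C\u{a}ld\u{a}raru's sense, and conclude that the local kernels glue to a $p_2^*\theta$-twisted sheaf whose Cohen--Macaulayness and equivalence property are checked locally. This is exactly the skeleton of the paper's argument, and your worry about a ``higher obstruction'' is unfounded: the cocycle condition on the gluing translations, together with the uniqueness of the line bundles produced by the square theorem, directly yields the required cocycle of line bundles on double overlaps with trivializable triple products.

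There is, however, one step that does not work as written: the choice of an \emph{\'etale} cover $U \to \P^n$ trivializing $\alpha \in \Sha^0$. The group $\Sha^0$ is the image of $\wt{\Sha} = H^1(\P^n, R^1\pi_*T_{X/\P^n}) \cong \C$ under the exponential map computed in the analytic topology; its elements are represented by translations along local \emph{analytic} (indeed $C^\infty$) sections, and there is no reason that a given class --- even a torsion one corresponding to a projective twist --- is split by an \'etale cover. The paper therefore works throughout with analytic open covers and applies the theorem of the square to $C^\infty$-sections (this is also what makes the construction deform over all of $\wt{\Sha}$, including the non-algebraic fibers). Your argument survives verbatim if you replace ``\'etale cover'' by ``analytic cover'' and carry out the descent analytically; otherwise you owe a proof that projective elements of $\Sha^0$ are \'etale-locally trivial, which is not established anywhere. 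A second, smaller issue is the final step: ``faithfully flat descent promotes this to the desired twisted equivalence'' is too quick, since being an equivalence is not formally a local property of a kernel. The paper instead exhibits the explicit inverse kernel $\sQ = \sP^{\vee}[2n]$ (a shifted sheaf, since $\sP$ is Cohen--Macaulay of codimension $n$), observes that the unit map from the identity kernel to the convolution $\sQ \circ \sP$ is a morphism of honest (untwisted) sheaves, and checks that it is an isomorphism over each open of the cover by Arinkin's theorem, hence globally; fully faithfulness plus triviality of the canonical bundle then gives the equivalence. You should replace the descent slogan by this concrete verification.
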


If $X$ is a very general element inside certain (infinitely many) Noether--Lefschetz loci $\cN(d) \subset K^2_6$, it is a Tate--Shafarevich twist of a Beauville--Mukai system $X' \to \P^2$ as in the statement. 
So we find a Brauer class $\theta_d \in \Br(X')$ and a derived equivalence 
\[
    \Phi_{\sP}: \Db(X) \isomor \Db(X',\theta_d).
\]
It sends a sheaf $G \in \Coh(X)$ to a twisted vector bundle on $X'$, provided that $G$ is Cohen--Macaulay and its support is finite over $\P^2$. 
If the sheaf $G$ is atomic, the resulting bundle will be a twisted atomic bundle---we study these in \cref{sec:TwistedAtomicSheaves}---therefore we can apply O'Grady's results \cite{ogrady22} to analyze its slope stability. 
It turns out that for \textit{suitable} polarizations $h' \in \Amp(X')$ (this terminology is introduced in \cite{ogrady22}), the images of the sheaves in $M^{\circ}_{\vv_0}(X,h)$ under $\Phi_{\sP}$ are $\mu_{h'}$-stable $\theta_d$-twisted bundles. 
The key reason is that all the sheaves in $M^{\circ}_{\vv_0}(X,h)$ have integral support.

\subsubsection{Smoothness of the moduli space}
To conclude the proof of \cref{thm:IntroLagrangians} it remains to exclude that on $M^{\circ}_{\vv_0}(X,h)$ meets other components or has embedded points. 
We prove that this is true on the ``vector bundle side" $M^{\circ}_{(X',h')}(\vv_d)$, using that it parametrizes only stable projectively hyperholomorphic bundles. 
Roughly speaking, we exploit the fact that the symplectic form on $\Ext^1(E,E)$ exists for $E \in M^{\circ}_{\vv_d}(X',h')$, even if (a priori) $E$ is not a smooth point. 

Our argument in \cref{sec:ModuliSpacesGeneral} is very elementary, and it requires no deformation theory. 
We directly prove that the sheaf $\cExt^3_{\pi}(\sE,\sE)$ is locally free, by analyzing the spectral sequence coming from Grothendieck--Verdier duality \eqref{eq:VerdierSpectralSequence}.
Here $\sE \to X' \times M^{\circ}_{\vv_d}(X',h')$ is a twisted universal family, so that $\cExt^3_{\pi}(\sE,\sE)$ is the cotangent sheaf to the moduli space.  
The degeneration of the same spectral sequence also gives \cref{thm:IntroExtSheaves}.

As the argument is so straightforward, the assumptions needed to run it play a major role. 
The crucial one, which seems difficult to generalize to other cases, is that the singular locus is of codimension $3$ or higher.
Verifying this condition takes up \cref{sec:NodalCubics}.
We do it by showing, on ``Lagrangian side", that sheaves whose support is at worst nodal are smooth points for $M^{\circ}_{\vv_0}(X,h)$.
Interestingly, even in this case, the symplectic form helps avoid what would otherwise be a difficult geometrical analysis. 
Indeed, as the tangent space at any point is symplectic (because of the derived equivalence), it suffices to show that its dimension is at most $11$.
This can be done with relatively simple geometric considerations, but proving that it is $10$ is much more difficult.

With a slight variation of our arguments, we can prove the following result, which seems more suited to generalizations or to interaction with deformation-theoretic arguments (such as an analysis of the Kuranishi map).

\begin{Thm}[{{\cref{thm:SmoothnessGeneral}}}]\label{thm:IntroSmoothnessGeneral} 
    Let $M$ be a moduli space of (possibly twisted) stable projectively hyperholomorphic bundles on a HK manifold. 
    If $M$ is normal and has lci singularities, then it is smooth. 
\end{Thm}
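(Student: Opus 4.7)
The plan is to follow the strategy of \cref{sec:ModuliSpacesGeneral}: show directly, by Grothendieck--Verdier duality and without invoking deformation theory, that a key relative $\cExt$-sheaf on $M$ is locally free. Let $\sE$ be a (possibly twisted) universal family on $X \times M$, let $\pi: X \times M \to M$ be the projection, and let $X$ be the ambient HK manifold of dimension $2n$. Since $\omega_\pi \cong \sO$, Grothendieck--Verdier duality gives a self-duality $K^\vee \cong K[2n]$ for $K := R\pi_*\RcHom(\sE, \sE)$, and thus a spectral sequence
\[
    E_2^{p,q} = \cExt^p\!\bigl(\cExt^{-q}_\pi(\sE,\sE),\,\sO_M\bigr) \Longrightarrow \cExt^{p+q+2n}_\pi(\sE,\sE).
\]
Combined with the Kodaira--Spencer iso $\cExt^1_\pi(\sE,\sE) \cong T_M$ and Verbitsky's fibrewise Hard Lefschetz isos $L^{n-k}_{\overline{\sigma}}:\cExt^k_\pi \isomor \cExt^{2n-k}_\pi$ from \cref{thm:HardLefschetzVerbitsky}, the goal reduces to showing that the abutment $\cExt^{2n-1}_\pi(\sE,\sE)\cong T_M$ is locally free, since this forces $T_M$ to have constant rank $\dim M$ and hence $M$ to be smooth.

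The normal+lci hypothesis enters in two ways. The conormal sequence of a local lci embedding of $M$ into a smooth ambient gives that $\Omega^1_M$ has projective dimension at most $1$, hence $\cExt^p(\Omega^1_M, \sO_M) = 0$ for $p \geq 2$, and normality gives that $\Sing(M)$ has codimension at least $2$. Independently, the projective hyperholomorphicity provides on each $\Ext^1(\sE_m, \sE_m)$ the non-degenerate skew-symmetric pairing $(a,b)\mapsto \Tr(L^{n-1}_{\overline{\sigma}}\circ a\circ b)$; globally this is the symplectic form on $M^{\mathrm{sm}}$, which extends over $\Sing(M)$ by normality to an iso between $T_M$ and its dual. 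Combining these two inputs, the lci vanishing transports via the symplectic iso to $\cExt^p(T_M,\sO_M)=0$ for $p \geq 2$, and then, via the Hard Lefschetz identifications $\cExt^q_\pi \cong \cExt^{2n-q}_\pi$, to $\cExt^p(\cExt^q_\pi,\sO_M)=0$ for all the intermediate $(p,q)$ on the diagonal $p+q=-1$ contributing to $\cExt^{2n-1}_\pi$. Only the edge term $E_2^{0,-1} = \cHom(T_M,\sO_M) = T_M^\vee$ survives, identifying $\cExt^{2n-1}_\pi$ with $T_M^\vee$; the symplectic iso $T_M^\vee \cong T_M$ and reflexivity then force this sheaf to be locally free.

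The main obstacle is the propagation step: deducing $\cExt^p(\cExt^q_\pi, \sO_M) = 0$ for all the needed intermediate pairs from the single input that $\Omega^1_M$ has projective dimension at most $1$. This requires a careful analysis of the $\mathfrak{sl}_2$-module structure coming from $L_{\overline{\sigma}}$ on the relative $\cExt$-sheaves and of the compatibility between Yoneda multiplication and Grothendieck--Verdier duality; it is here that the projective hyperholomorphicity is indispensable, since without the Hard Lefschetz identifications the higher $\cExt^q_\pi$ have no evident link to the cotangent sheaf and the lci hypothesis alone cannot suffice. A secondary technical point is to verify that the symplectic form on $M^{\mathrm{sm}}$ really extends across $\Sing(M)$ as an isomorphism of sheaves (rather than just of their reflexive hulls); this is where normality is essential.
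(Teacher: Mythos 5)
Your overall strategy (Grothendieck--Verdier duality plus Verbitsky's Hard Lefschetz, no deformation theory) is the right one, but the execution has two genuine gaps, and the "main obstacle" you flag is not actually surmountable along your route. You run the spectral sequence \eqref{eq:VerdierSpectralSequence} on the diagonal $p+q=-1$, aiming to compute the abutment $\cExt^{2n-1}_{\pi}(\sE,\sE)$; this forces you to kill all the intermediate terms $\cExt^{p}(\cExt^{p+1}_{\pi}(\sE,\sE),\sO_M)$ for $p\geq 1$, i.e.\ to control every $\cExt^{q}_{\pi}(\sE,\sE)$ for $2\leq q\leq 2n-1$. Neither the lci hypothesis (which only constrains $\Omega_M$) nor Hard Lefschetz (which only pairs $\cExt^q_\pi$ with $\cExt^{2n-q}_\pi$) gives any handle on these sheaves, and no $\mathfrak{sl}_2$-argument is available. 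The correct move is to run the spectral sequence on the \emph{opposite} diagonal $p+q=-(2n-1)$, converging to $\cExt^{1}_{\pi}(\sE,\sE)$: there the only terms besides $E_2^{0,-(2n-1)}=\cHom(\cExt^{2n-1}_{\pi}(\sE,\sE),\sO_M)$ involve $\cExt^{2n}_{\pi}(\sE,\sE)\cong\sO_M$ (\cref{lem:FirstandLastExt}), so degeneration is automatic and one gets $\cExt^{1}_{\pi}(\sE,\sE)\cong\cExt^{2n-1}_{\pi}(\sE,\sE)^{*}$, hence reflexivity of $\cExt^1_\pi(\sE,\sE)$, with no extra input.

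The second gap is your final step: a reflexive sheaf isomorphic to its dual need not be locally free, so "the symplectic iso $T_M^{\vee}\cong T_M$ and reflexivity then force this sheaf to be locally free" does not follow. The actual argument uses the hypotheses differently. The map $\mu=\cL_{\overline{\sigma}}^{\,n-1}\circ(-):\cExt^1_{\pi}(\sE,\sE)\to\cExt^{2n-1}_{\pi}(\sE,\sE)$, composed with relative Serre duality, is an isomorphism on the smooth locus and extends over $M$ by normality and reflexivity, exhibiting $\cExt^{2n-1}_{\pi}(\sE,\sE)\cong\cExt^1_{\pi}(\sE,\sE)\oplus\sT$ with $\sT$ torsion. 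Since $\cExt^{2n-1}_{\pi}(\sE,\sE)$ is the cotangent sheaf (Lehn) and the normal-plus-lci hypothesis makes $\Omega_M$ \emph{torsion-free} (Kunz) --- this, not projective dimension $\leq 1$, is what the lci assumption buys --- one gets $\sT=0$ and $\mu$ an isomorphism. Local freeness then comes from cohomology and base change: $\mu$ commutes with restriction to fibers, where it is an isomorphism by \cref{thm:HardLefschetzVerbitsky}; since $\varphi_{2n-1}(E)$ is an isomorphism by \cref{lem:FirstandLastExt}, so is $\varphi_1(E)$ for every $E$, whence $\cExt^1_{\pi}(\sE,\sE)\cong\Omega_M$ is locally free and $M$ is smooth.
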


\subsection{Related works}
This work is the natural continuation of \cite{bottini22}. 
There, we degenerated $Y$ to the determinantal cubic, and the Lagrangian surfaces degenerated to a reducible surface. 
The deformations of this reducible surface have even more components, making it significantly more difficult to analyze stability.
Instead, in this paper we degenerate to certain special smooth cubics and, at the minor cost of dealing with twisted sheaves, we get a moduli space with a projective component parametrizing only slope stable vector bundles. 

\cref{thm:IntroLagrangians} does not supersede \cite{lsv17}, although, as mentioned before, the smoothness part of our argument relies on it in a limited way.
Specifically, we do not need the descent, but only the transversality arguments combined with \cite[Corollary B.2]{fantechi1999}. 
However, once smoothness is established we still have to show that $M^{\circ}_{\vv_0}(X,h)$ is HK of type OG10.
Since $M^{\circ}_{\vv_0}(X,h)$ has a natural symplectic form, this amounts to a deformation argument.
It is likely that this can be done by degenerating to the chordal cubic, but, as the moduli space can degenerate in a bad way, the argument would probably end up needing \cite{klsv18}. 
Future developments may bring a fully modular description of an explicit degeneration to O'Grady's example.
However, until then, \cite{lsv17} remains necessary to establish that $M^{\circ}_{\vv_0}(X,h)$ is HK of type OG10. 

In \cite{ogrady24}, other (irreducible components of) moduli spaces on $\mathrm{K3}^{[2]}$-type HK manifolds are constructed.
These parametrize modular sheaves, which in general are not atomic. 
The only case in which they are atomic, one recovers a construction by Markman \cite[Section 11]{markman21}.
The biggest difference from our work is that it is expected that, if smooth, these components are again of type $\mathrm{K3}^{[n]}$.
Other examples of non-rigid modular (and in general not atomic) sheaves are given in \cite{fatighenti24,fatighentiOnorati24}.

In \cite{guo24} a similar strategy is applied to many other Lagrangian subvarieties to produce projectively hyperholomorphic bundles. 
Understanding the compactifications of the corresponding moduli spaces still appears to be out of reach. 
For one of these cases, first constructed by Iliev--Manivel \cite{iliev08}, a non-modular compactification has been constructed in \cite{xu24} using techniques developed by Saccà \cite{sacca24}.

\subsection{Structure of the paper}
In \cref{sec:TwistedAtomicSheaves}, following Beckmann's and Markman's work, we extend the notion of atomic objects to twisted sheaves. 
We reprove in this setting all the relevant results that hold for untwisted objects.
Namely: they are invariant under (certain) derived equivalences, a twisted atomic bundle is modular, and, if slope polystable, it is also projectively hyperholomorphic. 

In \cref{sec:TwistedPoincaré} we construct the twisted Poincaré sheaf and prove \cref{thm:TwistedPoincaré}.
We describe the cohomological action of the induced equivalence, and show that it sends atomic objects to twisted atomic objects. 

In \cref{sec:compactifiedPic} we study the relative compactified picard $\overline{\Pic^0}(\cF/\P^5)$, for a general cubic $Y$.
We prove that, with its reduced structure, it is isomorphic to $J_Y$, and that it parametrizes only Cohen--Macaulay sheaves. 

In \cref{sec:NodalCubics} we study the compactified Picard of surfaces of lines on nodal cubics, and we show that it behaves in a similar way to that of nodal curves.
We focus especially in the computation of the tangent space to $M_{\vv_0}(X,h)$ at a point in the component $M^{\circ}_{\vv_0}(X,h)$.
Many of the results are proven, more generally, for singular varieties which have only normal crossings singularities in codimension one. 

In \cref{sec:TransformsOfLagrangians} we study under which assumptions a sheaf is sent to a twisted stable bundle via the equivalence of \cref{thm:TwistedPoincaré}, generalizing \cite[Proposition 5.2]{bottini22}. 
For this we extend the notion of modular sheaves to twisted bundles, and generalize some of O'Grady's work to this setting. 

In \cref{sec:ModuliSpacesGeneral} we study the geometry of moduli spaces of twisted hyperholomorphic bundles.
We describe the symplectic form in the twisted case, and prove two results on the smoothness of moduli spaces of stable projectively hyperholomorphic bundles. 
One is \cref{thm:IntroSmoothnessGeneral}, and the other \cref{thm:SmoothnessOurCase} is a more particular criterion which we apply to our situation. 
We also essentially prove \cref{thm:IntroExtSheaves}. 

In \cref{sec:CompactificationSecond} we conclude the proofs of our main results.
We verify that the assumptions of  \cref{thm:SmoothnessOurCase} to show smoothness of $M^{\circ}_{\vv_d}(X',h')$ and deduce it for $M^{\circ}_{\vv_0}(X,h)$. 
We also deform $M^{\circ}_{\vv_d}(X',h')$ along twistor lines to construct smooth components of moduli spaces of untwisted bundles.

\subsection{Notations and conventions}
\begin{itemize}
    \item Moduli spaces of sheaves are considered with respect to Gieseker (semi)stablity. 
    We will always highlight explicitly if the sheaves are also slope stable. 
    \item Irreducible components of moduli spaces are taken with their reduced scheme structure.  
    \item General means in a Zariski open, very general means in the complement of countably many closed subsets.  
    \item If a functor is derived, it is explicitly denoted by $\sf{R}-$ or $\sf{L}-$, with the exception of the derived dual which is denoted $-^{\vee}$. 
    To distinguish between the derived dual and the standard one, we denote the latter by $-^*$. 
    This will be relevant mostly in \cref{sec:ModuliSpacesGeneral}.
    \item When talking about Cohen--Macaulay sheaves we always work over Gorenstein schemes. 
    In that case, $F$ Cohen--Macaulay if and only if its derived dual $F^{\vee}$ is a sheaf (appropriately shifted by the codimension of its support).
    A twisted sheaf is Cohen--Macaulay if locally is Cohen--Macaulay. 
    \item The terms  ``locally free sheaf" and  ``vector bundle" are used interchangeably throughout. 
\end{itemize}

\noindent
\textbf{Acknowledgments.}
During this work, I greatly benefited from many useful discussion from many people. 
I wish to thank: Federico Bongiorno, Riccardo Carini, Franco Giovenzana, Daniel Huybrechts, Eyal Markman, Emanuele Macrì, Luigi Martinelli, Kieran O'Grady, Antonio Rapagnetta, Giulia Saccà, Richard Thomas. 
I have been partially supported by the European Research Council (ERC) under the European Union’s Horizon 2020 research and innovation programme (ERC-2020-SyG-854361-HyperK).

\section{Twisted atomic objects}\label{sec:TwistedAtomicSheaves}

\subsection{Review: Brauer group and twisted sheaves}\label{sec:ReviewBrauer}
If $X$ is an analytic space, given an element $\alpha \in H^2(X,\sO^*_X)$ represented by a \v{C}ech cocycle $\{ \alpha_{ijk} \}$, there is the notion of $\alpha$-twisted sheaves, introduced in \cite{caldararu00}.
Roughly speaking, an $\alpha$-twisted sheaf $F$ is a collection of sheaves $F_i$ over an open cover $\{ U_i \}$ plus transition functions $\phi_{ij}: F_i|_{U_{ij}} \cong F_{j}|_{U_{ij}}$ which satisfy the cocycle condition up to $\alpha_{ijk}$. 
Recall that the Brauer group is the torsion part $\Br(X) =  H^2(X,\sO^*_X)_{\rm{tor}}$. 

Later we will use the following equivalent description of Brauer classes and twisted sheaves, we refer to \cite[Section 1.1]{caldararu00} for details. 
To an element $\alpha \in H^2(X,\sO^*_X)$ one associates an open cover $\{ U _i\}$ of $X$, and a collection of line bundles $\{L_{ij}\}$ over the intersection $U_{ij} = U_i \cap U_j$, satisfying a specific cocycle condition.
Part of this cocycle condition is that, over the triple intersection $U_{ijk}$, the bundle
\[
L_{ijk} \coloneqq L_{ij} \otimes L_{jk} \otimes L_{ki}
\]
is non-canonically trivial. 
To recover the first description, one refines the cover to trivialize the $L_{ij}$. 
Over this refined cover, the usual cocycle $\alpha_{ijk}$ is obtained by choosing trivializations for $L_{ijk}$. 

With this interpretation an $\alpha$-twisted sheaf as collection of sheaves $\{ U_i, F_i \}$, together with isomorphisms
\[
F_{i}|_{U_{ij}} \cong F_{j}|_{U_{ij}} \otimes L_{ij},
\]
compatible with the cocycle condition of the $L_{ij}$.

\textbf{Twisted Chern character.}
There are various ways in the literature to define a Chern character for a twisted sheaf \cite{huybrechts2005,lieblich07}, and they all depend on more data than just the Brauer class.  
We follow \cite{huybrechts2005} for the close analogy to K3 surfaces.

Let $X$ be a smooth projective variety over $\C$, and assume that $\alpha \in \Br(X)$ is topologically trvial, i.e. the image in $H^3(X,\Z)_{\mathrm{tor}}$ vanishes. 
This is always the case for $\mathrm{K3}^{[n]}$-type HK manifolds.
Then, we can choose a B-field $B \in H^2(Y,\Q)$ lifting $\alpha$, which means
\[
\exp(B^{0,2}) = \alpha \in H^2(Y,\sO^*_Y). 
\]
To construct a twisted Mukai vector, we will assume one of two additional conditions, depending on the context. 

One is, as in \cite{huybrechts2005}, that $\exp(B_{ijk}^{0,2}) = \alpha_{ijk}$ at the level of cocycles.
In this case, if we write
\begin{equation}\label{eq:TrivializationBField}
    B_{ijk} = a_{ij}  + a_{jk} - a_{ik}, \ \{ a_{ij}\} \in \Gamma^1(X,C^{\infty}),
\end{equation}
we get an $\alpha_{ijk}$-twisted topological line bundle $L$ given by the transition functions $\exp(a_{ij})$. 

The other is, instead, that $\alpha^d_{ijk} = 1$ at the level of cocycles, or equivalently, to have chosen a lift of $\alpha$ to $H^2(X,\mu_d)$.
In this case, following \cite[Lemma 5.5]{dejong_perry22}, we can take any $\alpha$-twisted topological line bundle $L$ on $X$. 
Then $L^d$ is an untwisted topological line bundle, and the B-field is given by $B = \frac{c_1(L^d)}{d}$.

Having fixed a B-field $B$, and the corresponding twisted topological line bundle $L$, we define a Chern character $\ch^B(-)$ for $\alpha$-twisted coherent sheaves in the following way.
First notice that tensor product with an $\alpha^{-1}$-twisted vector bundle shows that every $F \in \Coh(X,\alpha)$ admits a finite resolution by $\alpha$-twisted vector bundles \cite[Proposition 2.1.8]{caldararu00}.
If $E^{\bullet} \to F$ is any such resolution, we define
\begin{equation}\label{eq:DefTwistedChernChar}
\ch^B(F) \coloneqq \sum_i (-1)^i \ch(E^i \otimes L^{-1}),
\end{equation}
where the $E^i \otimes L^{-1}$ are untwisted topological vector bundles.
We extend it to bounded complexes by additivity in the usual way. 
The twisted Mukai vector is defined to be $v^B(F) \coloneqq \ch^B(F) \td_X^{1/2}$. 

\begin{Rem}\label{rem:TwistedChernIndependentFromRes}
    This definition is independent of the chosen resolution. 
    More generally, we can use any finite resolution of $F \otimes L^{-1}$ in the category of $C^{\infty}$-modules. 
    In other words, the Chern character depends only on the isomorphism class of $F \otimes L^{-1}$ as a $C^{\infty}$-module.
    This is because, since $C^{\infty}$ is a soft sheaf, an isomorphism in the derived category between any two resolutions can be lifted to an isomorphism up to homotopy, see \cite[Exposé II, Proposition 2.3.2(c)]{sga6}.
    A diagram chase then gives the desired independence. 
    This will be important in the proof of \cref{prop:CohomologicalAction}.
\end{Rem}

Following \cite[Remark 1.3]{huybrechts2005} we define a Hodge structure on $H^*(X,\Q)$ by 
\begin{equation}\label{eq:TwistedHodgeStructure}
    H(X,B)^{p,q} \coloneqq \exp(-B)H^{p,q}(Y).
\end{equation}
As shown in \cite[Proposition 1.2]{huybrechts2005}, for every $F \in \Coh(X,\alpha)$ one has 
\begin{equation}\label{eq:TwistedChern}
\ch^B(F) \in \bigoplus_{p} H(X,B)^{p,p}.
\end{equation}
Given a Fourier--Mukai equivalence $\Phi_{\sE} : \Db(X_1,\alpha_1) \isomor \Db(X_2,\alpha_2)$, one gets a Hodge isometry
\[
\Phi^H_{\sE} \coloneqq p_{2,*} \left(v^{-B_1 \boxplus B_2 }(\sE) \cup p^*_1(-)\right) : H^*(X_1,B_1,\Q) \cong H^*(X_2,B_2,\Q).
\]
Where $H^*(X,B,\Q)$ is the rational cohomology endowed with the Mukai pairing and the weight-zero Hodge structure 
\[
H^*(X,B,\C)^{-k,k} \coloneqq \bigoplus_{p - q = -k}  H(X,B)^{p,q}.
\]
The isometry $\Phi^H_{\sE}$ depends on the choice of the B-fields, but we will suppress it from the notation. 
  
To get an easy generalization of the theory of atomic objects to the twisted case, we make the following assumption.
\begin{Ass*}[$\dagger$]\label{assumptionTwisted}
    The Lie algebra morphism 
    \[
    \End(H^*(X,\Q)) \isomor \End(H^*(Y,\Q)), \ f \mapsto \Phi_{\sE}^H \circ f \circ (\Phi_{\sE}^H)^{-1}
    \]
    restrict to an isomorphism $\Phi_{\sE}^{\fg} : \fg(X) \isomor \fg(Y)$ between the LLV algebras.
\end{Ass*}
In the untwisted case, this is shown in \cite{taelman21} via an interpretation of the LLV algebra in terms of Hochschild cohomology.
A general statement for the twisted case is still unavailable in the literature.
In our case in \cref{sec:CohomologicalAction}, we will prove assumption $(\dagger)$ explicitly via a deformation argument. 

\subsection{Twisted extended Mukai lattice}
Let $X$ be a projective HK manifold of dimension $2n$, $\alpha \in \Br(X)$ a topologically trivial Brauer class, and $B \in H^2(X,\Q)$ a B-field for $\alpha$.
The isometry of $H^*(X,\Q)$ given by cup product with $\exp(-B)$ is the exponential of $- \cup(-B) \in \fg(X)$.
Since $\fg(X) \cong \mathfrak{so}(\wt{H}(X,\Q))$ by \cite{verbitsky96a,looijengaLunts97} and \cite[Theorem 2.7]{green22}, we get an isometry 
\[
\exp(-B) \in \mathrm{SO}(\wt{H}(X,\Q))
\]
of the rational extended Mukai lattice. 
We introduce a weight-two Hodge structure on $\wt{H}(X,\Q)$, defined by
\[
\wt{H}(X,B)^{p,q} \coloneqq \exp(-B) \left( \wt{H}(X,\C)^{p,q} \right).
\]
We write $\wt{H}(X,B,\Q)$ to denote the extended Mukai lattice equipped with this twisted Hodge structure.  

The following is an extension to the twisted case of \cite[Theorem 4.8 and 4.9]{taelman21}.
We ignore the relation to the Verbitsky component, since it is not necessary for our purposes, but it generalizes as well.  

\begin{Prop}\label{prop:TwistedEquivalences}
    Let $X_1$ and $X_2$ be deformation equivalent \HK manifolds of dimension $2n$, and assume that either $n$ or $b_2(X)$ is odd. 
    Let $\alpha_1 \in \Br(X_1)$ and $\alpha_2 \in \Br(X_2)$ be topologically trivial Brauer classes, with fixed B-fields $B_1$ and $B_2$.
    Let $\Phi : \Db(X_1,\alpha_1) \isomor \Db(X_2,\alpha_2)$ be a derived equivalence which satisfies assumption $(\dagger)$.
    Then there exists a Hodge isometry
    \[
    \Phi^{\wt{H}} : \wt{H}(X_1,B_1,\Q) \isomor \wt{H}(X_2,B_2,\Q),
    \]
    which is equivariant with respect to $\Phi^{\fg} : \fg(X_1) \isomor \fg(X_2)$. 
\end{Prop}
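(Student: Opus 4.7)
The strategy is to adapt the proof of \cite[Theorems 4.8 and 4.9]{taelman21} to the twisted setting, using assumption $(\dagger)$ as a substitute for Taelman's automatic construction of a Lie algebra isomorphism from a derived equivalence. First I would transport the Lie algebra isomorphism $\Phi^{\fg}: \fg(X_1) \isomor \fg(X_2)$ given by $(\dagger)$ along the identifications $\fg(X_i) \cong \mathfrak{so}(\wt{H}(X_i,\Q))$ of Verbitsky and Looijenga--Lunts to obtain an isomorphism of orthogonal Lie algebras. Then the representation-theoretic lemma used by Taelman---whose hypothesis that $n$ or $b_2(X)$ be odd ensures that $\wt{H}$ has odd rank, so that the outer automorphism of $\mathfrak{so}(\wt{H})$ causes no additional ambiguity---lifts this to a lattice isometry
\[
\Phi^{\wt{H}} : \wt{H}(X_1,\Q) \isomor \wt{H}(X_2,\Q),
\]
equivariant with respect to $\Phi^{\fg}$ and unique up to sign. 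The sign is then fixed by requiring compatibility with the twisted Mukai vector $v^{-B_1 \boxplus B_2}(\sE)$ of the Fourier--Mukai kernel, viewed inside a half-spin representation of $\mathrm{Spin}(\wt{H})$, following \cite[Theorem 4.9]{taelman21}. Because the twisted Chern character of \eqref{eq:DefTwistedChernChar} has the correct B-field equivariance, Taelman's argument adapts essentially verbatim.

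Hodge compatibility should then follow formally. The twisted Hodge decomposition on $\wt{H}(X,B,\Q)$ is encoded by a Weil $\mathrm{sl}_2$-triple in $\fg(X) \otimes \R$, obtained from the untwisted Weil triple by conjugation with $\exp(-B) \in \mathrm{SO}(\wt{H}(X,\Q))$, which itself sits in the image of $H^2(X,\Q) \hookrightarrow \fg(X)$. Since $\Phi^H$ is by construction a Hodge isometry for the twisted Hodge structures on $H^*$ (see \eqref{eq:TwistedHodgeStructure}), the isomorphism $\Phi^{\fg}$ sends the twisted Weil triple on $X_1$ to the one on $X_2$. As $\Phi^{\wt{H}}$ is $\Phi^{\fg}$-equivariant, it automatically intertwines the twisted Hodge decompositions on $\wt{H}$.

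The main obstacle is the sign step: reconciling the abstract representation-theoretic lift with the cohomological action of $\Phi$. In the untwisted case this is the heart of Taelman's Theorem 4.9 and requires a delicate analysis of the half-spin representation. In the twisted case one must verify that the twisted Mukai vector of the kernel behaves correctly with respect to the B-field shifts $\exp(-B_i)$ and the spin representation, which I expect to reduce to the untwisted statement after careful bookkeeping of these exponentials. Everything else---the Lie algebra transport, the lift to a lattice isometry, and the matching of twisted Hodge structures---should be straightforward given $(\dagger)$ and the dimension assumption.
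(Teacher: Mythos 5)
Your proposal takes essentially the same route as the paper, whose entire proof is a two-line reference: the isometry $\Phi^{\wt{H}}$ is constructed from the representation-theoretic arguments of \cite[Section 4.1]{taelman21} once assumption $(\dagger)$ supplies the Lie algebra isomorphism $\Phi^{\fg}$, and Hodge compatibility is deduced as in \cite[Proposition 4.7]{taelman21} from the fact that $\Phi^H$ is a Hodge isometry for the twisted Hodge structures --- exactly the two steps you describe. One small inaccuracy worth noting: the hypothesis that $n$ or $b_2(X)$ is odd does not force $\wt{H}(X,\Q)$ (of rank $b_2+2$) to have odd rank, so it is not about suppressing outer automorphisms of $\mathfrak{so}(\wt{H})$; in Taelman's argument the parity enters in resolving the sign ambiguity of the lift via the action on the Verbitsky component.
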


\begin{proof}
    The construction follows from the representation theoretic arguments in \cite[Section 4.1]{taelman21}, thanks to assumption $(\dagger)$.
    To prove that it preserves the Hodge structure, we argue in as \cite[Proposition 4.7]{taelman21}, using that $\Phi^H$ respects the Hodge structure. 
\end{proof}

\begin{Defi}
    Let $X$ be a HK manifold, $\alpha \in \Br(X)$ a Brauer class.
    An object $E \in \Db(X,\alpha)$ is atomic if there exists a B-field $B$ lifting $\alpha$ and a non-zero $\tilde{v}^B(E) \in \wt{H}(X,\Q)$ such that
    \begin{equation}\label{eq:DefTwistedAtomic}
    \Ann(v^B(E)) = \Ann(\tilde{v}^B(E))
    \end{equation}
    as sub-Lie algebras of $\fg(X)$. 
\end{Defi}

\begin{Rem}\label{rem:IndependenceOnBandAlgebraicity}
    \begin{enumerate}
        \item This definition is independent of the B-field. 
        Indeed, if $B'$ is another B-field for $\alpha$, we have 
        \[
        v^{B'}(E) = \exp(-(B'-B))v^B(E).
        \]
        Therefore, if $\tilde{v}^B(E)$ satisfies \eqref{eq:DefTwistedAtomic} for $v^B(E)$, then $\exp(-(B'-B))\tilde{v}^B(E)$ satisfies it for $v^{B'}(E)$.
        \item If $C$ denotes the Weil operator for the usual Hodge structure on $H^*(X,\Q)$, we have 
         \[
         C \circ \exp(B) \in \Ann(v^B(E)) 
         \]
        because of \eqref{eq:TwistedChern}.
        If $E$ is atomic, this implies that $C \circ \exp(B) \in \Ann(\tilde{v}^B(E)) $.
        In other words, $\tilde{v}^B(E) \in \wt{H}(Y,B,\Q)$ is of Hodge type for the twisted Hodge structure.
    \end{enumerate}
\end{Rem}

\begin{Prop}\label{prop:InvarianceOfTwistedAtomicity}
    In the setting of \cref{prop:TwistedEquivalences}, if $E \in \Db(X_1, \alpha_1)$ it atomic, then its image $\Phi(E) \in \Db(X_2,\alpha_2)$ is atomic. 
\end{Prop}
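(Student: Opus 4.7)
The plan is to use the Hodge isometry $\Phi^{\wt{H}}$ from \cref{prop:TwistedEquivalences} to produce an extended Mukai vector for $\Phi(E)$ by transporting that of $E$. The key idea is that both $\Ann(v^{B_1}(E))$ and $\Ann(\tilde{v}^{B_1}(E))$ can be pushed into $\fg(X_2)$ via $\Phi^{\fg}$, and the resulting annihilators should coincide with those of $v^{B_2}(\Phi(E))$ and $\Phi^{\wt{H}}(\tilde{v}^{B_1}(E))$, respectively.

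The essential input is the twisted Fourier--Mukai cohomological formula $\Phi^H(v^{B_1}(E)) = v^{B_2}(\Phi(E))$, the direct analogue of the untwisted case, which follows from a local Grothendieck--Riemann--Roch computation carried out on a refined cover trivializing the line bundles $L_{ij}$ from \cref{sec:ReviewBrauer}. Granting this, assumption $(\dagger)$ -- which by construction makes $\Phi^{\fg}$ the conjugation $f \mapsto \Phi^H \circ f \circ (\Phi^H)^{-1}$ -- gives immediately
\[
\Phi^{\fg}\bigl(\Ann(v^{B_1}(E))\bigr) = \Ann\bigl(v^{B_2}(\Phi(E))\bigr).
\]
Similarly, the $\Phi^{\fg}$-equivariance of $\Phi^{\wt{H}}$ asserted in \cref{prop:TwistedEquivalences} yields
\[
\Phi^{\fg}\bigl(\Ann(\tilde{v}^{B_1}(E))\bigr) = \Ann\bigl(\Phi^{\wt{H}}(\tilde{v}^{B_1}(E))\bigr).
\]
Atomicity of $E$ says that the two left-hand sides agree, so the two right-hand sides agree as well. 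Since $\Phi^{\wt{H}}$ is an isometry and $\tilde{v}^{B_1}(E) \neq 0$, its image in $\wt{H}(X_2,B_2,\Q)$ is non-zero, and it therefore serves as the extended Mukai vector of $\Phi(E)$, witnessing atomicity.

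The main obstacle is establishing the compatibility $\Phi^H(v^{B_1}(E)) = v^{B_2}(\Phi(E))$ in the twisted setting. One has to chase through the $C^{\infty}$-module resolutions of \cref{rem:TwistedChernIndependentFromRes} and keep careful track of the B-field conventions built into the kernel Mukai vector $v^{-B_1 \boxplus B_2}(\sE)$ that defines $\Phi^H$. Once this compatibility is in hand, what remains is a purely formal manipulation of annihilators inside the LLV algebras $\fg(X_1)$ and $\fg(X_2)$, with no further geometric input required.
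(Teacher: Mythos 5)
Your proposal is correct and follows essentially the same route as the paper: transport both annihilators via $\Phi^{\fg}$ using assumption $(\dagger)$ and the equivariance of $\Phi^{\wt{H}}$ from \cref{prop:TwistedEquivalences}, then invoke atomicity of $E$ to conclude that $\Phi^{\wt{H}}(\tilde{v}^{B_1}(E))$ is an extended Mukai vector for $\Phi(E)$. The paper's proof is the same chain of equalities, with the cohomological compatibility $\Phi^H(v^{B_1}(E)) = v^{B_2}(\Phi(E))$ — which you rightly flag as the underlying input for the first equality — left implicit.
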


\begin{proof}
    Choose B-fields $B_1,B_2$ for $\alpha_1,\alpha_2$. 
    By definition there is an extended Mukai vector $\tilde{v}^{B_1}(E)$ for $E$, which satisfies 
    \[
    \Ann(v^{B_1}(E)) = \Ann(\tilde{v}^{B_1}(E)) \subset \fg(X_1).
    \]
    By assumption $(\dagger)$ and \cref{prop:TwistedEquivalences}, we have 
    \[
     \Ann(v^{B_2}(\Phi(E))) = \Phi^{\fg}( \Ann(v^{B_1}(E))) =  \Phi^{\fg}(\Ann(\tilde{v}^{B_1}(E))) =  \Ann(\Phi^{\wt{H}} (\tilde{v}^{B_1}(E))).
    \]
    Therefore, $\Phi(E)$ is atomic and its twisted Mukai vector (with respect to $B_2$) is $ \tilde{v}^{B_2}(\Phi(E)) = \Phi^{\wt{H}} (\tilde{v}^{B_1}(E))$.
\end{proof}
Ultimately for us, atomicity is just a way to produce projectively hyperholomorphic twisted vector bundles, which have symplectic moduli spaces, as observed in \cite{verbitsky99} and recalled in \cref{sec:SymplecticForm}.
That atomic torsion-free sheaves are modular has been proved independently by Markman \cite[Theorem 1.2]{markman21}, and Beckmann \cite[Proposition 1.5]{beckmann22}.
The following is an adaptation of Beckmann's argument to the twisted case.

\begin{Prop}\label{prop:TwistedAtomicHyperholomorphic}
    Let $X$ be a projective HK manifold, $\alpha \in \Br(X)$ a Brauer class and $E$ be an $\alpha$-twisted vector bundle on $X$.
    If $E$ is atomic then $\cEnd(E)$ is modular. 
    If furthermore it is also $\mu_{\omega}$-polystable, then it is $\omega$-projectively hyperholomorphic.
\end{Prop}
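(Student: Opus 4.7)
The plan is to establish the two assertions in sequence, leveraging the untwisted nature of $\cEnd(E) = E \otimes E^{\vee}$. First I would note that $\cEnd(E)$ is a genuine untwisted bundle (the twist and its inverse cancel), so the classical discriminant $\Delta(\cEnd(E))$ makes sense. Because $c_1(\cEnd(E)) = 0$, the discriminant reduces to $-2r^2 \ch_2(\cEnd(E))$, and modularity of $\cEnd(E)$ amounts to checking that $\ch_2(\cEnd(E))$ remains of Hodge type on every K\"ahler deformation of $X$.

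Next I would compute $\ch(\cEnd(E))$ from the twisted Chern character via $\ch(\cEnd(E)) = \ch^B(E) \cdot \ch^{-B}(E^{\vee})$. A direct expansion shows that the B-field-dependent terms cancel in the product, yielding $\ch_2(\cEnd(E)) = 2r \, \ch^B_2(E) - c^B_1(E)^2$, well defined independently of the choice of $B$. The key step, adapting \cite[Proposition 1.5]{beckmann22} to the twisted setting, is to express this class as a $\fg(X)$-equivariant polynomial in the components of the extended Mukai vector $\tilde v^B(E) \in \wt H(X,B,\Q)$. For this, I would combine the LLV-equivariance of \cref{prop:TwistedEquivalences} with the atomic relation $\Ann(v^B(E)) = \Ann(\tilde v^B(E))$, which forces the Mukai vector $v^B(E)$ to lie in the $\fg(X)$-orbit of $\tilde v^B(E)$, and in particular pins down the degree-four component. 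By \cref{rem:IndependenceOnBandAlgebraicity}(2), $\tilde v^B(E)$ is of Hodge type with respect to the twisted Hodge structure, and this property is preserved on any K\"ahler deformation because the extended Mukai lattice forms a local system over the deformation space compatible with the LLV action. Consequently $\ch_2(\cEnd(E))$ stays of Hodge type, which is exactly the modularity condition.

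For the second assertion, if $E$ is $\mu_{\omega}$-polystable then so is $\cEnd(E)$, since tensor products of slope-polystable bundles are slope-polystable over a K\"ahler manifold. Applying Verbitsky's criterion \cite[Theorem 3.19]{verbitsky99} to the untwisted bundle $\cEnd(E)$, slope polystability together with $c_1 = 0$ and the modularity just established (which guarantees that $\Delta(\cEnd(E))$ stays of Hodge type along the twistor line generated by $\omega$) yields that $\cEnd(E)$ is $\omega$-hyperholomorphic. By the definition of projective hyperholomorphicity recalled in the introduction, this is precisely what it means for $E$ to be $\omega$-projectively hyperholomorphic. The main obstacle is the bookkeeping in the second paragraph: making the B-field contributions cancel cleanly and verifying that the resulting $\fg(X)$-equivariant expression genuinely descends to a well-defined function of $\tilde v^B(E)$ on the extended Mukai lattice equipped with its twisted Hodge structure.
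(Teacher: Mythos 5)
Your overall strategy -- reduce modularity of $\cEnd(E)$ to a Hodge-type statement for a class built out of $\ch^B(E)$, and transfer that statement from the extended Mukai vector using the atomicity relation $\Ann(v^B(E))=\Ann(\tilde v^B(E))$ -- is the same as the paper's (both adapt \cite[Lemma 5.4]{beckmann22}). The second half of your argument (polystability of $\cEnd(E)$ plus \cite[Theorem 3.19]{verbitsky99}) coincides with the paper's. But the first half has a genuine gap at its central step.

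You assert that $\tilde v^B(E)$ is of Hodge type for the twisted Hodge structure and that ``this property is preserved on any K\"ahler deformation because the extended Mukai lattice forms a local system compatible with the LLV action.'' That is false: the extended Mukai lattice being a local system does not freeze Hodge types, since the Hodge structure on $\wt H$ varies with the period point. Writing $\tilde v^B(E)=r(E)\alpha+c_1^B(E)+s\beta$, the degree-two component $c_1^B(E)$ will in general fail to remain of type $(1,1)$ on a K\"ahler deformation of $X$, so $\tilde v^B(E)$ does \emph{not} stay of Hodge type, and your deduction that $\ch_2(\cEnd(E))$ stays of Hodge type does not follow as written. The missing idea -- which is exactly the point of the paper's proof -- is to recenter: one replaces $\ch^B(E)$ by $k(E)=\exp(-c_1^B(E)/r(E))\,\ch^B(E)$ (note $\ch_2(\cEnd(E))=2r(E)\,k_2(E)$), and correspondingly replaces $\tilde v^B(E)$ by $\exp(-c_1^B(E)/r(E))\,\tilde v^B(E)=r(E)\alpha+s'\beta$. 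This recentred extended vector has no $H^2$-component, hence lies in the span of $\alpha$ and $\beta$ and is of Hodge type for \emph{every} complex structure; atomicity (which is preserved under the isometry $\exp(-c_1^B(E)/r(E))$) then transfers this to $k(E)$, and hence to $\Delta(E)$ and $c_2(\cEnd(E))$. Without this recentering step your argument only controls deformations along which $c_1^B(E)$ stays algebraic, which is not enough for modularity. (A minor additional point: atomicity gives an equality of annihilators, i.e.\ an isomorphism of cyclic $\fg(X)$-modules, not that $v^B(E)$ lies in ``the $\fg(X)$-orbit of $\tilde v^B(E)$''; the two vectors live in different representations.)
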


\begin{proof}
    The argument of \cite[Lemma 5.4]{beckmann22} works also in the twisted case, so we quickly go over it. 
    Let $B \in H^2(X,\Q)$ a B-field for $\alpha$, and consider the class 
    \begin{equation*}
        k(E) \coloneqq \exp(-c^B_1(E)/r(E))\ch^B(E).
    \end{equation*}
    It does not depend on the B-field, and it satisfies 
    \begin{equation*}
        \Delta(E) = -2r(E)k_2(E) \in H^4(X,\Q).
    \end{equation*}
    As in \cite[Lemma 5.4]{beckmann22} we prove that $ k(E)$ stays of Hodge type for all K\"ahler deformations of $X$.
    The key observation is that atomicity implies
    \begin{equation}\label{eq:Atomicityfork}
    \Ann\left( \exp(-c^B_1(E)/r(E))v^B(E)\right) =  \Ann\left(\exp(-c^B_1(E)/r(E))\tilde{v}^B(E)\right).
    \end{equation}
    The proof of \cite[Proposition 3.8]{beckmann22} and \cite[Theorem 6.13(3)]{markman21} follows directly from the definition of atomicity, so it works in the twisted setting as well. 
    We write
    \[
    \tilde{v}^B(E) = r(E) \alpha + c^B_1(E) + s\beta.
    \]
    This implies that
    \[
    \exp(-c^B_1(E)/r(E))\tilde{v}^B(E) = r(E) \alpha + s'\beta,
    \]
    for some $s' \in \Q$. 
    Therefore, $\exp(-c^B_1(E)/r(E))\tilde{v}^B(E)$ is of Hodge type with respect to every complex structure.
    The equality \eqref{eq:Atomicityfork} implies that the same is true for $k(E)$. 
    Since $\Delta(\cEnd(E))$ is a multiple of $c_2(\cEnd(E))$ we conclude that it is modular. 
    If $E$ is $\mu_{\omega}$-polystable (refer to \cref{sec:SlopeStability} for the definition of slope stability for twisted sheaves), then the endomorphism bundle $\cEnd(E)$ is $\mu_{\omega}$-polystable as well.
    Therefore $\cEnd(E)$ is hyperholomorphic by \cite[Theorem 3.19]{verbitsky99}.
\end{proof}

\section{Twisted Poincaré sheaf}\label{sec:TwistedPoincaré}
Let $(S,H)$ be a polarized K3 surface, and assume that every curve in $|H| \cong \P^n$ is integral. 
This holds for a general $(S,H)$. 
In the rest of this section, $\pi: X \coloneqq \overline{\Pic^0}(\cC/|H|) \to \P^n$ denotes the relative compactified Jacobian of the universal curve $\cC \to |H|$.
Our goal is to extend the autoequivalence of \cite[Theorem C]{arinkin13} to an equivalence $\Db(X^t) \isomor \Db(X,\theta_t)$, where $X^t$ is any Tate--Shafarevich twist of $X$.

\subsection{Review: Tate--Shafarevich group}
The Tate--Shafarevich group of the Lagrangian fibration $X \to \P^n$ was first introduced in \cite[Section 7]{markman14} as the natural generalization of the Tate--Shafarevich group of an elliptic fibration.
It was further studied in \cite{abasheva21}, and with a slightly different point of view in \cite{huybrechts_mattei24}.
It is defined as
\[
\Sha \coloneqq H^1(\P^n, \underline{\Aut}^0_{X/\P^n}),
\]
where $\underline{\Aut}^0_{X/\P^n}$ is the image of the exponential map 
\[
\pi_* T_{X/\P^n} \to \underline{\Aut}_{X/\P^n}.
\]
The group $H^1(\P^n,R^1\pi_*T_{X/\P^n}) \cong \C$ is denoted by $\wt{\Sha}$, and its image via the exponential map by $\Sha^0 \subset \Sha$.
An element $t \in H^1(\P^n,\underline{\Aut}^0_{X/\P^n})$ can  be represented by a \v{C}ech cocycle of relative automorphisms
\[
t_{ij} \in \Aut^0(\pi^{-1}(U_{ij})/U_{ij}),
\]
with respect to an open cover $\P^n = \bigcup_{i} U_{i}$.
If $\sigma: U_{ij} \to X$ denotes the zero section, any such automorphism is given by translation by the section $t_{ij} \circ \sigma$. 
By the cocycle condition, the $t_{ij}$ can be interpreted as transition functions and used to re-glue the manifold $X$ to a new one $X^{t}$, equipped with a fibration $\pi^{t} : X^{t} \to  \P^n$. 
In general, the $X^t$ satisfies all the properties for being \HK besides being K\"ahler.
In our setting, the manifold $X^t$ is K\"ahler by \cite[Proposition 7.7]{markman14} if $t \in \Sha^0$, and projective if $t \in \Sha^0$ is torsion by \cite[Theorem 5.19]{abasheva21}.

From now on, we will often abuse the notation and denote by the same letter $t$ both an element in $\wt{\Sha}$ and its image under the exponential map in $\Sha$.
All the twisted fibrations coming from elements in $\Sha^0$ can be put in a family over $\wt{\Sha}$.

\begin{Prop}[{{\cite[Section 7.2]{markman14}, \cite[Proposition 3.3]{abasheva21}}}]\label{prop:RelativeTSTwist}
    There exists a holomorphic family $\cX \to \wt{\Sha}$,
    such that the fiber over $t \in  \wt{\Sha}$ is $X^t$. 
    Moreover, there exists a holomorphic fibration
    \begin{equation}\label{eq:GlobalFibration}
    \cX \to \P^n \times \wt{\Sha},
    \end{equation}
    which restricts to $\pi_t : X^t \to \P^n$ for every $t \in \wt{\Sha}$.
\end{Prop}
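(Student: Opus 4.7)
The plan is to reconstruct the family $\cX \to \wt{\Sha}$ by globalizing the twisting procedure used to build a single $X^t$. First I would fix a sufficiently fine Stein (or affine) open cover $\{U_i\}$ of $\P^n$, chosen so that $R^1\pi_* T_{X/\P^n}$ has trivial higher cohomology on all intersections $U_{i_0 \cdots i_k}$; this guarantees that the \v{C}ech complex $\check C^{\bullet}(\{U_i\}, R^1\pi_* T_{X/\P^n})$ computes $H^1(\P^n, R^1\pi_* T_{X/\P^n}) = \wt{\Sha}$. Since $\wt{\Sha}$ is finite-dimensional and the \v{C}ech differential is $\C$-linear, I can fix a linear section
\[
    s : \wt{\Sha} \longrightarrow \check Z^1(\{U_i\}, R^1\pi_* T_{X/\P^n}),
\]
so that every $t \in \wt{\Sha}$ is represented by an explicit, linearly varying cocycle $\{s(t)_{ij}\}$.

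Next I would convert these infinitesimal cocycles into automorphisms, holomorphically in $t$. Over each double intersection $U_{ij}$, the local section $s(t)_{ij}$ is a relative vector field on $\pi^{-1}(U_{ij})$ tangent to the fibers. Exponentiating gives a fiberwise translation
\[
    \phi_{ij}(t) \coloneqq \exp\bigl(s(t)_{ij}\bigr) \in \Aut^0\bigl(\pi^{-1}(U_{ij})/U_{ij}\bigr),
\]
which depends holomorphically on $t$ and satisfies the cocycle relation $\phi_{ij}(t)\phi_{jk}(t)\phi_{ki}(t) = \mathrm{id}$ on triple intersections, because $s(t)$ is a \v{C}ech cocycle and the $\phi_{ij}(t)$ commute (they are translations in the abelian group law of the smooth locus, extended to the compactification). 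Thus $\{\phi_{ij}(t)\}$ gives a family of gluing data parameterized by $\wt{\Sha}$.

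With this data in hand I would define $\cX$ by gluing: take the disjoint union
\[
    \bigsqcup_i \pi^{-1}(U_i) \times \wt{\Sha}
\]
and identify $(x,t) \in \pi^{-1}(U_{ij}) \times \wt{\Sha}$ in the $j$-th chart with $(\phi_{ij}(t)(x), t)$ in the $i$-th chart. The cocycle condition makes this well defined, yielding a complex manifold $\cX$ together with a holomorphic projection to $\wt{\Sha}$ whose fiber over $t$ is precisely the manifold obtained by re-gluing $X$ via $\{\phi_{ij}(t)\}$, i.e.\ $X^t$ in the sense of Section~3.1. Since each $\phi_{ij}(t)$ is an automorphism over $U_{ij}$, the local fibrations $\pi^{-1}(U_i) \times \wt{\Sha} \to U_i \times \wt{\Sha}$ are compatible with the gluing and assemble into a global holomorphic map $\cX \to \P^n \times \wt{\Sha}$ restricting to $\pi_t$ over each $\{t\} \times \P^n$.

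The main subtlety I foresee is choosing the linear section $s$ in a way that is globally well-defined on $\wt{\Sha}$ and produces automorphisms that converge (and extend across the singular fibers of $\pi$). Refining the cover so that on each $U_i$ the restriction $\pi^{-1}(U_i) \to U_i$ is an analytically trivial torsor over its smooth locus, and invoking \cite{arinkin13} to extend fiberwise translations across the compactified Jacobians, addresses this; after that the linearity of $s$ and the abelian structure of the translations make the remaining verifications routine, and the universal property of $\cX \to \wt{\Sha}$ follows from the tautological construction.
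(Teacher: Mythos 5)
Your construction is correct and is essentially the one the paper cites from Markman [Section 7.2] and Abasheva--Rogov [Proposition 3.3] (the paper gives no independent proof): represent $t \in \wt{\Sha}$ by a linearly varying \v{C}ech cocycle, exponentiate to relative translations $\phi_{ij}(t)$, and glue the trivial pieces $\pi^{-1}(U_i) \times \wt{\Sha}$, the fibration to $\P^n \times \wt{\Sha}$ coming for free because the gluing maps live over $U_{ij}$. One cosmetic point: the cocycles should be valued in $\pi_* T_{X/\P^n}$ (vertical vector fields, so that the exponential of Section 3.1 applies and $\wt{\Sha} \cong H^1(\P^n,\Omega^1_{\P^n}) \cong \C$), which is in fact how you use them, rather than in $R^1\pi_* T_{X/\P^n}$ as the paper's displayed definition of $\wt{\Sha}$ would literally suggest.
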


\subsection{Construction of the sheaf}
Let $\pi: X \to |H|$ be a Beauville--Mukai system as above.
In \cite{arinkin13} it is shown that there exists a relative Poincaré sheaf $\sP^0$ on $X \times_{\P^n} X$, which induces a derived equivalence 
\[
\Phi_{\sP^0}: \Db(X) \isomor \Db(X).
\]
In \cite{addington16} this was generalized to the case of relative compactified Picard scheme of any degree, and now we generalize it to any Tate--Shafarevich twist. 
The following is an application of the Theorem of the square to a $C^{\infty}$-section, which we will use multiple times in the rest of the section. 

\begin{Lem}\label{lem:SquareTheorem}
    Let $U \subset |H|$ be an (analytic) open subset, and denote by $\pi : X_U \to U$ the restriction of the Lagrangian fibration.   
    Let $\tau: U \to X_U$ be a $C^{\infty}$-section, and let $\rho: X_U \isomor X_U$ be the corresponding relative translation. 
    Define the topological line bundle  $L \coloneqq (\tau \times \id )^*(\sP^0)$ on $X$.
    Then, there is an isomorphism
    \[
        (\rho \times \id)^*(\sP^0) \cong \sP^0 \otimes p_2^* L
    \]
    of $C^{\infty}$-modules on $X_U \times X_U$. 
\end{Lem}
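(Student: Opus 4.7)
The plan is to deduce this from a relative Theorem of the Square for $\sP^0$. The crucial point is that the Poincaré sheaf satisfies a biextension-type identity on the triple fiber product of $X_U$ over $U$, which is an honest holomorphic isomorphism there; the $C^{\infty}$-nature of $\tau$ only enters at the end, when we pull back this identity by a $C^{\infty}$-map.

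First, I would record (or invoke) the relative Theorem of the Square for $\sP^0$. Let $q_{ij}$ denote the three projections $X_U \times_U X_U \times_U X_U \to X_U \times_U X_U$, and let $m_{12}$ be the map that tensors the first factor into the second (well-defined on the relative smooth locus of the first factor), leaving the third factor alone. Then there should be a canonical isomorphism
\[
m_{12}^* \sP^0 \;\cong\; q_{13}^* \sP^0 \otimes q_{23}^* \sP^0.
\]
Fiberwise on the smooth part, this is the classical Theorem of the Square for abelian varieties; the extension to $X_b \times X_b$ when $C_b$ is an integral singular curve uses the Arinkin construction of $\sP^0$ as a maximal Cohen--Macaulay / $S_2$-reflexive sheaf, which is uniquely determined by its restriction to the smooth locus.

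Next, I would use $\tau$ to construct the $C^{\infty}$-map
\[
\tilde\tau: X_U \times_U X_U \to X_U \times_U X_U \times_U X_U, \qquad (x, z) \mapsto (x, \tau(\pi(x)), z).
\]
A direct computation gives
\[
m_{12} \circ \tilde\tau = \rho \times \id, \qquad q_{13} \circ \tilde\tau = \id, \qquad q_{23} \circ \tilde\tau = (\tau \times \id) \circ p_2.
\]
Pulling back the biextension isomorphism of the previous step by $\tilde\tau$---which is a perfectly well-defined operation at the level of $C^{\infty}$-modules---then yields
\[
(\rho \times \id)^* \sP^0 \;\cong\; \sP^0 \otimes p_2^* L,
\]
proving the lemma.

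The main obstacle is justifying the biextension identity across the singular locus of $X_U \times_U X_U$. On the smooth part it is immediate, but one needs the isomorphism to extend canonically across the singular strata. This can be done either by invoking the universal property characterizing $\sP^0$ in \cite{arinkin13}, or by a direct see-saw / rigidification argument along the zero section of $\pi$; alternatively, one could bypass the global biextension statement entirely and mimic the fiberwise patching strategy of \cite[Proposition~2.1]{addington16}, producing the global $C^{\infty}$-isomorphism by first establishing its fiberwise (unique up to scalar) counterparts and then normalizing them using the zero section.
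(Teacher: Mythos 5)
Your proposal is correct and takes essentially the same route as the paper: the biextension identity you are after is exactly \cite[Lemma 6.5]{arinkin13}, stated on $X_U^{\circ} \times_U X_U \times_U X_U$ with the acting factor restricted to the relative smooth locus (which disposes of your worry about the singular strata), and the paper likewise concludes by pulling this isomorphism back along the $C^{\infty}$-embedding $\tau \times \id \times \id$.
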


\begin{proof}
    By \cite[Lemma 6.5]{arinkin13}, given the diagram 
    \[\begin{tikzcd}
	{X_U^{\circ} \times_U X_U} & {X_U^{\circ} \times_U X_U \times_U X_U} & {X_U \times_U X_U} \\
	& {X_U \times_U X_U}
	\arrow["{p_{13}}"', from=1-2, to=1-1]
	\arrow["{p_{23}}", from=1-2, to=1-3]
	\arrow["{\mu \times \id}", from=1-2, to=2-2]
    \end{tikzcd}\]
    we have
    \[
        (\mu \times \id)^*(\sP^0) \cong p_{13}^*(\sP^0|_{X_U^{\circ} \times_U X_U}) \otimes p_{23}^*\sP^0.
    \]
    Here $X_U^{\circ}$ denotes the smooth locus of the Lagrangian fibration $\pi$, and $\mu : X_U^{\circ} \times X_U \to X_U$ its action by translation.  
    Pulling back the isomorphism along the $C^{\infty}$-embedding  
    \[
        \tau \times \id \times \id: X_U \times_U X_U \hookrightarrow X_U^{\circ} \times_U X_U \times_U X_U
    \]
    gives the result. 
\end{proof}

\begin{Thm}\label{thm:TwistedPoincaré}
    For any $t \in \Sha^0$, there exist $\theta_t \in H^2(X,\sO^*_X)$ and a twisted Cohen--Macaulay sheaf 
    \[
    \sP^t \in \Coh\left(X^t \times X,p_2^*\theta_t\right).
    \]
    If $X^t$ is algebraic it induces a derived equivalence 
    \[
    \Phi_{ \sP^t }: \Db(X^t) \isomor \Db(X,\theta_t).
    \]
    Moreover, the mapping $t \mapsto \theta_t$ gives a group homomorphism $\Sha^0 \to H^2(X,\sO^*_X)$.
\end{Thm}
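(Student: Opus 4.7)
The plan is to construct $\sP^t$ by gluing copies of Arinkin's untwisted Poincaré sheaf $\sP^0$ along the cocycle that defines $X^t$, and to interpret the gluing failure as a Brauer class on $X$. Choose an open cover $\{U_i\}$ of $\P^n$ on which $t$ is represented by a \v{C}ech cocycle $\{t_{ij}\}$ of relative automorphisms, so that $t_{ij}$ is fiberwise translation by a section $\tau_{ij} := t_{ij}\circ\sigma : U_{ij} \to X$ satisfying $\tau_{ij}+\tau_{jk} = \tau_{ik}$ over $U_{ijk}$. By construction, $X^t$ is obtained by re-gluing the charts $X_{U_i} := \pi^{-1}(U_i)$ via the translations $\rho_{ij}$. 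The first step is to consider the restrictions $\sP^0_i := \sP^0|_{X_{U_i}\times_{U_i} X_{U_i}}$ and to glue them, over the same cover, to a sheaf on the relevant relative product inside $X^t \times X$.

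The discrepancy in the gluing is controlled by \cref{lem:SquareTheorem}: on the overlap over $U_{ij}$, identifying $\sP^0_j$ with $\sP^0_i$ requires the translation $\rho_{ij}$, and
\[
(\rho_{ij}\times\id)^*\sP^0 \cong \sP^0 \otimes p_2^* L_{ij}, \qquad L_{ij} := (\tau_{ij}\times\id)^*\sP^0.
\]
Hence the $\sP^0_i$ do not glue to an honest sheaf but fit together into a $p_2^*\theta_t$-twisted sheaf $\sP^t$, where $\theta_t \in H^2(X,\sO_X^*)$ is the Brauer class encoded by the collection $\{L_{ij}\}$ via the description recalled in \cref{sec:ReviewBrauer}. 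The required cocycle identity $L_{ij}\otimes L_{jk} \cong L_{ik}$ (up to non-canonical isomorphism) over $U_{ijk}$ should follow from the theorem of the square for the Poincaré sheaf, applied to $\tau_{ij}+\tau_{jk}=\tau_{ik}$. The Cohen--Macaulay property of $\sP^t$ is local and is inherited from that of $\sP^0$, itself established in \cite{arinkin13}.

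For the derived equivalence when $X^t$ is algebraic, the local isomorphisms $\sP^t\cong \sP^0$ over each chart show that the adjunction unit and counit of $\Phi_{\sP^t}$ can be checked locally on $\P^n$, where they reduce to the corresponding statements for $\Phi_{\sP^0}$. For the group homomorphism, pick representatives $\{t_{ij}\},\{t'_{ij}\}$ of $t,t'\in\Sha^0$ on a common refinement; then $t+t'$ is represented by the translations associated to $\tau_{ij}+\tau'_{ij}$, and another application of the theorem of the square yields $L^{t+t'}_{ij} \cong L^t_{ij}\otimes L^{t'}_{ij}$, which translates precisely into $\theta_{t+t'} = \theta_t\cdot\theta_{t'}$ in $H^2(X,\sO_X^*)$. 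The main obstacle I anticipate is holomorphicity: the sections $\tau_{ij}$ and the bundles $L_{ij}$ live a priori only at the $C^\infty$ level (as in \cref{lem:SquareTheorem}), so some care is needed to ensure that $\theta_t$ lands in the analytic Brauer group and that $\sP^t$ is coherent — the relevant input is that the automorphisms $t_{ij}$, not just their smooth sections, are holomorphic, so that the gluing isomorphisms can be upgraded to the holomorphic category.
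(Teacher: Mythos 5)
Your proposal is correct and follows essentially the same route as the paper: glue the local restrictions of Arinkin's $\sP^0$ along the Tate--Shafarevich cocycle, use \cref{lem:SquareTheorem} to identify the gluing discrepancy as a collection of line bundles $\{L_{ij}\}$ defining $\theta_t$, verify the equivalence by checking the adjunction locally over $\P^n$ against \cite{arinkin13}, and deduce the group homomorphism from the multiplicativity of the $L_{ij}$. Your worry about holomorphicity is resolved exactly as you suggest — since the $t_{ij}$ are holomorphic, the $L_{ij}$ of \cref{lem:SquareTheorem} are in this case holomorphic line bundles, which is precisely what the paper observes.
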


\begin{proof}
Choose an analytic cover $\{ U_i \}$ of $|H| \cong \P^n$ and a cocycle $\{ t_{ij} \}$ representing $t \in \Sha^0$.
Over each open $U_i$ there is a biholomorphism $\rho_i : X^t_{U_i} \cong X_{U_i}$,
and the 
\[
t_{ij} = \rho_j|_{U_{ji}} \circ \rho^{-1}_i|_{U_{ij}}: X_{U_{ij}} \cong X_{U_{ji}} 
\]
are the gluing functions for $X^t$.
Let $\sP_i \coloneqq \sP^0|_{U_i} \in \Coh(X_{U_{i}} \times_{{U_{i}}} X_{U_{i}})$. 
By \cref{lem:SquareTheorem} there is a unique (in this case holomorphic) line bundle $L_{ij}$ on $X_{U_{ij}}$ such that
\begin{equation}\label{eq:TwistedPoincaré}
 (t_{ij} \times \id)^*(\sP_j|_{U_{ij}}) \cong \sP_i|_{U_{ij}} \otimes p_2^*(L_{ij}).
\end{equation}
Using the definition of $L_{ij}$ in \cref{lem:SquareTheorem} one checks that the cocycle condition for the $t_{ij}$ implies that the collection $\{L_{ij}\}$ represents a class $\theta_t \in H^2(X,\sO^*_X)$, as explained in \cref{sec:ReviewBrauer}.
By definition, \eqref{eq:TwistedPoincaré} means that the pullbacks $(\rho_i \times \id)^*\sP_i \in \Coh(X^t_{U_{i}} \times_{U_{i}} X_{U_{i}})$ glue to a $p_2^*\theta_t$-twisted sheaf on $X^t \times X$. 

If we take $t,s \in \Sha(X/\P^n)$, represent them by cocycles $\{ t_{ij}\} $ and $\{ s_{ij}\}$ with respect to the same cover of $\P^n$, their product is represented by $ t_{ij} \circ s_{ij}$.
If $\{L_{ij}\}$ and $\{ M_{ij}\}$ are the collection of line bundles associated to $\{ t_{ij}\}$ and $\{ s_{ij} \}$, it follows from \eqref{eq:TwistedPoincaré} that $\{ L_{ij} \otimes M_{ij} \}$ is the one associated to $\{ t_{ij} \circ s_{ij} \}$. 
Therefore, the association $t \mapsto \theta_t$ is compatible with the group structures. 

It remains to see that $\Phi_{\sP^t}$ is a derived equivalence, for which we exhibit an inverse.
Let $\sQ^t \coloneqq {\sP^t}^\vee[2n] \in \Db(X^t \times X,-p_2^*\theta)$, where ${\sP^t}^{\vee}$ denotes the derived dual. 
Since $\sP^t$ is a Cohen--Macaulay sheaf supported in codimension $n$, $\sQ^t$ is simply a sheaf shifted (to the left) by $n$.
The convolution $\sQ^t \circ \sP^t$ is an untwisted sheaf, and the natural map from the identity (coming from the adjunction) is an isomorphism over the opens $U_i$ by \cite[Theorem C]{arinkin13}.
Thus it is globally an isomorphism. 
This shows that $\sP^t$ is fully faithful, hence an equivalence, since $X$ has trivial canonical bundle. 
\end{proof}

\begin{Rem}\label{rem:TwistingInTheProjectiveCase}
    By \cite[Theorem 5.19]{abasheva21} the manifold $X^t$ is projective if and only if $t \in \Sha^0$ is torsion. 
    In this case, the obstruction $\theta_t$ is torsion as well, therefore it lies in the Brauer group $\Br(X) = H^2(X,\sO^*_X)_{\mathrm{tor}}$. 
    If $X^{\circ} = \Pic^0(X^t/|H|) \subset X$ denotes the smooth locus of the Lagrangian fibration, then $\theta_t|_{X^{\circ}} \in \Br(X^{\circ})$ is the obstruction to the existence of a relative universal sheaf for the Picard scheme. 
    Since $\Br(X) \cong  \Br(X^{\circ})$, this characterizes $\theta_t$ in this case. 
\end{Rem}

\begin{Rem}\label{rem:Untwisting}
    As usual, when dealing with the universal sheaf for the Picard scheme, one has to be careful about untwisting. 
    Notice that 
    \[
    \Supp \sP^t = X^t \times_{\P^n} X.
    \]
    Since $\sP^t$ has rank $1$ on its support, we deduce that $p_2^*(\theta_t)|_{X^t \times_{\P^n} X}$ is trivial in the Brauer group. 
    This does not mean that $\sP^t$ is an untwisted sheaf.
    It means that, after choosing a fine enough cover, we can represent the class $p_2^*\theta_t|_{X^t \times_{\P^n} X}$ by a boundary, which in turn allows to reglue $\sP^t$ to an honest sheaf.
    This cover will not respect the Lagrangian fibration, and therefore the untwisted sheaf will not be universal, hence not induce a derived equivalence.
    The same issue was discussed in \cite[Footnote 4]{addington16}.
\end{Rem}

\begin{Rem}\label{rem:ComputationOfTheObstruction}
    The obstruction $\theta$ can be described explicitly using the relation of the group $\Sha^0$ with the group $\Sha(S,H)$ introduced in \cite{huybrechts_mattei24}. 
    Indeed, by  \cite{huybrechts_mattei24}, the twist $X^t$ is a moduli space of twisted sheaves on $S$ with respect to some Brauer class $\alpha \in \Br(S)$. 
    There is a natural isomorphism\footnote{Because $X$ is a fine moduli space on $S$.} $\Br(X) \isomor \Br(S)$ induced by the Mukai homomorphism, and the obstruction $\theta$ is mapped to $\alpha$ under it. 
    This follows from the discussion in \cite[Section 2.3]{bottiniHuybrechts25}.
\end{Rem}

\subsubsection{The relative case}
The argument for  \cref{thm:TwistedPoincaré} can similarly be applied in the relative case.
More precisely, let $\cX \to \wt{\Sha}$ be the family of \cref{prop:RelativeTSTwist}, and denote by $\wt{X} \to \wt{\Sha}$ the trivial family $ \wt{X} \coloneqq X \times \wt{\Sha} $. 
By the construction \cite[Section 7.2]{markman14}, $\cX$ is obtained by regluing $\wt{X}$ along relative automorphisms.
Regluing along these automorphisms, as in the proof of \cref{thm:TwistedPoincaré}, we obtain the following.

\begin{Prop}\label{prop:TwistedPoincaréHochschildCohomology}
    There exists a class $\tilde{\theta} \in H^2(\wt{X},\sO^*_{\wt{X}})$ and a twisted sheaf
    \begin{equation}\label{eq:GlobalRelativePoincaré}
        \wt{\sP} \in \Coh(\cX \times_{\wt{\Sha}} \wt{X},p_2^*(\tilde{\theta})),
    \end{equation}
    such that for every $t \in \wt{\Sha}$ we have $\wt{\theta}|_t = \theta_t$ and $\wt{\sP}|_t = \sP^t$.
\end{Prop}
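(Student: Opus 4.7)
The plan is to follow the proof of \cref{thm:TwistedPoincaré} verbatim, but performed relatively over $\wt{\Sha}$. By construction \cite[Section 7.2]{markman14}, there is an analytic cover $\{U_i\}$ of $\P^n$ such that the family $\cX \to \wt{\Sha}$ admits trivializations
\[
\rho_i : \cX|_{U_i \times \wt{\Sha}} \isomor X_{U_i} \times \wt{\Sha}
\]
compatible with the fibration to $\P^n \times \wt{\Sha}$, and whose restriction over a point $t \in \wt{\Sha}$ recovers the biholomorphism $\rho_i^t$ used in the proof of \cref{thm:TwistedPoincaré}. The gluing cocycles
\[
\tau_{ij} \coloneqq \rho_j|_{U_{ji} \times \wt{\Sha}} \circ \rho_i^{-1}|_{U_{ij} \times \wt{\Sha}} : \wt{X}_{U_{ij}} \isomor \wt{X}_{U_{ji}}
\]
are relative automorphisms over $U_{ij} \times \wt{\Sha}$ that specialize fiberwise to the $t_{ij}$.

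On each open piece, I would put the pullback of the relative Poincaré sheaf $\wt{\sP}_i \coloneqq q^*\sP^0|_{U_i}$ via the projection $q: \wt{X}_{U_i} \times_{U_i} \wt{X}_{U_i} \to X_{U_i} \times_{U_i} X_{U_i}$. Next I would establish a relative form of \cref{lem:SquareTheorem}: for any $C^{\infty}$-section $\wt{\tau} : U \times \wt{\Sha} \to \wt{X}_U$ inducing a relative translation $\wt{\rho}$, setting $\wt{L} \coloneqq (\wt{\tau} \times \id)^*\wt{\sP}_i$ one has an isomorphism
\[
(\wt{\rho} \times \id)^*\wt{\sP}_i \cong \wt{\sP}_i \otimes p_2^*\wt{L}
\]
of $C^{\infty}$-modules. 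This reduces to the absolute statement pointwise on $\wt{\Sha}$ and glues since the theorem of the square is functorial in the base; alternatively, because $\sP^0$ already lives on $X \times_{\P^n} X$ and the $\tau_{ij}$ preserve the fibration, the identical cocycle computation that produces $L_{ij}$ in \cref{thm:TwistedPoincaré} now produces a holomorphic family $\wt{L}_{ij}$ of line bundles on $\wt{X}_{U_{ij}}$, simply because $\tau_{ij}$ depends holomorphically on the parameter in $\wt{\Sha}$.

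Applying \cref{lem:SquareTheorem} relatively, I obtain line bundles $\wt{L}_{ij}$ on $\wt{X}_{U_{ij}}$ together with isomorphisms
\[
(\tau_{ij} \times \id)^*(\wt{\sP}_j|_{U_{ij}}) \cong \wt{\sP}_i|_{U_{ij}} \otimes p_2^*\wt{L}_{ij}.
\]
The cocycle condition for the $\tau_{ij}$ translates, exactly as in the absolute case, into a cocycle relation for the $\wt{L}_{ij}$, producing the desired class $\wt{\theta} \in H^2(\wt{X},\sO^*_{\wt{X}})$. The sheaves $(\rho_i \times \id)^*\wt{\sP}_i$ then glue to a $p_2^*\wt{\theta}$-twisted coherent sheaf $\wt{\sP}$ on $\cX \times_{\wt{\Sha}} \wt{X}$. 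By construction the whole datum is compatible with base change along $\{t\} \hookrightarrow \wt{\Sha}$, so $\wt{\theta}|_t = \theta_t$ and $\wt{\sP}|_t = \sP^t$ as required.

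The only real point of attention is checking that the line bundles $\wt{L}_{ij}$ produced by the relative square theorem genuinely vary holomorphically with $t \in \wt{\Sha}$ rather than merely smoothly; I expect this to be immediate once one observes that both $\tau_{ij}$ and $\sP^0$ are holomorphic data, so the defining formula $\wt{L}_{ij} = (\wt{\tau}_{ij} \times \id)^*\wt{\sP}_i \otimes \wt{\sP}_i^{-1}$, interpreted via the relative theorem of the square, lands in $\mathrm{Pic}(\wt{X}_{U_{ij}})$. Once this holomorphicity is granted, everything else is a direct relative transcription of \cref{thm:TwistedPoincaré}, and no new ingredient is needed.
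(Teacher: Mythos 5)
Your proposal is correct and is essentially the paper's own argument: the paper proves this proposition precisely by observing that $\cX$ is obtained from the trivial family $\wt{X} = X \times \wt{\Sha}$ by regluing along relative automorphisms (holomorphic in the $\wt{\Sha}$-direction) and then running the proof of \cref{thm:TwistedPoincaré} relatively, which is exactly what you spell out. Your closing remark on holomorphicity of the $\wt{L}_{ij}$ is handled the same way as in the absolute case, where the holomorphy of the gluing automorphisms already forces the line bundles produced by \cref{lem:SquareTheorem} to be holomorphic.
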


\begin{Rem}
This result is consistent with the deformation theory developed in \cite{toda09}.
It is saying that the autoequivalence of $\Db(X)$ given by the usual Poincaré sheaf deforms, provided we deform the source along the commutative deformation given by the Tate--Shafarevich line, and the target along the gerby deformation given by $H^2(X,\sO_X) \subset \HT^2(X)$.
Indeed, by \cite[Theorem 1.2]{toda09}, at least for first-order deformations, this is equivalent to showing that the induced isomorphism
\[
\Phi^{\HT}_{\sP^0} : \HT^2(X) \cong \HT^2(X)
\]
sends $f \in H^1(X,\Omega^1_X)$ (interpreted as an element of $H^1(X,T_X)$) to $H^2(X,\sO_X)$.
One can check this using the action of $\HT^2(X)$ on $\wt{H}(X,\Q)$, and the fact that $\Phi^{\wt{H}}(f) = \beta$, see \cite[Proposition 10.4]{beckmann22}. 
\end{Rem}

\subsection{Cohomological action}\label{sec:CohomologicalAction}
Our next goal is to check that the equivalence 
\[
   \Phi_{ \sP^t }: \Db(X^t) \isomor \Db(X,\theta_t)
\]
satisfies assumption $(\dagger)$.
Consider the deformation over $\wt{\Sha}$ of \cref{prop:TwistedPoincaréHochschildCohomology}. 
Consider the parallel transport operators
\[
f^H_t : H^*(X^t,\Q) \isomor H^*(X,\Q), \ f^{\fg}_t : \fg(X^t) \isomor \fg(X),
\]
from the fiber $X^t$ over $t \in \wt{\Sha}$ to the central fiber $X$.
Clearly $f^{\fg}_t$ is a Lie algebra isomorphism, and $f^H_t$ is equivariant with respect to $ f^{\fg}_t$, namely
\begin{equation}\label{eq:ParallelTransportIsEquivariant}
    f^{\fg}_t(x).f^H_t(\omega) = f^H_t(x.\omega), \ \textrm{ for every } x \in \fg(X^t), \ \omega \in H^*(X^t,\Q).
\end{equation}
Define
\[
\Phi^{\fg}_{\sP^t} \coloneqq \Phi^{\fg}_{\sP^0} \circ f^{\fg}_t : \fg(X^t) \isomor \fg(X),
\]
where $\sP^0 \in \Coh(X \times X)$ is the untwisted Poincaré sheaf. 

The next result relates the cohomological action of $\Phi_{\sP^t}$ with the one of $\Phi_{\sP^0}$, and  formalizes the intuition that $\sP^t$ is a flat deformation of $\sP^0$. 
The difficulty lies in the fact that, along this flat deformation, the class $\theta_t \in H^2(X^t,\sO^*_{X_t})$ is not torsion for infinitely many $t$.
In particular, within our framework, the Mukai vector of $\sP^t$ is not well defined for those $t$.
Fortunately over the torsion points we have the following result.

\begin{Prop}\label{prop:CohomologicalAction}
    Let $t \in \Sha^0$ be a torsion element.
    Then there exists a B-field $B_t \in H^2(X,\Q)$ lifting $\theta_t$, such that
    \[
     \Phi^H_{\sP^t} = \Phi^H_{\sP^0} \circ f^H_t : H^*(X^t,\Q) \isomor H^*(X,B_t,\Q).
    \]
    In particular, the Hodge isometry $ \Phi^H_{\sP^t}$ is equivariant with respect to $\Phi^{\fg}_{\sP^t}$.
\end{Prop}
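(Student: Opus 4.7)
The plan is to deform $\sP^0$ into $\sP^t$ along a path in $\wt{\Sha}$ from $0$ to $t$ and to show that the twisted Mukai vector of $\sP^t$ agrees with that of $\sP^0$ after the natural identification given by parallel transport; the equality of cohomological Fourier--Mukai actions will then follow immediately. Since $\cX \to \wt{\Sha}$ is obtained from $X \times \wt{\Sha}$ by regluing along relative translations, which are smooth isotopies, the family is $C^{\infty}$-trivial. I fix such a $C^\infty$-diffeomorphism $\Psi: \cX \cong X \times \wt{\Sha}$ and denote by $\Psi \times \id$ the induced trivialization of $\cX \times_{\wt{\Sha}} \wt{X}$. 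Under this trivialization, $f^H_t \otimes \id$ is identified with the canonical comparison of fiber cohomologies, and the pushforward along $p_2$ becomes $t$-independent.

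Next I would pick a path from $0$ to $t$ in $\wt{\Sha}$ and, along it, a continuous family of real B-fields $B_s \in H^2(X,\R)$ lifting $\theta_s$, with $B_0 = 0$ and $B_t \in H^2(X,\Q)$; the latter is possible since $\theta_t$ is torsion. This is feasible because $\wt{\theta}$ is topologically trivial by construction, arising from the exponential of data in the $\wt{\Sha}$-direction. As recalled in \cref{sec:ReviewBrauer}, this choice yields a continuous family of $(-B_s \boxplus 0)$-twisted topological line bundles $L_s$ on $X \times X$, with $L_0$ trivial.

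The decisive input is \cref{rem:TwistedChernIndependentFromRes}: the twisted Chern character $\ch^{-B_s \boxplus 0}(\sP^s)$ depends only on the $C^{\infty}$-isomorphism class of the untwisted module $\sP^s \otimes L_s^{-1}$. Under $\Psi \times \id$, the collection $\wt{\sP} \otimes L^{-1}$ becomes a continuous family of untwisted $C^{\infty}$-modules on $X \times X$, whose $C^\infty$-isomorphism class is locally constant in $s$. Since at $s = 0$ we recover $\sP^0$, one obtains $(f^H_t \otimes \id)\bigl(\ch^{-B_t \boxplus 0}(\sP^t)\bigr) = \ch(\sP^0)$ in $H^*(X \times X, \Q)$, and multiplying by the parallel class $\td_{X \times X}^{1/2}$ gives the analogous identity for twisted Mukai vectors. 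Substituting into the Fourier--Mukai formulas, and using that $p_1^*$, $\cup$, and $(p_2)_*$ all commute with the trivialization, yields $\Phi^H_{\sP^t} = \Phi^H_{\sP^0} \circ f^H_t$ with $B_t$ as the B-field on the target. The equivariance of $\Phi^H_{\sP^t}$ with respect to $\Phi^{\fg}_{\sP^t} = \Phi^{\fg}_{\sP^0} \circ f^{\fg}_t$ then follows from \eqref{eq:ParallelTransportIsEquivariant} together with the untwisted LLV-equivariance of $\Phi^H_{\sP^0}$ proved in \cite{taelman21}.

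The main technical obstacle I anticipate is promoting the pointwise content of \cref{rem:TwistedChernIndependentFromRes} to families: one must show that the continuous family $\wt{\sP} \otimes L^{-1}$ admits a globally defined finite $C^{\infty}$-resolution whose fiberwise restrictions compute the twisted Chern characters. This is exactly where the softness of the sheaf $C^{\infty}$ enters, along the lines of the homotopy-lifting argument sketched in \cref{rem:TwistedChernIndependentFromRes}; once this is in place, the constancy of the Chern character along the family is formal.
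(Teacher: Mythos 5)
Your target identity $f^H_t(\ch^{B_t}(\sP^t)) = \ch(\sP^0)$, to be obtained by comparing $\sP^t$ twisted by a topological line bundle with $\sP^0$ as $C^{\infty}$-modules and then invoking \cref{rem:TwistedChernIndependentFromRes}, is the same as the paper's; but the route you take to that comparison is different, and it is where the argument breaks down. You propose to deform along a path in $\wt{\Sha}$ and deduce the comparison at $s=t$ from the assertion that the $C^{\infty}$-isomorphism class of the family $\wt{\sP}\otimes L^{-1}$ is locally constant in $s$. That assertion is precisely the content you need, and it is not justified: isomorphism classes of coherent sheaves supported on subvarieties, viewed as $C^{\infty}$-modules, are not locally constant in continuous families for any general reason, and even the weaker statement that the twisted Chern character is locally constant requires, as you yourself flag, a finite resolution by twisted vector bundles defined globally over $\cX\times_{\wt{\Sha}}\wt{X}$ and restricting fiberwise to resolutions. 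No such resolution is known to exist here: the fibers $X^s$ are non-algebraic for non-torsion $s$, the total space is not projective over $\wt{\Sha}$, and coherent sheaves on general complex manifolds need not admit global locally free resolutions. The softness argument in \cref{rem:TwistedChernIndependentFromRes} only compares two given resolutions of a fixed sheaf; it does not produce a resolution in families. So the central step of your proof is an unproven, and genuinely hard, claim.

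The paper avoids the family argument entirely. For the fixed torsion $t$ it applies \cref{lem:SquareTheorem} to the $C^{\infty}$-sections underlying the local translations $\rho_i: X^t_{U_i}\cong X_{U_i}$, producing topological line bundles $L_i$ with $\sP^t_i\cong\sP_i\otimes p_2^*L_i$ as $C^{\infty}$-modules; the cocycle compatibilities force the $L_i$ to glue to a $\theta_t$-twisted topological line bundle $L$ and the local isomorphisms to glue to a global $C^{\infty}$-isomorphism $\sP^t\otimes p_2^*L^{-1}\cong\sP^0$ on $X\times X$. One then chooses $B_t$ so that its associated twisted topological line bundle is exactly this $L$ (correcting an arbitrary lift $B'$ by $c_1(L\otimes M^{-1})$), and \cref{rem:TwistedChernIndependentFromRes} is applied at the single point $t$, with no continuity statement needed. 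Two smaller issues in your write-up: the kernel lives in $\Coh(X^t\times X,p_2^*\theta_t)$, so the relevant twisted Chern character is $\ch^{0\boxplus B_t}$, not $\ch^{-B_s\boxplus 0}$; and you never arrange that the endpoint of your chosen path of B-fields is the one whose associated line bundle actually relates $\sP^t$ to $\sP^0$ --- this normalization is an essential part of the statement, since the conclusion asserts the existence of a specific $B_t$ for which the displayed equality holds.
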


\begin{proof}
    It suffices to find a B-field $B_t$ such that
    \begin{equation}
       f^H_t( \ch^{B_t}(\sP^t)) = \ch(\sP^0) \in H^*(X,\Q).
    \end{equation}
    By \cref{prop:RelativeTSTwist} and the discussion in \cite[Section 7.2]{markman14}, we can think of $X$ and $X^t$ as the same underlying $C^{\infty}$ manifold $X$, equipped with two different complex structures $J$ and $J^t$.
    The map $X \to \P^n$ is holomorphic with respect to both, and the two complex structures agree on each fiber. 
    The natural identification $H^*(X^t,\Q) = H^*(X,\Q)$ is parallel transport along the family of \cref{prop:RelativeTSTwist}.
    With this point of view, the local biholomorphisms $ \rho_i : X^t_{U_i} \cong X_{U_i}$ can be interpreted as diffeomorphisms $X_{U_i} \cong X_{U_i}$, given by translation with respect to a $C^{\infty}$-section. 
    
    Adopting the notation of \cref{thm:TwistedPoincaré} $\sP_i = \sP^0|_{U_i}$, we set $\sP^t_i \coloneqq \sP^{t}|_{U_i} = (\rho_i \times \id)^*\sP_i$. 
    By \cref{lem:SquareTheorem}, there is a unique topological line bundle $L_{i}$ on $X_{U_i}$ such that
     \begin{equation}\label{eq:DefOfLineBundles}
        \sP^t_i \cong \sP_i \otimes p_2^*L_{i},
    \end{equation}
    as $C^{\infty}$-modules.
    By construction, the composition 
    \[
     \sP^t_i|_{U_{ij}} \isomor \sP^t_j|_{U_{ij}} \otimes p_2^*(L^{-1}_j \circ L_i)
    \]
    is the isomorphism \eqref{eq:TwistedPoincaré}. 
    In particular there are canonical isomorphisms
    \[
    L^{-1}_{j} \otimes L_i \cong L_{ij},
    \]
    where $\{ L_{ij} \}$ represents $\theta_t$.  
    These isomorphisms show that the collection $\{U_i,L_i\}$ glues as a $\theta_t$-twisted topological line bundle $L$ on $X$.
    And the isomorphisms \eqref{eq:DefOfLineBundles} glue to an isomorphism
    \begin{equation*}
         \sP^t \otimes p_2^*(L^{-1}) \cong \sP^0
    \end{equation*}
    of $C^{\infty}$-modules on $X \times X$.
    
    We claim that we can find a B-field $B_t \in H^2(X,\Q)$ such that its associated topological line bundle (cf. equation \eqref{eq:TrivializationBField}) is $L$. 
    Indeed, take any B-field $B'$ lifting $\theta_t$ at the level of cocycles.
    Writing $B'_{ijk} = a'_{ij} + a'_{jk} - a'_{ik}$ we obtain an $(\theta_{t})_{ijk}$-twisted topological line bundle $M$ with transition functions $\{ \exp(a'_{ij}) \}$. 
    Then $L \otimes M^{-1}$ is untwisted on the nose, and
    \[
        B_t \coloneqq B' + c_1(L \otimes M^{-1}) \in H^2(X,\Q)
    \]
    is the desired B-field.
    With this choice, by \cref{rem:TwistedChernIndependentFromRes} we deduce 
    \[
    \ch^{B_t}(\sP^t ) = \ch(\sP^t \otimes L^{-1}) = \ch(\sP^0) \in H^*(X,\Q),
    \]
    where we are identifying $H^*(X^t,\Q) = H^*(X,\Q)$ via parallel transport.
    If we expand this out, we can write 
    \[
    f^H_t(v^{B_t}(\sP^t)) = v(\sP^0). 
    \]
    Then, a computation shows that
    \[
    \Phi^H_{\sP^t} = \Phi^H_{\sP^0} \circ f^H_t,
    \]
    which is equivariant with respect to $\Phi^{\fg}_{\sP^t}$.
\end{proof}

\begin{Cor}\label{cor:PoincaréSendsAtomicToAtomic}
    The equivalence $\Phi_{\sP^t} : \Db(X^t) \isomor \Db(X,\theta_t)$ sends atomic objects to atomic twisted objects. 
\end{Cor}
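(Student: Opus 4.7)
The plan is to deduce the corollary as a direct consequence of \cref{prop:InvarianceOfTwistedAtomicity}. That proposition has three hypotheses: deformation-equivalence of source and target, the parity condition on $n$ or $b_2$, and Assumption $(\dagger)$ on the cohomological action. The first two come for free: $X^t$ is deformation equivalent to $X$ via the family $\cX \to \wt{\Sha}$ of \cref{prop:RelativeTSTwist}, and both are of $\mathrm{K3}^{[n]}$-type so $b_2 = 23$ is odd. The only substantive step is therefore to verify Assumption $(\dagger)$ for $\Phi_{\sP^t}$.

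For this I would unpack \cref{prop:CohomologicalAction}. It asserts, for torsion $t \in \Sha^0$, the existence of a B-field $B_t$ lifting $\theta_t$ such that $\Phi^H_{\sP^t} = \Phi^H_{\sP^0} \circ f^H_t$ is equivariant with respect to the map $\Phi^{\fg}_{\sP^t} \coloneqq \Phi^{\fg}_{\sP^0} \circ f^{\fg}_t$. Equivariance is precisely the statement that conjugation by $\Phi^H_{\sP^t}$ carries any $x \in \fg(X^t)$ to $\Phi^{\fg}_{\sP^t}(x) \in \fg(X)$, so Assumption $(\dagger)$ reduces to checking that $\Phi^{\fg}_{\sP^t}$ is a Lie algebra isomorphism $\fg(X^t) \isomor \fg(X)$. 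This follows by composing two known facts: the parallel transport $f^{\fg}_t$ is a Lie algebra isomorphism (as recorded just before \eqref{eq:ParallelTransportIsEquivariant}), while $\Phi^{\fg}_{\sP^0}$ is one by \cite{taelman21} applied to the untwisted Poincaré equivalence.

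Since the nontrivial content has already been supplied by \cref{prop:CohomologicalAction}, no genuine obstacle remains. The only delicate point is that the extended Mukai vector used to define atomicity depends on the B-field (cf.\ \cref{rem:IndependenceOnBandAlgebraicity}), so one must work with the specific $B_t$ furnished by \cref{prop:CohomologicalAction}; with that choice \cref{prop:InvarianceOfTwistedAtomicity} applies and yields the transport of atomic objects along $\Phi_{\sP^t}$, completing the proof.
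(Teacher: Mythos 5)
Your proposal is correct and follows the paper's own route: the proof in the text likewise observes that the equivariance of $\Phi^H_{\sP^t}$ with respect to $\Phi^{\fg}_{\sP^t}$ established in \cref{prop:CohomologicalAction} is exactly assumption $(\dagger)$, and then invokes \cref{prop:InvarianceOfTwistedAtomicity}. Your additional checks (deformation equivalence via the family over $\wt{\Sha}$, oddness of $b_2$, and the choice of $B_t$) are implicit in the paper and correctly handled.
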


\begin{proof}
    Since $\Phi^H_{\sP^t}$ is equivariant with respect to $\Phi^{\fg}_{\sP^t}$, its action on $\End(H^*(X,\Q))$ given by conjugation respects the LLV algebra. 
    That is, assumption $(\dagger)$ holds for $\Phi^H_{\sP^t}$. 
    Therefore, we conclude by \cref{prop:InvarianceOfTwistedAtomicity}.
\end{proof}

\section{Compactified Picard scheme}\label{sec:compactifiedPic}
Let $Y \subset \P^5$ be a general cubic fourfold, and let $|\sO_Y(1)| \cong \P^5$ the space of hyperplane sections.
Let
\[
\cV \coloneqq \{ (x,H) \mid x \in Y_H, \ H \in \P^5 \} \subset Y \times \P^5
\]
be the universal hyperplane section.
The second projection $\cV \to \P^5$ is flat, and the fibers are the hyperplane sections of $Y$. 
The first projection $\cV \to Y$ is a $\P^4$-bundle, hence $\cV$ is regular of dimension $8$. 
Analogously, we consider the relative variety of lines 
\[
\cF \coloneqq \{ (l,H) \mid l \in F(Y_H), \  H \in \P^5 \}  \subset F(Y) \times  \P^5.
\]
The second projection $\cF \to \P^5$ is flat, and the fiber over $H$ is the surface of lines in $Y_H$. 
Every fiber is integral with lci singularities by \cite[Lemma 2.5]{lsv17}.
The first projection $\cF \to F(Y)$ is a $\P^3$-bundle, thus $\cF$ is regular of dimension $7$. 

Following Altman--Kleiman \cite{altman80,altman79} we consider the scheme $\Pic^{=}(\cF/\P^5)$ parameterizing torsion-free sheaves of rank 1 on the fibers of $\cF \to \P^5$. 
We denote by $\Pic(\cF/\P^5)^-$ the open subscheme parameterizing Cohen--Macaulay (CM) sheaves, and, as usual, by $\Pic(\cF/\P^5)$ the open subscheme parameterizing locally free sheaves.
Let 
\[
\Pic^0(\cF /\P^5)  \subset \Pic^{-0}(\cF/\P^5) \subset \Pic^{=0}(\cF/\P^5)
\]
be the connected components containing the trivial line bundle.
As shown in \cite{altman79}, the scheme $\Pic^{=0}(\cF/\P^5)$ is projective over $\P^5$, and $\Pic^{-0}(\cF/\P^5)$ is quasi-projective.
Denote by $\overline{\Pic^0}(\cF /\P^5)$ the closure of the locally free locus in $ \Pic^{=0}(\cF/\P^5)$, taken with its {\textit{reduced}} scheme structure. 
Since the Picard scheme of a smooth cubic threefold is $5$-dimensional, we deduce that $\overline{\Pic^0}(\cF /\P^5)$ has dimension $10$.
The goal of this section is to show that $\overline{\Pic^0}(\cF /\P^5)$ is smooth and contained in  $\Pic^{-0}(\cF/\P^5)$.

\subsection{Lines on cubic threefolds}
Now, let $V = Y_H \subset \P^4$ be a hyperplane section of a general cubic $Y$. 
It is an integral cubic threefold with only ADE singularities by \cite[Corollary 3.7]{lsv17}.
If $l \subset V$ is a line, the projection to a complementary plane defines a rational map 
\[
 V \dashrightarrow \P^2. 
\]
It is resolved by blowing up the line, and we obtain a conic bundle 
\begin{equation}
   \pi_l : V_l \to \P^2.
\end{equation}

\begin{Lem}
    For any line $l \subset V$ the discriminant $D_l \subset \P^2$ is a plane quintic. 
\end{Lem}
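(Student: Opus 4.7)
The plan is to prove this via an explicit coordinate computation, reducing the claim to a short degree count for the determinant of a $3 \times 3$ symmetric matrix. Choose homogeneous coordinates $(x_0:\cdots:x_4)$ on $\P^4$ so that $l = \{x_2 = x_3 = x_4 = 0\}$. Since $l \subset V$, the defining cubic $F$ lies in the ideal $(x_2,x_3,x_4)$ and one can write
$$F = x_2 A_2 + x_3 A_3 + x_4 A_4,$$
with $A_i \in \C[x_0,\ldots,x_4]_2$. In these coordinates the projection is $(x_0:\cdots:x_4) \dashrightarrow (x_2:x_3:x_4)$, and the plane through $l$ sitting over $a = (a_2:a_3:a_4) \in \P^2$ is parametrized by $(s,t,u) \mapsto (s,t,ua_2,ua_3,ua_4)$.

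Next, I would substitute this parametrization into $F$ and factor out the copy of $u$ coming from $l \subset V$, obtaining the residual conic $Q_a(s,t,u) = 0$ in the plane. Tracking bidegrees in $(s,t,u)$ versus $(a_2,a_3,a_4)$, the coefficients of $s^2, st, t^2$ in $Q_a$ are linear in $a$, those of $su$ and $tu$ are quadratic in $a$, and the coefficient of $u^2$ is cubic in $a$. Consequently the symmetric matrix of $Q_a$ takes the shape
$$M(a) = \begin{pmatrix} L_{11} & L_{12} & Q_{13} \\ L_{12} & L_{22} & Q_{23} \\ Q_{13} & Q_{23} & C_{33}\end{pmatrix},$$
with $L_{ij}, Q_{ij}, C_{33}$ homogeneous in $a$ of degrees $1, 2, 3$ respectively. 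A direct inspection of the six terms of $\det M(a)$ shows that each has total degree exactly $5$, so the discriminant $D_l = \{\det M = 0\}$ is cut out set-theoretically by a form of degree $5$ in $\P^2$.

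The main obstacle is to verify that $\det M$ does not vanish identically, so that $D_l$ is genuinely a plane quintic rather than all of $\P^2$. My approach here is geometric: if $\det M \equiv 0$, then every plane through $l$ would cut $V$ in $l$ together with a degenerate conic, producing a two-parameter family of lines on $V$ incident to $l$. Since $V$ is integral with only ADE singularities, the variety of lines $F(V)$ is an irreducible surface, and the condition ``meets $l$'' defines a proper divisor on $F(V)$ (pulled back from a Schubert divisor on $G(2,5)$). Hence the generic plane through $l$ meets $V$ in $l$ plus a smooth irreducible conic, which forces $\det M \not\equiv 0$ and finishes the proof. A fallback option, if the above geometric argument needs tightening in the singular case, is to deform $V$ inside the space of integral cubic threefolds to a smooth one, where the classical fact that the discriminant of the projection is a smooth plane quintic applies, and then invoke flatness/semicontinuity.
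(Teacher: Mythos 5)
Your degree count via the $3\times 3$ symmetric matrix $M(a)$ is correct and is exactly the ``classical'' part that the paper does not spell out: once one knows $D_l$ is a proper curve, it is a quintic. The problem is that the entire content of the lemma is concentrated in ruling out $\det M \equiv 0$, and your argument for that step is circular. You assert that ``the condition `meets $l$' defines a \emph{proper} divisor on $F(V)$, pulled back from a Schubert divisor on $G(2,5)$.'' The Schubert cycle $\sigma_1(l)$ is a hyperplane section of $G(2,5)$ in the Pl\"ucker embedding, so its intersection with the irreducible surface $F(V)$ is a proper divisor \emph{unless} $F(V)\subset\sigma_1(l)$, i.e.\ unless every line of $V$ meets $l$ --- and that containment is precisely what happens when $\det M\equiv 0$. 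Irreducibility of $F(V)$ alone does not exclude it. The paper does the real work here: assuming $D_l=\P^2$, it uses Stein factorization to produce a surface of lines meeting $l$, identifies it with $F(V)$ by integrality of the Fano surface (so every line meets $l$), and then analyzes the map $F(V)\setminus\{l\}\to l$, $l'\mapsto l\cap l'$: if constant, $V$ is a cone; if surjective, through every point of $l$ there is a curve of lines. Both are excluded for a hyperplane section of a \emph{general} cubic fourfold (the latter by \cite[Claim 2.15]{lsv17}). Note that generality of the ambient fourfold $Y$ enters essentially; the statement is not true for arbitrary integral cubic threefolds with ADE singularities without some such input.

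Your fallback (deform $V$ to a smooth cubic threefold and invoke semicontinuity) also does not close the gap as stated. The locus of pairs $(V,l)$ with $\det M_{V,l}\equiv 0$ is Zariski closed in the incidence variety, and knowing it misses the smooth locus only shows the \emph{generic} pair is fine. The lemma is claimed for \emph{every} line $l$ on \emph{every} hyperplane section $V=Y_H$ of a fixed general $Y$; a closed bad locus could a priori still meet every $Y$ in finitely many pairs $(H,l)$. You would need an argument that the bad locus does not dominate the space of cubic fourfolds \emph{and} that its image is avoided by a general $Y$ together with all of its hyperplane sections and lines, which again requires the kind of incidence analysis the paper carries out.
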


\begin{proof}
    If $D_l$ is a curve, then the fact that it is a quintic is classical. 
    So assume by contradiction that $D_l = \P^2$, so that every fiber of $\pi_l$ is singular. 
    By Stein factorization of $\pi_l$ we obtain a surface $S_l$, finite over $\P^2$, parametrizing lines in $V$ which all meet $l$. 
    By \cite[Lemma 2.5]{lsv17} this surface must coincide with $F(V)$, so every line in $V$ meets $l$. 
    Consider the map 
    \[
        F(V) \setminus \{l\} \to l, l' \mapsto l \cap l'.
    \]
    If it is constant, then $V$ is a cone, which does not happen if $Y$ is general.
    If it is surjective, for every point in $l$ there is a curve in $V$ of lines through that point. 
    This does not happen if $Y$ is general, by \cite[Claim 2.15]{lsv17}.
\end{proof}

Define the curve 
\[
C_l \coloneqq F(\pi^{-1}(D_l)/D_l)
\]
as the relative scheme of lines in the fibers of $\pi_l^{-1}(D_l)$.
Since $\pi_l$ is a conic bundle, the morphism $C_l \to D_l$ is flat of degree $2$.
Moreover, it is étale if all the fibers of $\pi_l$ are reduced.

At least the line $l$ is general (we will later show that this holds for all lines), \cite[Lemma 1.26]{huybrechts23} says that $C_l$ is the closure of the locus of lines meeting $l$
\begin{equation}
    C_l = \overline{\{ l' \in F(V) \mid l \neq l' \text{ and } l' \cap l \neq \emptyset \}}.
\end{equation}
The involution $\iota : C_l \to C_l$ associating to a line $l'$ its residual line 
\begin{equation}\label{eq:InvolutionResidualLine}
    l' \mapsto l'', \text{ where } \mathrm{Span}(l,l') \cap V = l \cup l' \cup l''
\end{equation}
gives an identification $C_l/\langle \iota \rangle \cong D_l$.

Following \cite[Definition 2.9]{lsv17}, we define a line $l \subset V$ to be very good if every fiber of $\pi_l$ is reduced and $C_l$ is integral. 
By \cite[Proposition 2.10]{lsv17}, every hyperplane section of a general cubic fourfold contains a very good line.

\begin{Rem}\label{rem:VeryGoodImpliesSmooth}
    If $p \in V$ is a singular point, the locus of lines through $p$
    \[
    C_p = \{ l \in F(V) \mid p \in l \}
    \]
    is a curve, and the singular locus of $F(V)$ is the union of these curves (see \cite[Lemma 1.5(ii)]{altman77}).
    It follows that a very good line $l$ is a smooth point of $F(V)$, as otherwise, $C_l$ would be reducible.  
\end{Rem}

\subsection{Relative setting}

Let $\cH \subset \P^5 \times \P^5 \xrightarrow{p_2} \P^5$ be the universal hyperplane section and let $\L_{\cV} \subset \cV \times_{\P^5} \cF$ be the universal line.
Pick a family of $2$-planes
\[
\Pi \subset \cH \times_{\P^5} \cF 
\]
disjoint from $\L_{\cV}$. 
Blowing-up the universal line we get the projection
\[
\pi : \wt{\cV \times_{\P^5} \cF} \to \Pi.
\]
By the above discussion, the discriminant locus $\cD \subset \Pi$ is a hypersurface, which restricts to the plane quintic $D_l \subset \P^2$ for every $(l,H) \in \cF$.
Define 
\[
\cC \coloneqq F(\pi^{-1}(\cD)/\cD) \to \cD
\]
to be the relative scheme of lines in the fibers of $ \pi^{-1}(\cD) \to \cD$.

\begin{Lem}\label{lem:RelativeGrassmannianEmbedsInFano}
    There is a closed embedding of schemes over $\cF$
    \[
    \cC \hookrightarrow \cF \times_{\P^5} \cF.
    \]
    The set-theoretical image is the closure of the set of intersecting lines. 
    In particular, $\cC$ is irreducible. 
\end{Lem}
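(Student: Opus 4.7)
The plan is threefold: construct the morphism via the universal property of the relative Fano scheme, verify it is a closed embedding by combining properness with a fibral analysis, and deduce irreducibility from a dimension count together with the integrality of $C_l$ for very good $l$.

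First I would construct the morphism. A point of $\cC$ is a line $m$ in a fiber of $\pi$ over some $([\Pi],(l,H)) \in \cD$; this fiber is the strict transform in $V_l$ of the residual conic $(V_H \cap \Pi) \setminus l$. Since $m$ is not contained in the exceptional divisor of the blow-up $V_l \to V_H$, the blow-down contraction sends $m$ isomorphically onto a line $m' \subset V_H$ meeting $l$ at the point $l \cap \Pi$. Formally, the universal line over $\cC$ (pulled back from the relative Fano scheme of $\pi^{-1}(\cD) \to \cD$) pushes forward along the blow-down $\wt{\cV \times_{\P^5} \cF} \to \cV \times_{\P^5} \cF$ to a family of lines in $\cV$ parametrised by $\cC$. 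The universal property of $\cF \to \P^5$ yields a morphism $\cC \to \cF$ over $\P^5$, which together with the structure map $\cC \to \cD \to \cF$ gives the desired morphism $\cC \to \cF \times_{\P^5} \cF$ over $\cF$.

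To show it is a closed embedding, both $\cC$ and $\cF \times_{\P^5} \cF$ are proper over $\cF$, so the morphism is proper. Fibrewise over $(l,H) \in \cF$, the induced map $C_l \to F(V_H)$ sends $m$ to its blow-down $m'$, and by \cite[Lemma 1.26]{huybrechts23} its image is the closure of $\{l' \in F(V_H) \mid l' \neq l,\ l' \cap l \neq \emptyset\}$. Injectivity on points holds because $m'$ determines the $2$-plane $\langle l, m'\rangle \in \P^2$ and hence the unique fiber of $\pi_l$ containing the strict transform of $m'$; unramifiedness follows because first-order deformations of $m$ inside fibers of $\pi_l$ correspond bijectively, via the blow-down, to first-order deformations of $m'$ in $V_H$ constrained to meet $l$. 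A proper, injective, unramified morphism between noetherian schemes is a closed embedding. The set-theoretic image is then the closure of the locus $\{(l_1, l_2, H) \mid l_1 \neq l_2,\ l_1 \cap l_2 \neq \emptyset\}$, since this identification holds fibrewise over the dense locus of very good lines in $\cF$.

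For irreducibility, $\cC \to \cD$ is generically finite of degree $2$ and $\cD \subset \Pi$ is a hypersurface in a $\P^2$-bundle over the $7$-dimensional $\cF$, so $\cC$ is equidimensional of dimension $8$. Every irreducible component of $\cC$ has fibres of dimension at most $1$ over $\cF$ (being contained in some $C_l$), hence dominates $\cF$. By \cite[Proposition 2.10]{lsv17}, the locus of very good lines is dense in $\cF$, and for such $l$ the curve $C_l$ is integral; thus the generic fibre of $\cC \to \cF$ is integral, forcing $\cC$ to consist of a single irreducible component. The main obstacle lies in the closed-embedding step: one must verify that the natural scheme structure on $\cC$, inherited from the relative Fano scheme of the singular conic bundle $\pi^{-1}(\cD) \to \cD$, agrees globally with that of the incidence subscheme of $\cF \times_{\P^5} \cF$, even at non-very-good lines where $C_l$ can be reducible or non-reduced, so the unramifiedness argument must be established uniformly across the family.
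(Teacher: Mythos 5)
Your construction of the morphism and your identification of the set-theoretic image match the paper, and your irreducibility argument (equidimensionality of $\cC$ plus integrality of the generic fibre $C_l$ over the dense locus of very good lines) is a legitimate alternative to the paper's citation of \cite[Lemma 2.13]{lsv17} for the irreducibility of the incidence locus. Note, however, that equidimensionality of $\cC$ does not follow from generic finiteness of $\cC \to \cD$ alone; one needs that $\cC \to \cD$ is finite with fibres of constant length $2$, which the paper only establishes in the subsequent lemma, so you should be careful not to import it prematurely.

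The genuine gap is in the closed-embedding step, and you have flagged it yourself. There are two problems. First, your injectivity argument (``$m'$ determines the $2$-plane $\langle l,m'\rangle$ and hence the fibre of $\pi_l$'') breaks down exactly for the components of fibres of $\pi_l$ contained in the exceptional divisor of the blow-up: these blow down to $l$ itself, they occur over the planes tangent to $V$ along $l$ (a case the paper explicitly records when describing the image), and the plane is then not recoverable from the image line. Second, the uniform unramifiedness across non-very-good lines, which you correctly identify as the main obstacle, is asserted but never established. The paper avoids both issues by arguing functorially: $\cC$ is by construction a subfunctor of $\mathrm{Hilb}_{\wt{\cV\times_{\P^5}\cF}/\cF}$, and composing with the blow-down --- which contracts no fibre of $\pi$ --- yields a \emph{monomorphism} of schemes over $\cF$ into $\cF\times_{\P^5}\cF$; since $\cC\to\cF$ is proper and a proper monomorphism is a closed immersion, no fibrewise injectivity or tangent-space computation is needed. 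Replacing ``injective and unramified'' by ``monomorphism of functors'' is the missing idea that closes your argument.
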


\begin{proof}
    The functor of points of $\cC$, as a scheme over $\cF$, is the subfunctor $\mathrm{Hilb}_{\pi^{-1}(\cD)/\cF}$ parameterizing lines in the fibers of $\pi^{-1}(\cD) \to \cD$.
    We get a chain of inclusions of functors
    \begin{equation}\label{eq:ChainOfInclusionsHilbert}
    \cC \subset \mathrm{Hilb}_{\pi^{-1}(\cD)/\cF} \subset \mathrm{Hilb}_{\wt{\cV \times_{\P^5} \cF}/\cF}.
    \end{equation}
    
    The blow-down
    \[
    \wt{\cV \times_{\P^5} \cF} \to \cV \times_{\P^5} \cF
    \]
    does not contract any fiber of $\pi$, as this is true already if we blow-up the whole $\P^5$. 
    Therefore, the blow-down composed with the composition \eqref{eq:ChainOfInclusionsHilbert} gives a monomorphism of schemes over $\cF$
    \[
    \cC \hookrightarrow \cF \times_{\P^5} \cF.
    \]
    which is a closed embedding by the properness of $\cC \to \cF$.  
    
    A fiber of $\pi_l$ contains $l' \neq l$ if and only if $l'$ meets $l$, and contains $l$ if and only if it lies on a plane tangent to $V$.  
    It follows that set-theoretically the image is 
    \[
    \overline{\{ (l,l',H) \mid l \neq l',\  \mathrm{ and} \  l \cap l' \neq \emptyset \}},
    \]
    which is irreducible by \cite[Lemma 2.13]{lsv17}.
\end{proof}


\begin{Rem}
    In particular, it follows that $\cD$ is irreducible.
    Since $\cD$ is also Cohen--Macaulay and generically reduced, it is in fact integral. 
\end{Rem}

\begin{Lem}\label{lem:FlatnessofC}
    The natural map $\cC \to \cD$ is flat, and $\cC$ is Cohen--Macaulay. 
    In particular, $\cC$ is integral, and the morphism $\cC \to \cF$ is flat and projective. 
\end{Lem}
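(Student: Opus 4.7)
The strategy is to realize $\cC \to \cD$ as the classical discriminant double cover of the conic bundle $\pi$, and then to derive Cohen--Macaulayness, integrality, and flatness over $\cF$ by descent and miracle flatness.

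First I would show that $\cC \to \cD$ is finite flat of degree $2$. The morphism $\cC \to \cD$ is projective by construction and has $0$-dimensional fibers, so it is finite. Over every point $(l, H) \in \cF$ it restricts to the map $C_l \to D_l$, which is flat of degree $2$ by the conic bundle structure recalled after \cref{rem:VeryGoodImpliesSmooth}. Hence each fiber of $\cC \to \cD$ has length $2$. Since $\cD$ is integral (remark preceding the lemma) hence reduced, and a finite morphism to a reduced Noetherian base with locally constant fiber length is flat, $\cC \to \cD$ is finite flat of degree $2$.

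Next, I would deduce that $\cC$ is Cohen--Macaulay. Locally over $\cF$ the family $\Pi \to \cF$ is a $\P^2$-bundle, so $\Pi$ is regular of dimension $9$. The discriminant $\cD \subset \Pi$ is a relative plane quintic, cut out by the vanishing of the relative determinant, and so is a Cohen--Macaulay hypersurface of dimension $8$. Since $\cC \to \cD$ is finite flat with Cohen--Macaulay $0$-dimensional fibers over a Cohen--Macaulay base, $\cC$ is Cohen--Macaulay of dimension $8$.

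Integrality is then immediate: irreducibility of $\cC$ is \cref{lem:RelativeGrassmannianEmbedsInFano}; Cohen--Macaulayness precludes embedded components; and the cover $\cC \to \cD$ is generically étale (over the open locus where the conic fiber is a pair of distinct lines), so $\cC$ is generically reduced, hence reduced. For the final assertions, the morphism $\cC \to \cF$ is projective because it factors through the closed embedding $\cC \hookrightarrow \cF \times_{\P^5} \cF$ of \cref{lem:RelativeGrassmannianEmbedsInFano} followed by the projective first projection. Flatness follows by miracle flatness: $\cC$ is Cohen--Macaulay of dimension $8$, $\cF$ is regular of dimension $7$, and the fibers are the curves $C_l$, of pure dimension $1 = 8 - 7$.

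The only potentially delicate step is the input that $C_l \to D_l$ (hence $\cC \to \cD$) has constant fiber length $2$ even at points where the conic in the fiber degenerates to a double line; this is not the naive Hilbert scheme count, but rather an intrinsic feature of the conic bundle discriminant double cover, already invoked in the passage defining $C_l$. Granted this, the argument above is formal.
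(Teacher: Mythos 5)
Your proof is correct and follows essentially the same route as the paper: constant fiber length $2$ over the integral base $\cD$ gives flatness of $\cC \to \cD$, Cohen--Macaulayness descends from the hypersurface $\cD \subset \Pi$ through the finite flat cover with CM fibers, and integrality follows from irreducibility, absence of embedded points, and generic reducedness. Two minor remarks: the paper gets flatness of $\cC \to \cF$ simply by composing the flat morphisms $\cC \to \cD \to \cF$ rather than by miracle flatness (your route also works, but strictly requires justifying that the fibers $C_l$ are pure of dimension $1$, which follows by base-changing the finite flat cover $\cC \to \cD$ along $D_l \hookrightarrow \cD$); and the length-$2$ fiber over a point where the conic is a double line \emph{is} the naive Hilbert-scheme count --- the Fano scheme of lines in a double line is a length-two point --- which is exactly how the paper argues, so the hedge in your last paragraph is unnecessary.
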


\begin{proof}
    Every fiber of 
    \[
    \pi^{-1}(\cD) \to \cD
    \]
    is a singular conic.
    Since the Hilbert scheme is compatible with base change, the fiber $\cC_d = F(\pi^{-1}(d))$ over a point $d \in \cD$ is the scheme of lines in the singular conic $\pi^{-1}(d)$. 
    It consists of two reduced points if the conic is reduced, and a double point if the conic is a double line. 
    So, the map $\cC \to \cD$ is finite with fibers of constant length $2$.
    Thus it is flat, as $\cD$ is integral. 
    
    Since any $0$-dimensional length $2$ scheme is Cohen--Macaulay, the morphism $\cC \to \cD$ is Cohen--Macaulay by \cite[Lemma 37.22.2]{stacks}.
    Therefore, $\cC$ is Cohen--Macaulay by \cite[Lemma 37.22.4(1)]{stacks} and the fact that $\cD$ is Cohen--Macaulay, being a hypersurface in a regular scheme.
    We conclude because $\cD \to \cF$ is flat and projective, and a Cohen--Macaulay scheme does not have embedded points, so $\cC$ is integral and it agrees with the locus of intersecting lines by \cref{lem:RelativeGrassmannianEmbedsInFano}.
\end{proof}

\subsection{Abel--Jacobi and restriction}
We denote by $\cF_0 \subset \cF$ the locus of very good lines. 
The projection $f: \cF_0 \to \P^5$ is smooth and surjective. 
Therefore, the base change $\cF \times_{\P^5} \cF_0$ is smooth. 
Consider the ``Abel--Jacobi'' morphism
\begin{equation}\label{eq:Albanese}
    \alpha: \cF \times_{\P^5} \cF_0 \to \Pic^{-0}(\cF \times_{\P^5} \cF_0 / \cF_0), \quad (l,l',H) \mapsto I_{C_l/F(Y_H)} \otimes \sO_{F(Y_H)}(C_{l'}).
\end{equation}
It is well defined, because for every $(l,H) \in \cF$ the curve $C_l \coloneqq \cC_{(l,H)} \subset F(Y_H)$ is a Cohen--Macaulay scheme by \cref{lem:FlatnessofC}. 
Since $F(Y_H)$ has Gorenstein singularities, the ideal sheaf $I_{C_l/F(Y_H)}$ is Cohen--Macaulay on $F(Y_H)$. 

Let $\cC_0 \xhookrightarrow{i} \cF \times_{\P^5} \cF_0$ be the intersection of $\cC$ with the open locus $\cF \times_{\P^5} \cF_0$. 
By definition of very good lines, $\cC_0 \to \cF_0$ has integral fibers with planar singularities.
It is a Cartier divisor, because the ambient space is smooth. 
Since $\cC_0 \to \cF_0$ is flat by \cref{lem:FlatnessofC}, it is a Cartier divisor fiberwise (a so-called relative effective Cartier divisor, see \cite[Section 31.18]{stacks}).
Therefore, by \cref{prop:PullbackOfCM}, we have a well-defined restriction map
\begin{equation}\label{eq:Restriction}
    i^*: \Pic^{-0}(\cF \times_{\P^5} \cF_0/\cF_0) \to \overline{\Pic^0}(\cC_0 / \cF_0), \quad G \mapsto G|_{C_l}.
\end{equation}
Recall that for a family of integral curves with planar singularities, the relative compactified Picard parametrizes Cohen--Macaulay sheaves. 

\begin{Lem}
    The composition 
    \[
    \gamma \coloneqq i^* \circ \alpha : \cF \times_{\P^5} \cF_0 \to \overline{\Pic^0}(\cC_0 / \cF_0)
    \]
    is a local complete intersection morphism. 
    In particular, the pullback
    \begin{equation}\label{eq:Pullback}
    \gamma^* : \overline{\Pic^0}( \overline{\Pic^0}(\cC_0 / \cF_0)) \to \Pic^{-0}(\cF \times_{\P^5} \cF_0/\cF_0)
    \end{equation}
    is well defined. 
\end{Lem}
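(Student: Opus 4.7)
The plan is to show that $\gamma$ is a morphism between smooth $\C$-schemes. Any such morphism is automatically a local complete intersection, and lci pullback preserves Cohen--Macaulay sheaves of rank one, so the pullback map $\gamma^*$ on the relative compactified Picards will follow immediately.

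First I would verify smoothness of the source. The scheme $\cF$ is a $\P^3$-bundle over the smooth HK fourfold $F(Y)$, hence smooth. By \cref{rem:VeryGoodImpliesSmooth}, every very good line is a smooth point of its fiber $F(Y_H)$, so the restriction $\cF_0 \to \P^5$ is a smooth morphism; consequently the base change $\cF \times_{\P^5} \cF_0 \to \cF$ is smooth, and therefore $\cF \times_{\P^5} \cF_0$ is smooth as a $\C$-scheme.

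Next I would establish smoothness of the target. By \cref{lem:FlatnessofC} the morphism $\cC_0 \to \cF_0$ is flat with integral fibers, and the very good hypothesis, combined with the transversality arguments of \cite{lsv17}, ensures that each fiber $C_{l'}$ has only planar singularities --- indeed, $\cC_0$ is a Cartier divisor inside the smooth scheme $\cF \times_{\P^5} \cF_0$. The smoothness criterion of Fantechi--G\"ottsche--van Straten \cite[Corollary B.2]{fantechi1999} then yields that $\overline{\Pic^0}(\cC_0/\cF_0)$ is smooth over the smooth base $\cF_0$.

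The lci property now follows from the standard fact that any morphism $f\colon X \to Y$ between smooth $\C$-schemes factors as the graph $\Gamma_f\colon X \hookrightarrow X \times_\C Y$ --- a section of the smooth projection $p_1$, hence a regular closed immersion --- followed by the smooth projection $p_2\colon X \times_\C Y \to Y$. For the pullback, lci morphisms preserve Cohen--Macaulayness (via the Koszul resolution of the regular immersion factor and flatness of the smooth factor), while rank one and degree zero are preserved by continuity, so $\gamma^*$ is well-defined as in \eqref{eq:Pullback}. The main obstacle is the planarity of the singularities of $C_{l'}$, which underlies the smoothness of the target and is precisely where the transversality arguments of \cite{lsv17} are essential.
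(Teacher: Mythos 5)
Your argument is essentially the paper's: both reduce to the fact that a morphism between smooth $\C$-varieties is lci (your graph factorization is precisely the proof of the Stacks project lemma the paper cites), with the key input being the smoothness of $\overline{\Pic^0}(\cC_0/\cF_0)$ via \cite[Corollary B.2]{fantechi1999} and the transversality results of \cite{lsv17}. Two small points. First, \cite{fantechi1999} gives smoothness of the total space $\overline{\Pic^0}(\cC_0/\cF_0)$ as a $\C$-scheme, not smoothness of the morphism to $\cF_0$ (the fibers are compactified Jacobians of singular curves, hence singular); only the former is needed, so your conclusion stands but the phrasing ``smooth over the smooth base'' should be corrected. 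Second, before invoking ``lci pullback preserves Cohen--Macaulay sheaves'' you must know that the sheaves being pulled back --- the points of $\overline{\Pic^0}(\overline{\Pic^0}(\cC_0/\cF_0))$, a closure which a priori could contain non-CM torsion-free sheaves --- are themselves Cohen--Macaulay; the paper supplies this from \cite[Theorems A and B]{arinkin13}. With that reference added, and the fiberwise finite-tor-dimension and flatness bookkeeping of \cref{lem:LciOnFibers} and \cref{prop:PullbackOfCM} made explicit, your proof matches the paper's.
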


\begin{proof}
    See \cref{sec:functorialityPicard} for the definition of a lci morphism. 
    Consider the diagram 
\[\begin{tikzcd}
	{\cF \times_{\P^5} \cF_0} & {\overline{\Pic^0}(\cC_0 / \cF_0)} \\
	{\cF_0}
	\arrow[from=1-1, to=2-1]
	\arrow[from=1-2, to=2-1]
	\arrow["\gamma", from=1-1, to=1-2]
\end{tikzcd}\]
    Since ${\overline{\Pic^0}(\cC_0 / \cF_0)}$ is smooth by \cite[Corollary B.2]{fantechi1999} and \cite[Corollary 3.9]{lsv17}, by \cite[Lemma 37.62.13]{stacks} we get that $\gamma$ is a local complete intersection morphism. 
    We conclude by \cref{lem:LciOnFibers} and \cref{prop:PullbackOfCM} that the pullback is well-defined at the level of Cohen--Macaulay sheaves. 
    We conclude because, by \cite[Theorems A and B]{arinkin13}, every sheaf in $\overline{\Pic^0}( \overline{\Pic^0}(\cC_0 / \cF_0))$ is Cohen--Macaulay.
\end{proof}

\begin{Rem}\label{rem:BaseChangeToVG}
    We have the following base-change property:
    \begin{equation}\label{eq:baseChange}
       \overline{\Pic^0}(\cF/\P^5) \times_{\P^5} \cF_0 = \overline{\Pic^0}(\cF \times_{\P^5} \cF_0/\cF_0).
    \end{equation}
    Indeed, since $\Pic^{=0}(\cF/\P^5)$ commutes with base change we have 
    \[
     \overline{\Pic^0}(\cF/\P^5) \times_{\P^5} \cF_0 \subset \Pic^{=0}(\cF/\P^5) \times_{\P^5} \cF_0 = \Pic^{=0}(\cF \times_{\P^5} \cF_0/\cF_0).
    \]
    Furthermore, $\cF_0 \to \P^5$ is smooth with integral fibers, so $\overline{\Pic^0}(\cF/\P^5) \times_{\P^5} \cF_0 $ is integral.   
    It contains the relative Picard, so it must coincide with its closure.
\end{Rem}

\begin{Prop}\label{prop:CompactifiedPicCM}
    The variety $\overline{\Pic^0}(\cF/\P^5)$ is contained in the Cohen--Macaulay locus.
    Moreover, the inclusion
    \[
    \Pic^0(\cF/\P^5) \subset \overline{\Pic^0}(\cF/\P^5)
    \]
    has complement of codimension 2. 
\end{Prop}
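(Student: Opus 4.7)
The plan is to base change to $\cF_0$ to gain access to the pullback $\gamma^*$ from (\ref{eq:Pullback}), and then to combine the fact that $\gamma^*$ lands in the Cohen--Macaulay locus $\Pic^{-0}$ by construction with a properness-and-dimension argument to show that its image is all of $\overline{\Pic^0}(\cF \times_{\P^5} \cF_0/\cF_0)$. The codimension statement is then obtained by a fiberwise analysis.

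\textbf{Reduction to $\cF_0$.} By \cref{rem:BaseChangeToVG} together with the fact that $\cF_0 \to \P^5$ is smooth and surjective, both Cohen--Macaulayness of a sheaf and the codimension of a closed subset are preserved and detected after this base change. So it suffices to prove the analogous statements for $\overline{\Pic^0}(\cF \times_{\P^5} \cF_0/\cF_0)$.

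\textbf{Cohen--Macaulayness.} Let $Z \subset \Pic^{-0}(\cF \times_{\P^5} \cF_0/\cF_0)$ be the scheme-theoretic image of $\gamma^*$. Its source $\overline{\Pic^0}(\overline{\Pic^0}(\cC_0/\cF_0))$ is proper over $\cF_0$, so $Z$ is itself proper over $\cF_0$ and hence is a closed irreducible subset of the $\cF_0$-proper scheme $\Pic^{=0}(\cF \times_{\P^5} \cF_0/\cF_0)$. Fiberwise over $(l,H) \in \cF_0$, the Abel--Jacobi morphism $\gamma_{l,H} \colon F(Y_H) \to \overline{\Pic^0}(C_l)$ factors through the Prym subvariety $\Prym(C_l/D_l) \subset \Pic^0(C_l)$, and for smooth $Y_H$ the Clemens--Griffiths theorem identifies $\Prym(C_l/D_l)$ with the intermediate Jacobian $J(Y_H) \cong \Pic^0(F(Y_H))$; dualising, the induced pullback $\gamma_{l,H}^*$ on $\Pic^0$'s is surjective. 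Consequently $Z$ contains the dense open subset $\Pic^0(\cF \times_{\P^5} \cF_0/\cF_0) \subset \overline{\Pic^0}(\cF \times_{\P^5} \cF_0/\cF_0)$, and since both are irreducible closed subsets of $\Pic^{=0}$ of dimension $12$, they coincide. In particular $\overline{\Pic^0}(\cF \times_{\P^5} \cF_0/\cF_0) \subset \Pic^{-0}$, which proves the Cohen--Macaulay claim.

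\textbf{Codimension of the complement.} Over the open subset $U \subset \P^5$ where $Y_H$ is smooth, the Fano surface $F(Y_H)$ is also smooth, and every rank-one torsion-free sheaf on it is a line bundle. Hence $\overline{\Pic^0}(\cF/\P^5) \setminus \Pic^0(\cF/\P^5)$ is contained in the preimage of the discriminant $\Delta = \P^5 \setminus U$, which has codimension one in $\P^5$. Over a general $H \in \Delta$ (say $Y_H$ with a single node), the structure sheaf $\cO_{F(Y_H)}$ is a line bundle in the $5$-dimensional irreducible fiber $\overline{\Pic^0}(F(Y_H))$, so the non-locally-free locus is a proper closed subset of that fiber and has codimension $\geq 1$ in it. Together with the codimension-one containment in the preimage of $\Delta$ this yields total codimension $\geq 2$; the preimages of the more degenerate strata of $\Delta$ of codimension $\geq 2$ in $\P^5$ contribute codimension $\geq 2$ automatically.

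\textbf{Main obstacle.} The delicate step is verifying that the image $Z$ has the expected dimension $12$, i.e.\ that $\gamma^*$ is dominant onto $\overline{\Pic^0}(\cF \times_{\P^5} \cF_0/\cF_0)$. Clemens--Griffiths surjectivity is clean over smooth $Y_H$, but extending it uniformly over all of $\cF_0$---together with knowing that the fibers of $\overline{\Pic^0}(\cF/\P^5) \to \P^5$ remain irreducible over singular $H$ where the relative Prym of $\cC_0 \to \cF_0$ still computes the compactified intermediate Jacobian---is the real technical hurdle, and is where the transversality arguments from \cite{lsv17} together with \cite[Corollary B.2]{fantechi1999} are used.
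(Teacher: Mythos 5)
Your proposal follows the paper's route essentially step for step: base change to $\cF_0$, use the pullback $\gamma^*$ from \eqref{eq:Pullback} (which by construction lands in the Cohen--Macaulay locus), show that its image contains the line bundles and hence all of $\overline{\Pic^0}(\cF\times_{\P^5}\cF_0/\cF_0)$, and then argue fiberwise for the codimension. Your Clemens--Griffiths justification of why the image of $\gamma^*$ contains the line bundles is a useful expansion of the paper's ``clearly it contains the line bundles''. However, the step you flag as the ``main obstacle'' --- that the image $Z$ has dimension $12$ --- is a non-issue and does not require any transversality input: $Z$ is integral (it is the schematic image of the integral scheme $\overline{\Pic^0}(\overline{\Pic^0}(\cC_0/\cF_0))$) and meets the locally free locus, which is \emph{open} in $\Pic^{=0}$; hence the line bundles are dense in $Z$ and $Z$ equals their closure $\overline{\Pic^0}(\cF\times_{\P^5}\cF_0/\cF_0)$. (For the Cohen--Macaulay claim alone even this equality is superfluous: the chain $\overline{\Pic^0}\subseteq Z\subseteq \Pic^{-0}$ already suffices.) The roles of \cite{lsv17} and \cite[Corollary B.2]{fantechi1999} are confined to the smoothness of $\overline{\Pic^0}(\cC_0/\cF_0)$, which was needed earlier to make $\gamma$ lci and hence to define $\gamma^*$ at all.

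The one genuine gap is in the codimension count. You treat only a general one-nodal $H$ in the discriminant $\Delta$ and assert that the deeper strata ``contribute codimension $\geq 2$ automatically''. This is not automatic: if the fiber of $\overline{\Pic^0}(\cF/\P^5)\to\P^5$ over a very singular $H$ were reducible, or of dimension larger than $5$, the non-locally-free locus over a codimension-$2$ stratum of $\Delta$ could a priori fail to have total codimension $2$. The paper closes this uniformly: the fibers of the source of $\gamma^*$ are irreducible by \cite[Theorem B]{arinkin13}, so by surjectivity of $\gamma^*$ every fiber of $\overline{\Pic^0}(\cF/\P^5)\to\P^5$ is irreducible; each fiber contains a line bundle, so its locally free locus is a dense open subset contained in $\Pic^0(F(Y_H))$, forcing the fiber to have dimension at most $h^1(\sO_{F(Y_H)})=5$ and the non-locally-free locus to be a proper closed subset of it. This gives codimension at least $2$ over all of $\Delta$ at once. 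You have the needed input (the surjectivity of $\gamma^*$ onto $\overline{\Pic^0}$), but the uniform fiberwise statement must be made explicit rather than restricted to the generic stratum.
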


\begin{proof}
    Since $ \overline{\Pic^0}( \overline{\Pic^0}(\cC_0 / \cF_0))$ is proper over $\cF_0$, the schematic image of $\gamma^*$ is closed and proper over $\cF_0$. 
    Clearly it contains the line bundles, and it is integral because so is $\overline{\Pic^0}( \overline{\Pic^0}(\cC_0 / \cF_0))$. 
    Therefore, it must be equal to $\overline{\Pic^0}(\cF \times_{\P^5} \cF_0/\cF_0)$, which a fortiori must be contained in $\Pic^{-0}(\cF \times_{\P^5} \cF_0/\cF_0)$. 
    By \cref{rem:BaseChangeToVG} this means that 
    \[
     \overline{\Pic^0}(\cF/\P^5) \times_B \cF_0 \subseteq \Pic^{-0}(\cF/\P^5) \times_B \cF_0,
    \]
    which, projecting on the first factor, gives the first statement. 

    As for the second part, notice that the fibers of $\overline{\Pic^0}(\overline{\Pic^0}(\cC_0/\cF_0))$ are irreducible by \cite[Theorem B]{arinkin13}.
    Since $\gamma^*$ is surjective onto $\overline{\Pic^0}(\cF \times_{\P^5} \cF_0/\cF_0)$, it follows that the same holds for the fibers of $\overline{\Pic^0}(\cF/\P^5)$.
    The complement of the locally free locus lives over the divisor in $\P^5$ which parameterizes singular cubics.
    Over each singular cubic it is a divisor in the fiber, therefore it has codimension 2 globally. 
\end{proof}

\begin{Prop}\label{thm:CompactifiedPicSmooth}
     Restriction induces an isomorphism
     \[
     i^* : \overline{\Pic^0}(\cF \times_{\P^5} \cF_0 /\cF_0) \cong \overline{P}_{\cF_0}, 
     \]
     which descends to an isomorphism 
     \[
        \overline{\Pic^0}(\cF/\P^5) \cong J_Y. 
     \]
     In particular, $\overline{\Pic^0}(\cF /\P^5)$ is a smooth HK manifold of type OG10, and the natural map to $\P^5$ is a Lagrangian fibration. 
\end{Prop}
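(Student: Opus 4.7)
The plan is to establish the isomorphism $i^*$ over $\cF_0$ first, and then descend along the smooth surjection $\cF_0 \to \P^5$ to recover $J_Y$. Smoothness, HK type OG10, and the Lagrangian fibration structure for $\overline{\Pic^0}(\cF/\P^5)$ then follow automatically from the corresponding properties of $J_Y$ proved in \cite{lsv17}, together with \cref{rem:BaseChangeToVG} and effective descent.

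\textbf{Step 1: $i^*$ factors through $\overline{P}_{\cF_0}$.} For each $(l',H) \in \cF_0$, restriction along $i$ recovers the classical Abel--Jacobi map $\Pic^0(F(Y_H)) \to \Pic^0(C_{l'})$, which lands in $\Prym(C_{l'}/D_{l'})$. This Prym condition $\iota^*(L|_{C_{l'}}) \cong (L|_{C_{l'}})^{\vee}$ for $L \in \Pic^0(F(Y_H))$ follows from the residual relation $l + \iota(l) + l' \sim H \cap \Span(l,l')$ on $F(Y_H)$ and the classical identification $\Pic^0(F(V)) \cong J(V) \cong \Prym(C_{l'}/D_{l'})$ for smooth cubic threefolds $V$ (Beauville, Murre). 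Since both the Prym condition and the Cohen--Macaulay extension are closed, and since by \cref{prop:CompactifiedPicCM} the locally free locus is dense in $\overline{\Pic^0}(\cF \times_{\P^5} \cF_0 /\cF_0)$ with complement of codimension $\geq 2$, $i^*$ factors through $\overline{P}_{\cF_0}$.

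\textbf{Step 2: $i^*$ is an isomorphism over $\cF_0$.} Both source and target are integral and $12$-dimensional, projective over $\cF_0$, with $\overline{P}_{\cF_0}$ smooth by \cite[Corollary 3.9]{lsv17} combined with \cite[Corollary B.2]{fantechi1999}. Over the smooth locus of the fibration, $i^*$ restricts to the classical Abel--Jacobi isomorphism, so $i^*$ is birational. To globalize this, I would combine Arinkin's self-duality \cite[Theorems A--C]{arinkin13} applied fiberwise to the integral planar curves $C_{l'}$---which identifies $\overline{\Pic^0}(\overline{P}_{\cF_0}/\cF_0) \cong \overline{P}_{\cF_0}$---with the pullback map $\gamma^*$ of \eqref{eq:Pullback}. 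This produces a morphism $\overline{P}_{\cF_0} \to \overline{\Pic^0}(\cF \times_{\P^5} \cF_0 /\cF_0)$, which is surjective by the proof of \cref{prop:CompactifiedPicCM}. A direct Poincaré-sheaf calculation, using that $\gamma = i^* \circ \alpha$ and the definition of $\alpha$, then gives $i^* \circ \gamma^* = \id_{\overline{P}_{\cF_0}}$. Hence $\gamma^*$ is a surjective closed embedding between integral schemes, so an isomorphism, with $i^*$ its inverse.

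\textbf{Step 3: Descent to $J_Y$.} The LSV construction \cite[Section 5]{lsv17} realizes $J_Y \to \P^5$ as the descent of $\overline{P}_{\cF_0} \to \cF_0$ along the smooth surjection $f: \cF_0 \to \P^5$. By \cref{rem:BaseChangeToVG}, the base change of $\overline{\Pic^0}(\cF/\P^5)$ along $f$ is $\overline{\Pic^0}(\cF \times_{\P^5} \cF_0/\cF_0)$, which by Step 2 is canonically isomorphic to $\overline{P}_{\cF_0}$ over $\cF_0$. Faithfully flat descent then yields $\overline{\Pic^0}(\cF/\P^5) \cong J_Y$ over $\P^5$; all remaining claims transfer from $J_Y$.

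\textbf{Main obstacle.} The delicate step is upgrading $i^*$ from a birational morphism to a genuine isomorphism, including along the boundary over singular cubic threefolds where both compactified Prym and compactified Picard acquire non-trivial strata. The decisive inputs are Arinkin's autoduality \cite{arinkin13} applied to the family $\cC_0 \to \cF_0$, and the LSV smoothness of $\overline{P}_{\cF_0}$ via the smoothness criterion \cite[Corollary B.2]{fantechi1999}; together they circumvent the need for a direct stratified analysis of $i^*$.
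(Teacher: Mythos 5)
Your Steps 1 and 3 follow the paper's proof: the factorization through $\oP_{\cF_0}$ via the classical Mumford/Beauville--Murre identification, and the descent to $J_Y$ by checking it over the open locus of smooth hyperplane sections and taking the closure of the graph. The divergence, and the problem, is in Step~2, which is the heart of the statement.

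Your mechanism for upgrading $i^*$ to an isomorphism rests on two unproved claims. First, you assert that the restriction of $\gamma^*$ to $\oP_{\cF_0}$ is surjective ``by the proof of \cref{prop:CompactifiedPicCM}''; but that proof only gives surjectivity of $\gamma^*$ from the full double dual $\overline{\Pic^0}(\overline{\Pic^0}(\cC_0/\cF_0))$, whose fibres are $11$-dimensional (the curves $C_l$ are \'etale double covers of plane quintics, hence of genus $11$), whereas the Prym sublocus has $5$-dimensional fibres; surjectivity of the restricted map is a genuinely new statement. Second, the identity $i^*\circ\gamma^* = \id_{\oP_{\cF_0}}$ is not ``a direct Poincar\'e-sheaf calculation'': one has $i^*\circ\gamma^* = (\gamma\circ i)^*$, and $\gamma\circ i : \cC_0 \to \overline{\Pic^0}(\cC_0/\cF_0)$ is the incidence correspondence $l \mapsto (I_{C_l}\otimes\sO(C_{l'}))|_{C_{l'}}$, \emph{not} the Abel--Jacobi embedding of $C_{l'}$ into its compactified Jacobian. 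Arinkin's autoduality therefore does not identify $(\gamma\circ i)^*$ with the identity; the composite is an endomorphism of the Prym induced by a degree~$5$ correspondence, and computing it (even over smooth threefolds) is precisely the classical content you are trying to avoid. The paper takes a shorter route here: having shown $i^*$ is proper, surjective and birational, it observes that fibrewise $i^*_{(l,H)}:\Pic^0(F(Y_H))\to\Prym(C_l/D_l)$ is a dominant homomorphism of connected $5$-dimensional algebraic groups, hence surjective with finite kernel; thus $i^*$ on the open Picard locus is birational and quasi-finite onto a smooth target, hence an isomorphism by Zariski's main theorem, and since the non-locally-free locus has codimension $2$ in the source while $\oP_{\cF_0}$ is smooth, purity of the exceptional locus (\cite[1.40]{debarre01}) forces $i^*$ to be an isomorphism everywhere. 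If you want to keep your Arinkin-based strategy, you would need to actually compute the action of the incidence correspondence on the Prym and prove surjectivity of $\gamma^*|_{\oP_{\cF_0}}$; as written, both steps are gaps.
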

   
\begin{proof}
    Over the open locus of smooth hyperplane sections, the restriction \eqref{eq:Restriction} factors through the relative compactified Prym by a result of Mumford, as explained in \cite[Proposition 5.3.10]{huybrechts23}. 
    Taking closures, we get 
     \begin{equation}
            i^* :  \overline{\Pic^0}(\cF \times_{\P^5} \cF_0/\cF_0) \to \oP_{\cF_0}.
    \end{equation}
    It is an isomorphism over the dense open of smooth hyperplane sections, and so by the properness of the source it is surjective. 
    
    We now show that it is an isomorphism everywhere. 
    On the fiber over $(l,H) \in \cF_0$, we have a morphism of algebraic groups
    \[
    i_{(l,H)}^*: \Pic^0(F(Y_H)) \to \Prym(C_l/D_l).
    \]
    Both are connected and 5-dimensional, and $i^*$ is dominant.
    General theory of algebraic groups implies that it is surjective with finite kernel. 
    It follows that 
    \[
        i^* : \Pic^0(\cF \times_{\P^5} \cF_0/\cF_0 ) \to \Prym(\cC_0/\cD_0)
    \]
    is surjective, birational, and quasi-finite. 
    Since the target is smooth, it is an isomorphism by Zariski's main theorem. 
    Therefore the pullback $i^*$ between the compactifications is an isomorphism over the locus of locally free sheaves, which has complement of codimension $2$ in the source.
    Since the target $\oP_{\cF_0}$ is smooth, it is an isomorphism (see \cite[1.40]{debarre01}).

    To see that $ i^*$ descends to an isomorphism
    \[
     \overline{Pic^0}(\cF/\P^5) \cong J_Y
    \]
    it is enough to see that it descends over an open subset. 
    Indeed, then the closure of the graph of the induced birational map will give an isomorphism, because it is an isomorphism after a smooth base change. 
    Over the locus of smooth hyperplane sections $U \subset \P^5$, the composition
    \[
     \Pic^0(\cF \times_{U} \cF_{0,U}/\cF_{0,U} ) \isomor \oP_{\cF_0}|_U =   J_Y \times_{U} \cF_{0,U}
    \]
    reduces to the ``Abel--Jacobi'' map induced by the universal line $\L_U \subset \cV_U \times_U \cF_U$, see \cite[Proposition 5.3.10]{huybrechts23} and \cite[Section 5]{lsv17}.
    This descends over $U$ by construction.
\end{proof}

\begin{Rem}\label{rem:FibersOfRelativePic}
In particular this says that $\overline{\Pic^0}(\cF/\P^5)$ commutes with taking fibers, i.e.
\[
\overline{\Pic^0}(\cF/\P^5)|_H = \overline{\Pic^0}(F(Y_H)).
\]
This is not true in general for a flat family $\mathcal{X} \to S$ of integral varieties, because taking the closure of the relative Picard could add components in the fiber. 
This happens for example for a degeneration of smooth curves to a singular curve with non-planar singularities. 
\end{Rem}

\begin{Rem}\label{rem:SmoothnessWithoutLsv}
Since the morphism $\cF_0 \to \P^5$ is smooth, surjective, with integral fibers, it follows that $\overline{\Pic^0}(\cF/\P^5)$ is smooth and flat over $\P^5$, even without using the descent results of \cite{lsv17}. 
We need these results to see that it is a HK manifold of type OG10.
\end{Rem}

\section{Nodal cubic threefolds}\label{sec:NodalCubics}
In the previous section we studied the structure of the reduced variety $\overline{\Pic^0}(\cF/\P^5)$. 
In this section we build the foundation for understanding its natural scheme structure, at least along the locus of nodal cubics. 

Let $V \subset \P^4$ be a cubic threefold with exactly one ordinary double point $p$.
Projection from the node gives a rational map $V \dashrightarrow \P^3$, which is resolved by blowing-up the node.
The geometry is summarized in the following diagram.
\[\begin{tikzcd}
	& Q & {\wt{V}} & E \\
	{\{p\}} & V && {\P^3} & D
	\arrow["b"', from=1-3, to=2-2]
	\arrow["\pi", from=1-3, to=2-4]
	\arrow[hook, from=2-1, to=2-2]
	\arrow[from=1-2, to=2-1]
	\arrow[hook, from=1-2, to=1-3]
	\arrow[hook', from=2-5, to=2-4]
	\arrow[from=1-4, to=2-5]
	\arrow[hook', from=1-4, to=1-3]
	\arrow[dashed, from=2-2, to=2-4]
\end{tikzcd}\]
The exceptional divisor $Q$ of the blow-up $b : \wt{V} \to V$ is a non-singular quadric.
The resolution of the projection $\pi : \wt{V} \to \P^3$ is the blow-up of a smooth genus 4 curve $D \subset \P^3$. 
This curve parameterizes lines in $V$ through the node, and it is the intersection of a cubic and a quadric. 
Details can be found in \cite{clemens72}.

As recalled in \cref{rem:VeryGoodImpliesSmooth}, the surface $F(V)$ is singular along $D$.
It is resolved by the normalization, which is given by 
\begin{equation}\label{eq:NormalizationOfFano}
    n : D^{(2)} \to F(V), \quad l + l' \mapsto l'',
\end{equation}
where $l''$ is the residual line in the intersection $\Span(l,l') \cap  V$. 
There are two copies of $D$ embedded in $D^{(2)}$, obtained via the two rulings on the non-singular quadric $Q$. 
More specifically, for each point $x \in D$, there are two lines in $Q$ that contain $x$, one for each ruling. 
Since $D$ is the intersection of $Q$ and a cubic, each of these lines meets $D$ in two more points, giving a point in $D^{(2)}$. 
The normalization glues together the images $D_1$ and $D_2$ of these two embeddings. 

\begin{Lem}[{{\cite[Corollary 4.1, Inters 4.4]{geer10}}}]\label{lem:DegreeOfNormals}
    The divisors $D_1$ and $D_2$ are algebraically equivalent in $D^{(2)}$ but not linearly equivalent, and their intersection number is $(D_1.D_2) = 0$.
\end{Lem}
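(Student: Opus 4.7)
\emph{Strategy.} My plan is to compute the classes of $D_1$ and $D_2$ in $\NS(D^{(2)})\otimes\Q$ using a basis coming from the Abel--Jacobi map, and to deduce all three assertions from this. I will work with $F = \{x_0\} + D$, a fiber of the Albanese (for some $x_0 \in D$), together with $\Theta = \alpha^*\theta$, the pullback via the Abel--Jacobi map $\alpha \colon D^{(2)} \to J$ of a theta divisor on $J = J(D)$. Since $D$ is non-hyperelliptic of genus $4$, the morphism $\alpha$ is birational onto the Brill--Noether locus $W_2 \subset J$, and applying the Poincar\'e formula in $J$ yields the intersection pairing $F^2 = 1$, $F \cdot \Theta = g = 4$ and $\Theta^2 = g(g-1) = 12$.

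\emph{Intersection numbers of $D_i$.} I would then compute $D_i \cdot F$ and $D_i \cdot \Theta$ directly. A point of $F \cap D_i$ is a divisor $x_0 + y$ with $x_0 + y + z \in |R_i|$ for some $z \in D$; since $|R_i|$ is a basepoint-free pencil and $h^0(R_i - x_0) = 1$ for generic $x_0$, there is a unique divisor $l_0 = x_0 + y_1 + y_2 \in |R_i|$ containing $x_0$, so $F \cap D_i = \{x_0 + y_1,\, x_0 + y_2\}$ and $D_i \cdot F = 2$. For the other intersection: under Abel--Jacobi, $D_i$ maps isomorphically onto the translate $[R_i] - D \subset J$, so by the Poincar\'e formula $D_i \cdot \Theta = D \cdot \theta = g = 4$. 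These numbers are independent of $i$, hence in $\NS(D^{(2)}) \otimes \Q$ one obtains $[D_1] = [D_2] = -2F + \Theta$, and consequently $D_1 \cdot D_2 = 4F^2 - 4 F \cdot \Theta + \Theta^2 = 4 - 16 + 12 = 0$. Since $H^2(D^{(2)},\Z)$ is torsion-free ($H_1(D^{(2)},\Z) \cong \Z^{2g}$ via the Albanese), this equality lifts from $\NS\otimes \Q$ to $\NS$, giving algebraic equivalence.

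\emph{Non-linear equivalence and main obstacle.} To show $D_1 \not\sim D_2$, I would restrict $\cO(D_1 - D_2)$ to $F \cong D$. From the description of $F \cap D_i$ above the divisor $y_1 + y_2$ is linearly equivalent to $R_i - x_0$ on $D$, so $\cO(D_i)|_F \cong \cO_D(R_i - x_0)$ and therefore $\cO(D_1 - D_2)|_F \cong \cO_D(R_1 - R_2) \in \Pic^0(D)$ is non-trivial, because $R_1 \neq R_2$ are the two distinct $g^1_3$'s on the canonical curve $D \subset Q$ coming from the two rulings of the smooth quadric. The main subtlety is that the argument identifies $[D_1] = [D_2]$ only in the rational span of $F$ and $\Theta$, so it implies equality in $\NS(D^{(2)})$ only once one knows that $F$ and $\Theta$ generate $\NS(D^{(2)}) \otimes \Q$. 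This holds for a general smooth genus $4$ curve (so Picard rank $2$), and for the arbitrary $D$ arising in the construction one reduces to this case by specialization, since algebraic equivalence is closed in flat families of smooth projective varieties.
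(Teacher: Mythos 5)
The paper does not prove this lemma at all: it is quoted directly from van der Geer--Kouvidakis \cite{geer10}, whose Corollary 4.1 and intersection table give exactly these statements. Your proposal supplies an actual proof, and the computation itself is sound: the intersection numbers $F^2=1$, $F\cdot\Theta=4$, $\Theta^2=12$, $D_i\cdot F=2$, $D_i\cdot\Theta=4$ are all correct, the resulting class $-2F+\Theta$ is consistent (e.g.\ with adjunction, since $K_{D^{(2)}}=F+\Theta$ gives $D_i^2=0$ for a genus $4$ curve of that class), and the restriction of $\cO(D_1-D_2)$ to $F\cong D$ being $\cO_D(R_1-R_2)\neq\cO_D$ (the two $g^1_3$'s are distinct precisely because the quadric $Q$ is smooth, i.e.\ the node is ordinary) correctly rules out linear equivalence.

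The one genuinely delicate point is the one you flag yourself: pairing against $F$ and $\Theta$ only determines $[D_i]$ if these two classes span $\NS(D^{(2)})\otimes\Q$, and the curves $D$ occurring here form at most a $4$-dimensional family inside the $9$-dimensional $M_4$, so you cannot invoke generality of $D$. Your specialization fix does work, but as stated it is loose: what one actually uses is that the space of smooth $(3,3)$-curves on a fixed smooth quadric is connected and dominates $M_4$, that over it the two rulings (hence $D_1^t,D_2^t$) are globally defined, and that $t\mapsto[D_1^t]-[D_2^t]$ is a flat section of $R^2\pi_*\Z$, so its vanishing at a general point forces vanishing everywhere; homological and algebraic equivalence then agree for divisors, and $D_1\cdot D_2$ is likewise constant in the family. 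Two cleaner alternatives avoid the issue entirely: (i) $D_i$ is the locus of degree-$2$ divisors subordinate to the pencil $|R_i|$, and the class of such a locus in $C^{(2)}$ is $(d-g-1)x+\theta=\theta-2x$ in integral cohomology for \emph{every} curve (ACGH, Ch.~VIII), which gives $[D_1]=[D_2]$ with no genericity assumption; (ii) for the intersection number, $D_1\cap D_2=\emptyset$ outright, since a common point $y+z$ would have to lie on a line of each ruling, forcing $y=z$ to be the intersection point of the two lines and both lines to be tangent to $D$ there, which is impossible. With either of these substituted for the rank-$2$ assumption, your argument is a complete and self-contained proof of the cited statement.
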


\subsection{Cohen--Macaulay sheaves on $F(V)$}
The compactified Picard of the surfaces of lines on nodal cubic threefolds was first studied in \cite[Section 8]{geer10}, where it is shown to be geometrically similar to the compactified Picard of nodal curves.
For a review on the compactified Picard of integral planar curves we refer to \cite{cook98,lsv17,rego80}.
The essential fact that we want to generalize is the following:
If $G$ is a torsion-free sheaf of rank 1 on a locally planar integral curve $C$, there is a line bundle $L$ on its normalization $n: \wt{C} \to C$ such that $n_*L = G$.

\begin{Defi}\label{def:PlanarInCod1}
    Let $Z$ be a reduced but possibly reducible complex variety $Z$.
    We say that it has \textit{planar singularities in codimension} $1$ if it is singular along a smooth divisor $D \subset Z$, and étale locally near every $d \in D$ there is a pointed curve $(C,0)$ with only a planar singularity in $0$, and an isomorphism 
    \[
       Z \cong_{\text{loc}} D \times C \ \text{ such that }  D \mapsto D \times \{ 0\}.
    \]
    We say that $Z$ has \textit{nodal singularities in codimension} $1$ if moreover $C$ has a simple node in $0$.
\end{Defi}

If $V$ is a cubic threefold with a single node, then $F(V)$ has nodal singularities in codimension 1 by \cite[Section 7]{clemens72}.
If $V$ has a unique singular point of type ADE, then its surface of lines is singular along the smooth curve $D$ of lines through the singular point by \cite[Corollary 1.11]{altman77}.

\begin{Rem}\label{rem:NormalizationOfPlanarSingularities}
    If $Z$ has planar singularities in codimension $1$, then it is resolved by its normalization $n: \wt{Z} \to Z$, which étale locally is given by $D \times \wt{C} \to D \times C$.
    The inverse image $\wt{D} = n^{-1}(D)$ may have some non-reduced connected components, but the restriction of the normalization to its reduction $n|_{\wt{D}_{\rm{red}}}: \wt{D}_{\rm{red}} \to D$ is étale.
    In particular, $\wt{D}_{\rm{red}}$ is smooth. 
    If the singularities are nodal, then $\wt{D}$ is also reduced.
\end{Rem} 

\begin{Prop}\label{prop:CMisPushOfLineBundleGeneral}
    Let $Z$ be a complex variety with planar singularities in codimension $1$.
    Then, for every sheaf $G$ on $Z$ which is $S_2$ of rank $1$ but not locally free, there exists a line bundle $L \in \Pic(\wt{Z})$ such that $n_*L = G$.
    In particular, $G$ is Cohen--Macaulay. 
\end{Prop}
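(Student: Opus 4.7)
The plan is to construct $L$ directly as a line bundle on $\wt Z$ whose pushforward recovers $G$. On the smooth locus $U = Z \setminus D$, the sheaf $G|_U$ is rank $1$ and reflexive on a smooth variety, hence a line bundle, and since $n$ restricts to an isomorphism over $U$ we obtain a line bundle $L^\circ = n^*(G|_U)$ on $\wt U = n^{-1}(U)$. I would define $L$ to be the reflexive hull $(n^*G)^{**}$ on $\wt Z$; because $\wt Z$ is smooth, this is automatically a line bundle extending $L^\circ$. The adjunction unit then gives a natural map $G \to n_*n^*G \to n_*L$, which is injective because it is an isomorphism over the dense open $U$ and $G$ is torsion-free.

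To finish, I would show this injection is an isomorphism. Both $G$ and $n_*L$ are rank $1$ and $S_2$ on $Z$: for $G$ by hypothesis, and for $n_*L$ because $L$ is a line bundle on the smooth $\wt Z$ (hence Cohen--Macaulay) and depth is preserved under the finite morphism $n$. Two $S_2$ sheaves agreeing on an open set with codimension $\geq 2$ complement are equal, so it suffices to verify the agreement at the generic points of each irreducible component of $D$, since then the equality $G = n_*L$ holds outside a subset of codimension at least $2$.

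This is the crux of the argument. Étale locally near such a generic point, $Z \cong D \times C$ with $C$ a planar curve singularity in $0$; the local ring of $Z$ at the generic point of $D$ is a one-dimensional local ring obtained from $\sO_{C,0}$ by extending scalars to the function field of (the étale cover of) $D$. Rank $1$ torsion-free modules over such a ring correspond to fractional ideals and are classified by partial normalizations via their endomorphism ring $E(I) = \{f \in K(R) : fI \subset I\}$, while $n_*\sO_{\wt Z}$ corresponds to the full normalization. Since $G$ is not locally free, the $S_2$ property of $G$ on the full germ $D\times C$ (which is strictly stronger than torsion-freeness on a slice $\{d\}\times C$) forces $G$ locally at the generic points of $D$ to coincide with $n_*\sO_{\wt Z}$, hence with $n_*L$; combined with the previous paragraph, this gives the desired global isomorphism. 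The main obstacle is precisely this local identification: it is immediate in the nodal case, where the only rank $1$ torsion-free modules up to isomorphism are $R$ and $\wt R$ and the hypothesis rules out the former, but for more general planar singularities one must exploit the ambient $S_2$ condition to eliminate the intermediate partial normalizations that otherwise occur.
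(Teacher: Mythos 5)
Your construction of $L=(n^*G)^{\vee\vee}$, the adjunction map $\varphi\colon G\to n_*L$, the verification that $n_*L$ is Cohen--Macaulay, and the reduction (via $S_2$-ness of both sides) to checking $\varphi$ at the generic points of the components of $D$ all coincide with the paper's argument; your depth argument for $n_*L$ via finiteness of $n$ is a clean alternative to the paper's local product/flat base change computation. The problem is exactly the step you single out as the crux and leave unproved: that the ambient $S_2$ condition forces the stalk of $G$ at a generic point $\eta$ of $D$ to be the full normalization module rather than an intermediate partial normalization. This is a genuine gap, and no argument of the kind you sketch can close it in the stated generality. At $\eta$ the local ring $\sO_{Z,\eta}$ is one-dimensional, so $S_2$ there means only depth $\geq 1$, i.e.\ torsion-freeness; the depth-two condition at closed points of $D$ places no further constraint on the stalk at $\eta$. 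Concretely, take $Z=D\times C$ with $C$ the tacnode $y^2=x^4$ and $G=I_{D\times\{0\}}$: this is Cohen--Macaulay (hence $S_2$) of rank $1$ and not locally free, but its stalk at $\eta$ is the maximal ideal $\mathfrak{m}$ of $R=\sO_{Z,\eta}$, whose endomorphism ring is the nodal partial normalization $R[y/x]\subsetneq \wt{R}$. Since the endomorphism ring is an isomorphism invariant, $\mathfrak{m}\not\cong\wt{R}_\eta\cong (n_*L)_\eta$, so $G\neq n_*L$ for any $L$. The intermediate partial normalizations you hope to eliminate genuinely occur among $S_2$ sheaves.

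For comparison, the paper closes this step by arranging that $G$ and $n_*L$ are flat over $D$ and that the double dual commutes with restriction to slices $\{d\}\times C$, and then invoking the curve-level statement that a torsion-free rank-one sheaf on a planar curve is recovered as $n_*\bigl((n^*G|_d)^{\vee\vee}\bigr)$. The example above shows that this curve-level statement also holds only when every non-free torsion-free rank-one module on the slice is a line bundle pushed forward from the full normalization --- i.e.\ in the nodal case, which is the only case used downstream (\cref{lem:ExtOfPush}, \cref{cor:ExtOnNodal}, \cref{cor:DimensionForNodal}). So your instinct that the nodal case is immediate and the general case is the obstacle is correct; but the obstacle is not removable, and a complete proof must either restrict to nodal singularities in codimension one or add a hypothesis ruling out the intermediate modules.
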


\begin{proof}
    This amounts to gluing the family version of the usual statement for locally planar curves. 
    Given $G$, we define the line bundle as 
    \[
     L \coloneqq (n^*G)^{\vee \vee} \in \Pic(\wt{Z}).
    \]
    It is locally free, because it is reflexive of rank $1$ on a smooth variety.
    There is a natural map $n^*G \to L$, which by adjunction gives 
    \begin{equation}\label{eq:MorphismToPushforward}
         \varphi: G \to n_*L.
    \end{equation}
    First we prove that $n_*L$ is a Cohen--Macaulay sheaf. 
    Away from $D$ it is a line bundle on a smooth variety; so assume $Z = D \times C$ and choose a point $d \in D$.
    Up to taking a smaller neighborhood of $d$ in the étale topology, we may assume that $L = p_2^*(L')$ for some $L' \in \Pic(\wt{C})$. 
    Then, by flat base change, we have 
    \[
    n_*L = n_*(p_2^*L') = p_2^*(n_*L') \in \Coh(D \times C)
    \]
    This is Cohen--Macaulay as it is the pullback of the Cohen--Macaulay sheaf $n_*L'$ via the flat map $p_2$.
    
    Since both $G$ and $n_*L$ are $S_2$, it suffices to see that $\varphi$ is an isomorphism away from codimension $2$.
    Away from $D$ it is an isomorphism because $n$ is, and $G$ is $S_2$.
    Hence we can assume that $Z = D \times C$ as in the statement. 
    By generic flatness, there is an open in $D$ over which $G$ and $n_*L$ are flat. 
    Since the closed complement of this subset has codimension $2$ in $Z$, we may assume that $G$ and $n_*L$ are flat over $D$.
    We may also assume that $n^*G \to (n^*G)^{\vee \vee}$ is surjective, as its cokernel is supported in codimension $2$.  
    
    For every $d \in D$, the restriction $G|_d$ is a torsion-free sheaf on $C$ by flatness.
    Since $(n_*L)|_d = n_*(L|_d)$, because $n$ is finite, we conclude if we show there is a canonical identification 
    \begin{equation}\label{eq:PushforwardforFibers}
         (n^*G|_d)^{\vee \vee} \cong L|_d.
    \end{equation}
    Indeed, $G|_d$ is a torsion-free sheaf on a planar curve, for which it is true that $\varphi|_d$ is an isomorphism $G|_d \cong n_*((n^*G|_d)^{\vee \vee})$. 
    
    To prove \eqref{eq:PushforwardforFibers} notice that there is a natural map
    \[
    (n^*G|_d)^{\vee \vee} \to (n^*G)^{\vee \vee}|_d = L|_d,
    \]
    obtained by restriction from $n^*G \to (n^*G)^{\vee \vee}$ and factoring through the double dual. 
    This map is always surjective, because it is a factorization of a surjective map, and generically is an isomorphism. 
    Since the source is torsion-free, it must be an isomorphism. 
\end{proof}

\begin{Lem}\label{lem:IndependenceOnL}
    Let $Z$ be an irreducible projective variety, and let $n: \wt{Z} \to Z$ be a finite morphism from an irreducible projective variety $\wt{Z}$. 
    Then for any line bundle $L \in \Pic(\wt{Z})$, there is an isomorphism
    \[
     \RcHom(n_*L,n_*L) \cong \RcHom(n_*\sO_{\wt{Z}},n_*\sO_{\wt{Z}}).
    \]
    In particular $\Ext^i(n_*L,n_*L) \cong \Ext^i(n_*\sO_{\wt{Z}},n_*\sO_{\wt{Z}})$ for every $i \in \Z$.
\end{Lem}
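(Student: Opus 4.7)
The plan is to translate both sides to computations on $\wt{Z}$ via the $(n^{*},n_{*})$ adjunction and then to leverage the invertibility of $L$. The sheaf-theoretic adjunction gives, for any coherent $F$ on $\wt Z$,
\[
    \RcHom_{Z}(n_{*}F,\;n_{*}L)\;\cong\;\mathbf{R}n_{*}\,\RcHom_{\wt{Z}}\bigl(\mathbf{L}n^{*}n_{*}F,\;L\bigr).
\]
Taking $F=L$ and using that $L$ is a line bundle, so that $\RcHom_{\wt Z}(-,L)\cong\RcHom_{\wt Z}(-\otimes L^{-1},\sO_{\wt Z})$, one obtains
\[
    \RcHom_{Z}(n_{*}L,n_{*}L)\;\cong\;\mathbf{R}n_{*}\,\RcHom_{\wt{Z}}\!\bigl(\mathbf{L}n^{*}n_{*}L\otimes L^{-1},\;\sO_{\wt{Z}}\bigr),
\]
and analogously for $L=\sO_{\wt{Z}}$. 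Thus the whole statement reduces to producing a canonical isomorphism
\[
    \mathbf{L}n^{*}n_{*}L\otimes L^{-1}\;\cong\;\mathbf{L}n^{*}n_{*}\sO_{\wt{Z}}\quad\text{in }\mathrm{D}^{\mathrm{b}}(\wt{Z}),
\]
after which $\mathbf{R}n_{*}$ transfers the identification to $Z$.

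For this key step, the plan is to work on the fiber square formed by $n$. Letting $W:=\wt{Z}\times_{Z}\wt{Z}$ with projections $p_{1},p_{2}\colon W\to\wt{Z}$, derived base change gives $\mathbf{L}n^{*}n_{*}L\cong\mathbf{R}(p_{1})_{*}(p_{2}^{*}L)$, and the projection formula yields
\[
    \mathbf{L}n^{*}n_{*}L\otimes L^{-1}\;\cong\;\mathbf{R}(p_{1})_{*}\bigl(p_{2}^{*}L\otimes p_{1}^{*}L^{-1}\bigr).
\]
The line bundle $\sM:=p_{2}^{*}L\otimes p_{1}^{*}L^{-1}$ on $W$ restricts canonically to $\sO_{\wt{Z}}$ along the diagonal $\Delta\cong\wt{Z}\hookrightarrow W$. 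Since $n$ in our setting is a finite birational morphism (the normalization), $W$ decomposes as the union of $\Delta$ and components lying over the non-isomorphism locus of $n$; over the dense open where $n$ is an isomorphism, $W$ agrees with $\Delta$ and $\sM=\sO_{W}$. One then checks that $\mathbf{R}(p_{1})_{*}\sM\cong\mathbf{R}(p_{1})_{*}\sO_{W}$, using the projectivity and irreducibility of $\wt{Z}$ to control the behavior of $\sM$ on the remaining components.

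The main obstacle is precisely this last identification: although $\sM$ is trivial along $\Delta$, it may be nontrivial on the off-diagonal components of $W$, and the conclusion requires that the contributions of these components to $\mathbf{R}(p_{1})_{*}\sM$ and $\mathbf{R}(p_{1})_{*}\sO_{W}$ match. I expect that the argument can be carried out by analyzing $W$ component-by-component using the local geometry of the singular locus, together with the invertibility of $L$; in the setup of the paper (normalization of a surface with planar singularities in codimension one, where the fiber product has a particularly simple structure) this should follow from a direct local calculation glued together via the irreducibility of $\wt{Z}$.
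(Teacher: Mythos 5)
Your proposal has a genuine gap at its central step, and it is the step where all the content of the lemma lives. After the (correct) adjunction reductions, everything hinges on the claim that $\mathbf{L}n^{*}n_{*}L\otimes L^{-1}\cong\mathbf{L}n^{*}n_{*}\sO_{\wt Z}$, equivalently that $\mathbf{R}(p_1)_*\sM\cong\mathbf{R}(p_1)_*\sO_W$; you explicitly leave this to an ``expected'' component-by-component analysis of $W$. But this cannot be closed by general arguments: for the degree-two cover $n:\P^1\to\P^1$ and $L=\sO(1)$ one has $n_*L\cong\sO^{\oplus 2}$ while $n_*\sO\cong\sO\oplus\sO(-1)$, so $n^{*}n_{*}L\otimes L^{-1}\cong\sO(-1)^{\oplus 2}$ is not isomorphic to $n^{*}n_{*}\sO\cong\sO\oplus\sO(-2)$. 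Hence whatever finishes your argument must use something specific to the situation (birationality of $n$, the structure of the singular locus), and that input is exactly what is missing. There is also an earlier technical problem: derived base change along the square $W=\wt Z\times_Z\wt Z$ requires $n$ to be Tor-independent of itself over $Z$, which fails when $n$ is the normalization of a singular variety; there $\mathbf{L}n^{*}n_{*}L$ has nonzero cohomology in infinitely many negative degrees and is not computed by $\mathbf{R}(p_1)_*p_2^{*}L$ on the honest fiber product, so the reduction to a line bundle $\sM$ on $W$ is not available as stated.

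The paper's proof takes a completely different, global route that sidesteps all of this. It assembles the sheaves $n_*M$, $M\in\Pic(\wt Z)$, into a single family $\sG=(n\times\id)_*\sL$ over $\Pic(\wt Z)$, where $\sL$ is a universal line bundle, and uses that translation by $L$ on $\Pic(\wt Z)$ carries $\sG$ to itself up to tensoring by a line bundle pulled back from $\Pic(\wt Z)$ (uniqueness of the universal bundle up to such a twist). Since that twist cancels inside $\RcHom(\sG,\sG)$, the relative endomorphism complex is translation-invariant, and restricting to the points $[\sO_{\wt Z}]$ and $[L]$ gives the isomorphism; no local analysis of $\wt Z\times_Z\wt Z$ is needed. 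If you want to salvage your approach you would have to prove the comparison on $W$ by hand in the relevant geometric situation; the family argument over $\Pic(\wt Z)$ is the efficient way around that obstacle.
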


\begin{proof}
    Let $\sL$ be a universal line bundle on $\wt{Z} \times \Pic(\wt{Z})$, and consider $\sG \coloneqq (n \times \id)_*(\sL) \in \Coh(Z \times \Pic(\wt{Z}) )$.
    It is flat with respect to the second projection, and commutes with base change because $n$ is finite, so $\sG|_{L} = n_*L$ for every $L \in \Pic(\wt{Z})$.
    The complex $ \RcHom (\sG,\sG)$ is compatible with (derived) restriction. 
    So, for every $L \in \Pic(\wt{Z})$ we have 
    \[
     \RcHom (\sG,\sG)|_{\wt{Z} \times \{ L \}} = \RcHom(n_*L,n_*L).
    \]
    Let $m_{L}$ be the automorphism of $\Pic(\wt{Z})$ given by tensor product with $L$. 
    We have 
    \[
    (\id \times m_{L})_*\sG=  (n \times \id)_*((\id \times m_{L})_*\sL) = (n \times \id)_*(\sL \times p_2^*\rho(L)) = \sG \otimes p_2^*\rho(L),
    \]
    for some line bundle $\rho(L)$ on $\Pic(\wt{Z})$.
    This is because two universal line bundles are unique up to pullback of a line bundle on $\Pic(\wt{Z})$. 
    Therefore we have 
    \[
      (\id \times m_{L})_*\RcHom (\sG,\sG) = \RcHom ( (\id \times m_{L})_*\sG, (\id \times m_{L})_*\sG) = \RcHom (\sG,\sG).
    \]
    Restricting to $\sO_{\wt{Z}}$ gives 
    \[
    \RcHom(n_*L,n_*L) =  (\id \times m_{L})_*\RcHom (\sG,\sG)|_{\sO_{\wt{Z}}} \cong  \RcHom (\sG,\sG)|_{\sO_{\wt{Z}}} = \RcHom(n_*\sO_{\wt{Z}},n_*\sO_{\wt{Z}}).
    \]
    Taking (derived) global sections and cohomology gives the second result. 
\end{proof}

We will apply the above result to normalizations, closed embeddings, and compositions of the two. 

\subsection{Towards smoothness}
Our goal is to compute the dimension of the space of deformations $\Ext^1_X(i_*G,i_*G)$, where $Z$ is the surface of lines on a nodal cubic threefold, $i:Z \hookrightarrow X$ its embedding inside the variety of lines of a cubic fourfold, and $G$ is a Cohen--Macaulay sheaf of rank $1$ on $Z$.
In general, there exists a spectral sequence 
\begin{equation}\label{eq:SpectralSequenceNormal}
   E^{p,q}_2 = \Ext^p(G, G \otimes \bwed^q N_{Z/X}) \implies \Ext^{p+q}(i_*G,i_*G),
\end{equation}
because $i$ is a regular embedding.
In this case the normal bundle is the dual of the tautological bundle $\sS^{\vee}$.

The surface of lines on a nodal cubic threefold has nodal singularities in codimension $1$.
If $G$ is not locally free, we can write $G = n_*L$ for $L \in \Pic(\wt{Z})$ by \cref{prop:CMisPushOfLineBundleGeneral}. 
By \cref{lem:IndependenceOnL} it suffices to consider the case $L = \sO_{\wt{Z}}$.
The first terms of the spectral sequence \eqref{eq:SpectralSequenceNormal} are 
\begin{equation}\label{eq:FirstTermsNormal}
    0 \to \Ext^1(n_*\sO_{\wt{Z}},n_*\sO_{\wt{Z}}) \to \Ext^1_X(i_*n_*\sO_{\wt{Z}},i_*n_*\sO_{\wt{Z}}) \to \Hom(n_*\sO_{\wt{Z}},n_*\sO_{\wt{Z}} \otimes \sS^{\vee}).
\end{equation}
Our goal is to show that the middle space has dimension $10$. 
We will conclude this in \cref{lem:SmoothnessCod3}, for now we bound its dimension by $11$.

We start by computing the leftmost space. 
Here we use the local-to-global spectral sequence to get 
\begin{equation}\label{eq:LocalToGlobalFirstTerms}
 0 \to H^1(Z,\cEnd(n_*\sO_{\wt{Z}})) \to \Ext^1(n_*\sO_{\wt{Z}},n_*\sO_{\wt{Z}}) \to H^0(Z,\cExt^1(n_*\sO_{\wt{Z}}n_*\sO_{\wt{Z}})).
\end{equation}
We start with the space on the right.

\begin{Lem}\label{lem:ExtOfPush}
    Let $Z$ be a variety with nodal singularities in codimension $1$, and let $n: \wt{Z} \to Z$ be its normalization.
    For any line bundle $L \in \Pic(\wt{Z})$, there are isomorphisms of rank $2$ bundles on $D$
    \[
     \cExt^1( n_*L, n_*L) \cong N_{D/Z} \cong n_*\sO_{\wt{D}}(\wt{D}),
    \]
    where $\wt{D} \coloneqq n^{-1}(D)$. 
\end{Lem}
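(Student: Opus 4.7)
The plan is to reduce to an étale-local computation on a product $D \times C$ with $C$ a nodal plane curve, where both sides can be computed explicitly. Since Lemma \ref{lem:IndependenceOnL} is proved at the level of $\RcHom$-complexes, passing to cohomology sheaves lets us assume $L = \sO_{\wt{Z}}$. The statement being étale-local on $Z$, Definition \ref{def:PlanarInCod1} lets us further assume $Z = D \times C$ with $C = \mathrm{Spec}\, k[x,y]/(xy)$ having a single node at $p$, and $\wt{Z} = D \times \wt{C}$ with $n$ pulled back along $p_2$ from the normalization $\wt{C} \to C$. Flat base change along $p_2$ then reduces $\cExt^1_Z(n_*\sO_{\wt{Z}}, n_*\sO_{\wt{Z}})$, $N_{D/Z}$, and $n_*\sO_{\wt{D}}(\wt{D})$ to the pullbacks of their analogues on the affine nodal curve.

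On the curve $C$, I would apply $\cHom(-, n_*\sO_{\wt{C}})$ to the conductor sequence $0 \to \sO_C \to n_*\sO_{\wt{C}} \to k(p) \to 0$. Using $\cExt^{\geq 1}_C(\sO_C, -) = 0$ together with the identification $\cHom(n_*\sO_{\wt{C}}, n_*\sO_{\wt{C}}) \cong n_*\sO_{\wt{C}}$, which sends $\varphi$ to $\varphi(1)$ and is an isomorphism because $n_*\sO_{\wt{C}}$ is an $\sO_C$-algebra with unit, gives
\[
    \cExt^1_C(n_*\sO_{\wt{C}}, n_*\sO_{\wt{C}}) \cong \cExt^1_C(k(p), n_*\sO_{\wt{C}}).
\]
I would then compute the right-hand side using the $2$-periodic minimal free resolution of $k(p)$ over $k[x,y]/(xy)$, whose first syzygies are $(y,0)$ and $(0,x)$. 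A direct calculation gives a rank-$2$ skyscraper at $p$ whose two summands correspond to the two branches of the node.

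For the normal-bundle identifications, $m_p^2 = (x^2, xy, y^2) = (x^2, y^2)$ in $\sO_{C,p}$, so $N_{p/C} = (m_p/m_p^2)^\ast = k\partial_x \oplus k\partial_y$ has rank $2$. Since $\wt{C}$ is smooth, $n_*\sO_{\wt{p}}(\wt{p}) = T_{p_1}\wt{C} \oplus T_{p_2}\wt{C}$, and the differential $dn$ maps this to $T_pC = N_{p/C}$, giving an isomorphism precisely because the node has two distinct tangent directions. Matching the basis vector $\partial_x$ (respectively $\partial_y$) to the extension class produced by the first syzygy $(y,0)$ (respectively $(0,x)$) yields the identification $N_{p/C} \cong \cExt^1_C(n_*\sO_{\wt{C}}, n_*\sO_{\wt{C}})$. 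Pulling back along $p_2$ gives both isomorphisms in the local model.

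The main obstacle is checking that the local isomorphisms are intrinsic enough to glue across overlapping étale charts. For $n_*\sO_{\wt{D}}(\wt{D}) \cong N_{D/Z}$ this is automatic: the map is the pushforward of the canonical morphism $n_*N_{\wt{D}/\wt{Z}} \to N_{D/Z}$ induced by $n^*\cI_D \to \cI_{\wt{D}}$, which is coordinate-free. For $N_{D/Z} \cong \cExt^1(n_*\sO_{\wt{Z}}, n_*\sO_{\wt{Z}})$, I would construct the global map via the deformation-theoretic interpretation: a normal vector to $D$ in $Z$ corresponds to a first-order deformation of $Z$ smoothing the nodal stratum, and the induced flat deformation of $n_*\sO_{\wt{Z}}$ is an element of the $\cExt^1$-sheaf. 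The étale-local computation above then shows both canonical maps are isomorphisms.
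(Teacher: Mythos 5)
Your \'etale-local computation is correct, and it takes a genuinely different route from the paper's: you apply $\cHom(-,n_*\sO_{\wt C})$ to the normalization sequence $0 \to \sO_C \to n_*\sO_{\wt C} \to k(p) \to 0$ and use the $2$-periodic resolution of $k(p)$ over $k[x,y]/(xy)$, whereas the paper first identifies $n_*\sO_{\wt Z} \cong I_D^{\vee}$ (hence $\RcHom(n_*\sO_{\wt Z},n_*\sO_{\wt Z}) \cong \RcHom(I_D,I_D)$, using that $n_*\sO_{\wt Z}$ is CM and $I_D$ is reflexive) and then reads off $\cExt^1(I_D,I_D) \cong \cHom(I_D,\sO_D) = N_{D/Z}$ from the long exact sequence of $0 \to I_D \to \sO_Z \to \sO_D \to 0$. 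Your reduction of $\cExt^1(n_*\sO_{\wt C},n_*\sO_{\wt C})$ to $\cExt^1(k(p),n_*\sO_{\wt C})$ is fine, as is the identification $N_{D/Z} \cong n_*\sO_{\wt D}(\wt D)$ via the canonical codifferential map, which is exactly the paper's argument.

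The genuine gap is in how you globalize the isomorphism $N_{D/Z} \cong \cExt^1(n_*L,n_*L)$. The map you propose --- ``a normal vector to $D$ in $Z$ corresponds to a first-order deformation of $Z$ smoothing the nodal stratum, and the induced flat deformation of $n_*\sO_{\wt Z}$ is an element of the $\cExt^1$-sheaf'' --- is not well defined as stated. A section of $N_{D/Z}$ is a first-order deformation of the embedding $D \hookrightarrow Z$, not of $Z$ itself; a deformation of $Z$ would in any case not produce a class in $\cExt^1_Z$ of a sheaf on the \emph{fixed} $Z$; and the dimension count is off, since the local smoothing directions of a nodal stratum form a rank-$1$ space (the $T^1$ of the node is $k[x,y]/(xy,x,y) \cong k$) while $N_{D/Z}$ has rank $2$. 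So no canonical global map is actually exhibited, and an abstract local isomorphism of two rank-$2$ sheaves does not glue by itself. The cleanest repairs are either the paper's: use $n_*\sO_{\wt Z} \cong I_D^{\vee}$ to transport the problem to $I_D$ and take the connecting homomorphism $\cHom(I_D,\sO_D) \to \cExt^1(I_D,I_D)$, which is canonical, globally defined, and checked to be an isomorphism by exactly the kind of local computation you performed; or, staying within your own setup, replace $k(p)$ by the globally defined quotient $Q = n_*\sO_{\wt Z}/\sO_Z$ (a line bundle on $D$) and track the resulting twist carefully when identifying $\cExt^1(Q,n_*\sO_{\wt Z})$ with $N_{D/Z}$.
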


\begin{proof}
    Thanks to \cref{lem:IndependenceOnL} we may assume $L = \sO_{\wt{D}}$.
    Locally analytically, the normalization identifies two isomorphic varieties along two reduced copies of the divisor $D$. 
    Therefore, $n|_{n^{-1}(D)} : n^{-1}(D) \to D$ is étale of degree $2$ over $D$, in particular $\wt{D}$ is smooth. 
    A local computation shows that $n_*\sO_{\wt{Z}} \cong I_{D}^{\vee}$, because this holds for nodal curves. 
    Dualizing this, we get $ n_*\sO_{\wt{Z}}^{\vee} = I_D$, since $n_*\sO_{\wt{Z}}$ is Cohen--Macaulay by \cref{prop:CMisPushOfLineBundleGeneral}, and $I_D$ is reflexive. 
    Therefore, we get 
    \[
    \RcHom(n_*\sO_{\wt{Z}},n_*\sO_{\wt{Z}}) \cong \RcHom(I_D,I_D).
    \]
    In particular $\cExt^1(n_*\sO_{\wt{Z}},n_*\sO_{\wt{Z}}) \cong \cExt^1(I_D,I_D)$.
    The $\cHom$ long exact sequence of the ideal short exact sequence is 
    \begin{equation}\label{eq:LongExactHomSequence}
    \cHom(I_D,I_D) \to \cHom(I_D,\sO_Z) \to \cHom(I_D,\sO_D) \to \cExt^1(I_D,I_D) \to 0,
    \end{equation}
    where the zero on the right is because $I_D$ is a CM sheaf. 
    Étale locally we can assume $Z = D \times C$ singular along $D \times \{ 0\}$, with $C = \rm{Spec}(\C[x,y]/(xy))$ and $D$ smooth.
    An explicit computation at the ring level shows that the first map in \eqref{eq:LongExactHomSequence} is surjective.
    Therefore the last map is an isomorphism
    \[
    N_{D/Z} = \cHom(I_D,\sO_D) \cong \cExt^1(I_D,I_D).
    \]
    It remains to compute the normal sheaf $N_{D/Z}$.
    By definition of $\wt{D}$ there is a morphism $n^*(\sC_{D/Z}) \to \sC_{\wt{D}/\wt{Z}}$, induced by the codifferential of $n$.
    By adjunction we get 
    \begin{equation}\label{eq:Codifferential}
       \sC_{D/Z} \to n_*\sC_{\wt{D}/\wt{Z}}  = n_*\sO_{\wt{D}}(-\wt{D}).
    \end{equation}
    Again, we assume that $Z = D \times C$ as above.
    The conormal sheaf is simply 
    \[
    \sC_{D/Z} = T^{\vee}_{C,0} \otimes \sO_{D}.
    \]
    If $0_{1}$ and $0_2$ denote the two preimages of $0$ under the normalization $\wt{C} \to C$, we have 
    \[
    n_*\sC_{\wt{D}/\wt{Z}} = \left(T_{\wt{C},0_1} \oplus T_{\wt{C},0_2}\right)^{\vee} \otimes \sO_{D}.
    \]
    We conclude that the map \eqref{eq:Codifferential} is an isomorphism, because for the nodal curve $T_{\wt{C},0_1} \oplus T_{\wt{C},0_2} \to T_{C,0}$ is an isomorphism. 
    Since $n|_{\wt{D}}: \wt{D} \to D $ is étale, $n_*$ commutes with dualization.
    Thus we get the desired isomorphism for the normal sheaf as well. 
\end{proof}

\begin{Cor}\label{cor:ExtOnNodal}
    Let $Z$ be the surface of lines on a cubic threefold with one node.
    Then, for every $G$ Cohen--Macaulay sheaf of rank $1$ on $Z$ we have
    \[\ext^1(G,G) = 
    \begin{cases}
        5 & \text{ if } G \text{ is locally free,} \\
        6 & \text{ otherwise.} 
    \end{cases}
    \]
\end{Cor}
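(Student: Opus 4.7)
If $G$ is locally free then $\cEnd(G) \cong \sO_Z$ and $\cExt^i(G,G)=0$ for $i\geq 1$, so the local-to-global spectral sequence collapses and yields $\Ext^1_Z(G,G) \cong H^1(Z,\sO_Z)$. To prove $h^1(Z,\sO_Z)=5$, I would combine \cref{thm:CompactifiedPicSmooth} and \cref{rem:SmoothnessWithoutLsv} with the flatness of $\overline{\Pic^0}(\cF/\P^5) \to \P^5$: each fiber has dimension $5$, and by \cref{rem:FibersOfRelativePic} the fiber over $H$ is $\overline{\Pic^0}(Z)$. The open subscheme $\Pic^0(Z)$ is thus $5$-dimensional, and being a connected algebraic group in characteristic zero it is reduced, so $\dim \Pic^0(Z)$ coincides with $\dim_\C H^1(Z,\sO_Z)$, giving the desired equality.

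If $G$ is not locally free, \cref{prop:CMisPushOfLineBundleGeneral} and \cref{lem:IndependenceOnL} reduce the computation to $G=n_*\sO_{\wt Z}$. I would then feed into the low-terms exact sequence
\[
0 \to H^1(Z,\cEnd(G)) \to \Ext^1_Z(G,G) \to H^0(Z,\cExt^1(G,G)) \xrightarrow{d_2} H^2(Z,\cEnd(G))
\]
the identifications $\cEnd(G) \cong n_*\sO_{\wt Z}$ (a direct local computation at a node, analogous to the end-of-the-nodal-curve calculation) and $\cExt^1(G,G) \cong N_{D/Z} \cong n_*\sO_{\wt D}(\wt D)$ from \cref{lem:ExtOfPush}. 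For $D$ of genus $4$, the Hodge numbers of $\wt Z = D^{(2)}$ give $h^1(Z,\cEnd(G))=h^1(D^{(2)},\sO)=4$ and $h^2(Z,\cEnd(G))=h^2(D^{(2)},\sO)=6$.

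The remaining and most delicate step is to show $\dim H^0(D,N_{D/Z}) - \dim\im(d_2) = 2$. Since $D_1 \cdot D_2 = 0$ by \cref{lem:DegreeOfNormals} forces $D_1 \cap D_2 = \emptyset$, the two components of $\wt D$ decouple and
\[
H^0(D,N_{D/Z}) = H^0(D_1,N_{D_1/D^{(2)}}) \oplus H^0(D_2,N_{D_2/D^{(2)}}),
\]
each summand a degree-zero line bundle on $D \cong D_i$. The algebraic equivalence of $D_1$ and $D_2$, combined with adjunction on $D^{(2)}$, should force each $N_{D_i/D^{(2)}}$ to be trivial, yielding $h^0 = 2$. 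The vanishing of $d_2$ is the main obstacle; I would extract it from Serre duality on the Gorenstein surface $Z$ applied to the local-to-global spectral sequence, or from a direct analysis of the Yoneda pairing induced by the class of $\wt D$. Together these yield $\ext^1(G,G) = 4 + 2 = 6$.
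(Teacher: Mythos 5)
Your reduction to $G=n_*\sO_{\wt Z}$, the identification $\cEnd(G)\cong n_*\sO_{\wt Z}$ with $h^1=4$, and the decomposition $H^0(D,N_{D/Z})=H^0(N_{D_1})\oplus H^0(N_{D_2})$ into two degree-zero line bundles all match the paper. But the two steps that would turn the resulting bound $\ext^1(G,G)\le 4+2=6$ into an equality both have genuine gaps. First, the triviality of $N_{D_i/D^{(2)}}$ does \emph{not} follow from algebraic equivalence plus adjunction: algebraic equivalence of $D_1$ and $D_2$ only gives $\deg N_{D_i}=D_i^2=D_1\cdot D_2=0$, and adjunction merely rewrites $N_{D_i}\cong\omega_{D_i}\otimes\omega_{D^{(2)}}^{-1}|_{D_i}$, which is a non-trivial comparison you never carry out. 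This is not a cosmetic omission: since $D_1\cap D_2=\emptyset$ one has $N_{D_1}=\sO(D_1-D_2)|_{D_1}$, the restriction to $D_1$ of a line bundle that is algebraically but \emph{not} linearly trivial (\cref{lem:DegreeOfNormals}), and a see-saw computation on $D\times D$ identifies it with (the pullback of) the difference $g_2-g_1$ of the two trigonal pencils on $D$ — a nonzero element of $\Pic^0$. So the triviality you need is delicate and tied to which line bundle $L$ on $\wt Z$ actually occurs in the boundary of $\overline{\Pic^0}(Z)$; it is not a formal consequence of the cited intersection theory. Notice that the paper is careful to assert only that $h^0\le 2$, ``with equality if and only if both are trivial,'' and never proves triviality directly. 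Second, the vanishing of $d_2$ is exactly the point you flag as ``the main obstacle,'' and neither Serre duality on $Z$ nor a Yoneda-pairing analysis is developed into an argument; since $h^2(Z,\cEnd(G))=6\ne 0$, there is no a priori reason for $d_2$ to vanish.

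The paper circumvents both problems with a global argument you are missing. It uses the three-term exact sequence only for the upper bound $\ext^1(G,G)\le 4+2=6$, and then rules out the value $5$ as follows: by \cref{lem:IndependenceOnL} all non-locally-free boundary points of $\overline{\Pic^0}(Z)$ have the same $\ext^1$, so if this number were $5$ the five-dimensional variety $\overline{\Pic^0}(Z)$ would be smooth everywhere; but by \cref{thm:CompactifiedPicSmooth} it is a singular fiber of the Lagrangian fibration $J_Y\to\P^5$ (it contains the non-proper group $\Pic^0(Z)$, which has a $\G_m$-factor, so it cannot be an abelian variety), a contradiction. Since the tangent space at a point of a $5$-dimensional variety has dimension at least $5$, the only remaining value is $6$. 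This trades your two hard local verifications for one soft global input, and is the essential idea your proposal lacks.
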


\begin{proof}
    Recall from \cite{altman77} that $h^1(Z,\sO_Z) = 5$, so if $G$ is locally free then $\ext^1(G,G) = 5$.
    If $G$ is not locally free, we write $G = n_*L$ as in \cref{prop:CMisPushOfLineBundleGeneral}. 
    As in \cref{lem:IndependenceOnL} assume that $L = \sO_{\wt{Z}}$, and consider the first terms of the local-to-global spectral sequence \eqref{eq:LocalToGlobalFirstTerms}:
    \[
     0 \to H^1(Z,\cEnd(n_*\sO_{\wt{Z}})) \to \Ext^1(n_*\sO_{\wt{Z}},n_*\sO_{\wt{Z}}) \to H^0(Z,\cExt^1(n_*\sO_{\wt{Z}}n_*\sO_{\wt{Z}})).
    \]
    
    By \cref{lem:ExtOfPush} we get  
    \[
    H^0(Z,\cExt^1(n_*\sO_{\wt{Z}},n_*\sO_{\wt{Z}})) = H^0(D,\sO_{D_1}(D_1) \oplus \sO_{D_2}(D_2)).
    \]
    where $D_1$ and $D_2$ are the two copies of $D$ lying over it.  
    By \cref{lem:DegreeOfNormals} the normal bundles $\sO_{D_i}(D_i)$ have degree $0$.
    Therefore, this space at dimension at most $2$, and exactly $2$ if and only if they are both trivial. 

    The leftmost space is equal to $H^1(\wt{Z},\sO_{\wt{Z}}) = H^1(D^{(2)},\sO_{D^{(2)}})$, which has dimension $4$. 
    Indeed, there is a natural morphism
    \begin{equation}\label{eq:ComputationOfCurlyEnd}
        n_* \sO_{\wt{Z}} \to \cEnd(n_*\sO_{\wt{Z}}).
    \end{equation}
    Locally the normalization $n$ corresponds to the embedding of an integral domain inside its integral closure $A \hookrightarrow B$.
    The morphism \eqref{eq:ComputationOfCurlyEnd} becomes 
    \[
    B \to \End_A(B), \quad  b \mapsto b \cdot -, 
    \]
    which is an isomorphism because $B$ lives in the quotient field of $A$. 

    Combining everything we get $\ext^1(n_*\sO_{\wt{Z}},n_*\sO_{\wt{Z}}) \leq 4 + 2 = 6$.
    Now assume that it is $5$.
    Every sheaf in the boundary of $\overline{\Pic^0}(Z)$ is Cohen--Macaulay, so the compactified Picard would be smooth by \cref{lem:IndependenceOnL}.
    The isomorphism of \cref{thm:CompactifiedPicSmooth} that it is not, so we get a contradiction. 
\end{proof}

We have computed the dimension of the space on the left in \eqref{eq:FirstTermsNormal}, now we move to the rightmost space.
Locally analytically, the embedding $i: Z \hookrightarrow X$ looks like two smooth Lagrangian surfaces meeting along a smooth curve. 
We can compute the pullback of the normal bundle $n^*\sS^{\vee}$ to the normalization $\wt{Z}$ via the following result. 

\begin{Lem}\label{lem:pullbackOfNormal}
    Let $(X,\sigma)$ be a (not necessarily compact) complex symplectic manifold, and let $Z_1, Z_2 \subset X$ be two Lagrangian submanifolds meeting along a smooth divisor $D = Z_1 \cap Z_2$. Let $Z = Z_1 \cup Z_2$, $n: \tilde{Z} \to Z$ be the normalization, and $D_1 \cup D_2 \subset \tilde{Z}$ the two preimages of the divisor $D$. 
    Then there is an isomorphism
    \[
    n^*(\sC_{Z/X}) \cong T_{\tilde{Z}}(-\log(D_1 + D_2))
    \]
    induced by the symplectic form.
\end{Lem}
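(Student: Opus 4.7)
The plan is to build the map using the Hamiltonian correspondence, show it respects the logarithmic structure by a linear-algebra check at points of $D$, and then verify the isomorphism in local Darboux coordinates.

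First, for a local section $f$ of the ideal sheaf $I_Z$, define the Hamiltonian vector field $X_f$ by $\iota_{X_f}\sigma = df$; equivalently $X_f = \sigma^{-1}(df)$ under the identification $\sigma:T_X\isomor\Omega^1_X$. Because $Z_i$ is Lagrangian and $f|_{Z_i}=0$, the 1-form $df|_{Z_i}$ annihilates $T_{Z_i}$, so $X_f|_{Z_i}$ is $\sigma$-orthogonal to $T_{Z_i}$, which (again by the Lagrangian condition) forces $X_f|_{Z_i}\in T_{Z_i}$. A Leibniz computation shows that $X_f|_{\tilde Z}=0$ whenever $f\in I_Z^2$, so this construction descends to a morphism
\[
\varphi:\sC_{Z/X}\to n_*T_{\tilde Z}.
\]

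Next I would show that $\varphi$ factors through $n_*T_{\tilde Z}(-\log(D_1+D_2))$, i.e.\ that at a point $p\in D$ the value $X_f(p)$ is tangent to $D$. At such a point $f\in I_{Z_1}\cap I_{Z_2}$, so $df|_p$ annihilates $T_pZ_1+T_pZ_2\subset T_pX$. Using the general identity $\sigma^{-1}(\mathrm{Ann}(W))=W^{\perp_\sigma}$, this gives
\[
X_f(p)\in (T_pZ_1+T_pZ_2)^{\perp_\sigma}=T_pZ_1^{\perp_\sigma}\cap T_pZ_2^{\perp_\sigma}=T_pZ_1\cap T_pZ_2=T_pD,
\]
the middle equality being the Lagrangian hypothesis. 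Adjunction then yields the desired map $n^*\sC_{Z/X}\to T_{\tilde Z}(-\log(D_1+D_2))$.

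Finally, to check that this map is an isomorphism it is enough to work \'etale-locally near a point of $D$. I would choose Darboux coordinates $x_1,\dots,x_n,y_1,\dots,y_n$ on $X$ with $\sigma=\sum dx_i\wedge dy_i$, adapted to the pair $(Z_1,Z_2)$ so that
\[
Z_1=\{y_1=\cdots=y_n=0\},\qquad Z_2=\{y_1=\cdots=y_{n-1}=x_n=0\},
\]
in which case $I_Z=(y_1,\dots,y_{n-1},x_ny_n)$ and the two preimages of $D$ cut out $\{x_n=0\}\subset Z_1$ and $\{y_n=0\}\subset Z_2$. A direct calculation gives $X_{y_i}=\partial_{x_i}$ for $i<n$ and $X_{x_ny_n}=x_n\partial_{x_n}-y_n\partial_{y_n}$, and these restrict on each component of $\tilde Z$ to the generators $\partial_{x_1},\dots,\partial_{x_{n-1}},x_n\partial_{x_n}$ of $T_{Z_1}(-\log D_1)$, respectively $\partial_{x_1},\dots,\partial_{x_{n-1}},-y_n\partial_{y_n}$ of $T_{Z_2}(-\log D_2)$. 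Thus $\varphi$ takes generators of the stalks of $n^*\sC_{Z/X}$ to a frame of $T_{\tilde Z}(-\log(D_1+D_2))$, and is an isomorphism. The main obstacle in this step is producing the adapted Darboux coordinates: after applying Weinstein's Lagrangian neighborhood theorem to replace $(X,Z_1)$ by $(T^*Z_1,Z_1)$, one must straighten $Z_2$ to the conormal bundle of $D\subset Z_1$, which is a standard but careful application of Moser's trick relative to the divisor $D$.
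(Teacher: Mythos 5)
Your proof is correct and follows essentially the same route as the paper: the map is the composite $n^*\sC_{Z/X}\to n^*\Omega_X\xrightarrow{\ \sigma^{-1}\ }n^*T_X$ (your Hamiltonian vector field $X_f=\sigma^{-1}(df)$ is exactly this), it lands in $T_{\tilde Z}$ because each branch is Lagrangian, and the logarithmic factorization along $D$ comes from the identity $(T_{Z_1}+T_{Z_2})^{\perp_\sigma}=T_{Z_1}\cap T_{Z_2}=T_D$, which is precisely the paper's key step. The one point where you diverge is the final frame computation: you insist on Darboux coordinates adapted to the pair $(Z_1,Z_2)$, which requires a symplectic normal form for two Lagrangians meeting cleanly along a divisor (Weinstein plus a relative Moser argument — true, but a nontrivial input that needs some care in the holomorphic category, as you acknowledge). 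This can be avoided, and the paper does avoid it: take arbitrary holomorphic coordinates in which $Z_1$ and $Z_2$ are coordinate subspaces, so that $I_Z=(u_{n+1},\dots,u_{2n-1},u_nu_{2n})$, and use only pointwise symplectic linear algebra. The generators $\sigma^{-1}(du_{n+1}),\dots,\sigma^{-1}(du_{2n-1})$ restrict along $D$ to a basis of $T_D=\sigma^{-1}\bigl(\Ann(T_{Z_1}+T_{Z_2})\bigr)$, while $\sigma^{-1}\bigl(d(u_nu_{2n})\bigr)|_{Z_1}=u_n\,\sigma^{-1}(du_{2n})|_{Z_1}$ is the defining equation of $D_1$ times a vector field transverse to $D_1$ in $Z_1$ (because $du_{2n}$ does not annihilate $T_{Z_2}$ along $D$), so one gets a frame of $T_{\tilde Z}(-\log(D_1+D_2))$ with no normalization of $\sigma$. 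On the other hand, your explicit formula $X_{x_ny_n}=x_n\partial_{x_n}-y_n\partial_{y_n}$ makes more transparent than the paper's terse phrasing why the image generates the full log tangent sheaf (hitting the direction $x_n\partial_{x_n}$, not merely $T_{D_1}$), so the computation is worth keeping in whichever coordinate system you settle on.
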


\begin{proof}
    Consider the composition 
    \[
     n^*(\sC_{Z/X}) \to n^*(\Omega_X) \isomor n^*T_X.
    \]
    Away from $D_1,D_2$ the normalization map $n$ is an isomorphism onto a Lagrangian submanifold, hence this composition maps isomorphically onto $T_{\tilde{Z}}$.
    Therefore, to conclude it suffices to see that, restricted to $D_i$, it factors as 
    \begin{equation}\label{eq:factorizationNormal}
    n^*(\sC_{Z/X})|_{D_i} \twoheadrightarrow T_{D_i} \hookrightarrow n^*T_X|_{D_i}.
    \end{equation}
    
    Say that $X$ has dimension $2n$. 
    We can choose local coordinates for $X$ such that $Z_1 = \{ x_1 = \dots = x_n = 0 \}$ and $Z_2 = \{x_2 = \dots x_{n+1} = 0 \}$.
    Then $Z = \{ x_1x_{n+1} = x_2 = \dots x_n = 0\}$, and $D = \{  x_2 = \dots x_n = 0 \}$.
    Via this local description, one sees that the natural map $ \sC_{Z/X}|_D \to \Omega_X|_D$ maps surjectively onto the subbundle $\Ann(T_{Z_1}|_D + T_{Z_2}|_D)$.
    Under the isomorphism $\sigma:  \Omega_X|_D \cong T_X|_D$, this corresponds to the symplectic orthogonal 
    \[
    (T_{Z_1}|_D + T_{Z_2}|_D)^{\perp} = T_D.
    \]
    Therefore the composition factors through the subbundle $T_D \subset T_X|_D$, and pulling back we get \eqref{eq:factorizationNormal}.
\end{proof}

\begin{Cor}\label{cor:PullbackOfTautological}
    Let $V$ be a nodal cubic threefold, and let $\sS$ be the tautological bundle on $F(V)$.
    We have 
    \[
    n^*(\sS^{\vee}) \cong \Omega_{C^{(2)}}(\mathrm{log}(D_1 + D_2)).
    \]
    In particular, $h^0(D^{(2)},n^*(\sS^{\vee})) = 5$.
\end{Cor}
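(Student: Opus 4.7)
The plan is to derive the isomorphism directly from \cref{lem:pullbackOfNormal} and then compute the dimension of global sections via the standard residue short exact sequence.

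First, even though $F(V)$ is globally irreducible, étale locally near $D$ it is the transverse union of two smooth Lagrangian branches meeting along $D$: indeed $F(V)$ has nodal singularities in codimension one by the discussion preceding \cref{def:PlanarInCod1}, and each smooth branch of $F(V)$ is Lagrangian since $F(V)$ is a generically smooth Lagrangian subvariety of $X$. Hence the proof of \cref{lem:pullbackOfNormal} applies verbatim on any such étale neighbourhood, and combining with the fact that $n$ is an isomorphism away from $D$ yields
\[
n^*\sC_{F(V)/X} \cong T_{D^{(2)}}\bigl(-\log(D_1+D_2)\bigr).
\]
Dualizing, using the standard identification $\bigl(T(-\log D)\bigr)^{\vee} \cong \Omega(\log D)$ for a smooth divisor, together with $N_{F(V)/X} \cong \sS^{\vee}$, produces the claimed isomorphism $n^*\sS^{\vee} \cong \Omega_{D^{(2)}}(\log(D_1+D_2))$.

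For the statement on global sections I would use the residue sequence
\[
0 \to \Omega_{D^{(2)}} \to \Omega_{D^{(2)}}(\log(D_1+D_2)) \to \sO_{D_1}\oplus \sO_{D_2} \to 0
\]
together with its long exact sequence in cohomology. Since the Albanese of $D^{(2)}$ is the Jacobian $J(D)$ and $D$ has genus $4$, one has $h^0(\Omega_{D^{(2)}})=q(D^{(2)})=g(D)=4$. The connecting homomorphism sends $(1,0)$ and $(0,1)$ to the Atiyah classes $c_1(\sO(D_1))$ and $c_1(\sO(D_2))$ in $H^1(\Omega^1_{D^{(2)}})$. By \cref{lem:DegreeOfNormals} these two classes coincide, and they are non-zero: adjunction combined with $D_1^2 = D_1\cdot D_2 = 0$ (from \cref{lem:DegreeOfNormals}) gives $D_1\cdot K_{D^{(2)}} = 2g(D)-2 = 6 \neq 0$, so $[D_1]\neq 0$ in $H^2(D^{(2)})$. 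Consequently the image of the connecting map is one-dimensional, its kernel is one-dimensional, and I conclude $h^0(n^*\sS^{\vee})=4+1=5$.

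The main things to verify are the local picture used to invoke \cref{lem:pullbackOfNormal} and the non-vanishing of $[D_1]$ in cohomology; once these are in place, the rest is a direct application of duality for logarithmic bundles and the residue long exact sequence.
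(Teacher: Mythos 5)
Your proposal is correct and follows essentially the same route as the paper: the isomorphism is obtained by applying \cref{lem:pullbackOfNormal} to the local analytic picture of two Lagrangian branches meeting along $D$ and dualizing, and the section count comes from the residue sequence, with $h^0(\Omega_{D^{(2)}})=g(D)=4$ and a one-dimensional kernel of the connecting map because $[D_1]=[D_2]$. Your explicit check that $[D_1]\neq 0$ via adjunction (using $D_1^2=D_1\cdot D_2=0$ from \cref{lem:DegreeOfNormals}) is a small extra verification that the paper leaves implicit, but it is a welcome one.
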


\begin{proof}
    Choose a smooth cubic fourfold $Y$ such that $V \subset Y$ is a hyperplane section.
    The tautological bundle $\sS$ is the conormal bundle of the inclusion $F(V) \subset F(Y)$. 
    Locally analytically, near the singular locus of $F(Y_H)$, this embedding looks like two smooth Lagrangian surfaces meeting on a smooth curve.  
    Therefore, we can apply \cref{lem:pullbackOfNormal} to conclude that $n^*(\sS^{\vee}) \cong \Omega_{D^{(2)}}(\log(D_1 + D_2))$.

    To compute global sections, recall the following exact sequence 
    \begin{equation}\label{eq:ShortExactSequenceForLogCotangent}
    0 \to \Omega_{D^{(2)}} \to \Omega_{D^{(2)}}(\mathrm{log}(D_1 + D_2)) \to \sO_{D_1} + \sO_{D_2} \to 0.
    \end{equation}
    The connecting homomorphism $H^0(D_1,\sO_{D_1}) \oplus H^0(D_2,\sO_{D_2}) \to H^1(D^{(2)},\Omega_{D^{(2)}})$ sends $(1,0)$ and $(0,1)$ to the fundamental class of $D_1$ and $D_2$ respectively. 
    The two curves $D_1$ and $D_2$ are algebraically equivalent by \cref{lem:DegreeOfNormals}, so the connecting homomorphism has a one-dimensional kernel. 
    Since the space $H^0(D^{(2)},\Omega_{D^{(2)}}) $ has dimension $4$, we deduce that $h^0(D^{(2)}, n^*(\sS^{\vee})) = 5$. 
\end{proof}

\begin{Cor}\label{cor:DimensionForNodal}
    Let $X$ be the variety of lines of a smooth cubic fourfold, and $Z$ be the surface of lines of a hyperplane section with one node. 
    Let $i: Z \hookrightarrow X$ be the embedding and $G$ a Cohen--Macaulay sheaf on $Z$.
    Then $\ext^1(i_*G,i_*G) \leq 11$. 
\end{Cor}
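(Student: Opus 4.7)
The plan is to use the spectral sequence \eqref{eq:SpectralSequenceNormal} to bound $\ext^1(i_*G, i_*G)$ in terms of $\ext^1(G, G)$ and $\hom(G, G \otimes \sS^{\vee})$, and then to estimate each of these separately using the tools already developed. The five-term exact sequence \eqref{eq:FirstTermsNormal} gives
\[
\ext^1(i_*G, i_*G) \;\leq\; \ext^1(G, G) + \hom(G, G \otimes \sS^{\vee}),
\]
so it suffices to show that the right-hand side is at most $11$.

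For the first summand, \cref{cor:ExtOnNodal} already gives $\ext^1(G, G) \leq 6$, with equality (only) for sheaves that are not locally free. So the main content of the proof is the bound $\hom(G, G \otimes \sS^{\vee}) \leq 5$. I would split this into the two geometric cases. If $G$ is locally free, then $\cHom(G, G \otimes \sS^{\vee}) = \sS^{\vee}$ as a rank $2$ bundle on $Z$. Since $\sS^{\vee}$ is torsion-free and $n : \wt{Z} = D^{(2)} \to Z$ is the normalization, the adjunction map $\sS^{\vee} \hookrightarrow n_* n^* \sS^{\vee}$ is injective, so
\[
h^0(Z, \sS^{\vee}) \;\leq\; h^0(\wt{Z}, n^* \sS^{\vee}) \;=\; 5,
\]
using \cref{cor:PullbackOfTautological}.

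If instead $G$ is not locally free, we write $G = n_* L$ for some $L \in \Pic(\wt{Z})$ via \cref{prop:CMisPushOfLineBundleGeneral}. Applying the argument of \cref{lem:IndependenceOnL} with the rank-$2$ bundle $\sS^{\vee}$ inserted, one gets a canonical isomorphism
\[
\cHom(n_* L, n_* L \otimes \sS^{\vee}) \;\cong\; \cEnd(n_* \sO_{\wt{Z}}) \otimes \sS^{\vee}.
\]
Combining the identification $\cEnd(n_* \sO_{\wt{Z}}) \cong n_* \sO_{\wt{Z}}$ from \eqref{eq:ComputationOfCurlyEnd} with the projection formula yields
\[
\cHom(n_* L, n_* L \otimes \sS^{\vee}) \;\cong\; n_*(n^* \sS^{\vee}),
\]
so that $\hom(n_* L, n_* L \otimes \sS^{\vee}) = h^0(\wt{Z}, n^* \sS^{\vee}) = 5$, again by \cref{cor:PullbackOfTautological}.

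Putting the two bounds together gives $\ext^1(i_*G, i_*G) \leq 6 + 5 = 11$, which is the claim. The only delicate step is verifying the identification $\cHom(n_* L, n_* L \otimes \sS^{\vee}) \cong n_*(n^* \sS^{\vee})$ in the non-locally-free case: this is essentially a twisted version of \cref{lem:IndependenceOnL} combined with \eqref{eq:ComputationOfCurlyEnd}, and the main obstacle is just checking that the argument of that lemma goes through after tensoring by the rank-$2$ bundle $\sS^{\vee}$, which it does because the construction is local on the base and respects flat tensoring.
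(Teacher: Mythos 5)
Your proposal is correct and follows essentially the same route as the paper: the five-term sequence \eqref{eq:FirstTermsNormal}, the bound $\ext^1(G,G)\leq 6$ from \cref{cor:ExtOnNodal}, and the identification of $\Hom(G,G\otimes\sS^{\vee})$ with $H^0(\wt{Z},n^*\sS^{\vee})$ of dimension $5$ via \cref{prop:CMisPushOfLineBundleGeneral}, \cref{lem:IndependenceOnL} and \cref{cor:PullbackOfTautological}. Your explicit treatment of the locally free case (via the injection $\sS^{\vee}\hookrightarrow n_*n^*\sS^{\vee}$) is a minor refinement the paper leaves implicit, since there the bound $5+5=10$ is immediate anyway.
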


\begin{proof}
    Consider the exact sequence \eqref{eq:FirstTermsNormal}
    \[
    0 \to \Ext^1(G,G) \to  \Ext^1(i_*G,i_*G) \to \Hom(G,G \otimes \sS^{\vee}).
    \]
    By \cref{cor:ExtOnNodal} the left space has dimension $6$. 
    For the space on the right, we write $G = n_*\sO_{\wt{Z}}$ by \cref{prop:CMisPushOfLineBundleGeneral} and \cref{lem:IndependenceOnL}.
    We have 
    \[
    \Hom(G,G \otimes \sS^{\vee}) = H^0(\wt{Z},n^*\sS^{\vee}),
    \]
    which has dimension $5$ by \cref{cor:PullbackOfTautological}.
    Thus, we conclude that $\ext^1(i_*G,i_*G) \leq 11$. 
\end{proof}

\section{Stable bundles from Lagrangians}\label{sec:TransformsOfLagrangians}
In this section we explain how the Poincaré sheaf gives projectively hyperholomorphic bundles from sheaves supported on Lagrangian subvarieties. 
Let $X' = M_{(0,H,1-g)}(S,H)$ be a Beauville--Mukai system with a section on a polarized K3 of genus $g$.
Assume that every curve in $|H|$ is integral, so that \cite{arinkin13} applies. 

Let $\pi: X \to \P^n$ be a projective Tate--Shafarevich twist of the Lagrangian fibration $\pi': X' \to |H| \cong \P^n$. \cref{thm:TwistedPoincaré} gives a twist $\theta \in \Br(X')$, and a twisted sheaf $\sP \in \Coh(X \times X',p_2^*\theta)$.
For $t \in \P^n$ we denote by $X_t$ and $X'_t$ the fibers of the Lagrangian fibrations $X \to \P^n$ and $X' \to \P^n$.
We denote by $\sP_t$ the restriction to $\Coh(X_t \times X'_t)$ of the Poincaré sheaf $\sP$. 

Note the change in notation compared to \cref{sec:TwistedPoincaré}:  $X'$ denotes the Lagrangian fibration with the section, the Tate--Shafarevich twist is now fixed to be $X$, the twisted Poincaré sheaf is $\sP$, and $t$ denotes a point in the base $\P^n$.

\begin{Lem}\label{lem:PropertiesOfP}
    For every $t \in \P^n$, denote by $i_t : X_t \hookrightarrow X$ and $i'_t : X'_t \hookrightarrow X'$ the embedding of the fibers.
    There are isomorphisms of functors
    \begin{enumerate}
        \item $\Phi_{\sP} \circ i_{t,*} \cong i'_{t,*} \circ \Phi_{\sP_t}$.
        \item $i'^{*}_{t} \circ \Phi_{\sP} \cong  \Phi_{\sP_t} \circ i^*_t$.
    \end{enumerate}
    And similarly for $\Phi^{-1}_{\sP}$.
\end{Lem}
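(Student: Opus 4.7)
The plan is to work on the fiber product $X \times_{\P^n} X'$, where the Poincar\'e sheaf is actually supported, and then reduce both identities to standard flat base change along the inclusion of the fiber over $t$. Write $\sP = r_*\sP^{\sharp}$ where $r : X \times_{\P^n} X' \hookrightarrow X \times X'$ and $\sP^{\sharp}$ is the corresponding twisted sheaf on the fiber product, and let $\pi_1, \pi_2$ denote the two projections from $X \times_{\P^n} X'$. The projection formula then gives
\[
\Phi_{\sP}(-) \;\cong\; \mathbf{R}\pi_{2,*}\bigl(\sP^{\sharp} \otimes^{\mathbf{L}} \mathbf{L}\pi_1^*(-)\bigr),
\]
so both sides of (1) and (2) admit presentations that live naturally on $X \times_{\P^n} X'$.

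Two preliminary flatness facts are needed. First, $\pi_1$ and $\pi_2$ are flat, since each is the base change of a Lagrangian fibration $\pi$ or $\pi'$, and such fibrations are themselves flat being surjective with equidimensional fibers from a Cohen--Macaulay space to the smooth base $\P^n$. Second, $\sP^{\sharp}$ is $\P^n$-flat: on the Beauville--Mukai side this holds because $\sP^{0,\sharp}$ is the universal sheaf of the relative Picard scheme, and the \'etale-local regluing in the proof of \cref{thm:TwistedPoincaré} preserves $\P^n$-flatness. Granting these, part (1) follows by applying flat base change to the Cartesian square with $\pi_1$ and $i_t$ (giving $\mathbf{L}\pi_1^* i_{t,*} \cong j_{t,*} \mathbf{L}q_1^*$, where $j_t : X_t \times X'_t \hookrightarrow X \times_{\P^n} X'$ is the inclusion of the fiber over $t$), then the projection formula along $j_t$, the identification $\mathbf{L}j_t^*\sP^{\sharp} \cong \sP_t$ (which uses the $\P^n$-flatness of $\sP^{\sharp}$), and finally the relation $\pi_2 \circ j_t = i'_t \circ q_2$. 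Part (2) is entirely parallel, using flat base change on the other Cartesian square
\[
\begin{tikzcd}
X_t \times X'_t \ar[r, "j_t"] \ar[d, "q_2"'] & X \times_{\P^n} X' \ar[d, "\pi_2"] \\
X'_t \ar[r, "i'_t"] & X'
\end{tikzcd}
\]
to get $\mathbf{L}i'^*_t \mathbf{R}\pi_{2,*} \cong \mathbf{R}q_{2,*} \mathbf{L}j_t^*$, and then the same identifications; the claim for $\Phi^{-1}_{\sP}$ follows verbatim, since its kernel is again supported on the fiber product and satisfies the analogous flatness properties.

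The main technical point where care is required is the Brauer twist $p_2^*\theta$: every pullback, pushforward and tensor product must be performed in the appropriate twisted derived category. Since the twist is pulled back from $X'$ and therefore pulls back compatibly along every projection in the diagrams above, the twisted versions of flat base change and the projection formula apply without essential change. This bookkeeping is what I expect to take the most care, but it requires no new geometric input beyond the flatness facts already established.
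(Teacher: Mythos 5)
Your proposal is correct and follows essentially the same route as the paper: the paper's proof consists of noting that $\sP$ is flat over $\P^n$ (being flat over both factors) and that $\Phi_{\sP}$ is $\P^n$-linear because $\sP$ is supported on the fiber product, and then invoking the base change theorem, deferring the details to \cite[Lemma 5.3]{bottini22}. Your write-up is precisely that flat-base-change-plus-projection-formula argument on $X \times_{\P^n} X'$ carried out explicitly, with the same two flatness inputs.
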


\begin{proof}
    First notice that $\sP$ is flat over $|H|$, as it is flat over both factors. 
    Additionally, since it is supported on the fiber product, the equivalence $\Phi_{\sP}$ is $\P^n$-linear in the following sense. 
    If $F \in \Db(\P^n)$ we have 
    \[
        \Phi_{\sP}(- \otimes \pi^*(F)) =  \Phi_{\sP}(-) \otimes \pi'^*(F).
    \]
    The statement follows from this and the base change Theorem, see \cite[Lemma 5.3]{bottini22} for more details.
\end{proof}

\subsubsection{Local freeness}
Now we give a criterion to prove local freeness of the image of a sheaf under $\Phi$.
The following is simply a twisted version of \cite[Proposition 5.2]{bottini22}.

\begin{Prop}\label{prop:LocallyFreeGeneral}
    Let $Z$ be a subvariety of $X$, and $i: Z \hookrightarrow X$ the embedding. 
    Let $G$ be a rank 1 sheaf on $Z$ such that $i_*G$ is Cohen--Macaulay on $X$.
    Assume that $Z$ is finite over $\P^n$ of degree $r$.
    Then $E \coloneqq \Phi_{\sP}(i_*G)$ is a twisted locally free sheaf of rank $r$, and its restriction to a general fiber $X'_t$ splits 
        \[
            E_t = \bigoplus^{r}_{k = 1} L_{t,k}
        \]
    as a sum of $\theta|_{X'_t}$-twisted line bundles. 
    Moreover, if $i_*G$ is atomic, then $E$ is twisted atomic. 
\end{Prop}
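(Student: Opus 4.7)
The strategy is to adapt the argument of \cite[Proposition 5.2]{bottini22} to the twisted setting, analyzing $E = \Phi_{\sP}(i_*G)$ fiber by fiber over $\P^n$ and then upgrading to local freeness globally via the Cohen--Macaulay property. For the final statement on atomicity, I can simply invoke \cref{cor:PoincaréSendsAtomicToAtomic}.

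First I would check that $i_*G$ is $\pi$-flat. Since $X$ is smooth, $i_*G$ is Cohen--Macaulay of dimension $n$, and its support $Z$ is finite of degree $r$ over the smooth base $\P^n$ via a map of relative dimension $n$; this forces the fibers $i_t^*(i_*G)$ to be sheaves of constant length $r$, hence $\pi$-flatness. Next, for every $t \in \P^n$, \cref{lem:PropertiesOfP}(2) combined with this flatness gives
\[
    i'^{*}_{t}(E) \cong \Phi_{\sP_t}(i^{*}_{t}(i_*G)).
\]
For general $t$, the fiber $X_t$ is a smooth abelian variety dual to $X'_t$, $Z$ meets $X_t$ transversally in $r$ reduced points $z_1,\dots,z_r$, so $i^{*}_{t}(i_*G) \cong \bigoplus_{k} k(z_k)$. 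The Poincaré--Mukai transform on an abelian variety sends each $k(z_k)$ to the restriction $\sP_t|_{\{z_k\} \times X'_t}$, a $\theta|_{X'_t}$-twisted line bundle $L_{t,k}$. Hence $E|_{X'_t} \cong \bigoplus_{k=1}^{r} L_{t,k}$, which in particular shows that $E$ is a genuine sheaf of generic rank $r$ with the claimed splitting on a general fiber.

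To upgrade from generic to everywhere local freeness, I would argue that $E$ is Cohen--Macaulay on $X'$. Since $\sP$ is Cohen--Macaulay on $X \times X'$ by \cref{thm:TwistedPoincaré} and is supported in the correct codimension, and since $i_*G$ is Cohen--Macaulay of dimension $n$ by hypothesis, one can mimic the depth computation in the untwisted case \cite{bottini22}: writing $E = Rp_{2,*}(p_1^*(i_*G) \otimes \sP)$, the intersection of the supports of $p_1^*(i_*G)$ and $\sP$ has the expected codimension (as $Z$ is finite over $\P^n$), and the resulting pushforward is Cohen--Macaulay of dimension $2n = \dim X'$. Since $X'$ is smooth, Auslander--Buchsbaum then forces $E$ to be locally free of rank $r$; twisting by $p_2^*\theta$ does not affect this local computation, as depth is an étale-local condition and the Brauer class trivializes on a suitable étale cover. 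Finally, if $i_*G$ is atomic, then $E$ is twisted atomic by \cref{cor:PoincaréSendsAtomicToAtomic}.

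The main obstacle is the Cohen--Macaulay step: verifying that the twisted Fourier--Mukai $\Phi_{\sP}$ really preserves the Cohen--Macaulay property for sheaves of the correct dimension, in a way strong enough to force local freeness. In the untwisted case this is routine and carried out in \cite{bottini22}; the subtlety here is to check that the $p_2^*\theta$-twist does not disrupt the intersection-theoretic depth estimate, for which reducing to the untwisted situation on an étale cover trivializing $\theta$ should suffice.
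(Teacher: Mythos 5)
Your proposal is correct, and for the splitting on a general fiber and the atomicity claim it coincides with the paper's argument (fiberwise reduction via \cref{lem:PropertiesOfP}(2), skyscrapers at smooth points mapping to twisted line bundles, and \cref{cor:PoincaréSendsAtomicToAtomic}, which is just a repackaging of \cref{prop:InvarianceOfTwistedAtomicity}). Where you diverge is the local-freeness step. The paper tests the derived fibers: it shows $\Ext^i_{\Db(X',\theta)}(E,\C(x))=0$ for $i>0$ by adjunction, rewriting this as $\Ext^i_{\Db(X)}(i_*G,\Phi^{-1}_{\sP}(\C(x)))$, observing that $\theta$ restricts trivially to each fiber so that $\sP_t$ differs from Arinkin's kernel by a twisted line bundle, and then invoking the untwisted computation of \cite[Proposition 5.2]{bottini22}. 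You instead work on $X\times X'$ with the kernel itself: Tor-independence of $p_1^*(i_*G)$ and $\sP$ (two Cohen--Macaulay sheaves meeting in the expected codimension $2n$), exactness and depth-preservation of the finite pushforward along $Z\times_{\P^n}X'\to X'$, and Auslander--Buchsbaum. Both routes rest on the same commutative algebra; the ``main obstacle'' you flag is in fact closed by the standard Serre--Auslander criterion (properly intersecting Cohen--Macaulay modules over a regular local ring have vanishing higher Tor and Cohen--Macaulay tensor product), and your remark that the twist is harmless because depth is étale-local is the right way to dispose of it. The paper's adjunction route has the mild advantage of quoting \cite{bottini22} verbatim once the fiberwise untwisting is observed; your route is more self-contained and makes the role of the Cohen--Macaulay hypothesis on $i_*G$ and on $\sP$ more transparent. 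Your preliminary observation that $i_*G$ is flat over $\P^n$ (by miracle flatness) is not needed for the statement, which only concerns a general fiber, but it is correct and harmless.
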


\begin{proof}
    To verify that $E$ is locally free, it suffices to show that
    \begin{equation}\label{eq:HomologicalConditionLocalFreeness}
        \Ext^i_{\Db(X',\theta)}(E,\C(x)) = 0 \quad   \forall i >0,
    \end{equation}
    for all points $x \in X'$.
    As in the untwisted case, this shows that the derived fiber at every point is concentrated in degree $0$, ensuring that $E$ is locally free. 
    We have 
    \[
        \Ext^i_{\Db(X',\theta)}(E,\C(x)) \cong  \Ext^i_{\Db(X)}(i_*G,\Phi^{-1}_{\sP}(\C(x))).
    \]
    By construction the Brauer class $\theta|_{X'_t}$ is trivial, and $\sP_t \in \Coh(X_t \times X'_t, \theta|_{X'_t})$ differs from Arinkin's universal sheaf by a $\theta|_{X'_t}$-twisted line bundle. 
    The same argument as in \cite[Proposition 5.2]{bottini22} allows us to conclude that $E$ is locally free.

    Now let $t \in \P^n$ be general, and let $\{ x_1,\dots,x_{r} \}$ denote the points of intersection of $Z$ with $X_t$.
    By \cref{lem:PropertiesOfP}$(2)$, we have
    \[
    E_t = \Phi_{\sP_t}(i_*G|_{X_t}) = \bigoplus^{r}_{k = 1} \Phi_{\sP_t}(\C(x_k)).
    \]
    By definition of the Poincaré sheaf, if $x \in X_t$ is a smooth point the image of the skyscraper sheaf $\C(x)$ under $\Phi_{\sP}$ is a line bundle.
    Setting $L_{t,k} \coloneqq  \Phi_{\sP_t}(\C(x_k))$, the decomposition follows. 
    In particular, $E$ has rank $r$. 
    Finally, if $i_*G$ is atomic, it follows from \cref{prop:InvarianceOfTwistedAtomicity} that $E$ is also aotmic.
\end{proof}

For integral atomic Lagrangians, finiteness over the base is the same as being generically finite, and thus it is a cohomological condition.

\begin{Prop}\label{prop:FinintessOverBaseLagrangians}
    Let $\pi : X \to \P^n$ a Lagrangian fibration, and let $i: Z \hookrightarrow X$ be an integral Lagrangian such that $i_*\sO_Z$ is atomic. 
    If $\pi_*[Z] = r[\P^n] \in H^{0}(\P^n,\Z)$ with $r >0 $, then $\pi|_Z : Z \to \P^n$ is finite of degree $r$.
\end{Prop}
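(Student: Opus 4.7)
The hypothesis $\pi_*[Z]=r[\P^n]$ with $r>0$, combined with $Z$ being integral of dimension $n=\dim\P^n$, immediately implies that $\pi|_Z : Z\to\P^n$ is surjective (its image must have maximal dimension) and generically finite of degree $r$. Since $Z$ is closed in the projective manifold $X$, the map $\pi|_Z$ is also proper; hence it will be finite of degree $r$ as soon as all of its fibers are zero-dimensional. My plan is to argue by contradiction: assume some fiber $\pi|_Z^{-1}(t_0)$ contains an irreducible component $W$ with $\dim W\geq 1$, so that $W$ is contained in the Lagrangian fiber $X_{t_0}$, and then derive a contradiction from atomicity of $i_*\sO_Z$.

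The strategy is to pass to the extended Mukai lattice. Since $i_*\sO_Z$ is torsion of rank zero on $X$, the $\alpha$-component of its extended Mukai vector vanishes, so I may write $\tilde v(i_*\sO_Z)=c+s\beta$ with $c\in H^2(X,\Q)$ and $s\in\Q$. By the Beckmann--Markman description of the Mukai vector of an atomic object, $v(i_*\sO_Z)$ is entirely determined by $\tilde v$ via the Verbitsky embedding; in particular, $[Z]\in H^{2n}(X,\Q)$ is the image of $\tilde v(i_*\sO_Z)^n/n!$ in the Verbitsky component. Pairing with a fiber class, the projection-formula identity $[Z]\cdot[F]=r$ pins down $s$ in terms of $r$ and the Beauville--Bogomolov pairing of $c$, and more generally each component of $v(i_*\sO_Z)$ becomes an explicit polynomial in $c$ and $s$.

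The hard part is then to rule out $W$: the existence of a contracted component $W\subseteq X_{t_0}$ should produce a cohomological contribution supported over the single point $t_0\in\P^n$, whereas every piece of $v(i_*\sO_Z)$ must lie in the Verbitsky component of $H^*(X,\Q)$ and therefore be ``spread evenly'' across all fibers of $\pi$. I expect the cleanest way to make this precise is to use the Poincar\'e-sheaf equivalence $\Phi_{\sP}$ of \cref{sec:TwistedPoincaré}: because $i_*\sO_Z$ is atomic, so is $\Phi_{\sP}(i_*\sO_Z)$ on the dual Beauville--Mukai system $X'$, and a positive-dimensional fiber of $\pi|_Z$ would, via \cref{lem:PropertiesOfP}, translate into nontrivial higher cohomology in the restriction $\Phi_{\sP}(i_*\sO_Z)|_{X'_{t_0}}$; this contradicts the locally-free behavior that atomicity together with the numerical degree $r$ should enforce on this restriction, in the spirit of \cref{prop:LocallyFreeGeneral}.
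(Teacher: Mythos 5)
Your opening reductions are fine: surjectivity, generic finiteness of degree $r$, properness, and the reduction to showing that every fiber of $\pi|_Z$ is zero-dimensional all match the start of the paper's argument. The gap sits exactly in the step you yourself flag as ``the hard part'': you never actually rule out a positive-dimensional component $W\subset \pi|_Z^{-1}(t_0)$. Knowing that $v(i_*\sO_Z)$ is a universal polynomial in $\tilde v(i_*\sO_Z)$ cannot do this by itself, because the cohomology class $[Z]$ simply does not detect whether $\pi|_Z$ contracts a subvariety of $Z$; the heuristic that classes in the Verbitsky component are ``spread evenly over the fibers'' has no force here. Worse, the fallback via the Poincar\'e-sheaf equivalence is circular: \cref{prop:LocallyFreeGeneral} takes finiteness of $Z$ over $\P^n$ as a \emph{hypothesis}, and local freeness of $\Phi_{\sP}(i_*\sO_Z)$ (equivalently, the vanishing of the higher cohomology of the derived restrictions you want to exploit) is exactly what would fail if $\pi|_Z$ had a positive-dimensional fiber; atomicity alone does not ``enforce'' that local freeness, and the present proposition is precisely the input needed to obtain it. Note also that the proposition is stated for an arbitrary Lagrangian fibration, where no Poincar\'e sheaf is available.

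The idea you are missing is that atomicity constrains the \emph{restriction map on $H^2$}, not merely the Mukai vector. Writing $\tilde v(i_*\sO_Z)=\lambda+s\beta$ with $\lambda\in\NS(X)_{\Q}$, for any $\mu\in H^2(X,\Q)$ with $q(\mu,\lambda)=0$ the isometry $B_\mu=\exp(\mu)\cup-$ fixes $\tilde v(i_*\sO_Z)$, hence fixes $v(i_*\sO_Z)$ by atomicity, hence $[Z]\cup\mu=0$; so $\Ker([Z]\cup-)=\langle\lambda\rangle^{\perp}$ has corank one in $H^2(X,\Q)$. By Lehn's lemma this kernel coincides with the kernel of $j^*\colon H^2(X,\Q)\to H^2(\wt{Z},\Q)$ for a resolution $\wt{Z}\to Z$, so $i^*\colon\NS(X)_{\Q}\to\NS(Z)_{\Q}$ has rank one. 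Consequently $\pi|_Z^*\sO_{\P^n}(1)=i^*\pi^*\sO_{\P^n}(1)$ is a positive (because $r>0$) rational multiple of the ample class $i^*h$ in $\NS(Z)_{\Q}$, hence ample, and a proper morphism pulling back an ample class to an ample class is finite. This is the mechanism that kills positive-dimensional fibers, and it is absent from your sketch.
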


\begin{proof}
    The equality $\pi_*[Z] = r[\P^n]$ means that $Z$ dominates $\P^n$, and thus it is generically finite, since $Z$ is integral of dimension $n$. 
    Let $\nu : \wt{Z} \to Z$ be a resolution of singularities, and let $j = i \circ \nu : \wt{Z} \to X$ the composition. 
    By \cite[Lemma 2.7]{lehn16} we have 
    \begin{equation}\label{eq:RestrictionSingular}
        \Ker \left(j^* :H^2(X,\Q) \to H^2(\wt{Z},\Q) \right) = \Ker \left( H^2(X,\Q) \xrightarrow{[Z] \cup -}  H^{2n+2}(X,\Q)\right).
    \end{equation}
    Write $\tilde{v}(i_*\sO_Z) = \lambda + s\beta$ for some $\lambda \in \NS(X)$ and $s \in \Q$.
    We show that 
    \[
        \langle \lambda \rangle^{\perp} = \Ker \left( H^2(X,\Q) \xrightarrow{[Z] \cup -}  H^{2n+2}(X,\Q)\right).
    \]
    Take $\mu \in H^2(X,\Q)$ be any class, and let $B_{\mu} \in \mathrm{SO}(H^*(X,\Q))$ be the isometry acting as cup product with $\exp(\mu)$.
    Its action on $\wt{H}(X,\Q)$ is given in \cite[Section 4.1]{beckmann21}, and if $q(\mu,\lambda) =0$ we have $B_{\mu}(\tilde{v}(i_*\sO_Z)) = \tilde{v}(i_*\sO_Z)$.
    Therefore $B_{\mu}(v(i_*\sO_Z)) = v(i_*\sO_Z)$, by definition of atomicity. 
    Since 
    \[
        [Z] \cup \mu = B_{\mu}(\ch_{2n+2}(i_*\sO_Z)) - \ch_{2n+2}(i_*\sO_Z) = 0
    \]
    we conclude that $\langle \lambda \rangle^{\perp} \subseteq \Ker([Z] \cup -)$.
    As the latter space cannot be the whole $H^2(X,\Q)$ this must be an equality. 

    The equality \eqref{eq:RestrictionSingular} implies that
    \[
        \rk \left( j^* : \NS(X)_{\Q} \to \NS(\wt{Z})_{\Q} \right) = 1.
    \]
    Since $\nu: \wt{Z} \to Z$ is surjective, the pullback $\nu^* : \NS(Z)_{\Q} \to \NS(\wt{Z})_{\Q}$ is injective.
    Here, if $Z$ is singular, the Neron--Severi group $\NS(Z)$ is the group of line bundles up to numerical equivalence. 
    It follows that the restriction $i^*: \NS(X)_{\Q} \to \NS(Z)_{\Q}$ has also rank $1$. 
    Therefore
    \[
        \pi|_Z^*(\sO_{\P^n}(1)) = i^*\pi^*(\sO_{\P^n}(1)) \in \NS(Z)_{\Q},
    \]
    must be a positive (as $r > 0$) multiple of an ample class, so it is itself ample.
    Since the pullback of the ample class $\sO_{\P^n}(1)$ is ample, the map $\pi|_Z: Z \to \P^n$ is finite. 
    The cohomological assumption forces its degree to be $r$. 
\end{proof}

\subsubsection{Slope stability}\label{sec:SlopeStability}
We follow Yoshioka's approach \cite{yoshioka06} for the construction of moduli spaces of twisted sheaves, as moduli spaces of certain sheaves on a Brauer--Severi variety representing the Brauer class. 
Although his convention for the Chern character of twisted sheaves is slightly different from \cite{huybrechts2005}, it is perfectly equivalent.
Recall that the slope of a twisted sheaf $E$ on a polarized variety $(X,h)$ is defined as 
\begin{equation}\label{eq:DefSlope}
    \mu^B_h(E) \coloneqq 
    \begin{cases}
        \frac{c_1^B(E).h^{\dim X - 1}}{r(E)} &\text{ if } r(F) > 0, \\
        + \infty &\text{ if } r(E) = 0.
    \end{cases}
\end{equation}
Here, $B$ is a fixed B-field for the Brauer class, and $c_1^B$ is the twisted first Chern class.
As usual, a torsion-free sheaf $E$ is $\mu_h$-semistable (or simply $\mu$-semistable) if for all subsheaves of smaller rank $F \subset E$ the inequality $\mu^B_h(F) \leq \mu^B_h(E)$ holds, and $\mu_h$-stable if the strict inequality holds. 
Changing $B$ changes the slope by an additive constant, hence the corresponding stability condition is well-defined and does not depend on $B$. 
For this reason, we will often omit $B$ from the notation and write $\mu_h$ instead of $\mu^B_h$.

Now we need to extend some results proved by O'Grady in \cite{ogrady22} to the twisted setting.
For simplicity, we restrict our discussion to locally free sheaves. 
Recall from \cite[Definition 3.3]{ogrady22} the definition of the constant $a$ for a modular sheaf, used for the wall and chamber decomposition. 

\begin{Defi}
    Let $X$ be a HK manifold, $\theta \in \Br(X)$ a Brauer class, and let $E$ be be a $\theta$-twisted locally free sheaf. 
    We say that $E$ is modular if $\cEnd(E)$ is modular, and we set
    \[
        a(E) \coloneqq \frac{a(\cEnd(E))}{r(E)^4},
    \]
    where $r(E)$ is the rank of $E$.
\end{Defi}

Following \cite[(3.2.1)]{ogrady22}, if $F$ is a $\theta$-twisted torsion-free sheaf and $E$ is a $\theta$-twisted locally free sheaf, we set 
\[
    \lambda_{F,E} \coloneqq c_1(E^{\vee} \otimes F) = r(E)c^B_1(F) - r(F)c^{B}_1(E).
\]
The last equality holds for any B-field $B \in H^2(X,\Q)$ for $\theta$.
With this definition, the main technical results of \cite{ogrady22} extend naturally to the twisted setting by reducing to the untwisted case. 
Indeed, tensorization with $E^{\vee}$ maps $\theta$-twisted vector bundles into untwisted bundles, and we have
\begin{equation}\label{eq:KeyEqualityForTwistedModular}
     \lambda_{F,E} = \frac{1}{r(E)^2 }\lambda_{E^{\vee} \otimes F,E^{\vee} \otimes E} .
\end{equation}

Recall from \cite[Definition 3.5]{ogrady22}, that a class $h \in \Amp(X)_{\R}$ is $a$-suitable 
if, for every hyperplane orthogonal to a class $\lambda \in \NS(X)$ with $-a \leq q(\lambda) < 0$, $h$ lies on the same half space as $f$.
We need the following generalization to twisted bundles of \cite[Proposition 5.11(a)]{ogrady24}.

\begin{Prop}\label{prop:SemistabilityRestriction}
    Let $X \to \P^n$ a Lagrangian fibration, $\theta \in \Br(X)$ a Brauer class and $E$ a $\theta$-twisted modular vector bundle on $X$.
    Let $h$ be an ample class that is $a(E)$-suitable.
    Assume that $E_t$ is $\mu_{h_t}$-semistable for a general $t \in \P^n$.
    If $E$ is not $\mu_h$-stable, then there exists a subsheaf $F \subset E$ with $0 < r(F) < r(E)$ such that $\mu_{h_t}(F_t) =\mu_{h_t}(E_t) $ for a general $t \in \P^n$, and $\mu_h(F) = \mu_h(E)$.
\end{Prop}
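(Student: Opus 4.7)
The plan is to reduce the statement to the corresponding untwisted result \cite[Proposition~5.11(a)]{ogrady24} by tensoring with $E^{\vee}$. Suppose $E$ is not $\mu_h$-stable, and choose a torsion-free subsheaf $F \subset E$ with $0 < r(F) < r(E)$ and $\mu_h(F) \geq \mu_h(E)$. Since $E$ is $\theta$-twisted and $E^{\vee}$ is $\theta^{-1}$-twisted, the natural inclusion $E^{\vee} \otimes F \hookrightarrow E^{\vee} \otimes E = \cEnd(E)$ is an inclusion of honest untwisted sheaves. By hypothesis $\cEnd(E)$ is modular, with $a(\cEnd(E)) = r(E)^4 a(E)$ by the definition of $a(E)$ for twisted bundles.

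Next I translate the slope data via the identity $\lambda_{E^{\vee}\otimes F,\cEnd(E)} = r(E)^2 \lambda_{F,E}$, which was recorded just before the statement. Being a positive rescaling, this identity immediately yields three useful facts: (i) the destabilizing inequality $\mu_h(F) \geq \mu_h(E)$ is equivalent to $\mu_h(E^{\vee} \otimes F) \geq \mu_h(\cEnd(E))$; (ii) the hyperplanes $\lambda_{F,E}^{\perp}$ and $\lambda_{E^{\vee}\otimes F,\cEnd(E)}^{\perp}$ coincide, so the relevant wall-crossing data is literally the same; and (iii) the BBF rescaling by $r(E)^4$ is exactly cancelled by the corresponding rescaling in the definition of $a$, so that $a(E)$-suitability of $h$ is precisely what is needed to control the walls that appear on the $\cEnd(E)$ side. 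Since we are in characteristic zero, $\mu_{h_t}$-semistability of $E_t$ implies the same for $\cEnd(E_t) = \cEnd(E)_t$, so the generic-fiber hypothesis carries over.

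With all hypotheses in the untwisted framework verified, I apply O'Grady's Proposition~5.11(a) to $\cEnd(E)$ together with the destabilizing subsheaf $E^{\vee} \otimes F$, obtaining the equalities $\mu_{h_t}((E^{\vee} \otimes F)_t) = \mu_{h_t}(\cEnd(E)_t)$ for general $t$ and $\mu_h(E^{\vee} \otimes F) = \mu_h(\cEnd(E))$. Dividing each equality by $r(E)^2$ and undoing the rescaling using the identity above gives $\mu_{h_t}(F_t) = \mu_{h_t}(E_t)$ generically and $\mu_h(F) = \mu_h(E)$ globally, exactly the two conclusions of the proposition.

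The main obstacle is the careful bookkeeping of suitability and the modularity constant under the tensor operation; this is delicate but, thanks to the clean rescaling by $r(E)^2$, all of the walls and bounds match up on the nose. A secondary subtlety is that the untwisted proposition, as stated, produces \emph{some} subsheaf satisfying the equalities, whereas we want to conclude them for the specific $F$ we started with. To handle this, I would inspect the proof of loc.\ cit.\ and verify that its argument in fact shows that every saturated destabilizing subsheaf of a modular sheaf (with $h$ suitable and the fiberwise semistability assumption) has matching slopes both globally and generically, and then apply this directly to the saturation of $E^{\vee} \otimes F$ inside $\cEnd(E)$.
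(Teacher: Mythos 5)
Your overall strategy --- reduce to the untwisted case by tensoring with $E^{\vee}$ and using the identity $\lambda_{F,E} = r(E)^{-2}\lambda_{E^{\vee}\otimes F,\,E^{\vee}\otimes E}$ --- is exactly the paper's, which explicitly follows O'Grady's proof of Proposition 5.11(a) and adapts each step to the twisted setting. However, the black-box application you propose first does not go through as stated: O'Grady's proposition applied to the modular sheaf $\cEnd(E)$ requires $h$ to be $a(\cEnd(E))$-suitable, and since $a(\cEnd(E)) = r(E)^4\,a(E)$ this is a \emph{strictly stronger} condition than the $a(E)$-suitability you are given (larger $a$ means more hyperplanes to avoid). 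The cancellation of the $r(E)^4$ rescaling that you invoke is valid only for walls of the form $\lambda_{G,\cEnd(E)}^{\perp}$ with $G = E^{\vee}\otimes F$ for a subsheaf $F\subset E$, since then $q(\lambda_{G,\cEnd(E)}) = r(E)^4\,q(\lambda_{F,E})$; a general destabilizing subsheaf of $\cEnd(E)$ can produce a wall with $q(\lambda)\in[-a(\cEnd(E)),-a(E))$ that $a(E)$-suitability does not control. Your fallback --- inspecting the proof of loc.\ cit.\ and running it for the specific subsheaf $E^{\vee}\otimes F$ --- is therefore not merely a device to land on the given $F$, it is the necessary route, and it is precisely what the paper does: it applies O'Grady's Lemmas 3.7 and 3.11 and Proposition 3.10 to the pair $(E^{\vee}\otimes F,\cEnd(E))$ and translates each inequality through the rescaling identity, at which point $a(E)$-suitability is exactly the right hypothesis. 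One further caution: if you pass to the saturation of $E^{\vee}\otimes F$ inside $\cEnd(E)$, its class $\lambda$ need no longer equal $r(E)^2\lambda_{F,E}$, so the rescaling bookkeeping can break; the paper avoids this by working with $\lambda_{F,E}$ and the unsaturated subsheaf throughout.
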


\begin{proof}
    We follow the proof of \cite[Proposition 5.11(a)]{ogrady24} and indicate where care is needed due to the twist. 
    Let $\mathsf{S}$ be the set of rational numbers $s$ such that there is a $\theta$-twisted subsheaf $F \subset E$ with smaller rank and
    \[
        q(\lambda_{F,E}, (1-s)h + sf) =0.
    \]
    We show that $S$ is non-empty and finite. 
    Since $E$ is not $\mu_h$-stable, we can find $F \subset E$ with smaller rank and $\mu_h(F) \geq \mu_h(E)$. 
    Then 
    \[
        \mu_h(E^{\vee} \otimes F) \geq \mu_h(E^{\vee} \otimes E),
    \]
    thus \cite[Lemma 3.7]{ogrady22} combined with \eqref{eq:KeyEqualityForTwistedModular} gives $q(\lambda_{F,E},h) \geq 0$. 
    Similarly, since $E_t$ is semistable, $\cEnd(E)_t = \cEnd(E_t)$ is semistable by \cite[Corollary 3.2.10]{huybrechts_lehn_2010}.
    It follows from \cite[Lemma 3.11]{ogrady22} and  \eqref{eq:KeyEqualityForTwistedModular} that $q(\lambda_{F,E},f) \leq 0$.
    From these two inequalities it follows that $\mathsf{S}$ is non-empty.
    At the same time it is finite, because the corresponding set for $\cEnd(E)$ is finite.
    The equality 
    \[
        r(E)r(F)(\mu_{h_t}(F_t) - \mu_{h_t}(E_t)) = n!c_Xq(h,f)^{n-1}q(\lambda_{F,E},f)
    \]
    works in the twisted setting as well.
    Notice that the difference of the slopes does not depend on the B-field.
    So assume $\mu_{h_t}(F_t) - \mu_{h_t}(E_t) < 0$, or equivalently $q(\lambda_{F,E},f) < 0$.
    Using \cite[Proposition 3.10]{ogrady22}, we get 
    \[
        -a(\cEnd(E)) \leq q(\lambda_{E^{\vee} \otimes F,E^{\vee} \otimes E}) \leq 0,
    \]
    thus by definition 
    \[
        -a(E) \leq q(\lambda_{F,E}) \leq 0.
    \]
    From here we conclude in the same way.
\end{proof}

Now take $X \to \P^n$ a Tate--Shafarevich twist of $X' \to \P^n$, as above.
Moreover, let $i: Z \hookrightarrow X$ the embedding of a subvariety, and let $G$ be a rank $1$ sheaf on $X$ such that $i_*G$ is Cohen--Macaulay.
Then by \cref{prop:LocallyFreeGeneral} its image $E \coloneqq \Phi_{\sP}(i_*G)$ is a $\theta$-twisted locally free sheaf.

\begin{Thm}\label{thm:StabilitySecondExample}
    With the notation above, assume that $Z$ is integral and $i_*G$ is atomic. 
    Let $h'$ be an $a(E)$-suitable polarization on $X'$. 
    Then $E$ is a twisted atomic $\mu_{h'}$-stable vector bundle.
    In particular, it is projectively hyperholomorphic. 
\end{Thm}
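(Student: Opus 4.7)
The plan is to combine Proposition~\ref{prop:LocallyFreeGeneral} and Proposition~\ref{prop:FinintessOverBaseLagrangians} with the twisted slope-stability machinery of Proposition~\ref{prop:SemistabilityRestriction}, and then rule out destabilizers using the integrality of $Z$.

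Since $Z$ is an integral atomic Lagrangian, Proposition~\ref{prop:FinintessOverBaseLagrangians} (whose argument applies equally to any rank-one atomic pushforward, such as $i_*G$) shows that $Z \to \P^n$ is finite of some degree $r$. Proposition~\ref{prop:LocallyFreeGeneral} then gives that $E = \Phi_{\sP}(i_*G)$ is a $\theta$-twisted locally free sheaf of rank $r$, and its restriction to a general fiber $X'_t$ splits as
\[
E_t \cong \bigoplus_{k=1}^{r} L_{t,k}, \qquad L_{t,k} = \Phi_{\sP_t}(\C(x_k)),
\]
where $\{x_1, \ldots, x_r\} = Z \cap X_t$. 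By Corollary~\ref{cor:PoincaréSendsAtomicToAtomic}, $E$ is twisted atomic, and hence modular by Proposition~\ref{prop:TwistedAtomicHyperholomorphic}.

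To invoke Proposition~\ref{prop:SemistabilityRestriction}, observe that on the abelian fiber $X'_t$ the Brauer class $\theta|_{X'_t}$ is trivial and the line bundles $L_{t,k}$ are all translates of a common line bundle (obtained by varying the point in the Poincaré kernel), so they share the same first Chern class. Consequently $E_t$ is $\mu_{h'_t}$-polystable, hence $\mu_{h'_t}$-semistable, for general $t$. Combined with the $a(E)$-suitability of $h'$, Proposition~\ref{prop:SemistabilityRestriction} reduces the failure of $\mu_{h'}$-stability of $E$ to the existence of a subsheaf $F \subset E$ with $0 < r(F) < r$ and $\mu_{h'_t}(F_t) = \mu_{h'_t}(E_t)$ for general $t$. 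Passing to the saturation of $F$ in $E$, we may further assume that $F_t \subset E_t$ is a direct summand of the polystable bundle $E_t$, so $F_t = \bigoplus_{k \in S_t} L_{t,k}$ for some $\emptyset \subsetneq S_t \subsetneq \{1, \ldots, r\}$.

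The heart of the argument is to rule out such an $F$. Applying $\Phi_{\sP}^{-1}$ to the inclusion $F \hookrightarrow E$ gives a morphism $F' := \Phi_{\sP}^{-1}(F) \to \Phi_{\sP}^{-1}(E) = i_*G$ in $\Db(X)$, and the base-change from Lemma~\ref{lem:PropertiesOfP} yields
\[
\sfL i_t^* F' \cong \Phi_{\sP_t}^{-1}(F_t) = \bigoplus_{k \in S_t} \C(x_k)
\]
for general $t$, a sheaf concentrated in degree zero. Hence on a dense open of $\P^n$ the complex $F'$ is an honest coherent sheaf, and the induced map $F' \hookrightarrow i_*G$ exhibits it as a subsheaf supported on a closed subscheme $Z' \subseteq Z$ whose intersection with a general fiber $X_t$ is the non-empty proper subset $\{x_k : k \in S_t\}$. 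In particular $Z' \to \P^n$ is dominant. Since $Z \to \P^n$ is finite and $Z$ is integral of dimension $n$, any proper closed subvariety of $Z$ has strictly smaller dimension and hence cannot dominate $\P^n$; we conclude $Z' = Z$ set-theoretically, so $|S_t| = r$, contradicting $r(F) < r$. Thus $E$ is $\mu_{h'}$-stable, and Proposition~\ref{prop:TwistedAtomicHyperholomorphic} then gives that it is projectively hyperholomorphic. The main obstacle I expect is the bookkeeping in the last step: promoting the fiberwise identification of $\sfL i_t^* F'$ with an honest sheaf to a global subsheaf inclusion $F' \hookrightarrow i_*G$ with the correct support. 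This relies on the $\P^n$-linearity of $\Phi_{\sP}$ and on $\Phi_{\sP_t}^{-1}$ sending (twisted) line bundles to skyscraper sheaves concentrated in a single degree.
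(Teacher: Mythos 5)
Your proposal is correct and follows essentially the same route as the paper: fiberwise polystability from \cref{prop:LocallyFreeGeneral}, the wall-and-chamber restriction result \cref{prop:SemistabilityRestriction} to produce a fiberwise-destabilizing subsheaf, and then applying $\Phi_{\sP}^{-1}$ via \cref{lem:PropertiesOfP} so that integrality of $Z$ forces the support of the transformed subsheaf to be all of $Z$, whence $r(F)=r(E)$, a contradiction. The extra steps you include (deriving finiteness of $Z\to\P^n$ from \cref{prop:FinintessOverBaseLagrangians} and passing to the saturation of $F$) are consistent with how the paper organizes this material and do not change the argument.
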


\begin{proof}
    By \cref{prop:LocallyFreeGeneral} the restriction $E_t$ is the direct sum of (twisted) line bundles $L_{t,k}$ with the same degree, in particular polystable. 
    Assume that $E$ is not $\mu_{h'}$-stable. 
    Then \cref{prop:SemistabilityRestriction} produces a subsheaf $F \subset E$ with smaller rank and fiberwise destabilizing. 
    
    The only way for $F_t$ to destabilize is that if its is a subsum of the line bundles $L_{t,k}$. 
    By \cref{lem:PropertiesOfP} this implies that $\Phi_{\sP}^{-1}(F) \in \Db(X)$ is, generically over $\P^n$, a sheaf whose support is finite over $\P^n$ and contained in the surface $Z$.
    The surface $Z$ is integral, so the support agrees with $Z$, at least generically over the base. 
    Therefore $r(E) = r(F)$ which gives a contradiction.
    So $E$ is $\mu_{h'}$-stable, hence projectively hyperholomorphic by \cref{prop:TwistedAtomicHyperholomorphic}. 
\end{proof}

\subsection{The surfaces of lines}\label{sec:TheSurfacesOfLines}
Now we apply the results above to our case. 
Let $\cK^2_{6}$ be the moduli space of polarized \HK manifolds $(X,h)$ of degree $6$ and divisibility $2$. 
A general point corresponds to the variety of lines $F(Y)$ on a smooth cubic fourfold $Y $, equipped with the Pl\"ucker polarization. 
Fix a non-negative integer $d$, and let $\cN(d) \subset \cK^2_{6}$ be the closure of the locus parametrizing polarized \HK manifolds $(X,h)$ with Neron-Severi group
\begin{equation}\label{eq:NSOfFano}
\NS(X) = \Z h \oplus \Z f, \text{ and BBF form } q(h) =6, \ q(h,f) = 2d, \ q(f) = 0.
\end{equation}
A general $(X,h) \in \cN(d)$ has a unique (up to automorphisms of $\P^2$) Lagrangian fibration $\pi: X \to \P^2$ that satisfies $\pi^*\sO_{\P^2}(1) = \sO_X(f)$.

\begin{Prop}[{{\cite[Theorem 7.11]{markman14},\cite[Proposition B.4]{ogrady22}}}]\label{prop:OGradyTateShafarevich}
    There exists a polarized K3 surface $(S,H)$ of degree 2, such that $X \to \P^2$ is birational to a Tate--Shafarevich twist in $\Sha^0$ of the Beauville-Mukai system $ X' \coloneqq M_{(0,H,-1)}(S,H) \to \P^2$.
    Moreover, if $d \geq  35 $ and $ 3 \nmid d $ then $X$ is isomorphic to this Tate--Shafarevich twist.
\end{Prop}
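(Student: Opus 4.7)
The plan is to realize $X$ abstractly as a Tate--Shafarevich twist of a Beauville--Mukai system through period arguments on Markman's extended Mukai lattice, and then to upgrade the birational identification to a genuine isomorphism by a wall-and-chamber analysis of the movable cone.

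First I would work with Markman's extended Mukai lattice $\widetilde H(X,\mathbb Z)$, an abstract lattice of signature $(4,20)$ carrying a weight-two Hodge structure and a canonical inclusion $H^2(X,\mathbb Z)\hookrightarrow\widetilde H(X,\mathbb Z)$ as the middle summand. The initial lattice-theoretic step is to produce a primitive isotropic vector $\tilde v=(0,H,-1)\in\widetilde H(X,\mathbb Z)$ of Hodge type whose middle component is the Lagrangian class $f$ and with $q(H)=2$; this reduces to a direct computation using the Gram matrix of $\NS(X)=\mathbb Z h\oplus\mathbb Z f$ and the divisibility of $f$. The quotient $\tilde v^\perp/\mathbb Z\tilde v$ then carries a polarized weight-two Hodge structure of K3 type and degree $2$, and by the surjectivity of the period map and Torelli for K3 surfaces it is realized by a polarized K3 surface $(S,H)$. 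The Beauville--Mukai system $X'\coloneqq M_{(0,H,-1)}(S,H)\to\mathbb P^2$ is a $\mathrm{K3}^{[2]}$-type HK fourfold whose extended Mukai lattice, via Mukai's identification $v^\perp\cong H^2(X',\mathbb Z)$, is Hodge isometric to $\widetilde H(X,\mathbb Z)$ in a way matching the two Lagrangian classes. An application of Verbitsky's global Torelli theorem then produces a birational identification between $X$ and some fiber of the family $\mathcal X\to\widetilde\Sha$ of \cref{prop:RelativeTSTwist}; since $X$ is projective and hence K\"ahler, the corresponding parameter lies in $\Sha^0$ by \cite[Proposition 7.7]{markman14}, which proves the first assertion.

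The main obstacle is the second claim: upgrading the birational map to a true isomorphism under the numerical assumptions $d\geq 35$ and $3\nmid d$. For this I would use the description of the movable and nef cones of $\mathrm{K3}^{[2]}$-type HK manifolds, in which walls are cut out by classes $\alpha\in\widetilde H(X,\mathbb Z)$ of square $-2$, or of square $-10$ and divisibility $2$. Pulled back to $\NS(X)$, these translate into Pell-type constraints on the Gram matrix $\bigl(\begin{smallmatrix}6 & 2d\\2d & 0\end{smallmatrix}\bigr)$. The hypotheses $d\geq 35$ and $3\nmid d$ should be precisely those ruling out solutions inside the cone separating the ample chamber of $X$ from that of the Tate--Shafarevich twist. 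Once it is shown that no such wall exists, the birational identification preserves ample chambers and hence extends to a regular isomorphism. The bulk of the work lies in this explicit lattice case analysis, together with carefully matching the marked ample chambers on the two sides.
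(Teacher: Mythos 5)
This proposition is not proved in the paper at all: it is quoted verbatim from \cite[Theorem 7.11]{markman14} (the birational statement) and \cite[Proposition B.4]{ogrady22} (the upgrade to an isomorphism), so there is no internal argument to compare yours against. Measured against those sources, your first paragraph is a reasonable sketch of Markman's strategy: one does construct a degree-$2$ polarized K3 period from the isotropic fibre class (Markman works with $f^{\perp}/\Z f\subset H^2(X,\Z)$, which is the degree-$2$ polarized K3 lattice of rank $21$, rather than with the extended Mukai lattice, but the two bookkeepings are equivalent), and one does conclude via global Torelli. The step you elide, however, is the actual heart of Markman's theorem: you need to know that the periods of the fibres of $\cX\to\wt{\Sha}$ sweep out exactly the affine line of periods obtained from $X'$ by moving the $(2,0)$-class in the direction determined by $f$, so that the period of $X$ is hit by some $t\in\wt{\Sha}$. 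This is the content of the theory of (degenerate) twistor/Tate--Shafarevich deformations and is not ``an application of Verbitsky's global Torelli theorem''; as written, your argument assumes the conclusion of the hard step.

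For the second claim there is a concrete problem with the mechanism you propose. In the lattice $\NS(X)=\Z h\oplus\Z f$ with Gram matrix $\left(\begin{smallmatrix}6&2d\\2d&0\end{smallmatrix}\right)$ one has $q(ah+bf)=2a(3a+2db)$, so $q=-2$ forces $a=\pm1$ and $db=\mp2$, while $q=-10$ forces $a\in\{\pm1,\pm5\}$ and $db\in\{\mp4,\mp8\}$; hence classes of square $-2$, or of square $-10$, exist only for $d\le 8$. Your proposed wall-and-chamber analysis therefore already yields the absence of walls for all $d\ge 9$, and cannot be the source of the stated hypotheses $d\ge 35$ and $3\nmid d$ (note also that these involve the prime $3$, which never enters the $-2$/$-10$ Pell analysis, but does enter through $q(h)=6$ and divisibility/saturation questions for the embedding of $\Z h\oplus\Z f$ into $H^2(X,\Z)$). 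So either the numerical hypotheses come from a different obstruction than the one you identify --- in O'Grady's argument they arise from the finer lattice-theoretic matching needed to recognize the Hodge isometry as a parallel-transport operator compatible with the two fibrations and polarizations --- or your argument proves a different (possibly sharper) statement than the one asserted. Either way, the sentence ``the hypotheses should be precisely those ruling out solutions'' is not a proof, and the computation above shows it is not even the right guess; this part of the claim remains unestablished in your write-up.
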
 

As $d \in \Z_{\geq 0}$ varies, the Noether--Lefschetz loci $\cN(d)$ form an infinite collection of divisors in $\cK^2_{6}$, so they cover a dense subset of $\cK^2_{6}$. 
All properties which are Zariski general continue to hold for a general element of $\cN(d)$ for infinitely many $d$.

In particular, we can find infinitely many $d$ such that a very general element in $\cN(d)$ is a variety of lines $X=F(Y)$ with the following properties:
\begin{itemize}
    \item The Neron--Severi group is as in \eqref{eq:NSOfFano}.
    \item For every hyperplane section $Y_H \subset Y$, its surface of lines $F(Y_H)$ is integral.
\end{itemize}
In this case, its dual $X'$ is a Beauville--Mukai system on a polarized K3 $(S,H)$ of Picard rank 1, so that every curve in $|H|$ is integral, and we can apply \cref{thm:TwistedPoincaré}.
In particular we obtain a Brauer class $\theta_d \in \Br(X')$.
The applications of all the results above gives the following. 

\begin{Cor}\label{cor:VBfromSurfaces}
    Let $X$ be as above, let $i: Z \hookrightarrow X$ be the surface of lines of any hyperplane section, and take $G \in \overline{\Pic^0}(Z)$. 
    Let $h'$ be a suitable polarization on $X'$.
    Then $E = \Phi(i_*G)$ is a $\mu_{h'}$-stable $\theta_d$-twisted atomic vector bundle of rank $5d^2$. In particular, it is projectively hyperholomorphic.
\end{Cor}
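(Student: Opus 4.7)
The strategy is a direct combination of the machinery already set up: \cref{prop:LocallyFreeGeneral} and \cref{prop:FinintessOverBaseLagrangians} will give local freeness and the rank, \cref{thm:StabilitySecondExample} will give $\mu_{h'}$-stability and twisted atomicity, and \cref{prop:TwistedAtomicHyperholomorphic} will upgrade this to projective hyperholomorphicity. The hypotheses to verify are: $Z$ is integral, $i_*G$ is Cohen--Macaulay, $i_*G$ is atomic, and $\pi|_Z \colon Z \to \P^2$ is finite of degree $5d^2$, where $\pi$ is the Lagrangian fibration with fiber class $f$. Integrality of $Z = F(Y_H)$ is part of the standing hypothesis on $X$; the Cohen--Macaulay property of $i_*G$ follows from \cref{prop:CompactifiedPicCM} ($G$ is Cohen--Macaulay on $Z$) together with the fact that $Z$ is cut out in $X$ by a regular section of the tautological rank-$2$ bundle $\sS^\vee$, so $i$ is a regular embedding of codimension $2$.

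For atomicity of $i_*G$ I would exploit the irreducibility of $\overline{\Pic^0}(\cF/\P^5)$ (\cref{thm:CompactifiedPicSmooth}) and the fact that the defining condition $\Ann(v) = \Ann(\tilde v)$ is closed in a flat family with constant topological data; it therefore suffices to exhibit one atomic point. Over a smooth hyperplane section $Y_H$, the Lagrangian $F(Y_H)$ is smooth, and for a topologically trivial line bundle $L$ on it one may invoke Beckmann's criterion \cite[Theorem 1.8]{beckmann22}; the required geometric condition can be certified directly on $F(Y_H)$, or alternatively deduced by transporting atomicity from the Beauville--Mukai dual $X'$ (where the analogous Lagrangian admits an evident atomic structure) via \cref{cor:PoincaréSendsAtomicToAtomic}.

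Finally, the rank is computed via \cref{prop:FinintessOverBaseLagrangians}, which equates it to $r = \int_X [Z] \cdot f^2$. Writing $[Z] = c_2(\sS^\vee)$ and decomposing in $H^4(X,\Q)$ as $[Z] = A h^2 + E\, c_2(X) + \eta$ with $\eta$ in the Beauville--Bogomolov orthogonal complement of the Verbitsky component (which pairs trivially with $f^2$), the coefficients $A, E$ are pinned down by the classical intersection numbers $[Z] \cdot h^2 = 45$ and $[Z]^2 = 27$ --- the Plücker degree of the surface of lines on a smooth cubic threefold, and the number of lines on a smooth cubic surface --- together with the $\mathrm{K3}^{[2]}$-type Fujiki relation and the standard constant $\int_X c_2(X) \cdot \alpha^2 = 30\, q(\alpha)$. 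Solving yields $A = 5/8$; combined with $\int_X c_2(X) \cdot f^2 = 30\, q(f) = 0$ and $\int_X h^2 \cdot f^2 = 2\, q(h,f)^2 = 8 d^2$, this gives $r = 8 A d^2 = 5 d^2$. The only genuine obstacle is the atomicity check at one point; everything else becomes bookkeeping once the earlier sections of the paper are in place.
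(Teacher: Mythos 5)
Your overall architecture coincides with the paper's: integrality of $Z$ from the standing hypothesis on $X$, Cohen--Macaulayness of $i_*G$ from \cref{prop:CompactifiedPicCM} together with the (Gorenstein, codimension-two regular) embedding $i$, atomicity of $i_*G$ reduced to atomicity of a single reference object, and then \cref{prop:FinintessOverBaseLagrangians}, \cref{thm:StabilitySecondExample} and \cref{prop:TwistedAtomicHyperholomorphic} to get local freeness, $\mu_{h'}$-stability and projective hyperholomorphicity. On atomicity the paper is more economical: since atomicity is a condition on the Mukai vector alone and $v(i_*G)=v(i_*\sO_Z)$ for every $G\in\overline{\Pic^0}(Z)$, it suffices to cite Beckmann's verification (Section 8.1.3 of that paper) that $i_*\sO_Z$ is atomic for a Fano surface of lines; your ``closedness in flat families'' is a harmless but unnecessary detour around the same point. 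Your alternative suggestion of importing atomicity from the Beauville--Mukai dual via \cref{cor:PoincaréSendsAtomicToAtomic} is circular as stated: the equivalence runs from $X$ to $X'$, and there is no independently established atomic object on $X'$ to transport back, so you should rely on the direct verification on $F(Y_H)$.

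The genuine divergence is the rank computation, and there is a real (if small) gap in it. The paper quotes Markman's formula $[Z]=\tfrac58\bigl(h^2-\tfrac15 c_2(X)\bigr)$ and then only needs $\int_X c_2(X)\cup f^2=0$ and $\int_X h^2\cup f^2=2(2d)^2$. You instead rederive the coefficients from $[Z]\cdot h^2=45$ and $[Z]^2=27$. Two remarks. First, for $\mathrm{K3}^{[2]}$-type one has $H^4(X,\Q)=\Sym^2H^2(X,\Q)$, so there is no orthogonal complement of the Verbitsky component in degree four; the correct reason that $[Z]=c_2(\sS)$ lies in $\Span(h^2,c_2(X))$ is that it is a monodromy-invariant class for the twenty-dimensional family of Fano varieties of lines. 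Second, and more importantly, your two equations do \emph{not} pin down the coefficients: writing $[Z]=Ah^2+Ec_2(X)$ and using $\int_X h^4=108$, $\int_X h^2\cup c_2(X)=180$, $\int_X c_2(X)^2=828$, the system $108A+180E=45$ and $108A^2+360AE+828E^2=27$ has the two solutions $(A,E)=(5/8,-1/8)$ and $(5/24,1/8)$, the second of which would give $r=5d^2/3$. You need one further input to exclude it, e.g.\ the intersection number $\int_X[Z]\cup c_2(X)$, integrality of $[Z]$ in $H^4(X,\Z)$, or integrality of the degree of $\pi|_Z$ for a single $d$ with $3\nmid d$ (the coefficients being independent of $d$). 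Once that is supplied, the remaining bookkeeping ($\int_X c_2(X)\cup f^2=30\,q(f)=0$ and $\int_X h^2\cup f^2=8d^2$) is correct and yields $r=5d^2$.
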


\begin{proof}
    By suitable polarization here we mean $a(E)$-suitable. 
    Since suitable polarizations depend only on the discriminant of $E$, they are independent of $Z$ and $G$.
    By our assumption on $X$, every such $Z$ is integral. 
    Moreover, by \cref{prop:CompactifiedPicCM} $G$ is Cohen--Macaulay on $Z$.
    Since $Z$ is Gorenstein, $i_*G$ is Cohen--Macaulay on $X$. 
    By \cite[Section 8.1.3]{beckmann22} the structure sheaf $i_*\sO_Z$ is atomic, so the same is true for $i_*G$ as the Mukai vector is the same. 
    By \cref{prop:FinintessOverBaseLagrangians} and \cref{thm:StabilitySecondExample} we deduce that $E$ is a $\theta_d$-twisted, atomic, $\mu_{h'}$-stable, locally free sheaf of rank $r$, where $\pi_*[Z] = r[\P^2]$. 

    It remains only to compute the rank $r$.
     By \cite[Lemma 7.1 and Example 7.4]{markman21}, the class of the surface is
    \[
         [Z] = \frac{5}{8}\left(h^2 - \frac{1}{5}c_2(X)\right) \in H^4(X,\Z).
    \]
    For a HK of type $\mathrm{K3}^{[2]}$, the second Chern class $c_2(X)$ is a multiple of dual of the BBF form $\sfq^{\vee}$, hence $ \int_X{c_2(X) \cup f^2 } = 0$, because $q(f) = 0$. 
    Moreover, by Fujiki's formula we have 
    \[
    \int_X{h^2\cup f^2} = 2q(h,f)^2 = 2\cdot (2d)^2
    \]
    Combining everything, we get
    \begin{equation*}
         \int_X{[Z] \cup f^2}  = \frac{5}{8}\int_X{h^2 \cup f^2} - \frac{1}{8}\int_X{c_2(X) \cup f^2} = 5d^2,
    \end{equation*}
    Projection formula implies that $r = 5d^2$.
\end{proof}

\section{Moduli spaces of hyperholomorphic bundles}\label{sec:ModuliSpacesGeneral}
In this section we study the geometry of moduli spaces of twisted projectively hyperholomorphic vector bundles on HK manifolds. 
We recall the construction of the symplectic forms, and prove various results aimed at establishing smoothness.

\subsection{Trace map for twisted sheaves}\label{sec:Trace}
We aim to extend the construction of the symplectic form in \cite[Section 7]{bottini22} to the twisted case.
For simplicity, we focus on projectively hyperholomorphic vector bundles.  
The first step is to extend the trace map to this setting.
For a good account in the untwisted case see \cite[Section 1.4]{kuznetsovMark09} and \cite[Section 10]{huybrechts_lehn_2010}, or \cite[Section 2.3]{caldararu03} for a more categorical approach.

Let $X$ be a smooth projective variety and $\alpha \in \Br(X)$ a Brauer class. 
Let $E$ be an $\alpha$-twisted locally free sheaf, its endomorphisms bundle $\cA \coloneqq \cEnd(E) $ is an Azumaya algebra representing $\alpha$.
Since $\cEnd(E) = E^{\vee} \otimes E$, there is a natural isomorphisms of $\sO_X$-modules $\cA\cong \cA^{\vee}$.
The trace map $\Tr: \cA \to \sO_X$ is the dualization of the natural section $\sO_X \to \cA$.
On the opens where $E$ is untwisted, this is the usual trace map. 

For an untwisted vector bundle $F$, tensoring induces a trace map $\Tr_F: \cA \otimes F \to F$.
This induces maps in cohomology:
\[
    \Tr_F: \Ext^k(E,E \otimes F) \to H^k(X,F).
\]
If $X$ has dimension $d$ and $\omega_X$ is the canonical bundle, under the Serre duality isomorphism 
\[
    \Ext^0(E,E) \cong \Ext^d(E,E \otimes \omega_X)^{\vee}
\]
the identity corresponds to the composition 
\[
    \Tr_{\omega_X}: \Ext^d(E,E \otimes \omega_X) \to H^d(X,\omega_X) =  \C.
\]
Therefore, the Serre duality pairing can be written as 
\[
    \Ext^k(E,E) \otimes \Ext^{d-k}(E,E \otimes \omega_X) \to H^d(X,\omega_X) = \C \quad (a,b) \mapsto \Tr_{\omega_X}(b \circ a).
\]

As in the untwisted case, it follows from the definition that the trace is linear: for $f \in H^k(X,F)$ and $g \in \Ext^l(E,E)$ we have 
\begin{equation}\label{eq:LinearityOfTrace}
    \Tr_F((f \otimes \id_E) \circ g) = f \circ \Tr_{\sO_X}(g).
\end{equation}

\begin{Rem}\label{rem:TraceKillsBrackets}
    The composition $\cA \times \cA \to \cA$ induces, when taking cohomology, the Yoneda pairing $\circ$ on $\Ext^*(E,E)$. 
    Therefore, as in the untwisted case \cite[Lemma 10.1.3]{huybrechts_lehn_2010}, we have $\Tr([a,b]) = 0$ for $a,b \in \Ext^*(E,E)$, where
    \[
        [a,b] = a \circ b - (-1)^{|a||b|}b \circ a
    \]
    is the graded bracket. 
\end{Rem}

As usual, everything can be extended to complexes in $\Db(X,\alpha)$ taking resolutions, but, for our purposes, the case of a locally free sheaf suffices.

\subsection{Symplectic form}\label{sec:SymplecticForm}
Now we take $X$ to be a projective HK manifold of dimension $2n$, and we choose a generator $\sigma \in H^0(X,\Omega^2_X)$.
In the following we tacitly identify $H^{2k}(X,\sO_X) \cong \C$ via $\overline{\sigma}^{2k}$ for every $k$. 
Consider the pairing 
\begin{equation}\label{eq:SympForm}
        \tau: \Ext^1(E,E) \times \Ext^1(E,E) \to H^2(X,\sO_X) \cong \C, \ (a,b) \mapsto \Tr( a \circ b),
\end{equation}
where we are writing $\Tr$ for $\Tr_{\sO_X}$.
While we cannot expect it to be non-degenerate in general, for projectively hyperholomorphic bundles it is. 

\begin{Lem}\label{lem:SymplecticityOfExt1}
    If $E$ a $\mu$-stable, projectively hyperholomorphic twisted bundle, then $\tau$ is symplectic.
    In particular, $\Ext^1(E,E)$ has even dimension. 
\end{Lem}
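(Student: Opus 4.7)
The plan is to verify separately that $\tau$ is skew-symmetric and non-degenerate.

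Skew-symmetry follows directly from \cref{rem:TraceKillsBrackets}. For $a,b \in \Ext^1(E,E)$, the graded bracket is $[a,b] = a \circ b + b \circ a$ (both classes have odd degree one, so the sign is $+1$), and $\Tr$ kills it; hence $\Tr(a \circ b) = -\Tr(b \circ a)$, which gives $\tau(a,b) = -\tau(b,a)$ after the identification $H^2(X,\sO_X) \cong \C$.

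For non-degeneracy, the strategy is to cup with $\overline{\sigma}^{n-1}$ in order to land in the top Serre-duality group, and then to use Verbitsky's holomorphic Hard Lefschetz for the hyperholomorphic bundle $\cEnd(E)$ to undo that multiplication. Fix $a \in \Ext^1(E,E)$ and assume $\Tr(a \circ b) = 0 \in H^2(X,\sO_X)$ for every $b \in \Ext^1(E,E)$. Setting $L_{\overline{\sigma}} = \overline{\sigma} \otimes \id_E \in \Ext^2(E,E)$ (so that $L^{n-1}_{\overline{\sigma}} = \overline{\sigma}^{n-1} \otimes \id_E$), the linearity \eqref{eq:LinearityOfTrace} of the trace gives
\[
    \Tr\bigl((L^{n-1}_{\overline{\sigma}} \circ a) \circ b\bigr) = \overline{\sigma}^{n-1} \cdot \Tr(a \circ b) = 0 \in H^{2n}(X,\sO_X)
\]
for every $b$. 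Since $\omega_X \cong \sO_X$, Serre duality produces the perfect pairing
\[
     \Ext^{2n-1}(E,E) \times \Ext^{1}(E,E) \to H^{2n}(X,\sO_X) \cong \C, \quad (c,b) \mapsto \Tr(c \circ b),
\]
so we conclude $L^{n-1}_{\overline{\sigma}} \circ a = 0$ in $\Ext^{2n-1}(E,E)$. Because $E$ is projectively hyperholomorphic, $\cEnd(E)$ is hyperholomorphic, and Verbitsky's holomorphic Hard Lefschetz (\cref{thm:HardLefschetzVerbitsky}) applied to $\cEnd(E)$ asserts that Yoneda composition with $L^{n-1}_{\overline{\sigma}}$ defines an isomorphism $\Ext^{1}(E,E) \cong \Ext^{2n-1}(E,E)$. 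Therefore $a = 0$, proving non-degeneracy.

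The one step that requires any care is the application of Hard Lefschetz in the twisted context, but the obstruction is merely cosmetic: the endomorphism sheaf $\cEnd(E)$ is untwisted, so Verbitsky's classical theorem applies to it unchanged once projective hyperholomorphicity of $E$ is in force. The $\mu$-stability hypothesis is not used anywhere in this argument; it is inherited from the surrounding moduli-theoretic context and becomes relevant only when one globalizes $\tau$ over the moduli space.
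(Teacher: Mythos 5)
Your proof is correct and takes essentially the same route as the paper: skew-symmetry from \cref{rem:TraceKillsBrackets}, then non-degeneracy by using linearity of the trace to reduce to the pairing $(a,b)\mapsto\Tr(L^{n-1}_{\overline{\sigma}}\circ a\circ b)$, which is perfect by Serre duality combined with \cref{thm:HardLefschetzVerbitsky} applied to $\cEnd(E)$. The paper compresses this last step into a one-line citation, and you have simply unpacked it; your closing remark that $\mu$-stability is not strictly needed is also consistent with the paper, whose proof only invokes the $\mu$-polystability of $\cEnd(E)$, which is already implied by its hyperholomorphicity.
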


\begin{proof}
From \cref{rem:TraceKillsBrackets} it follows that $\tau$ is skew-symmetric.
The linearity of the trace \eqref{eq:LinearityOfTrace} gives 
\begin{equation}\label{eq:LinearitySympForm}
    \overline{\sigma}^{n-1} \circ \Tr(a \circ b) = \Tr(L_{\overline{\sigma}}^{n-1} \circ a \circ b).
\end{equation}
where $L_{\overline{\sigma}} \coloneqq \overline{\sigma} \otimes \id_E \in \Ext^2(E,E)$.
It suffices to see that the latter is non-degenerate, as 
\[
    \overline{\sigma}^{n-1} \circ - : H^{2}(X,\sO_X) \to H^{2n}(X,\sO_X)
\]
is an isomorphism.
Since $\cA= \cEnd(E)$ is $\mu$-polystable and hyperholomorphic, this follows from \cref{thm:HardLefschetzVerbitsky} below (applied to $\cA$) combined with Serre duality.
\end{proof}

\begin{Thm}[{{\cite[Theorem 4.2A]{verbitsky96b}}}]\label{thm:HardLefschetzVerbitsky}
    Let $X$ be a HK manifold of dimension $2n$, and $F$ a hyperholomorphic vector bundle on $X$.
    Then for every $k \leq n$ the map
    \[
    L_{\overline{\sigma}}^{n-k} \circ - : H^k(X,F) \to H^{2n-k}(X,F)
    \]
    is an isomorphism.
\end{Thm}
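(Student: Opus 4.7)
The plan is to follow Verbitsky's strategy of enhancing the ordinary Lefschetz $\mathfrak{sl}_2$-action on $H^*(X,F)$ to the action of a larger Lie algebra containing $L_{\overline{\sigma}}$, for which the desired statement becomes a formal consequence of representation theory. The starting point is the analytic content of hyperholomorphicity: by the Uhlenbeck--Yau theorem $F$ carries a Hermite--Einstein connection, and hyperholomorphicity amounts to the requirement that its curvature $\Theta_F \in A^{1,1}(X,\cEnd F)$ remain of type $(1,1)$ for every complex structure in the twistor $\mathbb{P}^1$. Equivalently, $\Theta_F$ is invariant under the $SU(2)$-action on $\Lambda^2 T^*X$ induced by the hyperk\"ahler structure. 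In particular, $F$ is holomorphic with respect to each of the three complex structures $I$, $J$, $K$, and the $F$-valued de Rham Laplacian $\Delta$ agrees (up to the usual factor) with each Dolbeault Laplacian $\Delta_{\overline\partial_\alpha}$ for $\alpha\in\{I,J,K\}$.

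With this in hand, I would construct on the space $A^*(X,F)$ of $F$-valued smooth forms three Lefschetz $\mathfrak{sl}_2$-triples $(L_\alpha, H_\alpha, \Lambda_\alpha)$, one for each K\"ahler form $\omega_\alpha$. The $SU(2)$-invariance of $\Theta_F$ implies that the twisted K\"ahler identities hold simultaneously for all three complex structures, and a direct commutator computation shows that the operators $[L_\alpha,\Lambda_\beta]$ for $\alpha\neq\beta$ produce additional elements that together generate the Verbitsky algebra, isomorphic to $\mathfrak{so}(4,1)$. Crucially, every operator in this algebra commutes with $\Delta$, so the action descends from smooth $F$-valued forms to $\Delta$-harmonic $F$-valued forms, and hence to $H^*(X,F)$.

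To conclude, I would observe that in complex structure $I$ one has $\overline{\sigma}$ proportional to $\omega_J - i\omega_K$, so $L_{\overline{\sigma}} = L_J - iL_K$ lies in the complexification of the Verbitsky algebra. Inside the complexified $\mathfrak{so}(4,1)$, the nilpotent element $L_{\overline{\sigma}}$ is conjugate to $L_{\omega_I}$ by an element whose action on $H^*(X,F)$ is implemented by the honest $SU(2)$-symmetry coming from the hyperk\"ahler structure. The ordinary K\"ahler Hard Lefschetz for the Hermite--Einstein bundle $F$ in complex structure $I$ gives that $L_{\omega_I}^{n-k}\colon H^k(X,F)\to H^{2n-k}(X,F)$ is an isomorphism; transporting this via the conjugation yields the analogous statement for $L_{\overline{\sigma}}^{n-k}$ in the required bi-degrees.

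The main obstacle is the middle step: rigorously constructing the enhanced Lie algebra action on $\Delta$-harmonic $F$-valued forms. This requires verifying the twisted K\"ahler identities simultaneously for $I$, $J$, $K$ and carefully tracking the contribution of the curvature $\Theta_F$ in the various commutators; it is precisely the point where hyperholomorphicity (as opposed to holomorphicity in a single complex structure) is indispensable. Once the enhanced action is in place, the Hard Lefschetz statement for $L_{\overline{\sigma}}$ is a formal consequence of the representation theory of $\mathfrak{so}(4,1)$.
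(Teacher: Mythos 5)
The first thing to say is that the paper does not prove this statement at all: it is imported verbatim from Verbitsky \cite{verbitsky96b}, so the only meaningful comparison is with Verbitsky's argument. Your overall strategy --- the Hermite--Einstein connection, the $SU(2)$-invariance of the curvature forcing the various Laplacians on $F$-valued forms to coincide, and an enlarged Lie algebra of Lefschetz-type operators commuting with $\Delta$ and hence acting on harmonic forms --- is indeed Verbitsky's, and the first two paragraphs of your sketch are a fair account of it.

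The concluding step, however, contains a genuine error. The nilpotent element $L_{\overline{\sigma}}=L_{\omega_J}-iL_{\omega_K}$ is \emph{not} conjugate to $L_{\omega_I}$ in the complexified Verbitsky algebra: the invariant quadratic form on the span of $\omega_I,\omega_J,\omega_K$ satisfies $q(\omega_I)>0$ while $q(\omega_J-i\omega_K)=0$, and correspondingly $L_{\overline{\sigma}}$ squares to zero on the defining $5$-dimensional representation (Jordan type $(2,2,1)$) whereas $L_{\omega_I}$ does not (Jordan type $(3,1,1)$); they lie in different nilpotent orbits, and in particular no element of the honest $SU(2)$, which preserves the real span and the norm, can implement such a conjugation. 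Even granting some conjugation, it would rotate the Hodge bigrading and so would not carry the subspaces $H^k(X,F)=H^{0,k}$ to themselves; moreover the exponents do not match, since ordinary Hard Lefschetz on a $2n$-dimensional complex manifold requires the power $L_{\omega_I}^{2n-k}$ starting from total degree $k$, while the statement concerns $L_{\overline{\sigma}}^{n-k}$. Finally, ``ordinary K\"ahler Hard Lefschetz for the Hermite--Einstein bundle $F$'' is not an available input: for bundle-valued forms $L_{\omega_I}$ fails to commute with the twisted Laplacian by a curvature term, and Hard Lefschetz with coefficients in a non-flat bundle is false in general. The correct end of Verbitsky's argument is different: $\overline{\sigma}$ is a fiberwise non-degenerate element of $\Lambda^{0,2}$, so wedging with it defines, by pointwise linear algebra, a ``symplectic'' Lefschetz $\mathfrak{sl}_2$-triple on $\Lambda^{0,*}T^*X\otimes F$ whose semisimple element acts on $(0,q)$-forms by $q-n$; hyperholomorphicity, via the twisted hyper-K\"ahler identities, is what makes this triple commute with $\Delta_{\overline{\partial}}$, and the theorem then follows from the representation theory of \emph{this} $\mathfrak{sl}_2$ on the finite-dimensional space of harmonic $(0,*)$-forms, not by transport from the ordinary Lefschetz operator.
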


\begin{Rem}\label{rem:SymplecticFormUntwisted}
    The equality \eqref{eq:LinearitySympForm} is the same as \cite[Lemma 7.4]{bottini22}, using that for $\overline{\sigma}$ only the degree zero term of the exponential of the Atiyah class comes into play.
\end{Rem}

\subsection{Relative ext sheaves}
Let $S$ be a reduced scheme, and $\sE$ a twisted $S$-flat sheaf of positive rank on $X \times S$. 
Let $\pi : X \times S \to S$ be the second projection, and consider the sheaves
\begin{equation}\label{eq:RelativeExt}
    \cExt_{\pi}^k(\sE,\sE) \coloneqq \cH^k(\sfR\pi_{*} \circ \RcHom(\sE,\sE)) \in \Coh(S).
\end{equation}
If $\sE$ is locally free, which is the case we are interested in, this is also equal to $\sfR^k\pi_{*}(\cEnd(\sE))$.

For any closed point $s \in S$, there is a base change map
\begin{equation}\label{eq:BaseChangeMap}
    \phi_k(s) :  \cExt_{\pi}^k(\sE,\sE)|_{s} \to \Ext^k(\sE_s,\sE_s),
\end{equation}
where $\sE_s \coloneqq \sE|_{X \times \{s\}}$, see \cite[Section 12]{hartshorne77} and \cite{lange83}.
Notoriously, it fails to be an isomorphism in general, and bijectivity at the $k$-th level is related to the local freeness of $\cExt_{\pi}^{k-1}(\sE,\sE)$.
This is captured in the cohomology and base change theorem, \cite[Theorem 12.11]{hartshorne77}.
On the direct sum $\bigoplus_k \cExt^k_{\pi}(\sE,\sE)$ there is a natural product which makes it into a coherent $\sO_S$-algebra, which we denote by $ - \circ -$.
This product is compatible with the usual Yoneda product on $\Ext^*(\sE_s,\sE_s)$ via the base change maps. 

The second projection $\pi: X \times S \to S$ is a smooth projective morphism with relative dualizing complex given by $\sO_{X \times S}[2n]$. 
Therefore, we can apply Grothendieck--Verdier duality to $\RcHom(\sE,\sE)$, which gives an isomorphism 
\[
    \sfR\pi_*(\RcHom(\sE,\sE))[2n] \isomor (\sfR\pi_*(\RcHom(\sE,\sE)))^{\vee},
\]
where we have used that $\RcHom(\sE,\sE) \cong \RcHom(\sE,\sE)^{\vee}$. 
Combined with the Grothendieck spectral sequence for the composition of derived functors, we get a spectral sequence
\begin{equation}\label{eq:VerdierSpectralSequence}
    E^{p,q}_2 = \cExt^p_{S}(\cExt_{\pi}^{-q}(\sE,\sE), \sO_{S}) \implies \cExt_{\pi}^{p+q+2n}(\sE,\sE),
\end{equation}
whose degeneration yields morphisms
\begin{equation}\label{eq:RelativeSerreHomVersion}
    \cExt_{\pi}^i(\sE,\sE) \to \cExt^{2n-i}_{\pi}(\sE,\sE)^*.
\end{equation}
By the usual tensor-hom adjuction, they can be thought as a relative Serre duality pairing:
\begin{equation}\label{eq:RelativeSerreTensorPairing}
    \cExt_{\pi}^i(\sE,\sE) \otimes \cExt^{2n-i}_{\pi}(\sE,\sE) \to \sO_S.   
\end{equation}
In particular, for $i = 2n$,  there is a map 
\[
    \cExt^{2n}_{\pi}(\sE,\sE) \to \cExt^{0}_{\pi}(\sE,\sE)^{*}. 
\]
Dualizing the natural inclusion $\sO_S \to \cExt^{0}_{\pi}(\sE,\sE)$, and composing gives the (relative) trace map
\[
    \Tr: \cExt^{2n}_{\pi}(\sE,\sE) \to \sO_S.
\]

\begin{Rem}\label{rem:TraceMapsHK}
If $X$ is HK and $\sE$ is locally free, we can also construct a trace map for any even $k$. 
Consider the trace map $\cEnd(\sE) \to \sO_{X \times S}$, and apply $R^k\pi_*(-)$ to get 
\[
    R^k\pi_*(\cEnd(\sE)) \to R^k\pi_*(\sO_{X \times S}) = H^k(X,\sO_X) \otimes \sO_S \cong \sO_S.
\]
If $k = 2n$, it agrees with the above one up to constants.

\end{Rem}

\begin{Lem}\label{lem:FirstandLastExt}
    Assume that for every closed point $s \in S$, the twisted sheaf $\sE_{s}$ is a simple locally free sheaf. 
    Then, the base change maps
    \[
     \varphi_k(s) : \cExt_{\pi}^k(\sE,\sE)|_{s} \to \Ext^k(\sE_s,\sE_s)
    \]
    are isomorphisms for every $s \in S$, and for $k \in \{ 0,2n-1,2n \}$.   
    Moreover, the morphisms
    \[
    \sO_S \isomor \cExt_{\pi}^0(\sE,\sE) \text{ and }  \Tr: \cExt_{\pi}^{2n}(\sE,\sE) \isomor \sO_S.
    \]
    are isomorphisms.
\end{Lem}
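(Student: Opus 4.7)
The plan is to reduce the entire statement to the classical cohomology and base change machinery. Since $\sE$ is fiberwise locally free, the endomorphism sheaf $\cEnd(\sE) = \sE^{\vee} \otimes \sE$ is an \emph{untwisted}, $S$-flat coherent sheaf on $X \times S$, so $\cExt^k_\pi(\sE,\sE) = R^k\pi_* \cEnd(\sE)$, and the standard base change formalism of \cite[III.12]{hartshorne77} applies directly even though $\sE$ itself is twisted.

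First I would handle $k = 0$ and $k = 2n$ in parallel. Simplicity of $\sE_s$ gives $\dim \End(\sE_s) = 1$, and Serre duality on $X$ (whose canonical bundle is trivial) yields $\dim \Ext^{2n}(\sE_s,\sE_s) = 1$ as well. Grauert's theorem \cite[Cor.~12.9]{hartshorne77}, applied on each connected component of the reduced scheme $S$, then gives that $\cExt^0_\pi(\sE,\sE)$ and $\cExt^{2n}_\pi(\sE,\sE)$ are locally free of rank one and that $\varphi_0(s)$, $\varphi_{2n}(s)$ are isomorphisms for every $s$. The canonical morphism $\sO_S \to \cExt^0_\pi(\sE,\sE)$, $1 \mapsto \id_{\sE}$, is then a map of line bundles on $S$ whose fiber at each $s$ sends $1$ to a generator of $\End(\sE_s)$, hence it is an isomorphism. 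Dually, the trace $\cExt^{2n}_\pi(\sE,\sE) \to \sO_S$ is, fiberwise, Serre dual to the inclusion $\C \to \End(\sE_s)$ above, and the non-degeneracy of the Serre pairing $(a,b)\mapsto \Tr(b\circ a)$ forces this dual to be nonzero, hence a fiberwise iso of line bundles, so an iso globally.

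For $k = 2n-1$, once we know that $\varphi_{2n}(s)$ is surjective at every $s$ and that $\cExt^{2n}_\pi(\sE,\sE)$ is locally free, the implication (ii)$\Rightarrow$(i) of \cite[Thm.~12.11(b)]{hartshorne77} gives that $\varphi_{2n-1}(s)$ is surjective at every $s$, and then part (a) of the same theorem upgrades surjectivity to an isomorphism. The only mild subtlety in this argument is that $S$ is only assumed reduced, not integral; this is handled by working separately on each connected component, so no real obstacle arises.
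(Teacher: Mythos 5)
Your proposal is correct and follows essentially the same route as the paper: pass to the untwisted bundle $\cEnd(\sE)$, use simplicity plus Serre duality to get one-dimensional fibers in degrees $0$ and $2n$, apply Grauert to get local freeness and base change there, deduce the $k=2n-1$ case from \cite[Theorem 12.11]{hartshorne77}, and conclude the two global isomorphisms by checking them fiberwise on line bundles. No substantive differences.
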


\begin{proof}   
    Under these assumptions, the sheaf $\sE$ itself is locally free. 
    By Serre duality and the simpleness of $\cE_s$ we have
    \[
    \dim \Ext^0(\sE_s,\sE_s) = \dim \Ext^{2n}(\sE_s,\sE_s) = 1
    \]
    Grauert's Theorem \cite[Corollary 12.9]{hartshorne77}, applied to the bundle $\cEnd(\sE)$ and the second projection $\pi : X \times S \to S$, implies that $\cExt_{\pi}^k(\sE,\sE)$ is locally free and the base change map is an isomorphism for $k \in \{ 0,2n \}$.
    Then \cite[Corollary 12.11(b)]{hartshorne77} applied to $\varphi_{2n}(s)$ implies that $\varphi_{2n-1}(s)$ is an isomorphism.
    
    The natural inclusion $\sO_{S} \to \cExt_{\pi}^0(\sE,\sE)$ is compatible with base change.
    Since it is an isomorphism for every fiber, it is an isomorphism at the level of sheaves. 
    For $k = 2n$, we consider the following commutative diagram 
\[\begin{tikzcd}
	{\cExt^{2n}(\sE,\sE)|_s} & {\cExt^{0}(\sE,\sE)^{*}|_s} & {\sO_{S}|_{s}} \\
	{\Ext^{2n}(\sE_s,\sE_s)} & {\Ext^0(\sE_s,\sE_s)^{*}} & \C
	\arrow[from=1-1, to=1-2]
	\arrow["{\phi_{2n}(s)}", from=1-1, to=2-1]
	\arrow["\cong", from=1-2, to=1-3]
	\arrow["\cong", from=1-3, to=2-3]
	\arrow["\cong", from=2-1, to=2-2]
	\arrow["{\phi_0(s)^{*}}"', from=2-2, to=1-2]
	\arrow["\cong", from=2-2, to=2-3]
\end{tikzcd}\]
    which shows that the Trace map is compatible with base change. 
    Again, since it is an isomorphism fiberwise, so it is an isomorphism for the sheaf. 
\end{proof}

\begin{Prop}\label{prop:SmoothLocusSymplectic}
    Let $S$ be an open subset of the smooth locus of a moduli space $M$ of semistable sheaves on $X$. 
    Assume it parametrizes only twisted stable projectively hyperholomorphic vector bundles.
    Then there is an isomorphism $\cExt^1_{\pi}(\sE,\sE) \cong T_S$.
    Furthermore, the pairing 
    \begin{equation}\label{eq:RelativeSymplecticForm}
        \cExt^1_{\pi}(\sE,\sE) \otimes \cExt^1_{\pi}(\sE,\sE) \to H^2(X,\sO_X) \otimes \sO_S \cong \sO_S, \ (a,b) \mapsto \Tr(a \circ b)
    \end{equation}
    is a non-degenerate 2-form on $S$.
\end{Prop}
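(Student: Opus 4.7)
The plan is to globalize the pointwise construction of Section~\ref{sec:SymplecticForm} by combining cohomology-and-base-change for $\cExt^1_\pi(\sE,\sE)$ with the relative trace map, and then to verify the two required properties fiberwise.

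First I would establish the identification $\cExt^1_\pi(\sE,\sE)\cong T_S$. By hypothesis each $\sE_s$ is a twisted stable bundle, in particular simple, so \cref{lem:FirstandLastExt} gives $\cExt^0_\pi(\sE,\sE)=\sO_S$, and the cohomology-and-base-change theorem then forces $\cExt^1_\pi(\sE,\sE)$ to be locally free with base-change isomorphisms $\cExt^1_\pi(\sE,\sE)\big|_s\isomor \Ext^1(\sE_s,\sE_s)$. The Atiyah class of $\sE$ produces the Kodaira--Spencer map $\kappa:T_S\to\cExt^1_\pi(\sE,\sE)$, and on fibers it recovers the usual isomorphism $T_{S,s}\cong\Ext^1(\sE_s,\sE_s)$ since $S$ lies in the smooth locus of $M$ and $\sE_s$ is stable. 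As $T_S$ is locally free (smoothness of $S$) and $\kappa$ is a morphism of locally free sheaves of the same rank that is an isomorphism at every closed point, $\kappa$ is an isomorphism.

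Next I would construct the global pairing. Applying $R^2\pi_*(-)$ to the trace of sheaves $\Tr:\cEnd(\sE)\to\sO_{X\times S}$ from Section~\ref{sec:Trace} yields, via flat base change for $\pi$, a morphism
\[
    \Tr:\cExt^2_\pi(\sE,\sE)\longrightarrow H^2(X,\sO_X)\otimes\sO_S,
\]
as observed in \cref{rem:TraceMapsHK}. Composing the relative Yoneda product $\cExt^1_\pi(\sE,\sE)\otimes\cExt^1_\pi(\sE,\sE)\to\cExt^2_\pi(\sE,\sE)$ with this trace defines the pairing $(a,b)\mapsto\Tr(a\circ b)$ of the statement. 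Both the Yoneda product and the trace are compatible with base change, so the pointwise version of this pairing agrees with the form $\tau$ of equation~\eqref{eq:SympForm} applied to $\sE_s$.

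Finally I would verify the two properties. Skew-symmetry follows fiberwise from \cref{rem:TraceKillsBrackets} because the trace kills graded brackets on $\Ext^\ast(\sE_s,\sE_s)$; since $S$ is reduced, fiberwise vanishing of the symmetric part implies global skew-symmetry. For non-degeneracy, the pairing induces a morphism $\cExt^1_\pi(\sE,\sE)\to\cExt^1_\pi(\sE,\sE)^\ast$ between locally free sheaves of the same rank, which at every $s\in S$ restricts to the pointwise map induced by $\tau$; by \cref{lem:SymplecticityOfExt1} the latter is an isomorphism because $\sE_s$ is stable and projectively hyperholomorphic. Hence the map is an isomorphism on all fibers, so by Nakayama it is an isomorphism of $\sO_S$-modules, giving non-degeneracy. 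The only mild subtlety in this plan is ensuring that the base-change isomorphisms for $\cExt^1_\pi$ and $\cExt^2_\pi$ are compatible with both the Yoneda product and the trace; this is formal but is where the simpleness hypothesis on the fibers is used.
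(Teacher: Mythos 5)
Your proposal is correct in substance and its second half (constructing the pairing by pushing forward the trace, checking skew-symmetry and non-degeneracy fiberwise via \cref{rem:TraceKillsBrackets}, \cref{lem:SymplecticityOfExt1} and Nakayama) is exactly the paper's argument. Where you diverge is in identifying $\cExt^1_\pi(\sE,\sE)$ with $T_S$: you use the Kodaira--Spencer map coming from the Atiyah class and check it is a fiberwise isomorphism, whereas the paper invokes Lehn's theorem that $\cExt^{2n-1}_\pi(\sE,\sE)$ is the cotangent sheaf of the stable locus and then applies relative Serre duality to get $\cExt^1_\pi(\sE,\sE)\cong\cExt^{2n-1}_\pi(\sE,\sE)^*\cong T_S$. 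Both routes are legitimate; yours is more self-contained and direct, while the paper's fits the dualizing-complex machinery it reuses heavily in \cref{sec:ParticularResult}. One step of yours needs repair: local freeness of $\cExt^1_\pi(\sE,\sE)$ does \emph{not} follow from cohomology-and-base-change applied to $\cExt^0_\pi(\sE,\sE)$ --- Hartshorne's Theorem 12.11 only ties local freeness of $R^i$ to surjectivity of $\varphi^{i-1}$, not the other way up the ladder. The correct justification, and the one the paper uses, is Grauert's theorem: since $S$ lies in the smooth locus of $M$ and every $\sE_s$ is stable, $\ext^1(\sE_s,\sE_s)=\dim_s M$ is locally constant, so $\cExt^1_\pi(\sE,\sE)$ is locally free and commutes with base change. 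With that substitution your argument goes through.
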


\begin{proof}
    Under the assumptions, the dimension of the tangent space $\Ext^1(E,E)$ is constant. 
    Therefore, the sheaf $\cExt^1_{\pi}(\sE,\sE)$ is locally free and commutes with base change by Grauert's Theorem.
    By the main result of \cite{lehn98}, the sheaf $\cExt^{2n-1}_{\pi}(\sE,\sE)$ is the cotangent sheaf to the stable locus of the moduli space $M$.
    This is also locally free and commutes with base change for the same reason, therefore Serre duality implies 
    \[
        \cExt^1_{\pi}(\sE,\sE) \cong \cExt^{2n-1}_{\pi}(\sE,\sE)^{*} \cong T_{S}.
    \]
    Since both the Yoneda pairing and the trace map commute with base change, we conclude thanks to \cref{lem:SymplecticityOfExt1}.
\end{proof}

We conclude the section by globalizing the morphism of \cref{thm:HardLefschetzVerbitsky}, for later use.
The Leray spectral sequence relative to the morphism $\pi : X \times S \to S$ for the hypercohomology of the complex $\RcHom$, gives a global-to-local map 
\[
    \Ext^k_{X \times S}(\sE,\sE) \to H^0(S,\cExt_{\pi}^k(\sE,\sE) ).
\]
As in the absolute case, we pull back $\overline{\sigma}$ to get a morphism $ \sO_{X \times S} \to \sO_{X \times S}[2]$.
In turn, this gives an element of $\Ext^2_{X \times S}(\sE,\sE)$, which gives a section  $\cL_{\overline{\sigma}} \in H^0(S,\cExt_{\pi}^2(\sE,\sE) )$.
This is compatible with base change in the sense that $ \phi_2(s)(\cL_{\overline{\sigma}}) = L_{\overline{\sigma}} \in \Ext^2(\sE_s,\sE_s)$, as one easily sees by unraveling the definitions.

\subsection{A particular result}\label{sec:ParticularResult}
Now we put ourselves under some very restrictive assumptions, which, as we later prove, are satisfied in the case we consider. 
Let $X$ be a HK fourfold, and let $M^{\circ}$ be a projective irreducible component of a moduli space $M = M_{\vv}(X,h)$ of possibly twisted semistable sheaves.
We assume that:
\begin{enumerate}
    \item[(A)] Every sheaf in $M^{\circ}$ is a slope stable, projectively hyperholomorphic bundle.
    \item[(B)] The open locus $M^{\circ} \cap M_{\rm{sm}}$ of smooth points of $M$ has complement of codimension at least $3$ in $M^{\circ}$.
    \item[(C)] Taken with its reduced structure, $M^{\circ}$ is smooth. 
    \item[(D)] There is a non-empty open subset $U \subset M^{\circ} \cap M_{\rm{sm}}$ such that for every $E \in U$ the Yoneda pairing \\ $\bwed^2 \Ext^1(E,E) \to \Ext^2(E,E)$ is an isomorphism.
\end{enumerate}
These conditions mean that, along $M^{\circ}$, the moduli space $M$ may have other components or embedded points, but they have to meet $M^{\circ}$ in a closed subset of codimension at least $3$.
We prove that this does not happen, and conclude that $M^{\circ}$ is a smooth connected component of $M$.

\begin{Rem}\label{rem:SymplecticityOfM}
    Under these assumptions, by \cref{prop:SmoothLocusSymplectic} the intersection $M^{\circ} \cap M_{\rm{sm}}$ is symplectic. 
    Since its complement has high codimension in $M^{\circ}$, the symplectic form extends to the whole component.
    In particular, the canonical bundle $\omega_{M^{\circ}}$ is trivial.
\end{Rem}

Endow $M^{\circ}$ of its reduced structure; it is a closed subscheme of the moduli space $M$. 
We denote by $\sE$ the restriction to $X \times M^{\circ}$ of the twisted universal family.
It is $M^{\circ}$-flat, and for every $E \in M^{\circ}$ we have $\sE|_E \cong E$. 
In particular, $\sE$ is a locally free twisted sheaf on $X \times M^{\circ}$.

Denoting $\pi : X \times M^{\circ} \to M^{\circ}$ the second projection, the spectral sequence \eqref{eq:VerdierSpectralSequence} becomes
\begin{equation*}
    E^{p,q}_2 = \cExt^p_{M^{\circ}}(\cExt_{\pi}^{-q}(\sE,\sE), \sO_{M^{\circ}}) \implies \cExt_{\pi}^{p+q+4}(\sE,\sE).
\end{equation*}
Since $\pi$ is smooth of relative dimension $4$, the terms $E^{p,q}_2$ vanish unless $0 \leq -q \leq 4$.
Also clearly $E^{p,q}_2$ vanishes for $p < 0$, so that the spectral sequence is concentrated in the fourth quadrant.
To avoid confusion, in the following we denote by $-^{\vee} = \RcHom(-,\sO)$ the derived dual, and $-^* = \cHom(-,\sO)$ the underived dual. 

\begin{Lem}\label{lem:ComputingCurlyExt1}
    There are isomorphisms
    \[
    \cExt^1_{\pi}(\sE,\sE) \cong \cExt^3_{\pi}(\sE,\sE)^{*} \cong T_{M^{\circ}}.
    \]
    In particular, $E_2^{p,-1} = 0$ for all $p > 0$.
\end{Lem}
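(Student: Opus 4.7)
The plan is to read off $\cExt^1_{\pi}(\sE,\sE)\cong\cExt^3_{\pi}(\sE,\sE)^{*}$ directly from the spectral sequence \eqref{eq:VerdierSpectralSequence}, then to identify this common sheaf with $T_{M^{\circ}}$ via an $S_2$-extension argument that reduces everything to the smooth locus of $M$ where \cref{prop:SmoothLocusSymplectic} already applies. The final vanishing $E_2^{p,-1}=0$ is then a formal consequence of $T_{M^{\circ}}$ being locally free.

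First I would examine the diagonal $p+q=-3$ of \eqref{eq:VerdierSpectralSequence} (with $n=2$), as this is the one filtering $\cExt^1_{\pi}(\sE,\sE)$. Only two slots lie on it. By \cref{lem:FirstandLastExt} we have $\cExt^4_{\pi}(\sE,\sE)\cong\sO_{M^{\circ}}$, hence $E_2^{1,-4}=\cExt^1_{M^{\circ}}(\sO_{M^{\circ}},\sO_{M^{\circ}})=0$, while $E_2^{0,-3}=\cExt^3_{\pi}(\sE,\sE)^{*}$. No differential can reach $(0,-3)$ (the source would require $p<0$), and the only outgoing candidate is $d_2:E_2^{0,-3}\to E_2^{2,-4}=\cExt^2_{M^{\circ}}(\sO_{M^{\circ}},\sO_{M^{\circ}})=0$. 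Thus $E_\infty^{0,-3}=E_2^{0,-3}$ and the abutment filtration on $\cExt^1_{\pi}(\sE,\sE)$ has a single nonzero graded piece, giving the isomorphism $\cExt^1_{\pi}(\sE,\sE)\cong\cExt^3_{\pi}(\sE,\sE)^{*}$.

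For the identification with $T_{M^{\circ}}$, the key point is that both $\cExt^1_{\pi}(\sE,\sE)$ and $T_{M^{\circ}}$ are $S_2$-sheaves on $M^{\circ}$ agreeing on the complement of a codimension-$3$ closed subset. The sheaf $\cExt^3_{\pi}(\sE,\sE)^{*}$ is $S_2$ because it is the $\sO_{M^{\circ}}$-linear dual of a coherent sheaf on the smooth variety $M^{\circ}$: locally presenting $\sO^{a}\to\sO^{b}\to\cExt^3_{\pi}(\sE,\sE)\to 0$ and dualizing embeds it into $\sO^{b}$ with torsion-free cokernel, and the depth lemma then forces depth at least $2$. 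By assumption (C), $T_{M^{\circ}}$ is locally free, hence also $S_2$. By assumption (B), $U\coloneqq M^{\circ}\cap\smM$ has complement of codimension at least $3$ in $M^{\circ}$, and \cref{prop:SmoothLocusSymplectic} applied on $U$ gives $\cExt^1_{\pi}(\sE,\sE)|_{U}\cong T_{M^{\circ}}|_{U}$. Writing $j:U\hookrightarrow M^{\circ}$ for the inclusion, the $S_2$-property of both sheaves yields $\cExt^1_{\pi}(\sE,\sE)\cong j_{*}j^{*}\cExt^1_{\pi}(\sE,\sE)\cong j_{*}j^{*}T_{M^{\circ}}\cong T_{M^{\circ}}$.

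Since $\cExt^1_{\pi}(\sE,\sE)\cong T_{M^{\circ}}$ is locally free, $\cExt^p_{M^{\circ}}(\cExt^1_{\pi}(\sE,\sE),\sO_{M^{\circ}})=0$ for every $p>0$, which is precisely the asserted vanishing $E_2^{p,-1}=0$ for $p>0$. The main obstacle is ensuring that the filtration on $\cExt^1_{\pi}(\sE,\sE)$ collapses; this works here thanks to $n=2$ making the diagonal short, together with the identification $\cExt^4_{\pi}(\sE,\sE)\cong\sO_{M^{\circ}}$. Trying instead to identify $\cExt^3_{\pi}(\sE,\sE)$ with $\cExt^1_{\pi}(\sE,\sE)^{*}$ directly would be much harder, since the diagonal $p+q=-1$ contains three potentially nonzero terms.
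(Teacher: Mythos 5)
Your proposal is correct and follows essentially the same route as the paper: degeneration of the Grothendieck--Verdier spectral sequence along the diagonal $p+q=-3$ using $\cExt^4_{\pi}(\sE,\sE)\cong\sO_{M^{\circ}}$ from \cref{lem:FirstandLastExt}, then extension of the isomorphism of \cref{prop:SmoothLocusSymplectic} from $M^{\circ}\cap\smM$ across the codimension $\geq 3$ complement (the paper phrases the extension step via reflexivity rather than the $S_2$ property, but on the smooth variety $M^{\circ}$ these give the same conclusion), and finally local freeness of $T_{M^{\circ}}$ for the vanishing of $E_2^{p,-1}$.
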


\begin{proof}
    Consider the spectral sequence for $p + q = -3$. 
    The terms of interest are:
    \begin{itemize}
         \item $E^{0,-3}_2 =  \cHom(\cExt^{3}_{\pi}(\sE,\sE),\sO_{M^{\circ}}) \to E^{2,-4}_2 = \cExt^2(\cExt^{4}_{\pi}(\sE,\sE),\sO_{M^{\circ}})$,
         \item $E^{1,-4} = \cExt^1(\cExt^{4}_{\pi}(\sE,\sE),\sO_{M^{\circ}})$. 
    \end{itemize}
    The terms $E^{1,-4}_2$ and $E^{2,-4}$ vanish by \cref{lem:FirstandLastExt}.
    So, at this level the spectral sequence degenerates at the second page giving 
    \[
    \cExt^1_{\pi}(\sE,\sE) \cong \cExt^3_{\pi}(\sE,\sE)^*.
    \]
    It follows that $\cExt^1_{\pi}(\sE,\sE)$ is a reflexive sheaf on the smooth variety $M^{\circ}$.
    
    \cref{prop:SmoothLocusSymplectic} gives an isomorphism
    \[
        \cExt^1_{\pi}(\sE,\sE) \cong T_{M^{\circ}}
    \]
    over the open locus $M^{\circ} \cap M_{\rm{sm}}$.
    It extends to $M^{\circ}$, because both sheaves are reflexive on the smooth variety $M^{\circ}$, and $M^{\circ} \cap M_{\rm{sm}}$ has complement of high codimension. 
    Since $M^{\circ}$ is smooth, in particular the sheaf $\cExt^1_{\pi}(\sE,\sE)$ is locally free. Therefore $E_2^{p,-1} = \cExt^p(\cExt^1_{\pi}(\sE,\sE),\sO_{M^{\circ}}) = 0$ for all $p >0 $.
\end{proof}

Now we concentrate on the row $ q = - 3$.
The following crucial result is the reason we ask for codimension $3$ instead of $2$ in assumption (B).

\begin{Lem}\label{lem:TorsionInExt3}
    Let $\sT \coloneqq T(\cExt_{\pi}^3(\sE,\sE))$ be the torsion subsheaf of $\cExt_{\pi}^3(\sE,\sE)$.
    There is an isomorphism
    \[
    \cExt_{\pi}^3(\sE,\sE) \cong \cExt^1_{\pi}(\sE,\sE) \oplus \sT,
    \]
    and $\sT$ is supported away from $M^{\circ} \cap M_{\rm{sm}}$. 
    In particular 
    \begin{equation*}
        E_2^{p,-3} = \cExt^p(\cExt^3_{\pi}(\sE,\sE),\sO_{M^{\circ}}) = \cExt^p(\sT,\sO_{M^{\circ}})
    \end{equation*}
    vanishes for $p = 1,2$.
\end{Lem}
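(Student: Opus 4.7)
The strategy is to split $\cExt^3_\pi(\sE,\sE)$ by producing a left inverse to a Lefschetz-type map from $\cExt^1_\pi(\sE,\sE)$. First, consider the relative Lefschetz operator obtained by Yoneda product with the global section $\cL_{\overline{\sigma}} \in H^0(M^\circ,\cExt^2_\pi(\sE,\sE))$ constructed at the end of the preceding discussion:
\[
    L_{\overline{\sigma}} \colon \cExt^1_\pi(\sE,\sE) \to \cExt^3_\pi(\sE,\sE), \qquad a \mapsto \cL_{\overline{\sigma}}\circ a.
\]
Next, \cref{lem:ComputingCurlyExt1} provides a canonical isomorphism $\cExt^1_\pi(\sE,\sE) \cong \cExt^3_\pi(\sE,\sE)^{*}$ coming from the Grothendieck--Verdier pairing $(a,b)\mapsto \Tr(a\circ b)$. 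Dualising this iso and composing with the natural double-dual map yields a morphism
\[
    \psi \colon \cExt^3_\pi(\sE,\sE) \to \cExt^3_\pi(\sE,\sE)^{**} \cong \cExt^1_\pi(\sE,\sE)^{*}.
\]
Because $M^\circ$ is smooth by assumption (C), the kernel of the double-dual map is the torsion subsheaf, so $\ker(\psi)=\sT$.

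The key step is to show that the composition $\phi\coloneqq \psi\circ L_{\overline{\sigma}}\colon \cExt^1_\pi(\sE,\sE)\to \cExt^1_\pi(\sE,\sE)^{*}$ is an isomorphism on all of $M^\circ$. Over the open $U\coloneqq M^\circ\cap M_{\mathrm{sm}}$ every relevant base change map is an isomorphism by \cref{prop:SmoothLocusSymplectic}, so fibrewise over $U$ the map $\phi$ reads as $(a,b)\mapsto \Tr(L_{\overline{\sigma}}\circ a\circ b)$. By \cref{rem:SymplecticFormUntwisted} this is (up to the canonical identification of $H^2(X,\sO_X)$ with $\C$) the symplectic form $\tau$ of \cref{eq:SympForm}, and hence non-degenerate by \cref{lem:SymplecticityOfExt1}. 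Now $\cExt^1_\pi(\sE,\sE)$ and $\cExt^1_\pi(\sE,\sE)^{*}$ are both locally free on $M^\circ$ by \cref{lem:ComputingCurlyExt1}, so $\phi$ is a morphism between locally free sheaves of equal rank and its determinant is a section of a line bundle on the smooth variety $M^\circ$. This section is non-vanishing on the dense open $U$; if its zero locus were non-empty it would be a divisor contained in $M^\circ\setminus U$, which is impossible because that complement has codimension $\geq 3$ by assumption (B). Hence $\phi$ is an isomorphism.

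With $\phi$ invertible, the morphism $P\coloneqq \phi^{-1}\circ\psi$ is a left inverse of $L_{\overline{\sigma}}$, producing the splitting
\[
    \cExt^3_\pi(\sE,\sE)\cong L_{\overline{\sigma}}(\cExt^1_\pi(\sE,\sE)) \oplus \ker(P) \cong \cExt^1_\pi(\sE,\sE) \oplus \sT,
\]
since $\ker(P)=\ker(\psi)=\sT$. Over $U$ the sheaf $\cExt^3_\pi$ is locally free (it is dual to $\cExt^1_\pi|_U$ by \cref{prop:SmoothLocusSymplectic}), so $\sT$ is supported in $M^\circ\setminus U$, of codimension $\geq 3$ in $M^\circ$. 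The standard local-duality vanishing on the regular scheme $M^\circ$ gives $\cExt^p(\sT,\sO_{M^\circ})=0$ for $p<3$; combined with $\cExt^p(\cExt^1_\pi(\sE,\sE),\sO_{M^\circ})=0$ for $p\geq 1$ (as $\cExt^1_\pi(\sE,\sE)$ is locally free), the splitting yields the ``in particular'' statement. The main technical obstacle I anticipate is the careful verification that $\phi$ fibrewise over $U$ truly computes the symplectic form, which requires unravelling the base change isomorphisms together with the Grothendieck--Verdier identification underlying \cref{lem:ComputingCurlyExt1}; once this comparison is in place, the codimension argument and the splitting are formal.
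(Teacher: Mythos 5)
Your proof is correct and follows essentially the same route as the paper: both arguments use the Lefschetz map $a \mapsto \cL_{\overline{\sigma}} \circ a$ together with the Serre-duality map to $\cExt^1_{\pi}(\sE,\sE)^{*}$, check that the composite is an isomorphism fibrewise over $M^{\circ} \cap M_{\rm{sm}}$ via Verbitsky's hard Lefschetz, extend across the codimension $\geq 3$ complement, and conclude that $\mu$ is split injective with torsion cokernel, after which the $\cExt^p$-vanishing is the standard codimension bound. Your only deviations are cosmetic: you route the dual map through the double dual and the isomorphism of \cref{lem:ComputingCurlyExt1} instead of the relative Serre duality edge map, and you spell out the determinant/Hartogs step that the paper leaves implicit.
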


\begin{proof}
    Consider the section $\cL_{\sigma} \in H^0(M^{\circ},\cExt^2_{\pi}(\sE,\sE))$ constructed in section \cref{sec:SymplecticForm}.
    Cup product with it gives a morphism
    \begin{equation}\label{eq:InjectionExt1to3}
            \mu : \cExt^1_{\pi}(\sE,\sE) \to \cExt^3_{\pi}(\sE,\sE).
    \end{equation}
    On the other hand, by relative Serre duality we have a morphism $\cExt^3_{\pi}(\sE,\sE) \to \cExt^1_{\pi}(\sE,\sE)^{*}$.
    The composition $\cExt^1_{\pi}(\sE,\sE) \to \cExt_{\pi}^1(\sE,\sE)^{*}$ is an isomorphism on the smooth locus $M^{\circ} \cap M_{\rm{sm}}$.
    Indeed, by Grauert's Theorem, on $M^{\circ} \cap M_{\rm{sm}}$ the sheaf $\cExt^1_{\pi}(\sE,\sE)$ is locally free and commutes with base change, and on the fibers it is an isomorphism by Serre duality and \cref{thm:HardLefschetzVerbitsky}. 
    Since $M^{\circ} \cap M_{\rm{sm}}$ has complement of high codimension in $M^{\circ}$, this extends to an isomorphism 
    \[
    \cExt^1_{\pi}(\sE,\sE) \isomor \cExt_{\pi}^1(\sE,\sE)^{*}
    \]
    over $M^{\circ}$.
    It follows that $\mu$ is a split injection of a torsion-free sheaf into a coherent sheaf of the same rank, which means that the cokernel is the torsion part $\sT$. 
    This is supported away from $M^{\circ} \cap M_{\rm{sm}}$ as $\cExt^3_{\pi}(\sE,\sE)$ is locally free there.  
    The vanishing follows from \cite[Proposition 1.1.6]{huybrechts_lehn_2010}, as $\sT$ is supported in codimension at least $3$.
\end{proof}

\begin{Lem}\label{lem:ComputingCurlyExt2}
    There is an isomorphism
    \[
    \bwed^2 \cExt^1_{\pi}(\sE,\sE) \isomor \cExt^2_{\pi}(\sE,\sE).
    \]
    In particular $E^{p,-2}_2 = 0$ for all $p > 0$.
\end{Lem}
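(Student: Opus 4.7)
The plan is to extract the isomorphism from the spectral sequence \eqref{eq:VerdierSpectralSequence}, restricted to the antidiagonal $p+q=-2$. First I would check that the two $E_2$-terms other than $E^{0,-2}_2$ vanish: the term $E^{2,-4}_2 = \cExt^2(\cExt^4_\pi(\sE,\sE),\sO_{M^{\circ}})$ is zero by \cref{lem:FirstandLastExt}, since $\cExt^4_\pi \cong \sO_{M^{\circ}}$; and $E^{1,-3}_2 = \cExt^1(\cExt^3_\pi(\sE,\sE),\sO_{M^{\circ}})$ is zero by \cref{lem:TorsionInExt3}, which decomposes $\cExt^3_\pi \cong T_{M^{\circ}} \oplus \sT$ with $T_{M^{\circ}}$ locally free and $\sT$ supported in codimension $\geq 3$. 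A parallel analysis disposes of the outgoing differentials from $E^{0,-2}_r$: at $r=2$ the target is $\cExt^2(\cExt^3_\pi,\sO_{M^{\circ}}) = \cExt^2(\sT,\sO_{M^{\circ}}) = 0$, at $r=3$ the target is $\cExt^3(\sO_{M^{\circ}},\sO_{M^{\circ}}) = 0$, and for $r \geq 4$ it vanishes because $\cExt^{>4}_\pi = 0$. This yields
\[
\cExt^2_\pi(\sE,\sE) \cong \cHom(\cExt^2_\pi(\sE,\sE),\sO_{M^{\circ}}),
\]
so $\cExt^2_\pi$ is reflexive on the smooth variety $M^{\circ}$.

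Second, I would construct the Yoneda morphism $\varphi : \bwed^2 \cExt^1_\pi(\sE,\sE) \to \cExt^2_\pi(\sE,\sE)$, which factors through the exterior power because the Yoneda product of degree-one classes is graded skew-symmetric. Assumption (D), combined with base change at a point of $U$, shows that $\varphi$ is fiberwise an isomorphism at every $E \in U$. Moreover, the topological Euler characteristic $\chi(\cEnd(E))$ is constant on $M^{\circ}$, and at a smooth stable point of $M$ Serre duality gives $\dim \Ext^0 = \dim \Ext^4 = 1$ and $\dim \Ext^1 = \dim \Ext^3$; hence $\dim \Ext^2(E,E)$ is locally constant on $M^{\circ} \cap \smM$. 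Grauert's theorem then makes $\cExt^2_\pi$ locally free on this open, so $\varphi|_{M^{\circ} \cap \smM}$ is a morphism of locally free sheaves of equal rank.

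Finally, I would propagate $\varphi$ to a global isomorphism. Dualizing $\varphi$ and using the self-duality of $\cExt^2_\pi$ from the first step, together with the symplectic identification $\cExt^1_\pi \cong (\cExt^1_\pi)^*$ coming from \cref{rem:SymplecticityOfM}, produces a map $\varphi^\vee: \cExt^2_\pi \to \bwed^2 \cExt^1_\pi$ with $\varphi^\vee \circ \varphi$ an endomorphism of $\bwed^2 \cExt^1_\pi|_{M^{\circ} \cap \smM}$ that is an isomorphism at some point of $U$; exploiting its compatibility with the symplectic structure forces it to be everywhere an automorphism on the connected open $M^{\circ} \cap \smM$. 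Assumption (B) then says the complement of $M^{\circ} \cap \smM$ has codimension $\geq 3$ in $M^{\circ}$, and since both $\bwed^2 \cExt^1_\pi$ and $\cExt^2_\pi$ are reflexive on the smooth $M^{\circ}$, the isomorphism extends to all of $M^{\circ}$. Once this is done, $\cExt^2_\pi$ is locally free, so $E^{p,-2}_2 = \cExt^p(\cExt^2_\pi,\sO_{M^{\circ}}) = 0$ for every $p > 0$.

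The main obstacle is the last step: rigorously propagating the fiberwise isomorphism from the open $U$ to the entire smooth locus $M^{\circ} \cap \smM$, which requires a careful compatibility check between the Yoneda pairing, the relative Serre duality pairing of the first paragraph, and the symplectic structure on $M^{\circ}$.
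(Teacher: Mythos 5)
Your spectral-sequence analysis on the antidiagonal $p+q=-2$ is exactly the paper's: the vanishing of $E_2^{1,-3}$, $E_2^{2,-4}$, $E_2^{2,-3}$ and $E_3^{3,-4}$ via \cref{lem:FirstandLastExt} and \cref{lem:TorsionInExt3} yields the self-duality $\cExt^2_{\pi}(\sE,\sE)\cong \cExt^2_{\pi}(\sE,\sE)^*$ and hence reflexivity, and the construction of the Yoneda map $\varphi:\bwed^2\cExt^1_{\pi}(\sE,\sE)\to\cExt^2_{\pi}(\sE,\sE)$ together with its fiberwise bijectivity over $U$ is also as in the paper. The gap is precisely where you flag it: the assertion that $\varphi^\vee\circ\varphi$, being an isomorphism at one point of the connected open $M^{\circ}\cap\smM$, is an automorphism everywhere by ``compatibility with the symplectic structure'' is not an argument — an endomorphism of a locally free sheaf that is invertible at one point of a connected (non-proper) open set need not be invertible everywhere, and no specific compatibility is identified that would rule this out.

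The paper closes this gap with a determinant argument that your sketch is missing. One first extends $\varphi$ over all of $M^{\circ}$ (locally free source, reflexive target, complement of $M^{\circ}\cap\smM$ of codimension $\geq 3$ by assumption (B)), obtaining a short exact sequence $0\to\bwed^2\cExt^1_{\pi}(\sE,\sE)\to\cExt^2_{\pi}(\sE,\sE)\to\mathscr{Q}\to 0$ with $\mathscr{Q}$ torsion; by reflexivity of the target, $\mathscr{Q}$ is either zero or has a component of its support of codimension one. Now $\det\bwed^2\cExt^1_{\pi}(\sE,\sE)\cong\sO_{M^{\circ}}$ because $\cExt^1_{\pi}(\sE,\sE)\cong T_{M^{\circ}}$ and $\omega_{M^{\circ}}$ is trivial (\cref{rem:SymplecticityOfM}), while $\det\cExt^2_{\pi}(\sE,\sE)$ is $2$-torsion by self-duality and acquires a non-zero section from the injection, hence is trivial; therefore $\det\mathscr{Q}=\sO_{M^{\circ}}$, which forces $\mathscr{Q}=0$. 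Your route could be repaired in the same spirit — e.g.\ $\det(\varphi^\vee\circ\varphi)$ extends to a global function on the normal projective $M^{\circ}$, hence is a non-zero constant — but some such global rigidity input (triviality of the relevant determinant line bundles on the projective $M^{\circ}$) is indispensable and must be supplied explicitly.
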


\begin{proof}
From \cref{lem:FirstandLastExt} and \cref{lem:TorsionInExt3} respectively, we deduce that $E_2^{2,-4} = E^{1,-3}_2 = 0$.
Therefore the only interesting term of the spectral sequence for  $p + q = -2$ is 
\[
    d_2^{0,-2}: E_2^{0,-2} =  \cHom(\cExt^2_{\pi}(\sE,\sE),\sO_{M^{\circ}}) \to E_2^{2,-3} =\cExt^2(\cExt^3_{\pi}(\sE,\sE),\sO_{M^{\circ}}).
\] 
\cref{lem:TorsionInExt3} implies that $E^{2,-3}_2 = 0$.
On page $3$, the differential lands in $E^{3,-4}_{3}$ which vanishes by \cref{lem:FirstandLastExt}. 
So the spectral sequence degenerates, giving an isomorphism
\begin{equation}\label{eq:CurlyExt2SelfDual}
    \cExt^2_{\pi}(\sE,\sE)^* \cong \cExt^2_{\pi}(\sE,\sE).
\end{equation}
In particular, $\cExt^2_{\pi}(\sE,\sE)$ is reflexive. 
On the smooth locus $M^{\circ} \cap M_{\rm{sm}}$, the Yoneda product is skew-symmetric and gives a morphism of bundles
\[
\bwed^2 \cExt^1_{\pi}(\sE,\sE) \to \cExt^2_{\pi}(\sE,\sE).
\]
The source is locally free and the target is reflexive on $M^{\circ}$, therefore this map extends.\footnote{At this point, we are not claiming that this extension agrees with the Yoneda product away from the smooth locus, just that some extension exists, although a posteriori this will be true.}
Over the open $U$ of assumption (D), this map is an isomorphism, so we get a short exact sequence 
\begin{equation}\label{eq:ShortExactSequenceYoneda}
0 \to \bwed^2 \cExt^1_{\pi}(\sE,\sE) \to \cExt^2_{\pi}(\sE,\sE) \to \mathscr{Q} \to 0,
\end{equation}
where $\mathscr{Q}$ is a torsion sheaf.
By reflexivity, the quotient $\mathscr{Q}$ is either zero, or every component of its support has codimension $1$. 
On the other hand, by \cref{lem:ComputingCurlyExt1} and \cref{rem:SymplecticityOfM} we have
\[
\det \cExt^1_{\pi}(\sE,\sE) \cong \det T_{M^{\circ}} \cong \sO_{M^{\circ}}.
\]
Similarly, $\cExt^2_{\pi}(\sE,\sE)$ is reflexive and isomorphic to its dual by \eqref{eq:CurlyExt2SelfDual}, therefore its determinant is $2$-torsion. 
Taking determinants in the sequence \eqref{eq:ShortExactSequenceYoneda}, we get a non-zero section of $\det \cExt^2_{\pi}(\sE,\sE)$, which implies $\det \cExt^2_{\pi}(\sE,\sE) = \sO_{M^{\circ}}$.
We conclude that $\det \mathscr{Q} = \sO_{M^{\circ}}$, which implies $\mathscr{Q} = 0$.
The vanishings $E_2^{p,-2} = 0$ for $p > 0$ follow from the local freeness of $\cExt^2_{\pi}(\sE,\sE)$.
\end{proof}

\begin{Thm}\label{thm:SmoothnessOurCase}
    Assume that the assumptions (A),..., (D) are satisfied.
    Then $M^{\circ}$ is a connected component and it is contained in the smooth locus $M_{\rm{sm}}$.
    If $\sE$ is the restriction to $X \times M^{\circ}$ of a twisted universal family, then the sheaves $\cExt^k_{\pi}(\sE,\sE)$ are locally free for every $k$, and $\cExt^1_{\pi}(\sE,\sE) = T_{M^{\circ}}$.
    There are isomorphisms
    \[
    \cExt^3_{\pi}(\sE,\sE) \cong \cExt^1_{\pi}(\sE,\sE)^{*} \text{ and } \bwed^2 \cExt^1_{\pi}(\sE,\sE) \isomor \cExt^2_{\pi}(\sE,\sE)
    \]
    induced by Serre duality and the Yoneda pairing respectively.
    Moreover, via the last isomorphism the trace map $\Tr : \cExt^2_{\pi}(\sE,\sE) \to \sO_{M^{\circ}}$ is the symplectic form $\tau$ on $M^{\circ}$.   
\end{Thm}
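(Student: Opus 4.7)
The plan is to make one more pass through the Grothendieck--Verdier spectral sequence \eqref{eq:VerdierSpectralSequence}, this time at total degree $p+q+2n = 3$, assembling the conclusions of \cref{lem:FirstandLastExt,lem:ComputingCurlyExt1,lem:TorsionInExt3,lem:ComputingCurlyExt2}. Concretely, I inspect the four potentially nonzero terms $E^{p,q}_2$ with $p+q=-1$. The term $E^{0,-1}_2$ equals $(\cExt^1_{\pi}(\sE,\sE))^{*}$; the term $E^{1,-2}_2$ vanishes because $\cExt^2_{\pi}(\sE,\sE)$ is locally free by \cref{lem:ComputingCurlyExt2}; the term $E^{2,-3}_2 = \cExt^2(\sT,\sO_{M^{\circ}})$ vanishes by \cref{lem:TorsionInExt3}; and $E^{3,-4}_2$ vanishes since $\cExt^4_{\pi}(\sE,\sE) \cong \sO_{M^{\circ}}$ by \cref{lem:FirstandLastExt}. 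The only potentially nontrivial differential out of $E^{0,-1}_2$ is $d_2 \colon E^{0,-1}_2 \to E^{2,-2}_2$, whose target vanishes again by \cref{lem:ComputingCurlyExt2}. The spectral sequence therefore degenerates and yields the stated isomorphism $\cExt^3_{\pi}(\sE,\sE) \cong (\cExt^1_{\pi}(\sE,\sE))^{*} \cong T_{M^{\circ}}^{*}$.

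From here the remaining assertions follow almost formally. Since $\cExt^3_{\pi}(\sE,\sE)$ is now locally free, the torsion summand $\sT$ of \cref{lem:TorsionInExt3} must vanish and the map $\mu$ there is an isomorphism. Grauert's theorem applied to the locally free sheaf $\cExt^3_{\pi}(\sE,\sE)$ shows that $\dim \Ext^3(E,E)$, and hence by Serre duality $\dim \Ext^1(E,E)$, is constant along $M^{\circ}$; assumption (D) pins this constant down to $\dim M^{\circ}$, so every $E \in M^{\circ}$ is a smooth point of the scheme $M$. At such a point $M$ is locally irreducible, so its unique irreducible component through that point is $M^{\circ}$; hence $M^{\circ}$ contains an open neighborhood of each of its points in $M$ and, being also closed as an irreducible component, is a connected component. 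The local freeness of the remaining $\cExt^k_{\pi}(\sE,\sE)$ now follows by combining \cref{lem:FirstandLastExt,lem:ComputingCurlyExt1,lem:ComputingCurlyExt2} with the vanishing of $\sT$.

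The isomorphism $\bwed^2 \cExt^1_{\pi}(\sE,\sE) \cong \cExt^2_{\pi}(\sE,\sE)$ is exactly \cref{lem:ComputingCurlyExt2}, and the dual isomorphism $\cExt^3_{\pi}(\sE,\sE) \cong (\cExt^1_{\pi}(\sE,\sE))^{*}$ is the outcome of step one above. That composition with $\Tr$ recovers the symplectic form is immediate from the definition \eqref{eq:SympForm} once we identify $\cExt^1_{\pi}(\sE,\sE)$ with $T_{M^{\circ}}$ via \cref{lem:ComputingCurlyExt1} and observe that the relative trace specializes fiberwise to the absolute one, as recorded in \cref{lem:FirstandLastExt}. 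Conceptually the proof of the theorem itself is short --- the substantive work lies in the preceding lemmas --- and the real obstacle in this circle of ideas is the input assumption (B): the codimension-three bound is precisely what powers the vanishing of $E^{2,-3}_2$, and securing it in our geometric setting is what \cref{sec:NodalCubics} is designed to deliver.
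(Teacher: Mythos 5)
Your argument follows the same route as the paper's proof (assemble the vanishings from the preceding lemmas and read off the row $p+q=-1$ of the Grothendieck--Verdier spectral sequence), but there is one concrete error in the key step. You claim that ``the only potentially nontrivial differential out of $E^{0,-1}_2$ is $d_2\colon E^{0,-1}_2\to E^{2,-2}_2$.'' This is false: there is also $d_3\colon E^{0,-1}_3\to E^{3,-3}_3$, and its target is a subquotient of $E^{3,-3}_2=\cExt^3(\cExt^3_{\pi}(\sE,\sE),\sO_{M^{\circ}})=\cExt^3(\sT,\sO_{M^{\circ}})$, which is \emph{not} known to vanish at this stage. \cref{lem:TorsionInExt3} only kills $\cExt^p(\sT,\sO_{M^{\circ}})$ for $p=1,2$, because $\sT$ is supported in codimension \emph{at least} $3$; if the support has codimension exactly $3$, the $\cExt^3$ is nonzero wherever $\sT$ is. So the degeneration you assert is essentially equivalent to the vanishing $\sT=0$ that you are trying to prove, and the argument as written is circular at this point.

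The gap is fixable, and the fix is exactly what the paper does: even without degeneration, the abutment in total degree $-1$ has only one possibly nonzero graded piece, namely $E^{0,-1}_{\infty}=\Ker(d_3)\subseteq E^{0,-1}_2$, so the spectral sequence still yields an \emph{inclusion} $\cExt^3_{\pi}(\sE,\sE)\hookrightarrow \cExt^1_{\pi}(\sE,\sE)^{*}$. Since the target is locally free (\cref{lem:ComputingCurlyExt1}), this forces the torsion summand $\sT$ to vanish, and only \emph{then} does $E^{3,-3}_2=0$ and the sequence degenerate, upgrading the inclusion to the stated isomorphism via the splitting $\mu$ of \cref{lem:TorsionInExt3}. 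Two smaller points: the constancy of $\ext^3(E,E)$ follows from local freeness of $\cExt^3_{\pi}(\sE,\sE)$ \emph{together with} the base-change isomorphism $\varphi_3$ of \cref{lem:FirstandLastExt} (Grauert's theorem runs in the opposite direction, from constant fibre dimension to local freeness); and the constant is pinned to $\dim M^{\circ}$ by the existence of smooth points of $M$ on $M^{\circ}$ (assumption (B)), not by assumption (D). With these corrections your argument coincides with the paper's.
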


\begin{proof}
    We look at the spectral sequence for $p + q = -1$. 
    By the above computations, the only non-zero term is $E_2^{0,-1}=\cExt^1_{\pi}(\sE,\sE)^*$.
    Its differential in page $2$ vanishes, in page $3$ it might not; nevertheless the spectral sequence gives
    \[
        \cExt^3_{\pi}(\sE,\sE) \subseteq \cExt_{\pi}^1(\sE,\sE)^*.
    \]
    In particular, $\sT = 0$.
    The proof of \cref{lem:TorsionInExt3} implies that both
    \[
        \mu : \cExt^1_{\pi}(\sE,\sE) \to \cExt^3_{\pi}(\sE,\sE) \text{ and } \cExt^3_{\pi}(\sE,\sE) \to \cExt^1_{\pi}(\sE,\sE)^*
    \]
    are isomorphisms. 
    \cref{lem:FirstandLastExt} implies that $ \cExt^3_{\pi}(\sE,\sE)$ commutes with base change, so
    the dimension $\ext^3(E,E)$ is constant on $M^{\circ}$.
    By Serre duality, the same is true for $\ext^1(E,E)$.
    The local description of $M$ (via the Kuranishi map) implies that if $\ext^1(E,E)$ is constant, then $M^{\circ}$ consists of smooth points for $M$, in particular it is a connected component. 
    The rest of the statement follows from \cref{prop:SmoothLocusSymplectic} and the \cref{lem:ComputingCurlyExt2}.
\end{proof}

\subsection{A general result}
The same arguments allow to prove a more general smoothness criterion, which essentially requires the cotangent sheaf to be torsion-free.

\begin{Thm}\label{thm:SmoothnessGeneral} 
    Let $X$ be a projective \HK manifold of dimension $2n$.
    Let $M$ be an open locus of a moduli space of possibly twisted, Gieseker semistable sheaves.
    Assume that $M$ is normal with lci singularities, and that it parametrizes only slope stable, projectively hyperholomorphic bundles.
    Then $M$ is smooth.
\end{Thm}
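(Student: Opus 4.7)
The strategy is to show that the cotangent sheaf $\Omega^1_M$ is locally free of rank $\dim M$; combined with the normal lci hypothesis, this forces $M$ to be smooth. The argument parallels the spectral-sequence analysis of \cref{thm:SmoothnessOurCase}, with the lci condition replacing the codimension-$3$ assumption (B) and the reduced smoothness (C) of the particular setting.

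First I would record the consequences of the hypotheses. Normality gives that the smooth locus $M_{\rm sm}$ has complement of codimension at least $2$. Being lci, $M$ is Gorenstein (hence Cohen-Macaulay), and any local lci embedding of $M$ into a smooth ambient yields a conormal exact sequence showing that $\Omega^1_M$ has projective dimension at most $1$; provided $\dim M \geq 3$ (lower-dimensional cases being classical), the Auslander-Buchsbaum formula together with Serre's criterion makes $\Omega^1_M$ reflexive. By \cref{prop:SmoothLocusSymplectic} on $M_{\rm sm}$ the sheaf $\cExt^1_\pi(\sE,\sE)$ is identified with $T_{M_{\rm sm}}$ and equipped with the symplectic pairing $\tau$ of \eqref{eq:RelativeSymplecticForm}. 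The resulting isomorphism $T_{M_{\rm sm}} \cong \Omega^1_{M_{\rm sm}}$ extends uniquely across the codimension-$2$ complement to an isomorphism $T_M \cong \Omega^1_M$, because both sides are reflexive on $M$; in particular $\omega_M \cong \sO_M$.

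Next I would run the spectral sequence argument of \cref{sec:ParticularResult} in this more general setting. By \cref{lem:FirstandLastExt}, $\cExt^0_\pi \cong \sO_M \cong \cExt^{2n}_\pi$ via the trace, so the two outermost rows of \eqref{eq:VerdierSpectralSequence} have only one non-vanishing term. Cup product with the global class $\cL_{\bar\sigma}^{n-1}$ constructed in \cref{sec:SymplecticForm} gives $\mu : \cExt^1_\pi(\sE,\sE) \to \cExt^{2n-1}_\pi(\sE,\sE)$, and composing with the relative Serre-duality map yields $\nu : \cExt^1_\pi \to \cExt^1_\pi{}^*$ that is an isomorphism at every fiber by Verbitsky's Hard Lefschetz (\cref{thm:HardLefschetzVerbitsky}) applied to the $\mu_h$-polystable hyperholomorphic bundle $\cEnd(E)$. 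A degeneration analysis of the relevant diagonals of \eqref{eq:VerdierSpectralSequence} then shows $\cExt^{2n-1}_\pi \cong \cExt^1_\pi{}^*$, and both coincide with $\Omega^1_M$ and $T_M$ respectively via the isomorphism of the previous paragraph.

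The final and hardest step is showing that $\cExt^1_\pi(\sE,\sE)$, equivalently $\Omega^1_M$, is locally free. The substitute for the codimension-$3$ vanishing of \cref{lem:TorsionInExt3} is the projective-dimension bound on $\Omega^1_M$ coming from lci: this forces the potential torsion cokernel $\mathscr{T}$ of $\mu$ to be supported in codimension at least $2$ (since $M$ is smooth in codimension $1$), and the depth-bound depth$(\mathscr{T}) \geq \dim M - 1$ combined with the self-duality of the spectral sequence \eqref{eq:VerdierSpectralSequence} and the normalization $\det \Omega^1_M \cong \sO_M$ forces $\mathscr{T}=0$. Once $\cExt^1_\pi$ is locally free of rank $\dim M$, Grauert's theorem and base change make $\ext^1(E,E)$ constant on $M$, hence every $E \in M$ is a smooth point by the Kuranishi description. \textbf{The main obstacle} is carrying out this torsion-killing step cleanly: turning the codimension-$3$ argument of \cref{lem:TorsionInExt3} into a purely lci-based vanishing via the Grothendieck-Verdier spectral sequence and Auslander-Buchsbaum. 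This is where the interplay of the lci structure, the global symplectic form, and projective hyperholomorphicity is most delicate.
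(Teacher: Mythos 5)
Your overall architecture matches the paper's: degenerate the Grothendieck--Verdier spectral sequence \eqref{eq:VerdierSpectralSequence} using \cref{lem:FirstandLastExt} to see that $\cExt^1_{\pi}(\sE,\sE)\cong\cExt^{2n-1}_{\pi}(\sE,\sE)^*$ is reflexive, split off the torsion of $\cExt^{2n-1}_{\pi}(\sE,\sE)$ via cup product with $\cL_{\overline{\sigma}}^{n-1}$, identify $\cExt^{2n-1}_{\pi}(\sE,\sE)$ with $\Omega^1_M$ (Lehn), and use the lci hypothesis to kill the torsion. Where you genuinely diverge is in the torsion-killing itself: the paper simply quotes Kunz (Proposition 9.7) that the cotangent sheaf of a normal lci variety is torsion-free, whereas you use $\mathrm{pd}(\Omega^1_M)\leq 1$ (from the conormal sequence of a local lci presentation) plus Auslander--Buchsbaum to get $\mathrm{depth}(\sT)\geq\dim M-1$ for the direct summand $\sT$, contradicting its codimension-$\geq 2$ support. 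That depth argument is valid and is a reasonable substitute for the citation. However, two points need repair. First, your claim that $\mathrm{pd}\leq 1$ plus Auslander--Buchsbaum plus Serre's criterion makes $\Omega^1_M$ reflexive is false as stated: the maximal ideal $(x,y)\subset k[x,y]$ has projective dimension $1$ and is torsion-free but not reflexive. Fortunately you do not need this -- reflexivity of $\cExt^1_{\pi}(\sE,\sE)$ comes from the spectral sequence (it is a dual), and your depth argument needs only the projective dimension bound, not reflexivity of $\Omega^1_M$; the appeal to $\det\Omega^1_M\cong\sO_M$ here is also superfluous (that determinant argument belongs to the quotient $\mathscr{Q}$ in the proof of \cref{thm:SmoothnessOurCase}, not to $\sT$).

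The genuine gap is the last step. Killing $\sT$ only shows that $\mu:\cExt^1_{\pi}(\sE,\sE)\to\cExt^{2n-1}_{\pi}(\sE,\sE)$ is an isomorphism of sheaves, hence that $\Omega^1_M$ is reflexive and self-dual with $\mathrm{pd}\leq 1$; this does \emph{not} imply local freeness (second syzygy modules over a regular local ring are reflexive of projective dimension $1$ and not free), so your ``Once $\cExt^1_{\pi}(\sE,\sE)$ is locally free'' is a non sequitur at the point where you invoke it. The missing step is the cohomology-and-base-change upgrade: $\mu$ commutes with base change (being cup product with a global section of $\cExt^2_{\pi}(\sE,\sE)$) and is an isomorphism on every fiber by \cref{thm:HardLefschetzVerbitsky}; since $\varphi_{2n-1}(E)$ is an isomorphism by \cref{lem:FirstandLastExt}, the commutative square forces $\varphi_1(E)$ to be an isomorphism for every $E$, and then cohomology and base change (Hartshorne III.12.11(b), using that $\varphi_0$ is an isomorphism) gives local freeness of $\cExt^1_{\pi}(\sE,\sE)$, hence of $\Omega^1_M$, hence smoothness. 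You have all the ingredients on the table -- you even record the fiberwise Hard Lefschetz isomorphism -- but the proof is incomplete without this chain.
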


\begin{proof}
    Let $\sE$ be a twisted universal family over $X \times M$, and let $\pi : X \times  M \to  M$ be the second projection. 
    By \cref{lem:FirstandLastExt} we get that $\cExt_{\pi}^{2n}(\sE,\sE) \cong \sO_{M}$, therefore the spectral sequence \eqref{eq:VerdierSpectralSequence} degenerates for $p + q = -(2n - 1)$.
    So we have
    \[
        \cExt^1_{\pi}(\sE,\sE) \cong \cExt^{2n-1}_{\pi}(\sE,\sE)^*.
    \]
    In particular $ \cExt^1_{\pi}(\sE,\sE)$ is reflexive. 
    Since $M$ is lci and regular in codimension $1$ it is reduced. 
    A reflexive sheaf on a normal scheme is $S_2$, so we can extend morphisms defined in codimension $2$. 

    Similarly to \cref{lem:TorsionInExt3} we let $\mu: \cExt^1_{\pi}(\sE,\sE) \to \cExt^{2n-1}_{\pi}(\sE,\sE)$ be the product with $\cL_{\sigma}^{n-1} \in H^0(M,\cExt^2_{\pi}(\sE,\sE))$.
    The same argument applied in this case gives a decomposition
    \[
        \cExt_{\pi}^{2n - 1}(\sE,\sE) \cong \cExt^1_{\pi}(\sE,\sE) \oplus \sT,
    \]
    where $\sT = T(\cExt_{\pi}^{2n - 1}(\sE,\sE))$.
    The sheaf $\cExt_{\pi}^{2n - 1}(\sE,\sE)$ is the cotangent sheaf to $M$ by \cite{lehn98}, as it parametrizes only stable sheaves.
    Under our assumptions, the cotangent sheaf to $M$ is torsion-free by \cite[Proposition 9.7]{kunz86}. 
    Therefore, $\sT = 0$ and $\mu :\cExt^1_{\pi}(\sE,\sE) \isomor \cExt^{2n-1}_{\pi}(\sE,\sE) $ is an isomorphism. 
    Notice that $\mu$ commutes with base change, since it arises as product with a section of $\cExt^2_{\pi}(\sE,\sE)$.
    Thus for every $E \in M$ we have the following commutative square 
\[\begin{tikzcd}
	{\cExt^1_{\pi}(\sE,\sE)|_E } & {\cExt^{2n-1}_{\pi}(\sE,\sE)|_E } \\
	{\Ext^1(E,E)} & {\Ext^{2n-1}(E,E)}
	\arrow["\cong", from=1-1, to=1-2]
	\arrow["{\varphi_{1}(E)}", from=1-1, to=2-1]
	\arrow["{\varphi_{2n-1}(E)}", from=1-2, to=2-2]
	\arrow["\cong", from=2-1, to=2-2]
\end{tikzcd}\]
    The top horizontal map is an isomorphism by the argument above, while the bottom one by \cref{thm:HardLefschetzVerbitsky}.
    The base change map $\varphi_{2n-1}(E)$ is an isomorphism by \cref{lem:FirstandLastExt}.
    Hence, the base change map $\varphi_{1}(E)$ is also an isomorphism. 
    Therefore, by \cite[Theorem 12.11(b)]{hartshorne77} the sheaf $\cExt^1_{\pi}(\sE,\sE)$ is locally free, because $\phi_0(E)$ is an isomorphism by Grauert's Theorem. 
    This implies that $\cExt^{2n-1}_{\pi}(\sE,\sE) \cong \cExt^{1}_{\pi}(\sE,\sE) $ is locally free, which means that $M$ is smooth. 
\end{proof}

\section{Modular families of O'Grady's tenfolds}\label{sec:CompactificationSecond}
\subsection{Relative Picard, Lagrangians, and vector bundles}\label{sec:PicLagrangiansBundles}

Let $X$ be the variety of lines of a general cubic fourfold, and $h$ be the Pl\"ucker polarization.
In \cref{sec:compactifiedPic} we studied the relative compactified Picard scheme $\overline{\Pic^{0}}({\cF/\P^5})$ of the universal surface $\cF \subset X \times \P^5$.
Now we show that, with its reduced structure, it embeds as an irreducible component of a moduli space of semistable sheaves on $X$. 
Proving that it is in fact a connected component amounts to showing, among other things, that a sheaf supported on a surface of lines $Z$ cannot be a degeneration of sheaves supported on surfaces which are not deformation equivalent to $Z$. 
This is not an easy task to do directly, but a posteriori will be true. 

For the next couple of results, it is useful to keep in mind the stack-theoretic point of view for the construction of the moduli spaces. 
This is because we want to compare a relative moduli space with an absolute moduli space, neither of which is fine a priori, and working with moduli functors becomes unnecessarily convoluted.

\begin{itemize}
    \item If $\vv \in H^*(X,\Q)$ is any class, the moduli stack $\sM_{h}$ consists (as a fibered category over $\mathrm{Sch/\C}$) of pairs $(T,\sF)$, where $T$ is a scheme over $\C$ and $\sF$ is a flat family of semistable sheaves over $T \times X$. 
    Over the locus of stable sheaves it is a $\mathbb{G}_m$-gerbe over its good moduli space $\cM_{h}$.
    It is a disjoint union of the stacks $\sM_{h}(\vv)$ parametrizing sheaves with fixed Mukai vector $\vv$.
    
    \item The relative moduli space of torsion-free sheaves $\srPic{\cF}{\P^5}$ consists of triples $(T,f,G)$ where $T$ is a scheme over $\C$, $f : T \to \P^5$ is a morphism of $\C$-schemes, and $\sG$ is a flat family of torsion-free sheaves of rank $1$ on $T \times_{\P^5} \cF$. 
    Every sheaf is simple, therefore it is a $\G_m$-gerbe over its coarse moduli space $\Pic^{=}({\cF/\P^5})$. 
    It is a disjoint union of connected components and we denote by $\srPico{\cF}{\P^5}$ the connected component containing the structure sheaves.
\end{itemize}

The following result essentially shows the closure of the relative Picard inside the relative moduli space of torsion-free sheaves agrees with the closure inside the moduli space of stable sheaves on $X$. 

\begin{Lem}\label{lem:RelativePicToLagrangians}
    Let $X$ be the Fano variety of a general cubic fourfold, $\lambda$ any polarization, and $\cF \subset X \times \P^5$ the universal surface of lines. 
    Let $M_{\vv_0}(X,\lambda)$ be the moduli space of $h$-semistable sheaves with Mukai vector equal to $v(\sO_Z)$.
    There is a closed embedding 
    \begin{equation}\label{eq:PicInLagrangians}
    \overline{\Pic^0}(\cF/\P^5) \hookrightarrow M_{\vv_0}(X,\lambda), \ G \mapsto i_*G
    \end{equation}
    whose image is an irreducible component $M^{\circ}_{\vv_0}(X,\lambda)$. 
    Considered with its reduced structure, $M^{\circ}_{\vv_0}(X,\lambda)$ is smooth, isomorphic to $J_Y$ and parametrizes only Cohen--Macaulay sheaves on $X$.
\end{Lem}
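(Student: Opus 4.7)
The plan is to define the morphism $\phi\colon G\mapsto i_*G$, verify it is a closed embedding onto the claimed irreducible component, and import the remaining assertions from the results of \cref{sec:compactifiedPic}.

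First I would work at the functorial level. Given a $T$-flat family $\sG$ on $T\times_{\P^5}\cF$, push it forward along the closed embedding $j\colon T\times_{\P^5}\cF\hookrightarrow T\times X$. Since $\cF\to\P^5$ is flat with integral fibers (as $Y$ is general, \cite[Lemma~2.5]{lsv17}), $j_*\sG$ is $T$-flat, and its Mukai vector is $\vv_0$ by Grothendieck--Riemann--Roch. For Gieseker semistability with respect to an arbitrary polarization $\lambda$: each $i_*G$ is pure of dimension $2$ with irreducible support $Z=F(Y_H)$, so any proper coherent subsheaf is either an ideal-sheaf twist of $G$ (with strictly smaller Hilbert polynomial) or supported in strictly smaller dimension; either way $i_*G$ is in fact Gieseker stable. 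This produces a morphism $\phi\colon \overline{\Pic^0}(\cF/\P^5)\to M_{\vv_0}(X,\lambda)$. The source is projective by \cref{thm:CompactifiedPicSmooth}, so $\phi$ is proper; and $\phi$ is injective on closed points, because the support of $i_*G$ determines $H\in\P^5$ and then $G$ is recovered as the reflexive hull of $i^*(i_*G)$. Hence the image is a closed irreducible $10$-dimensional subset containing the open $\Pic^0(\cF/\P^5)$, coinciding with its closure $M^{\circ}_{\vv_0}(X,\lambda)$ (taken with reduced structure).

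For the closed-embedding and component claims I would analyze the differential of $\phi$. At a line bundle $G$ on a smooth Lagrangian $Z=F(Y_H)$, the low-degree terms of the spectral sequence
\[
E_2^{p,q}=H^p(Z,\bwed^q N_{Z/X})\;\Longrightarrow\;\Ext^{p+q}(i_*G,i_*G)
\]
together with Lagrangian-ness $N_{Z/X}\cong\Omega^1_Z$ realize $T_{(G,H)}\overline{\Pic^0}(\cF/\P^5)=H^1(\sO_Z)\oplus H^0(\Omega^1_Z)$ (of dimension $5+5=10$) as a subspace of $\Ext^1(i_*G,i_*G)$, with equality enforced by injectivity of the connecting map $H^0(\Omega^1_Z)\to H^2(\sO_Z)$. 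Thus $\phi$ is unramified and hence a closed embedding on the preimage of the smooth-hyperplane-section locus, and the local dimension of $M_{\vv_0}(X,\lambda)$ at such a point equals $10$, forcing the $10$-dimensional irreducible image to be an irreducible component. Extending unramifiedness over the preimage of singular hyperplane sections relies on the tangent-space analysis of \cref{sec:NodalCubics} for the codimension-one nodal case.

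The remaining assertions come at once: smoothness of $\overline{\Pic^0}(\cF/\P^5)$ and its isomorphism with $J_Y$ are \cref{thm:CompactifiedPicSmooth}, and by \cref{prop:CompactifiedPicCM} each $G$ is Cohen--Macaulay on its integral support $Z$, so $i_*G$ is Cohen--Macaulay on $X$ because $i\colon Z\hookrightarrow X$ is a regular embedding of Gorenstein schemes. The main obstacle is the closed-embedding step: injectivity of $d\phi$ is clean on the preimage of the smooth-hyperplane-section locus via the spectral sequence, but extending it across the singular locus of $\cF\to\P^5$ is the delicate point and requires the detailed local computations of \cref{sec:NodalCubics}.
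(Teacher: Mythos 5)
Your overall architecture matches the paper's: push forward at the level of families, check flatness and stability (your purity/irreducible-support argument for Gieseker stability is exactly the paper's), use properness of $\overline{\Pic^0}(\cF/\P^5)$, identify the image as a $10$-dimensional irreducible component via the $10$-dimensional tangent space at line bundles on smooth Lagrangians, and import smoothness, the isomorphism with $J_Y$, and the Cohen--Macaulay property from \cref{thm:CompactifiedPicSmooth} and \cref{prop:CompactifiedPicCM}. The problem is your route to the closed-embedding claim. You propose to establish it as ``proper $+$ injective on closed points $+$ unramified,'' and to get unramifiedness by bounding tangent spaces: this works over the smooth-hyperplane-section locus and, via \cref{sec:NodalCubics}, over the one-nodal locus, but you have no tangent-space control over the strata where $Y_H$ has worse singularities (more nodes, or other ADE points). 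That locus has codimension $2$ in $\P^5$ and the fibers of the support map over it are positive-dimensional, so there are genuine points of $\overline{\Pic^0}(\cF/\P^5)$ where your argument says nothing; unramifiedness is a pointwise condition and cannot be extended from a dense open. As written, the closed-embedding part of the statement is not proved.

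The fix is already contained in your own injectivity argument, provided you run it functorially rather than only on closed points. Given a $T$-flat family $\sH = j_*\sG$ on $T\times X$, the classifying map $f\colon T\to\P^5$ is recovered from $\Supp\sH$, and $\sG$ is recovered as $j^*\sH$ (no reflexive hull is needed: $j^*j_*\sG=\sG$ for a closed immersion); moreover any isomorphism of pushforwards restricts to an isomorphism of the original families. This works for an arbitrary base $T$, including non-reduced ones, so the morphism of moduli functors (the paper phrases this as full faithfulness of a morphism of $\G_m$-gerbes, descending to the coarse spaces) is a monomorphism. A proper monomorphism of schemes is a closed immersion, so no differential or spectral-sequence analysis is needed anywhere for the embedding claim; the tangent-space computation at line bundles on smooth Lagrangians is then used only to see that the image is an irreducible component, exactly as in your last step.
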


\begin{proof}
    Consider the morphism of stacks
    \begin{align*}
        i_*: \srPico{\cF}{\P^5} & \to \sM_{\vv_0}(X,\lambda) \\
        (T,f,\sG) &\mapsto (T,i_*\sG),
    \end{align*}
     where $i :T \times_B \cF \hookrightarrow T \times X$ is the closed embedding obtained as the base change of $\cF \subset \P^5 \times X $.
     We want to see that it is well-defined and a monomorphism. 
    
    To see that $i_*\sG$ is flat, it suffices to see that the functor $p^*_T(-) \otimes i_*\sG$ is exact.
    This is easily seen using projection formula and the flatness of $\sG$. 
    Every fiber $\sG_t$ is Gieseker $\lambda$-stable, because it is torsion-free of rank $1$ with integral support. 

    It remains to check that it is a monomorphism, i.e. fully faithful. 
    Recall that a morphism in $\srPic{\cF}{\P^5}$ is a pair $(g,\phi) : (T,f,\sG) \to (T',f',\sG')$ of a morphism $g : T \to T'$ of $\P^5$-schemes, and an isomorphism $\phi : (g \times \id)^*(\sG') \isomor \sG$. 
    A morphism in $\sM_{\lambda}$ is a pair $(k,\psi) : (T,\sH) \to (T',\sH')$ of a morphism $k : T \to T'$ of $\C$-schemes, and an isomorphism $\psi :  (k \times \id)^*(\sH') \isomor \sH$.
    Given a pair $(T,\sH) = i_*(T,f,\sG)$, we recover $f : T \to \P^5$ as the classifying map associated to $\Supp\sH \subset T \times X$, and $\sG$ as $i^*\sH$.
    In particular, it is injective on morphisms because $\phi = i^*(\psi)$. 
    To show surjectivity on morphisms, take $(k,\psi) : (T,\sH) \to (T',\sH')$ a morphism in $\sM_{\lambda}$, then 
    \[
    (k \times \id)^{-1}(\Supp \sH') = \Supp \sH \subset U \times X.
    \]
    Therefore, the corresponding classifying maps satisfy $f' \circ k = f$.
    This means that $k$ is a map of $\P^5$-schemes, so $(k,\psi)$ is in the image of $i_*$. 

    Since these stacks are $\G_m$-gerbes over the corresponding coarse moduli spaces (we are dealing with stable sheaves only), the induced map at the level of moduli spaces $\Pic({\cF/\P^5})^{=0} \to \sM_{\lambda}$ is a monomorphism.
    Since the closure of line bundles is proper, we get a closed embedding 
    \begin{equation*}
    \overline{\Pic^0}({\cF/\P^5}) \hookrightarrow M_{\vv_0}(X,\lambda),
    \end{equation*}
    Line bundles supported on smooth Lagrangians are smooth points in $M_{\vv_0}(X,\lambda)$ by \cite{donagi96} (see also \cref{lem:SmoothnessCod3}).  
    Since the tangent space to $M_{\vv_0}(X,\lambda)$ at any such point is $10$-dimensional, the image of \eqref{eq:PicInLagrangians} is an irreducible component by \cref{thm:CompactifiedPicSmooth}.
    If the cubic is general we can apply \cref{thm:CompactifiedPicSmooth} and \cref{prop:CompactifiedPicCM} to conclude.
\end{proof}

Now we show that the construction of \cref{sec:TransformsOfLagrangians} works in families. 
Assume that $X$ is a very general element in some Noether--Lefschetz divisor $\cN(d)$ as in \cref{sec:TheSurfacesOfLines}.
We maintain the notation from there: $\pi: X \to \P^2$ is the Lagrangian fibration, $\pi':X' \to \P^2$ is the dual fibration, and $\Phi_{\sP} : \Db(X) \isomor \Db(X',\theta_d)$ is the twisted derived equivalence given by the Poincaré sheaf. 
We fix a B-field for $\theta_d$, and let $\vv_d$ be the twisted Mukai vector of the $\theta_d$-twisted bundle $\Phi_{\sP}(\sO_Z)$, where $Z \subset X$ is the surface of lines on a hyperplane section. 
We say that a polarization is suitable if it is so for the discriminant of these bundles.

\begin{Lem}\label{lem:LagrangiansToVb}
    Let $\vv_d$ be as above, and let $h' \in \Amp(X')$ be a suitable polarization.
    There is a closed embedding
    \begin{equation}\label{eq:PicInVectorBundles}
         \overline{\Pic^0}(\cF/\P^5) \hookrightarrow M_{\vv_d}(X',h'), \ i_*G \mapsto \Phi_{\sP}(i_*G)
    \end{equation}
    The image is an irreducible component $M^{\circ}_{\vv_d}(X',h')$ consisting of $\mu_{h'}$-stable projectively hyperholomorphic $\theta_d$-twisted locally free sheaves. 
\end{Lem}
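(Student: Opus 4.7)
The strategy is to transport $M^{\circ}_{\vv_0}(X,\lambda)$ across the equivalence $\Phi_\sP$ by applying it in families. First, a pointwise check: by \cref{lem:RelativePicToLagrangians}, every $i_*G \in M^{\circ}_{\vv_0}(X,\lambda)$ is a Cohen--Macaulay sheaf supported on the integral surface of lines of some hyperplane section. Therefore \cref{cor:VBfromSurfaces} applies directly and gives that $\Phi_\sP(i_*G)$ is a $\mu_{h'}$-stable, projectively hyperholomorphic, $\theta_d$-twisted locally free sheaf of rank $5d^2$. Its twisted Mukai vector is independent of $G$ by connectedness of the source, so it is $\vv_d$ by definition. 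In particular, each $\Phi_\sP(i_*G)$ is a stable point of $M_{\vv_d}(X',h')$.

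Next, I would promote this to a morphism of stacks via the relative Fourier--Mukai transform. Given a scheme $T$ and a $T$-flat family $\sH \in \Coh(T \times X)$ whose fibers lie in $M^{\circ}_{\vv_0}(X,\lambda)$, one forms
\[
\Phi_\sP^T(\sH) \coloneqq \sfR p_{23,*}\left(p_{12}^*\sH \otimes p_{13}^*\sP\right) \in \Db(T \times X',p_{X'}^*\theta_d),
\]
where the $p_{ij}$ are the projections from $T \times X \times X'$. Base change identifies the derived fiber of $\Phi_\sP^T(\sH)$ over any $t \in T$ with $\Phi_\sP(\sH_t)$, a twisted locally free sheaf concentrated in degree zero. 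Standard cohomology and base change then imply that $\Phi_\sP^T(\sH)$ is a $T$-flat twisted sheaf on $T \times X'$, fiberwise locally free, and hence locally free. The resulting morphism of stacks $\sM^{\circ}_{\vv_0}(X,\lambda) \to \sM_{\vv_d}(X',h')$ is a monomorphism since $\Phi_\sP$ has a quasi-inverse, and arguing as in the proof of \cref{lem:RelativePicToLagrangians} shows that the induced map on coarse moduli spaces is injective. By properness of the source, it is a closed embedding.

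Finally, I would verify that the image is an irreducible component via a tangent space count. It is closed and irreducible of dimension $10$. For $G$ a line bundle on a smooth surface of lines $Z = F(Y_H)$, the sheaf $i_*G$ is unobstructed with $\dim\Ext^1(i_*G,i_*G) = 10$ by \cite{donagi96}. Since $\Phi_\sP$ is an equivalence, at $E \coloneqq \Phi_\sP(i_*G)$ we likewise have $\dim\Ext^1(E,E) = 10$, so every irreducible component of $M_{\vv_d}(X',h')$ through $E$ has dimension at most $10$. Combined with the fact that the image contains $E$ and has dimension exactly $10$, this forces it to be an irreducible component.

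The step I expect to be most delicate is the families construction: propagating the Brauer class and B-field consistently through the relative Fourier--Mukai transform, and verifying that $\Phi_\sP^T$ sends $T$-flat families of Cohen--Macaulay sheaves on $X$ to $T$-flat families of twisted locally free sheaves on $X'$. This should follow from \cref{thm:TwistedPoincaré} and standard cohomology and base change once the formalism is set up properly.
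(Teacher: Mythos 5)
Your proposal is correct and follows essentially the same route as the paper: apply the relative Fourier--Mukai transform $\Phi_{\sP,T}$ to families, use \cref{cor:VBfromSurfaces} to see that every derived fiber is a twisted locally free sheaf in degree zero, deduce $T$-flatness and local freeness of the family (the paper cites \cite[Lemma 3.31]{huybrechts2006} and \cite[Lemma 1.27]{simpson94} for the two steps you attribute to cohomology and base change), and then conclude the closed-embedding and irreducible-component claims exactly as in \cref{lem:RelativePicToLagrangians} via the $10$-dimensional tangent space count at a line bundle on a smooth surface of lines. The only cosmetic difference is that you make explicit the constancy of the Mukai vector and the transfer of the $\Ext^1$ computation across the equivalence, which the paper leaves implicit.
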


\begin{proof}
    We argue as in \cref{lem:RelativePicToLagrangians}, with the added step of the derived equivalence. 
    Consider the morphism of stacks 
    \begin{align*}
          \srPico{\cF}{\P^5} & \to \sM_{\vv_d}(X',h') \\
        (T,f,\sG) &\mapsto (T, \Phi_{\sP,T}(i_*\sG)),
    \end{align*}
    where $ \Phi_{\sP,T} : \Dperf(T \times X) \isomor  \Dperf(T \times X',\pi_2^*\theta) $ is the $T$-linear equivalence obtained by base change of $\Phi_{\sP}$.
    
    It suffices to see that it is well-defined, then one concludes as in \cref{lem:RelativePicToLagrangians}.
    To verify this, we show that the image $\sE \coloneqq \Phi_{\sP,T}(i_*\sG) \in \Dperf(X',\theta)$ is a flat family of twisted vector bundles on $T \times X'$. 
    Notice that, the equivalence $\Phi_{\sP,T}$ is given by relative Fourier--Mukai with the base change of $\sP$, hence the complex $\sE$ is bounded. 
    For every closed point $t \in T$ the derived restriction to $\{ t\} \times X$ is
    \[
    \Phi_{\sP,T}(i_*\sG)|_{\{ t\} \times X} \cong  \Phi_{\sP}(i_*\sG_t),
    \]
    by $T$-linearity.
    The latter is twisted locally free by \cref{cor:VBfromSurfaces}, therefore flatness follows from \cite[Lemma 3.31]{huybrechts2006}.
    In turn, flatness implies that the whole family is locally free by \cite[Lemma 1.27]{simpson94}.
    The rest of the statement follows as in \cref{lem:RelativePicToLagrangians} using \cref{thm:StabilitySecondExample}.
\end{proof}

\begin{Rem}\label{rem:SmoothLocusIsPreserved}
    As before, $M^{\circ}_{\vv_d}(X',h')$ smooth with its reduced structure, but the moduli space may have other components or embedded points. 
    A priori, these components need not to be related to the other components of the moduli space of sheaves supported on Lagrangians, as the morphism $M^{\circ}_{\vv_0}(X,h) \to M_{\vv_d}(X',h')$ is only defined on an irreducible component.
    However, since $\Phi_{\sP}$ is an equivalence, it sends smooth points to smooth points, because to detect smoothness of a moduli space in a stable point it suffices that $\Ext^1$ has the right dimension. 
\end{Rem}

\subsection{Smoothness}
Now we verify the assumptions (A)--(D) of \cref{sec:ParticularResult} to show that the component $M^{\circ}_{\vv_0}(X,h)$ is a connected component consisting of smooth points.  
When otherwise not specified, $X$ is assumed to be general inside some Noether--Lefschetz locus as in \cref{sec:TransformsOfLagrangians}.
By construction there is a support morphism $f: M^{\circ}_{\vv_0}(X,h) \to \P^5$ sending a sheaf $G$ supported on some $Z \subset X$ to the corresponding $Z$. 
We let 
\[
    M_{\rm{nod}} =\{ G \in M^{\circ}_{\vv_0}(X,h) \mid \text{either } G \text{ is locally free on } Z, \text{ or } Z \text{ is nodal} \},
\]
where by nodal we mean that $Z$ is the surface of lines on a cubic threefold with only a nodal singularity. 
The following is assumption (B). 

\begin{Lem}\label{lem:SmoothnessCod3}
    The locus $M_{\rm{nod}} \subset M^{\circ}_{\vv_0}(X,h)$ is open with complement of codimension $3$, and it is contained in the smooth locus.
\end{Lem}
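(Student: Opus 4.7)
The lemma splits into three claims: openness of $M_{\mathrm{nod}}$, codimension at least three of its complement, and smoothness of $M_{\vv_0}(X,h)$ at every point of $M_{\mathrm{nod}}$.

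For the first two claims I would use the Lagrangian fibration $f : M^{\circ}_{\vv_0}(X,h) \to \P^5$ of \cref{thm:CompactifiedPicSmooth}. Openness follows from writing $M_{\mathrm{nod}}$ as the union of the locally free locus (open by semicontinuity of $\cExt$) with $f^{-1}(U)$, where $U \subset \P^5$ is the open subscheme of hyperplane sections that are either smooth or have a single ordinary node. For the codimension estimate: for general $Y$, the closed stratum $\P^5 \setminus U$ (cubics with two or more singular points, or with a non-nodal singularity) has codimension at least $2$ by a standard dimension count on the discriminant, while by \cref{prop:CompactifiedPicCM} the non-locally-free locus meets every fiber of $f$ in a divisor. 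The complement of $M_{\mathrm{nod}}$ is contained in the non-locally-free locus lying over $\P^5 \setminus U$, hence has codimension at least $2 + 1 = 3$.

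For smoothness the key new ingredient is that $\Ext^1(i_*G, i_*G)$ carries a symplectic form. By \cref{lem:LagrangiansToVb} the derived equivalence $\Phi_{\sP}$ identifies $i_*G$ with a $\mu_{h'}$-stable projectively hyperholomorphic twisted bundle $E = \Phi_{\sP}(i_*G)$, so by \cref{lem:SymplecticityOfExt1} the space $\Ext^1(E,E)$ is symplectic, and this transfers through the equivalence to $\Ext^1(i_*G, i_*G)$. Combined with $\ext^1(i_*G, i_*G) \geq \dim M^{\circ}_{\vv_0}(X,h) = 10$, the tangent dimension is an even integer at least $10$, so to conclude smoothness it suffices to prove the upper bound $\ext^1(i_*G, i_*G) \leq 11$.

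I would verify this bound case by case. When $Z$ is smooth, so $G$ is automatically locally free, Donagi--Markman \cite{donagi96} gives $\ext^1 = 10$ directly. When $G$ is not locally free and $Z$ is nodal, the estimate is exactly \cref{cor:DimensionForNodal}. The remaining case, $G$ locally free on a singular $Z$, is reduced to $\Ext^*(i_*\sO_Z, i_*\sO_Z)$ via \cref{lem:IndependenceOnL} applied to the finite morphism $i$, and then attacked through the spectral sequence $E^{p,q}_2 = H^p(Z, \bwed^q \sS^{\vee}) \Rightarrow \Ext^{p+q}(i_*\sO_Z, i_*\sO_Z)$ of the regular embedding $Z \hookrightarrow X$ (regular since the fibers of $\cF \to \P^5$ are lci), yielding $\ext^1 \leq h^0(Z, \sS^{\vee}|_Z) + h^1(Z, \sO_Z)$.

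The main obstacle is controlling these two summands when $Z$ has worse-than-nodal singularities: both are only upper semicontinuous in the flat family $\cF \to \P^5$, so a priori their sum can exceed $11$. To handle this I would exploit the constancy of $\chi(Z, \sS^{\vee}|_Z)$ and $\chi(Z, \sO_Z)$ along $\cF \to \P^5$ together with Serre duality on the Gorenstein surface $Z$ to bound the sum from above; the evenness coming from the symplectic form then tips the bound down to $10$ and gives smoothness.
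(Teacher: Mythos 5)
Your treatment of openness and of the codimension bound matches the paper's argument (the paper phrases it as: the complement of $M_{\rm{nod}}$ is contained in $f^{-1}(U_1^c)$ with $U_1^c \subset \P^5$ of codimension $2$, and meets each integral fiber of $f$ in a proper closed subset), and your handling of non-locally-free sheaves on nodal surfaces --- the upper bound $\ext^1(i_*G,i_*G)\leq 11$ from \cref{cor:DimensionForNodal} forced down to $10$ by the evenness of $\dim\Ext^1$ obtained by transporting the symplectic form of \cref{lem:SymplecticityOfExt1} through the derived equivalence --- is exactly the paper's proof of that case.

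The gap is in the case of a locally free $G$ on a singular $Z$. Your plan is to bound $h^0(Z,\sS^\vee|_Z)+h^1(Z,\sO_Z)$ from above using constancy of $\chi(Z,\sO_Z)$ and $\chi(Z,\sS^\vee|_Z)$ in the family together with Serre duality, but this runs in the wrong direction: upper semicontinuity gives $h^2(Z,\sO_Z)=h^0(Z,\omega_Z)\geq 10$ at special fibers, so the identity $h^1(Z,\sO_Z)=h^0(Z,\omega_Z)-5$ extracted from $\chi(Z,\sO_Z)=6$ only yields $h^1(Z,\sO_Z)\geq 5$, and the analogous manipulation for $\sS^\vee|_Z$ likewise produces lower bounds. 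No combination of Euler characteristics and Serre duality alone rules out that the sum jumps above $11$. What is actually needed, and what the paper uses, are the two exact inputs $h^1(Z,\sO_Z)=5$ for the Fano surface of any (ADE-singular) hyperplane section, by \cite[Proposition 1.15]{altman77}, and $h^0(Z,N_{Z/X})=5$, i.e.\ the smoothness at $[Z]$ of the ($5$-dimensional) Hilbert scheme of deformations of $Z$. With these, the exact sequence \eqref{eq:FirstTermsNormal} for $G=\sO_Z$ (to which one reduces by \cref{lem:IndependenceOnL}, as you do) gives $\ext^1(i_*\sO_Z,i_*\sO_Z)\leq h^1(Z,\sO_Z)+h^0(Z,N_{Z/X})=10$ directly, so in this case the evenness step is not even required. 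You should either import these two cohomology computations or find a substitute; the Euler-characteristic argument as proposed will not close the case.
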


\begin{proof}
    Let $U_1 \subset \P^5$ be the open locus of hyperplane sections with at worst one node.
    Its complement $U^c_1 \subset \P^5$ has codimension $2$. 
    The complement of $M_{\rm{nod}}$ is contained in $f^{-1}(U^c_1)$, and over every point it is a proper closed subset of the fiber.
    Since $f$ has integral fibers by \cref{rem:FibersOfRelativePic}, we conclude that $M_{\rm{nod}}$ is open and its complement has codimension at least $3$. 
    To prove smoothness it suffices to see that for every $G \in M_{\rm{nod}}$ we have $\ext^1(G,G) = 10$.
    
    \textbf{Locally free sheaves.}
    Assume that $G$ is locally free over its support, which may be singular but it is integral. 
    We may assume that $G = i_*\sO_Z$ by \cref{lem:IndependenceOnL}, where $i: Z \hookrightarrow X$ is the inclusion.
    Since $Z \subset X$ is the zero locus of a regular section of $\sS^{\vee}$, we get
    \[
        \Ext^k(i_*\sO_Z,i_*\sO_Z) =  \bigoplus_{p + q = k} H^p(Z,\sS|_Z^{\vee}),
    \]
    taking the Koszul resolution. 
    Since $\sS|_Z^{\vee} = N_{Z/X}$ this gives
    \[
         \ext^1(i_*\sO_Z,i_*\sO_Z) = h^1(Z,\sO_Z) + h^0(Z,N_{Z/X}) = 10.
    \]
    Both the summands above are $5$, the first one is \cite[Proposition 1.15]{altman77}, and the latter is simply the smoothness (at the point $Z$) of the Hilbert scheme of deformations of $Z$.

    \textbf{Nodal cubics.}
    Now take $G$ any sheaf supported on a nodal $Z$.
    Then by \cref{prop:CompactifiedPicCM} $G$ is the pushforward of a Cohen--Macaulay sheaf on $Z$, and by \cref{cor:ExtOnNodal} we get $\ext^1(G,G) \leq 11$.
    By \cref{cor:PoincaréSendsAtomicToAtomic} and \cref{thm:StabilitySecondExample}, the image $\Phi_{\sP}(G)$ is a slope stable twisted atomic vector bundle. 
    Therefore, \cref{prop:TwistedAtomicHyperholomorphic} and \cref{lem:SymplecticityOfExt1} imply that $\Ext^1(G,G)$ has even dimension, which must be $10$.
\end{proof}

\begin{Cor}\label{cor:ModularLSV}
    Let $X$ be the Fano variety of lines on a general cubic fourfold, $h$ be the Pl\"ucker polarization, and $\vv_0 = v(\sO_Z)$, and let $M^{\circ}_{\vv_0}(X,h) \subset M_{\vv_0}(X,h)$ be as above. 
    Then, $M^{\circ}_{\vv_0}(X,h)$ is a connected component of $M_{\vv_0}(X,h)$.
    Moreover, $M^{\circ}_{\vv_0}(X,h)$ is a smooth HK manifold of type OG10 isomorphic to $J_Y$, and the support morphism $M^{\circ}_{\vv_0}(X,h) \to \P^5$ is a Lagrangian fibration. 
\end{Cor}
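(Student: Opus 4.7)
The plan is to verify the four hypotheses (A)--(D) of \cref{thm:SmoothnessOurCase} for the moduli space $M^{\circ}_{\vv_d}(X',h')$ on the vector-bundle side, apply the theorem, and transfer the conclusions back to $M^{\circ}_{\vv_0}(X,h)$ via the derived equivalence $\Phi_{\sP}$ of \cref{thm:TwistedPoincaré}. Since condition (A) demands projectively hyperholomorphic vector bundles, this strategy cannot be executed directly on the Lagrangian side. I first assume $X$ is very general in some Noether--Lefschetz divisor $\cN(d) \subset \cK^2_6$, so that $\Phi_{\sP}$ and \cref{lem:LagrangiansToVb} are available, and extend to a general cubic at the end by an openness argument.

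Conditions (A) and (C) are essentially free: (A) is exactly \cref{lem:LagrangiansToVb}, while (C) follows because $\Phi_{\sP}$ induces an isomorphism of reduced schemes between $M^{\circ}_{\vv_d}(X',h')$ and $M^{\circ}_{\vv_0}(X,h)$, and the latter is smooth by \cref{lem:RelativePicToLagrangians} and \cref{thm:CompactifiedPicSmooth}. For (B), \cref{lem:SmoothnessCod3} produces an open subset $M_{\rm{nod}} \subset M^{\circ}_{\vv_0}(X,h)$ whose complement has codimension at least $3$ and each of whose points is smooth in $M_{\vv_0}(X,h)$. A Fourier--Mukai equivalence identifies smooth stable points (\cref{rem:SmoothLocusIsPreserved}), so the image of $M_{\rm{nod}}$ in $M^{\circ}_{\vv_d}(X',h')$ provides the required open subset contained in the smooth locus of $M_{\vv_d}(X',h')$, with complement of codimension $\geq 3$.

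The delicate step is (D). I verify it at a point of the form $i_*\sO_Z$ with $Z$ the surface of lines on a smooth cubic threefold, and transfer it through $\Phi_{\sP}$. Since $\dim X = 4$, the local-to-global spectral sequence for $\Ext^{*}(i_*\sO_Z,i_*\sO_Z)$ degenerates by \cite{mladenov19}, yielding a graded-algebra isomorphism $\Ext^{*}(i_*\sO_Z,i_*\sO_Z) \cong H^{*}(Z,\C)$ that intertwines the Yoneda product with cup product. The classical theory of Fano surfaces of lines on cubic threefolds (cf.\ \cite{clemens72}) gives $\bwed^2 H^1(Z,\C) \xrightarrow{\sim} H^2(Z,\C)$ via cup product; transferring along $\Phi_{\sP}$, which as a triangulated equivalence respects composition, yields (D) on a nonempty open subset of $M^{\circ}_{\vv_d}(X',h')$ consisting of smooth points for the full moduli space.

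Applying \cref{thm:SmoothnessOurCase} now shows that $M^{\circ}_{\vv_d}(X',h')$ is a smooth connected component of $M_{\vv_d}(X',h')$, whence the same holds for $M^{\circ}_{\vv_0}(X,h)$ via $\Phi_{\sP}$. Being smooth, this component coincides with its reduction, so \cref{thm:CompactifiedPicSmooth} delivers the isomorphism $M^{\circ}_{\vv_0}(X,h) \cong J_Y$ compatibly with the support morphisms, and the OG10 deformation type together with the Lagrangian fibration to $\P^5$ are inherited from \cite{lsv17}. To drop the assumption $X \in \cN(d)$, I use that smoothness and the connected-component property are open conditions for the relative moduli space over an open subset of $\cK^2_6$; the locus where the conclusion holds is Zariski-open and nonempty (since it contains a very general element of $\cN(d)$), hence contains the general cubic. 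The main technical obstacle is (D): establishing the compatibility of the Yoneda product with cup product through both the spectral-sequence degeneration and the derived equivalence, together with the classical fact that the cohomology of a Fano surface is generated in degree one.
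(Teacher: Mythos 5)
Your proposal follows essentially the same route as the paper's proof: verify hypotheses (A)--(D) of \cref{thm:SmoothnessOurCase} on the vector-bundle side $M^{\circ}_{\vv_d}(X',h')$ (using \cref{lem:SmoothnessCod3}, \cref{rem:SmoothLocusIsPreserved}, \cref{thm:CompactifiedPicSmooth}, and the Mladenov degeneration plus $\bwedge^2 H^1(Z,\C)\cong H^2(Z,\C)$ for (D)), transfer smoothness back through $\Phi_{\sP}$, and then spread out from very general members of $\cN(d)$ to a general cubic via properness of the relative moduli space and an openness/critical-locus argument. The reasoning is correct and matches the paper's argument in all essential steps.
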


\begin{proof}
    First fix a Noether--Lefschetz divisor $\cN(d)$ as in \cref{sec:TheSurfacesOfLines}, and take $X$ very general in it.
    Let $M^{\circ}_{\vv_d}(X',h')$ be the component of \cref{lem:LagrangiansToVb}.
    We first prove that it consists of smooth points, and for this we verify the assumptions (A)--(D) of \cref{sec:ParticularResult}.

    Assumption (A) is satisfied by \cref{thm:StabilitySecondExample}.
    Assumption (B) follows from \cref{lem:SmoothnessCod3} and \cref{rem:SmoothLocusIsPreserved}.
    Assumption (C) follows from the isomorphism $M^{\circ}_{\vv_0}(X,h) \cong M^{\circ}_{\vv_d}(X',h')$ and \cref{thm:CompactifiedPicSmooth}.
    Lastly, for (D) notice that if $L$ is a line bundle on a smooth Lagrangian surface $Z$, then there is an algebra isomorphism
    \[
        \Ext^*(i_*L,i_*L) \cong H^*(Z,\C),
    \]
    by \cite{mladenov19}.
    Since for $Z = F(Y_H)$ the cup product induces an isomorphism $\bwed^2 H^1(Z,\C) \isomor H^2(Z,\C)$, we get (D).
    \cref{thm:SmoothnessOurCase} implies that $M^{\circ}_{\vv_d}(X',h')$ is a smooth connected component of $M_{\vv_d}(X',h')$.
    Therefore by \cref{rem:SmoothLocusIsPreserved}, it follows that $M^{\circ}_{\vv_0}(X,h)$ consists of smooth points, so it is a connected component. 

    It is left to deform to a general cubic.
    Let $\cC$ be the moduli space of smooth cubic fourfolds (it embeds as an open subset in $\cK^2_6$), let $\cX \to \cC$ be the universal variety of lines.
    By abuse of notation denote by $h$ also the relative Pl\"ucker polarization.
    Let $\cM_{\vv_0}(\cX/\cC,h) \to \cC$ be the relative moduli space of $h$-semistable sheaves with Mukai vector $\vv_0 = v(\sO_Z)$, and let $\cM^{\circ}_{\vv_0}(\cX/\cC,h)$ be the closure of the relative Picard.
    The embedding of \cref{lem:RelativePicToLagrangians} works over an open of $\cC$, and the same is true for \cref{rem:FibersOfRelativePic}.
    Thus $\rho: \cM^{\circ}_{\vv_0}(\cX/\cC,h) \to \cC$ has integral fibers.
    
    By the first part of the proof, $\rho$ is dominant, and, being projective, it is also surjective.
    By generic flatness, up to taking an open in $\cC$ we may assume that $\rho$ is flat.
    By the first part of the proof, there is a dense subset (in particular, non-empty) of $\cC$ over which the fibers of $\rho$ are smooth. 
    Therefore, the critical locus $\rm{Crit}(\rho)$ does not dominate $\cC$. 
    Thus over the complement of its image in $\cC$, the family $\rho: \cM^{\circ}_{\vv_0}(\cX/\cC,h) \to \cC$ is smooth and projective.
    If a cubic $Y \in \cC $ is general (so possibly we have to further restrict $\cC$), by \cref{thm:CompactifiedPicSmooth} we have an isomorphism $M^{\circ}_{\vv_0}(X,h) \cong J_Y$.
    The fact that the support morphism is a Lagrangian fibration is obvious. 
\end{proof}

\begin{Rem}\label{rem:ChangingPolarization}
    As explained in \cref{lem:RelativePicToLagrangians}, for a general cubic the polarization plays no role.
    Therefore, we obtain \cref{thm:IntroLagrangians} for any polarization $\lambda$. 
\end{Rem}

\subsubsection{Deformations along twistor lines} 
Before delving into the proof of \cref{thm:IntroBundles}, we recall how to deform twisted hyperholomorphic bundles along a twistor line.
For reference see \cite[Section 6.2]{markman20}, and \cite[Section 4.4]{peregoToma17}.

Let $X$ be a projective HK manifold, and let $\cX \to \P^1$ be the twistor space for $\cX_0 = X$ with respect to the K\"ahler class $\omega_X$. Let $\theta \in \Br(X)$ a Brauer class and let $E$ be a $\theta$-twisted projectively hyperholomorphic bundle on $X$. 
Its endomorphism bundle $\cA = \cEnd(E)$ is an Azumaya algebra representing the twist $\theta$.
In fact, it represents a lift of $\theta$ to $H^2(X,\mu_r)$, where $r$ is the rank of $E$. 

Since $\cA$ is polystable hyperholomorphic it deforms to a bundle $\wt{\cA}$ on $\cX$. 
One can check that the multiplication maps deforms as well, so  $\wt{\cA}_t$ is an Azumaya algebra for every $t \in \P^1$.
The corresponding twist $\theta_t \in H^2(\cX_t,\mu_r)$, is just $\theta$ via the identification $H^2(X,\mu_r) = H^2(\cX_t,\mu_r)$.
Seeing a $\theta$-twisted projectively hyperholomorphic bundle as a module over $\cA$, we can deform it as a $\theta_t$-twisted bundle by deforming the module structure.

We can also keep track of how the twisted Mukai vector changes along this deformation. 
Via the lift to $H^2(X,\mu_r)$, we choose a B-field $B \in H^2(X,\Q)$ such that $rB \in H^2(X,\Z)$. 
Explicitly, if $\theta^r = 1$ at the level of cocycles, we can find a $\theta$-twisted topological line bundle $L$ such that $L^r$ is untwisted on the nose, and $B = \frac{c_1(L^r)}{r}$ is a B-field for $\theta$.
Under parallel transport $H^2(X,\Q) = H^2(\cX_t,\Q)$, we obtain a B-field $B_t$ for $\theta_t$ for every $t \in \P^1$. 
By definition, for every $\theta$-twisted hyperholomorphic bundle, we have
\[
    v^B(E) = v^{B_t}(E_t),
\]
under the identification $H^*(X,\Q) = H^*(\cX_t,\Q)$.

\begin{Rem}\label{rem:MukaiVectorOfUntwist}
    By definition of B-field, if there is a $t \in \P^1$ for which $B_t$ is algebraic, then $\theta_t$ is trivial.
    Equivalently, if we choose a topologically trivial line bundle $L$ on $X$ as above, then it becomes algebraic on $\cX_t$. 
    This means that we can use it to canonically untwist $\theta_t$-twisted bundles on $\cX_t$.
    The untwisted bundle corresponding to $E_t$ is $E_t \otimes L^{-1}$, and, by definition, its Mukai vector is
    \[
         v(E_t \otimes L^{-1}) = v^{B_t}(E_t) \in H^*(\cX_t,\Q).
    \]
    Therefore, over $t \in \P^1$, we obtain a canonically\footnote{Depending only on the choice of a $\theta$-twisted topological line bundle $L$ on $X$.} untwisted bundle $E_t \otimes L^{-1}$ with Mukai vector given by the parallel transport of $v^B(E)$. 
\end{Rem}

\subsubsection{Proof of the \cref{thm:IntroBundles}}
Now take $X$ very general in some Noether--Lefschetz locus $\cN(d)$ as in \cref{sec:TheSurfacesOfLines}. 
Consider the corresponding Brauer class $\theta_d \in \Br(X')$, and choose a B-field $B$ representing $\theta_d$, as in the discussion above.

Up to tensoring every $E \in M^{\circ}_{\vv_d}(X',h')$ with a sufficiently ample line bundle on $X'$, we may assume that $c^{B}_1(E)$ has positive square $q(c^{B}_1(E)) > 0$. 
This operation does not change the $\mu$-stability (if we change the polarization accordingly), nor the atomicity of $E$. 
It does change their Mukai vector, which, by abuse of notation, we still denote by $\vv_d$.

By the twisted version of \cite[Lemma 5.4]{beckmann22} (see also the proof of \cref{prop:TwistedAtomicHyperholomorphic}), the extended Mukai vector of $E \in M^{\circ}_{\vv_d}(X',h')$ can be written as 
\[
    \tilde{v}^{B}(E) = 5d^2 \alpha + m_dc_d + s_d\beta \in \wt{H}(X',\Q)
\]
for some $s_d \in \Q$, where $c^{B}_1(E) = m_dc_d$ with $m_d \in \Z$ and $c_d \in H^2(X,\Z)$ primitive.
Fix a marking $\eta_{X'}: H^2(X',\Z) \cong \Lambda_{\mathrm{K3}^{[n]}}$, and let $\cK_d$ be an irreducible component of the moduli space of polarized HK manifolds of polarization type given by $\eta_{X'}(c_d)$.

\begin{Rem}
For $(W,h_d) \in \cK_d$, if a parallel transport operator $H^*(X',\Q) \cong H^*(W,\Q)$ sends $c_d$ to $h_d$, then the extended Mukai vector $\tilde{v}^{B}(E)$ deforms to 
\[
    \tilde{v}_d = 5d^2 \alpha + m_dh_d + s_d\beta \in \wt{H}(W,\Q).
\]
Therefore, if $T: \Sym^2 \wt{H}(X,\Q) \to H^*(X,\Q)$ denotes the LLV-equivariant projection, the Mukai vector $\vv_d$ is deformed to 
\begin{equation}\label{eq:DefOfMukaiVector}
    \vv_d = \frac{1}{10d^2}T(\tilde{v}_d^{(2)}) = 5d^2 +  m_dh_d + \dots \in H^*(W,\Q),
\end{equation}
see \cite[Corollary 3.10]{bottini22} for the complete form.
\end{Rem}

\begin{Thm}\label{thm:ModuliOfVB}
    If $(W,h_d) \in \cK_d$ is very general, the moduli space $M_{\vv_d}(W,h_d)$ contains a connected component $M^{\circ}_{\vv_d}(W,h_d)$ satisfying the following.
    \begin{enumerate}
        \item $M^{\circ}_{\vv_d}(W,h_d)$ is a smooth projective HK manifold of type OG10. 
        \item $M^{\circ}_{\vv_d}(W,h_d)$ parametrizes only $\mu_{h_d}$-stable atomic bundles.
    \end{enumerate}
\end{Thm}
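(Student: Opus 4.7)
The plan is to start from the smooth OG10 connected component $M^{\circ}_{\vv_d}(X',h')$---already established in the proof of \cref{cor:ModularLSV} by verifying assumptions (A)--(D) of \cref{sec:ParticularResult}---and deform it along twistor paths in $\cK_d$ to reach an arbitrary very general $(W,h_d)\in\cK_d$. The key mechanism, recalled in the preceding discussion, is that every $E\in M^{\circ}_{\vv_d}(X',h')$ is a projectively hyperholomorphic $\theta_d$-twisted bundle, so the Azumaya algebra $\cEnd(E)$ is hyperholomorphic and thus deforms along any twistor line together with its module structure.

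Given a K\"ahler class $\omega$ on $X'$ with $q(\omega,c_d)=0$, the associated twistor family $\cX'_\omega\to\P^1$ carries a deformation of the Brauer class $\theta_d$ and, relative to it, a family deformation of the bundles in $M^{\circ}_{\vv_d}(X',h')$. This produces a relative moduli space $\cM_\omega\to\P^1$ with central fiber $M^{\circ}_{\vv_d}(X',h')$. On each fiber $\cX'_t$ the deformed bundles remain projectively hyperholomorphic; they remain atomic because the extended Mukai vector is preserved by parallel transport and the LLV isomorphism is deformation invariant (cf.\ \cref{prop:TwistedEquivalences}); and for $\omega$ close to $h'$, they remain slope stable with respect to a suitable deformation of the polarization. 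Rerunning the verification of (A)--(D)---with (B) and (C) holding by openness of the corresponding conditions and (D) holding in families---\cref{thm:SmoothnessOurCase} then shows that every fiber of $\cM_\omega\to\P^1$ is a smooth connected component of a moduli space of (twisted) sheaves, deformation equivalent to OG10. Since $\cK_d$ is connected and any two of its points are joined by a chain of twistor paths keeping $c_d$ of Hodge type, iterating this procedure reaches any $(W,h_d)\in\cK_d$.

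To land on an \emph{untwisted} moduli space at $(W,h_d)$, we exploit \cref{rem:MukaiVectorOfUntwist}: having chosen $B$ with $5d^2B\in H^2(X',\Z)$, along a twistor deformation $B$ becomes algebraic at carefully chosen points, trivializing the deformed Brauer class. For very general $(W,h_d)\in\cK_d$ one has $\Pic(W)=\Z h_d$, and by selecting the twistor path appropriately one can arrange $B$ to become algebraic at the endpoint; the deformed bundles then canonically untwist to honest vector bundles of rank $5d^2$ with Mukai vector $\vv_d$ as in \eqref{eq:DefOfMukaiVector}. The resulting $M^{\circ}_{\vv_d}(W,h_d)$ is smooth projective, deformation equivalent to $M^{\circ}_{\vv_d}(X',h')$, hence of type OG10, and by construction parametrizes only $\mu_{h_d}$-stable atomic bundles. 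The main obstacle is the lack of \emph{a priori} control over connectivity: different twistor paths reaching the same $(W,h_d)$ may in principle land in distinct smooth components of $M_{\vv_d}(W,h_d)$, which is precisely why the family in \cref{thm:IntroBundles} is defined via Stein factorization over $\cK_d$.
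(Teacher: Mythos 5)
Your high-level strategy---transport the smooth twisted component $M^{\circ}_{\vv_d}(X',h')$ along a twistor path and untwist at a point of $\cK_d$---is the paper's. But the step you use to conclude smoothness of the deformed fibers is the one that fails. You propose to ``rerun the verification of (A)--(D)'' on each twistor fiber, with (B) and (C) ``holding by openness.'' Assumptions (B) and (C) were established using the specific geometry of the Lagrangian side (the isomorphism with $J_Y$, the codimension-$3$ count via nodal cubic threefolds); they are not open conditions that propagate along a twistor line, and there is no Lagrangian fibration on the intermediate fibers against which to re-verify them. Moreover, \cref{thm:SmoothnessOurCase} presupposes a projective Gieseker moduli space on a projective HK fourfold, whereas a general twistor fiber is non-projective, so the criterion is not even applicable there. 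The paper's actual mechanism is different and essential: Verbitsky's theorem that the cohomology of a hyperholomorphic bundle is constant along twistor deformations gives $\ext^1(E_t,E_t)=\ext^1(E,E)=10$ for every deformed bundle, so the image of $M^{\circ}_{\vv_d}(X',h')\times\P^1$ inside the Perego--Toma relative moduli space (built from the deformed Brauer--Severi variety $\cP\to\cX$) is automatically a smooth compact component; the Kobayashi--Hitchin correspondence then supplies K\"ahler metrics on the fibers, so each is a K\"ahler deformation of OG10.

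The path you choose is also not the paper's, and creates problems. You restrict to twistor lines with $q(\omega,c_d)=0$ so that $c_d$ stays of Hodge type, and assert that $\cK_d$ is connected by chains of such lines. The paper instead takes a \emph{general} twistor path in the unpolarized marked moduli space (Verbitsky's connectivity theorem), only requiring the markings to identify $c_d$ with $h_d$ at the two endpoints and the first and last twistor lines to be induced by K\"ahler classes compatible with the given polarizations; the intermediate fibers are then generic, which is what keeps slope stability under control. Keeping $c_d$ algebraic throughout would require a polarized twistor-connectivity statement you do not justify, and would force the intermediate fibers to carry a nontrivial N\'eron--Severi group, reopening the wall-crossing problem that your ``for $\omega$ close to $h'$'' clause only addresses locally. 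Finally, the untwisting at $(W,h_d)$ is automatic rather than a matter of ``selecting the path appropriately'': since $E_W$ is $\theta_W$-twisted, the degree-two part of $\exp(B_W)\,v^{B_W}(E_W)=\exp(B_W)\vv_d$ must be of type $(1,1)$, and as $m_dh_d$ is algebraic this forces $5d^2B_W$ to be of type $(1,1)$, hence $\theta_W$ trivial, exactly as in \cref{rem:MukaiVectorOfUntwist}.
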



\begin{proof}
    Start with $(X',h')$ and the component $M^{\circ}_{\vv_d}(X',h')$ parametrizing $\theta_d$-twisted atomic $\mu_{h'}$- bundles. 
    Let $a_d \coloneqq a(\cEnd(E))$ for some $E \in M^{\circ}_{\vv_d}(X',h')$, the constant used in \cref{sec:SlopeStability}, so that $h'$ is $a_d$-suitable. 
    Since the walls in the K\"ahler cone are algebraic, we can find a K\"ahler class $\omega_{X'}$ such that:
    \begin{itemize}
        \item Every $E \in M^{\circ}_{\vv_d}(X',h')$ is $\mu_{\omega_{X'}}$-stable. 
        \item The corresponding twistor line is general.
    \end{itemize}
    Unless $h_d$ is on an $a_d$-wall---a situation we avoid by assuming $(W,h_d)$ very general---we can do the same for $(W,h_d)$.
    Thus we obtain a K\"ahler class $\omega_W$, whose twistor line is general, and such that the $\mu_{\omega_W}$-stability is equivalent to the $\mu_{h_d}$-stability.
    Choose a marking $\eta_{W}$ so that $(X',\eta_{X'})$ and $(W,\eta_{W})$ are in the same connected component of the moduli space of marked HK's, and such that $\eta_{X'}(c_d) = \eta_W(h_d)$. 
    By applying \cite[Theorem 3.2 and 5.2e]{verbitsky96a} to general points in the twistor lines given by $\omega_{X'}$ and $\omega_W$ respectively, we find a general twistor path connecting $(X',\eta_{X'})$ and $(W,\eta_W)$ whose first and last twistor lines are given by $\omega_{X'}$ and $\omega_W$. 

    Now we explain how the component $M^{\circ}_{\vv_d}(X',h')$ deforms along the twistor path. 
    As usual, it is done by induction on the number of twistor lines, so we focus on the first one.
    Let $p: \cX \to \P^1$ be the holomorphic projection from the twistor space, and let $\tilde{\omega}$ be the deformation of $\omega_{X'}$. 
    Choose any bundle $F \in M^{\circ}_{\vv_d}(X',h') $ and deform it as explained above.
    The projectivization of the deformation gives a relative Brauer--Severi variety $\cP \to \cX$, such that for every $t \in \P^1$ we have $\cP_t = \P(F_t)$. 
    Then, the relative moduli space ${\cM}_{\vv_d}({\cX/\P^1},\tilde{\omega}) \to \P^1$ of twisted semistable sheaves on $\cX \to \P^1$ is constructed (as an complex space) via a relative moduli space of stable bundles on $\cP$, see \cite[Sections 4.3.1 and 4.3.2]{peregoToma17} for more details.\footnote{They work over K3 surfaces, but they only use it to prove smoothness.}
    
    If, for a bundle $E \in M^{\circ}_{\vv_d}(X',h')$, we denote by $E_t$ its deformation to $X_t = \cX|_t$, we get a differentiable embedding 
    \begin{equation}\label{eq:DeformationAlongTwistor}
        M^{\circ}_{\vv_d}(X',h') \times \P^1 \hookrightarrow {\cM}_{\vv_d}({\cX/\P^1},\tilde{\omega}), \ (E,t) \to E_t.
    \end{equation}
    The cohomology of hyperholomorphic bundles remains constant along a twistor deformation by \cite[Corollary 8.1]{verbitsky96b}, thus $\ext^1(E,E) = \ext^1(E_t,E_t)$ for every $E \in M^{\circ}_{\vv_d}(X',h')$.
    In particular, for every $t$, we obtain a smooth compact component  $M^{\circ}_{\vv_d}(X_t,\tilde{\omega}|_{X_t}) \subset \cM_{\vv_d}({\cX/\P^1},\tilde{\omega})|_t$. 
    Finally, as every fiber $ M^{\circ}_{\vv_d}(X_t,\tilde{\omega}|_{X_t})$ is a smooth moduli space parametrizing slope stable vector bundles on $\cP_t$, it is equipped with a natural K\"ahler metric thanks to the Kobayashi--Hitchin correspondence and gauge theory \cite[Theorem 7.6.36]{kobayashi87}.
    We conclude that the image of the embedding in \cref{eq:DeformationAlongTwistor} is a smooth proper family $ {\cM}^{\circ}_{\vv_d}({\cX/\P^1},\tilde{\omega}) \to \P^1$ with K\"ahler fibers. 
    Moreover, every fiber is a K\"ahler deformation a HK manifold of type OG10, so it is itself a HK manifold of type OG10.
    
    Continuing along the twistor path, we end up with a component $M^{\circ}_{\vv_d}(W,h_d)$, which is a HK manifold of type OG10.
    By the discussion above, it parametrizes untwisted atomic bundles, with Mukai vector $\vv_d$ by \cref{rem:MukaiVectorOfUntwist}.
    They are $\mu_{\omega_W}$-stable, hence $\mu_{h_d}$-stable by our choice of $\omega_W$. 
\end{proof}

\begin{Cor}\label{cor:FamiliesOfOG10}
    There exists an irreducible quasi projective variety $\cS_d$ with a dominant quasi-finite morphism $\cS_d \to \cK_d$, and a smooth projective morphism $\cM_d \to \cS_d$ whose fibers are HK manifolds of type OG10. 
    Moreover, if $s \in \cS_d$ is general and $(W,h_d)$ is its image, the fiber $\cM_d|_{s}$ parametrizes $\mu_{h_d}$-stable atomic bundles on $W$. 
\end{Cor}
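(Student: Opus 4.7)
The strategy is to globalize \cref{thm:ModuliOfVB} by forming a relative moduli space over $\cK_d$, cutting out the open locus of $\mu_{h_d}$-stable atomic vector bundles with the expected number of infinitesimal deformations, and then applying Stein factorization to separate the connected components in each fiber, since $M_{\vv_d}(W, h_d)$ need not be connected a priori.

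First I would pass to an étale cover $\cK'_d \to \cK_d$ on which a universal polarized family $(\cW', h_d)$ exists, and form the relative moduli space
\[
\rho : \cM_{\vv_d}(\cW'/\cK'_d, h_d) \to \cK'_d
\]
of Gieseker $h_d$-semistable sheaves with Mukai vector $\vv_d$; it is projective over $\cK'_d$. Define
\[
\cM^\circ \;\subset\; \cM_{\vv_d}(\cW'/\cK'_d, h_d)
\]
to be the open subscheme parametrizing $\mu_{h_d}$-stable locally free sheaves $E$ with $\ext^1(E,E) = 22$. Openness follows from openness of $\mu$-stability and local freeness in flat families, combined with upper semicontinuity of $\ext^1$ and the inequality $\ext^1(E,E) \ge \dim_{[E]} \cM$ at stable points; on $\cM^\circ$ the restricted morphism $\rho$ is smooth because constancy of $\ext^1$ at a stable point makes the relative Kuranishi map vanish. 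Let $\cN \subset \cM^\circ$ be an irreducible component dominating $\cK'_d$; by \cref{thm:ModuliOfVB} at least one exists, and for very general $s \in \cK'_d$ the fiber $\cN_s$ is a connected component of $M_{\vv_d}(\cW'_s, h_d)$ isomorphic to $M^\circ_{\vv_d}(\cW'_s, h_d)$.

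Next I would restrict to the dense open $\cU \subset \cK'_d$ over which $\cN \to \cK'_d$ is proper, and take the Stein factorization
\[
\cN|_\cU \xrightarrow{\;\alpha\;} \cS'_d \xrightarrow{\;\beta\;} \cU.
\]
Then $\beta$ is finite and dominant, hence quasi-finite, while $\alpha$ is smooth projective with geometrically connected fibers. Each fiber of $\alpha$ is a smooth K\"ahler deformation of an $M^\circ_{\vv_d}(W, h_d)$ from \cref{thm:ModuliOfVB}, hence a HK manifold of type OG10. The group of deck transformations of $\cK'_d \to \cK_d$ permutes the irreducible components of $\cM^\circ$ and descends to an action on $\cS'_d$; passing to the quotient, and if necessary restricting to one irreducible component, yields $\cS_d \to \cK_d$ quasi-finite together with $\cM_d \to \cS_d$ smooth projective, with the atomicity and stability of fiber sheaves inherited from $\cM^\circ$.

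The main obstacle will be the properness of $\cN \to \cK'_d$ over a dense open, i.e.\ verifying that a specialization of $\mu_{h_d}$-stable atomic locally free sheaves within $\cN$ stays inside $\cM^\circ$ rather than escaping by becoming non-locally-free, strictly semistable, or non-atomic. Atomicity is a closed condition, since it is defined by the vanishing of LLV operators on the family of Mukai vectors, so the limit remains atomic. Local freeness and stability are open among sheaves with constant $\ext^1$, and the cut $\ext^1 = 22$ combined with smoothness of $\rho$ on $\cM^\circ$ should force the limit sheaf to still lie in $\cM^\circ$. Once this properness is in hand, the OG10 deformation class propagates along the connected smooth family $\alpha$, and the stable atomic description of the fibers is automatic from the defining conditions of $\cM^\circ$.
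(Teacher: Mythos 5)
Your overall strategy --- relative moduli space over $\cK_d$, Stein factorization, then restriction to a good open locus --- is the same as the paper's, but the order in which you perform these steps creates a gap that you flag yourself ("the main obstacle") and do not close. You first cut out the open subscheme $\cM^{\circ}$ of $\mu_{h_d}$-stable locally free sheaves with constant $\ext^1$, and then need $\cN \to \cK'_d$ to be proper over a dense open before you can Stein-factorize. Your proposed justification --- that local freeness and stability are open and that the constancy of $\ext^1$ "should force the limit sheaf to still lie in $\cM^{\circ}$" --- is not an argument: openness of a locus never prevents a flat limit from escaping it, and $\ext^1$ is only upper semicontinuous, so it can jump up at the limit point, which is exactly the degeneration you need to rule out. (Also, the expected value is $\ext^1(E,E)=10$, the dimension of OG10, not $22$.)

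The paper avoids this entirely by reversing the order: it applies Stein factorization to the \emph{full} relative moduli space $\cM_{\vv_d}(\cX/\cK_d,h_d) \to \cK_d$, which is projective over $\cK_d$, so no properness needs to be verified. It then picks the irreducible component of the Stein factor $\cS_d$ that dominates $\cK_d$ and contains a point $s$ with $\rho^{-1}(s) = M^{\circ}_{\vv_d}(W,h_d)$ as in \cref{thm:ModuliOfVB}, and only afterwards discards bad loci: since $\mathrm{Crit}(\rho)$, the non-locally-free locus, and the non-slope-stable locus are closed in a scheme projective over $\cS_d$ and miss the entire fiber over $s$, their images in $\cS_d$ are proper closed subsets, and removing them yields the smooth projective family with the stated fibers. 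If you want to keep your ordering, you would have to prove the same non-domination statement for the closed complement of $\cM^{\circ}$, at which point you have reproduced the paper's argument; as written, the properness step is a genuine gap.
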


\begin{proof}
Let $\cX_d \to \cK_d$ a universal family.
By abuse of notation, denote by $h_d$ also the relative polarization. 
Let  $ \cM_{\vv_d}(\cX/{\cK_d},h_d) \to \cK_d$ be the relative moduli space Gieseker semistable sheaves with Mukai vector $\vv_d$.
It is projective over $\cK_d$, and by \cref{thm:ModuliOfVB} dominant, so it is surjective.  
By Stein factorization we factor it as a projective morphism $\rho: \cM_{\vv_d}(\cX/{\cK_d},h_d) \to \cS_d$ with connected fibers, and a finite morphism $\cS_d \to \cK_d$.

We replace $\cS_d$ by one irreducible component which dominates $\cK_d$, and contains a point $s \in \cS_d$ such that $\rho^{-1}(s) = M^{\circ}_{\vv_d}(W,h_d)$ as in \cref{thm:ModuliOfVB}.
Such a component must exist, because the locus of $(W,h_d)$ as in \cref{thm:ModuliOfVB} is dense.
By generic flatness, up to taking an open in $\cS_d$ we may assume that $\rho$ is flat and surjective. 
Since $\rho^{-1}(s)$ is smooth, the critical locus $\rm{Crit}(\rho)$ does not dominate $\cS_d$. 
Thus, up to taking an even smaller open in $\cS_d$, the morphism $\rho: \cM_{\vv_d}(\cX/{\cK_d},h_d) \to \cS_d$ has smooth fibers, which is the desired polarized family $\cM_d \to \cS_d$.
The last part of the statement follows in the same way, using the openness of the locally free and of the slope stable loci. 
\end{proof}

\appendix
\section{Functoriality of the compactified Picard}\label{sec:functorialityPicard}
Here we collect some technical results we use in \cref{sec:compactifiedPic} to construct morphisms between relative compactified Picard schemes. 
As in \cref{sec:compactifiedPic}, we follow \cite{altman79,altman80}. 

\begin{Prop}\label{prop:PullbackOfCM}
    Let $X \to S$ and $Y \to S$ be two flat morphisms between Noetherian schemes of finite type, with geometrically integral Gorenstein fibers.
    Let $f : X \to  Y$ be a morphism over $S$, such that for every closed point $s \in S$ the induced morphism $f_s : X_s \to  Y_s$ has finite tor dimension. 
    Then there is a morphism of schemes 
    \[
    f^* : \Pic(Y/S)^- \to \Pic(X/S)^-.
    \]
\end{Prop}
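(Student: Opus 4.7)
The plan is to construct $f^*$ at the level of families. Given an $S$-scheme $T$ and a $T$-flat family $G$ of Cohen--Macaulay rank-one sheaves on $Y_T \coloneqq Y \times_S T$, I would set $f^*G \coloneqq \mathsf{L} f_T^* G$, where $f_T \colon X_T \to Y_T$ is the base change of $f$. The content of the lemma is that $\mathsf{L}f_T^*G$ is in fact concentrated in degree zero, is $T$-flat, and has $T$-fibers which are Cohen--Macaulay of rank one on the corresponding fibers of $X/S$. This exhibits a natural transformation $\Pic(Y/S)^- \to \Pic(X/S)^-$ and yields the desired morphism by representability.

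Everything reduces to a fiberwise statement over $S$. For $t \in T$ lying over $s \in S$, $T$-flatness of $G$ and derived base change give
\[
(\mathsf{L} f_T^* G) \otimes^{\mathsf{L}}_{\mathcal{O}_{X_T}} \mathcal{O}_{X_t} \;\cong\; \mathsf{L} f_t^* G_t,
\]
which further agrees with the pullback of $\mathsf{L} f_s^* G_s$ along the residue field extension $k(s) \subseteq k(t)$. The hypothesis that $f_s$ has finite tor dimension then ensures the right-hand side is a bounded complex. The heart of the proof is to show that $\mathsf{L} f_s^* G_s$ is concentrated in degree zero and underlies a Cohen--Macaulay rank-one sheaf on $X_s$. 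Granted this, the $T$-flatness of $\mathsf{L} f_T^* G$ follows from the local criterion of flatness applied to complexes of bounded Tor-amplitude, and the remaining properties of $f^*G$ are immediate.

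The main obstacle is this fiberwise Cohen--Macaulay claim. I would argue locally: since $Y_s$ is integral Gorenstein, the CM rank-one sheaf $G_s$ is reflexive, and its derived dual $G_s^{\vee}$ is a single coherent sheaf placed in a definite degree. Finite tor dimension of $f_s$ combined with Gorensteinness of both $X_s$ and $Y_s$ allows one to commute derived pullback with derived duality: the derived dual of $\mathsf{L} f_s^* G_s$ is identified, up to a dimension shift coming from the relative dualizing complexes, with $\mathsf{L} f_s^*(G_s^{\vee})$, which is likewise a single sheaf. Being sandwiched between single-sheaf complexes of matching degrees, $\mathsf{L} f_s^* G_s$ must itself be concentrated in degree zero with Cohen--Macaulay underlying sheaf. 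The rank-one property then follows by checking on the dense open $f_s^{-1}(Y_s^{\mathrm{sm}})$, where $f_s$ is fiberwise flat and $G_s$ is a line bundle, combined with the integrality of $X_s$.
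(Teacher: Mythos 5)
Your proposal follows essentially the same route as the paper: reduce to showing that the (derived) pullback of a $T$-flat family of Cohen--Macaulay rank-one sheaves is again such a family, check everything fiberwise via derived base change along the commutative square of fibers, and conclude $T$-flatness from the criterion that a complex whose derived restriction to every fiber is a sheaf is itself a flat sheaf (\cite[Lemma 3.31]{huybrechts2006}). The one place where you diverge is the fiberwise statement that $\mathsf{L}f_s^*G_s$ is a Cohen--Macaulay sheaf concentrated in degree zero: the paper does not prove this but quotes it directly as \cite[Lemma 2.3]{arinkin13}, whereas you sketch a duality argument. Your sketch is morally the proof of that lemma, but it compresses the two steps that carry all the weight. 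First, $G_s$ is Cohen--Macaulay but in general not perfect (think of the pushforward of a line bundle from the normalization of a singular curve), so the commutation of derived pullback with derived duality is not formal; it has to be extracted from a possibly infinite resolution by finite free modules together with the finite Tor-dimension hypothesis, which is what keeps the dual side under control. Second, ``sandwiched between single-sheaf complexes of matching degrees, hence a sheaf'' is itself a lemma: from $\mathsf{L}f_s^*G_s$ concentrated in degrees $\le 0$ and its derived dual concentrated in the expected degrees, one deduces concentration in degree $0$ only via a spectral-sequence (or depth/local-duality) argument, using that the support of the pullback has the expected codimension. Both points are true and are exactly what \cite[Lemma 2.3]{arinkin13} packages; if you intend to reprove that lemma rather than cite it, those are the places to spell out.
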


\begin{proof}
    It suffices to see that $f^*$ is well defined at the level of the moduli functors. 
    Let $T \to S$ be any morphism and consider the base change $ f_T : X_T \to Y_T.$
    Both $X_T$ and $Y_T$ are flat over $T$, and for every closed point $t \in T$ the restriction $f_t : X_t \to Y_t$ has finite tor dimension. 
    Let $\cU \in \Coh(Y_T)$ be a $T$-flat family of Cohen--Macaulay sheaves of rank 1.
    We want to prove that $f_T^*(\cU)$ (here, the pullback is not derived) is again $T$-flat with Cohen--Macaulay fibers of rank 1.
    
    We start by proving flatness. 
    Let $t \in T$ be a closed point and consider the square 
    \[\begin{tikzcd}
	   {X_t} & {Y_t} \\
	   X & Y
	   \arrow["{\mu_t}", hook, from=1-1, to=2-1]
	   \arrow["{\nu_t}", hook, from=1-2, to=2-2]
	   \arrow["{f_t}", from=1-1, to=1-2]
	   \arrow["f", from=2-1, to=2-2]
    \end{tikzcd}\]
    By commutativity we have 
    \begin{equation}\label{eq:CommutativityPullbacks}
        \sfL\mu_t^* \circ \sfL f^* = \sfL f_t^* \circ \sfL\nu_t^*.
    \end{equation}
    We examine the composition on the right applied to $\cU$. 
    Since $\cU$ is flat over $T$, we have that $L\nu_t^*(\cU) = \nu_t^*(\cU) = \cU_t$ is a sheaf. 
    Since the restriction $\cU_t$ is Cohen--Macaulay and $f_t$ has finite tor dimension, the pullback $\sfL f_t^*(\cU) = f_t^*(\cU)$ is a sheaf by \cite[Lemma 2.3]{arinkin13}.
    Therefore \eqref{eq:CommutativityPullbacks} implies that $  \sfL \mu_t^* ( \sfL f^*(\cU)) $ is a sheaf for every closed point $t \in T$. 
    We conclude by \cite[Lemma 3.31]{huybrechts2006} that $\sfL f^*(\cU)$ is a $T$-flat sheaf. 
    Notice that, although in the assumptions of  \cite[Lemma 3.31]{huybrechts2006} it is required that $\sfL f^*(\cU) \in \Db(X)$, for the proof it suffices that it is a bounded above complex, as in this case.

    It remains to see that $f^*(\cU)$ is Cohen--Macaulay of rank 1 on every fiber. 
    This follows again by \cite[Lemma 2.3]{arinkin13}, as we are assuming that every fiber is Gorenstein. 
\end{proof}

Recall that $f : X \to Y$ is called local complete intersection if it locally factors as the composition of a smooth morphism and a regular embedding, see \cite[Section 37.60]{stacks}.\footnote{All our schemes are Noetherian, so Koszul-regular regular sequences are regular.}
It follows from the definition that such a morphism has finite tor dimension.

\begin{Lem}\label{lem:LciOnFibers}
Let $S$ be a smooth variety.
Let $X \to S$ and $Y \to S$ two Noetherian schemes of finite type, flat over $S$.
Let $f :X \to Y$ a local complete intersection morphism of $S$-schemes. 
Then, for every $s \in S$, 
\[
f_s : X_s \to Y_s
\]
is a local complete intersection morphism.  
In particular, it has finite tor dimension. 
\end{Lem}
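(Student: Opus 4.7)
The plan is to argue that the local complete intersection property survives base change to a fiber by using the flatness of both $X$ and $Y$ over $S$, together with the regularity of the local rings of $S$. The starting point is the local factorization available by definition: Zariski locally on $X$, write $f = p \circ i$ where $i : X \hookrightarrow Z$ is a regular closed immersion and $p : Z \to Y$ is smooth. Base-changing to the fiber over $s$ gives $f_s = p_s \circ i_s$; smoothness is preserved under arbitrary base change, so $p_s : Z_s \to Y_s$ is smooth. Thus the whole content of the lemma reduces to showing that $i_s : X_s \hookrightarrow Z_s$ is again a regular closed immersion.

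To handle this, fix a point $z \in X$ lying over $s \in S$, and set $A = \mathcal{O}_{S,s}$, $B = \mathcal{O}_{Z,z}$. Since $S$ is smooth at $s$, the ring $A$ is regular; choose a regular system of parameters $t_1,\dots,t_d$ for $\mathfrak{m}_A$. The ideal of $i$ in $B$ is, after shrinking, generated by a regular sequence $f_1,\dots,f_n \in B$. Flatness of $Z \to S$ makes $t_1,\dots,t_d$ into a regular sequence in $B$, and flatness of $X \to S$ makes $t_1,\dots,t_d$ regular in $B/(f_1,\dots,f_n)$. Combining these two statements gives that $f_1,\dots,f_n,t_1,\dots,t_d$ is a regular sequence in $B$.

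The key (and standard) step is the permutation argument: in a Noetherian local ring any regular sequence contained in the maximal ideal can be reordered, so $t_1,\dots,t_d,f_1,\dots,f_n$ is also regular in $B$. Reading off the last $n$ terms shows that the images $\bar f_1,\dots,\bar f_n$ form a regular sequence in $B/(t_1,\dots,t_d) = \mathcal{O}_{Z_s,z}$. Meanwhile, flatness of $\mathcal{O}_X$ over $A$ gives the short exact sequence
\[
0 \to I/\mathfrak{m}_sI \to \mathcal{O}_{Z_s} \to \mathcal{O}_{X_s} \to 0,
\]
which identifies the ideal of $X_s$ in $Z_s$ with the ideal generated by the $\bar f_i$. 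Hence $i_s$ is a regular closed immersion, and $f_s = p_s \circ i_s$ is lci. The final assertion on tor dimension is automatic: an lci morphism between Noetherian schemes has finite tor-dimension because, locally, the Koszul complex on $\bar f_1,\dots,\bar f_n$ provides a finite flat resolution of $\mathcal{O}_{X_s}$ over $\mathcal{O}_{Z_s}$, and $p_s$ is flat. The only delicate point is the permutation lemma for regular sequences, but this is classical (Matsumura, \emph{Commutative Ring Theory}, Thm.~16.3), so no real obstacle arises.
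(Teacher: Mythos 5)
Your proof is correct and follows essentially the same route as the paper's: both reduce to the regular‑immersion case, use flatness over the regular base to concatenate the defining regular sequence with the pullback of a regular system of parameters at $s$, invoke the permutability of regular sequences in a Noetherian local ring, and then read off that the images of the original equations remain a regular sequence on the fiber. The only cosmetic difference is that the paper phrases the concatenation via the chain of regular embeddings $X_s\hookrightarrow X\hookrightarrow Y$ rather than in terms of a regular system of parameters, which changes nothing of substance.
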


\begin{proof}
The question is local on the source and target, hence we can assume that $f : X \hookrightarrow Y$ is a regular embedding, after possibly renaming $Y$. 
Taking the fiber over a point $s \in S$, we obtain the Cartesian diagram
\[\begin{tikzcd}
	{X_s} & {Y_s} \\
	X & Y
	\arrow[hook, from=1-1, to=2-1]
	\arrow["f", hook, from=2-1, to=2-2]
	\arrow["f_s", hook, from=1-1, to=1-2]
	\arrow[hook, from=1-2, to=2-2]
\end{tikzcd}\]
The vertical arrows are regular embeddings, because $X \to S$ and $Y \to S$ are flat and $S$ is regular.
The bottom arrow is a regular embedding by assumption. 

We conclude since $X_s$ is the fiber product of two regular embeddings and it has the right dimension.
More precisely, let $s \in S$ and $x \in X_s \subset Y_s$, and say that $R \coloneqq \sO_{Y,x}$. 
Let $(g_1,\dots, g_n) \subset R$ and $(h_1,\dots,h_m) \subset R$ be the regular sequences cutting $X$ and $Y_s$, respectively.
The key point is that $X_s \subset X$ is cut by (the restriction to $X$ of) the same equations $(h_1,\dots,h_m)$ cutting $Y_s \subset Y$: the pullback of the equations for $s \in S$.

The sequence $(g_1,\dots,g_n,h_1,\dots,h_m)$ cuts $X_s \subset Y$; thus it is regular in $R$. 
The same is true for the sequence $(h_1,\dots,h_m,g_1,\dots,g_n) \subset R$, because for local rings regularity is independent of the order. 
We conclude by \cite[Lemma 15.30.14]{stacks} that the images $(\overline{g_1},\dots,\overline{g_n})$ are regular in $R/(h_1,\dots,h_m)$, therefore $X_s \subset Y_s$ is a regular embedding. 
\end{proof}


\end{document}